\let\oldtocsection=\tocsection
\let\oldtocsubsection=\tocsubsection
\let\oldtocsubsubsection=\tocsubsubsection
\renewcommand{\tocsection}[2]{\hspace{0em}\oldtocsection{#1}{#2}}
\renewcommand{\tocsubsection}[2]{\hspace{1em}\oldtocsubsection{#1}{#2}}
\renewcommand{\tocsubsubsection}[2]{\hspace{2em}\oldtocsubsubsection{#1}{#2}}
\newtheorem{theorem}{Theorem}[section]
\newtheorem{lemma}[theorem]{Lemma}
\newtheorem{corollary}[theorem]{Corollary}
\newtheorem{proposition}[theorem]{Proposition}
\theoremstyle{definition}
\newtheorem{definition}[theorem]{Definition}
\newtheorem{example}[theorem]{Example}
\newtheorem{remark}[theorem]{Remark}
\newtheorem{question}[theorem]{Question}
\numberwithin{equation}{section}
\newcommand{\spec}{\mathrm{Spec\hskip .5mm }}
\newcommand{\Z}{\mathbb{Z}}
\newcommand{\B}{\mathcal{B}}
\newcommand{\C}{\mathcal{C}}
\newcommand{\D}{\mathcal{D}}
\newcommand{\F}{\mathscr{F}}
\renewcommand{\H}{\mathscr{H}}
\newcommand{\N}{\mathbb{N}}
\newcommand{\ffi}{\varphi}
\newcommand{\eps}{\varepsilon}
\newcommand{\colim}{\operatorname{colim}}
\newcommand{\Spec}{\operatorname{Spec}}
\newcommand{\Sch}{\operatorname{Sch}}
\newcommand{\im}{\operatorname{Im}}
\newcommand{\Cat}{\operatorname{Cat}}
\newcommand{\holim}{\operatorname{holim}}
\newcommand{\Hom}{\operatorname{Hom}}
\newcommand{\Vect}{\operatorname{Vect}}
\newcommand{\can}{\operatorname{can}}
\newcommand{\RR}{\mathcal{R}}
\newcommand{\pt}{\operatorname{pt}}
\renewcommand{\P}{\mathcal{P}}
\newcommand{\K}{\mathcal{K}}
\newcommand{\Zar}{{Zar}}
\newcommand{\Nis}{{Nis}}
\newcommand{\Sing}{\operatorname{Sing}^{\aaa^1}}
\newcommand{\coker}{\operatorname{coker}}
\newcommand{\aaa}{\mathbb{A}}
\newcommand{\ppp}{\mathbb{P}}
\newcommand{\Tens}{\operatorname{Tens}}
\newcommand{\V}{\mathcal{V}}
\newcommand{\sr}{\operatorname{sr}}
\newcommand{\Aff}{\operatorname{Aff}}
\newcommand{\Tor}{\operatorname{Tor}}
\newcommand{\rk}{\operatorname{rk}}
\newcommand{\Open}{\operatorname{Open}}
\newcommand{\SSS}{\mathscr{S}}
\newcommand{\U}{\mathcal{U}}
\newcommand{\sSets}{\operatorname{sSets}}
\newcommand{\Ex}{\operatorname{Ex}}
\title[Euler class groups]{Euler class groups, and the homology of elementary and special linear groups}
\author{Marco Schlichting}
\address{Marco Schlichting, Mathematics Institute,
Zeeman Building,
University of Warwick,
Coventry CV4 7AL, UK} 
\thanks{}
\email{m.schlichting@warwick.ac.uk}
\subjclass{}
\keywords{}
\begin{document}
\bibliographystyle{alpha}

\begin{abstract}
We prove homology stability for elementary and special linear groups over rings with many units improving known stability ranges.
Our result implies stability for unstable Quillen $K$-groups and proves a conjecture of Bass.
For commutative local rings with infinite residue fields, we show that the obstruction to further stability is given by Milnor-Witt $K$-theory.
As an application we construct Euler classes of projective modules with values in the cohomology of the Milnor-Witt $K$-theory sheaf.
For $d$-dimensional commutative noetherian rings with infinite residue fields we show that the vanishing of the Euler class is necessary and sufficient for a projective module $P$ of rank $d$ to split off a rank $1$ free direct summand.
Along the way we obtain a new presentation of Milnor-Witt $K$-theory.
\end{abstract}

\maketitle

\tableofcontents

\section{Introduction}

The purpose of this paper is to improve stability ranges in homology and algebraic $K$-theory of elementary and special linear groups, and to apply these results to construct obstruction classes for projective modules to split off a free direct summand.
\vspace{1ex}

Our first result concerns a conjecture of Bass \cite[Conjecture XVI on p.\ 43]{BassConj}. 
In {\em loc.\,cit.} he conjectured that for a commutative noetherian ring $A$ whose maximal ideal spectrum has dimension $d$ the canonical maps
$$\pi_iBGL_{n-1}^+(A) \to \pi_iBGL_{n}^+(A)$$
are surjective for $n\geq d+i+1$ and bijective for $n \geq d+i+2$.
Here, for a connected space $X$, we denote by $X^+$ Quillen's plus-construction with respect to the maximal perfect subgroup of $\pi_1X$, and we write $BGL_n^+(A)$ for $BGL_n(A)^+$.
In this generality, there are counterexamples to Bass' conjecture; see \cite[\S 8]{VanDerKallenCounterEx}.
The best general positive results to date concerning the conjecture are due to van der Kallen \cite{VanDerKallen:Invent} and Suslin \cite{Suslin:KStability}.
They prove that the maps are surjective for $n-1 \geq \max(2i,\sr(A)+i-1)$ and bijective for $n-1\geq \max(2i,\sr(A)+i)$
where $\sr(A)$ denotes the stable rank of $A$ \cite{dfn:stableRank}.
Here $A$ need not be commutative nor noetherian.

In this paper we prove Bass' conjecture for rings with many units.
Recall \cite{SuslinNesterenko} that a ring $A$ (always associative with unit) has many units if for every integer $n\geq 1$ there is a family of $n$ {\em central elements} of $A$ such that the sum of each non-empty subfamily is a unit.
Examples of rings with many units are infinite fields, commutative local rings with infinite residue field and algebras over a ring with many units.
Here is our first main result.

\begin{theorem}[Theorem \ref{thm:BassConjecture}]
\label{thm:Main1}
Let $A$ be a ring with many units.
Then the natural homomorphism
$$\pi_iBGL_{n-1}^+(A) \to \pi_iBGL_n^+(A)$$
is an isomorphism for $n\geq i + \sr(A) +1$ and surjective for $n\geq i+\sr(A)$.
\end{theorem}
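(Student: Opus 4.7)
The plan is to reduce the statement on plus-constructions to an assertion about integral group homology of $GL_n(A)$ and then prove the latter by an equivariant spectral sequence argument. Quillen's plus-construction preserves integral homology and, for $n \geq 2$, abelianizes $\pi_1$. By a comparison-of-Serre-spectral-sequences argument (along the lines of \cite{Suslin:KStability}), the theorem follows once one establishes that the stabilization map
$$H_i(GL_{n-1}(A);\Z) \to H_i(GL_n(A);\Z)$$
is surjective for $n \geq i + \sr(A)$ and bijective for $n \geq i + \sr(A) + 1$. I would therefore concentrate on this homology statement and treat the passage back to $\pi_\ast BGL_n^+$ as a formality.

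For the homology stability I would follow the Quillen--Vogtmann--van der Kallen strategy. Consider the semisimplicial set $\mathcal{U}_\bullet$ whose $p$-simplices are ordered $(p+1)$-tuples of vectors in $A^n$ forming a partial basis, with $GL_n(A)$ acting diagonally. The stabilizer of the standard $p$-simplex is the affine subgroup $GL_{n-p-1}(A) \ltimes M_{n-p-1,\,p+1}(A)$, whose group homology coincides with that of $GL_{n-p-1}(A)$ under the many-units hypothesis, via a standard Hochschild--Serre collapse that uses the vanishing of the cohomology of an abelian group with central-unit action. Filtering the equivariant homology of $\mathcal{U}_\bullet$ by skeleta yields a spectral sequence
$$E^1_{p,q} = H_q(GL_{n-p-1}(A);\Z) \Longrightarrow H_{p+q}(GL_n(A);\Z)$$
valid up to a range controlled by the connectivity of $\mathcal{U}_\bullet$.

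The main obstacle is proving that $\mathcal{U}_\bullet$ is $(n-\sr(A)-1)$-connected. This bound is strictly stronger than the one available for arbitrary rings, and it is precisely this improvement that removes the $2i$ term from the prior van der Kallen--Suslin range. I would establish it by refining the Nesterenko--Suslin averaging trick: given a cycle in $\mathcal{U}_\bullet$ that one wishes to trivialize, a family of central elements of $A$ whose non-empty partial sums are all units is used to build a ``convex-combination'' chain homotopy transporting the cycle to one that lies in a cone and hence bounds, provided the rank exceeds $\sr(A)$ by enough to accommodate the partial-basis condition. Making the averaging sharp enough to deliver the stable-rank connectivity bound, rather than the weaker Cohen--Macaulay-type bound, is the technical heart of the paper and where the novel input is concentrated.

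Granted the connectivity bound, the proof concludes by induction on $n$. The $d^1$ differentials on the $E^1$-page are, up to sign, alternating sums of stabilization maps, so the inductive hypothesis makes the relevant rows of the spectral sequence exact in the required range; the edge homomorphism then delivers surjectivity, and a slightly sharper diagram chase upgrades it to bijectivity one degree higher. Reassembling with the reduction from the first paragraph gives the stated ranges for $\pi_i BGL_n^+(A)$.
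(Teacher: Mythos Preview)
Your overall architecture is right, but you have the key obstacle backwards. The connectivity bound you call the ``main obstacle'' --- that $\mathcal{U}_\bullet$ is $(n-\sr(A)-1)$-connected --- is not an improvement at all: it is exactly van der Kallen's bound for \emph{arbitrary} rings \cite[2.6 Theorem (i)]{VanDerKallen:Invent}, and the paper simply quotes it (Lemma~\ref{lem:Acyclicity}). No averaging trick on the chain complex is needed, and the ``many units'' hypothesis plays no role there. The $2i$ in the prior range does not come from a connectivity deficit.

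What actually removes the $2i$ is the step you wave through as a ``standard Hochschild--Serre collapse''. For arbitrary rings the stabilizer homology $H_p(\Aff_{q,n-q})$ does not reduce to $H_p(GL_{n-q})$ in all degrees $p$; van der Kallen must argue inductively, and that is where the factor of $2$ enters his range. The ``many units'' hypothesis is used precisely to make the centre action kill $H_{>0}(M_{q,n-q}(A))$ after a suitable localization (Proposition~\ref{prop:HqTorsion}, Theorem~\ref{thm:HomologyOfAffineGps}). For $GL_n$ this is indeed \cite[Theorem 1.11]{SuslinNesterenko}, but the paper runs the spectral sequence for $SG_n$ rather than $GL_n$, and there the centre is too small for the Nesterenko--Suslin argument to apply directly; Theorem~\ref{thm:HomologyOfAffineGps} is the new replacement and the genuine technical heart.

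The reason for going through $SG_n$ and $E_n$ (Theorems~\ref{thm:SGHstability} and \ref{thm:Estability}) rather than $GL_n$ is your secondary gap: the passage back to $\pi_i BGL_n^+$ is not quite the formality you suggest. The paper's proof of Theorem~\ref{thm:BassConjecture} uses that the homotopy fibre of $BGL_{n-1}^+ \to BGL_n^+$ is also that of $BE_{n-1}^+ \to BE_n^+$; since $BE_n^+$ is simply connected, the relative Serre spectral sequence has trivial coefficient system and Hurewicz applies directly. A route through $GL_n$-homology alone would leave the $\pi_1 = K_1(A)$-action on the fibre homology to be controlled.
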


The ring $A$ in Theorem \ref{thm:Main1} is not assumed to be commutative.
If $A$ is commutative noetherian with maximal ideal spectrum of dimension $d$ then  $\sr(A) \leq d+1$ \cite[Theorem 11.1]{Bass:StableK}.
So, our theorem  proves Bass' conjecture in case $A$ has many units.
If $A$ is commutative local with infinite residue field then $\sr(A)=1$ and the theorem admits the following refinement which shows that the stability range in Theorem \ref{thm:Main1} is sharp in many cases.
Denote by $K^{MW}_n(A)$ the $n$-th Milnor-Witt $K$-theory of $A$ \cite[Definition 3.1]{morel:book} which makes sense for any commutative ring $A$; see Definition \ref{dfn:HopkinsMorelMilnorWittK}.

\begin{theorem}[Theorem \ref{thm:BassConjLocalRingsInText}]
\label{thm:BassConjLocalRings}
Let $A$ be a commutative local ring with infinite residue field.
Then the natural homomorphism
$$\pi_iBGL_{n-1}^+(A) \to \pi_iBGL_n^+(A)$$
is an isomorphism for $n\geq i + 2$ and surjective for $n\geq i+1$.
Moreover, there is an exact sequence for $n\geq 2$
$$\pi_nBGL_{n-1}^+(A) \to \pi_nBGL_{n}^+(A) \to K^{MW}_n(A) \to \pi_{n-1}BGL_{n-1}^+(A)
\to \pi_{n-1}BGL_{n}^+(A).$$
\end{theorem}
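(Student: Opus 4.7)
The first two assertions are immediate from Theorem \ref{thm:Main1}, since any commutative local ring has $\sr(A)=1$; the bounds $n\geq i+\sr(A)+1$ and $n\geq i+\sr(A)$ specialise to $n\geq i+2$ and $n\geq i+1$. The substantive content of the theorem is therefore the five-term exact sequence, and my plan addresses only this.

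The plan is to reduce the exact sequence to a computation of relative group homology. Because $\sr(A)=1$, the Whitehead lemma identifies $E_n(A)$ with the commutator subgroup of $GL_n(A)$, and $E_n(A)$ is perfect; so each $BGL_n^+(A)$ is a simple space with $H_*(BGL_n^+(A);\Z)=H_*(GL_n(A);\Z)$ and with $\pi_1 = K_1(A)$ independent of $n$ for $n\geq 1$. The integral homology stability that underlies Theorem \ref{thm:Main1} gives that $H_i(GL_{n-1}(A))\to H_i(GL_n(A))$ is an isomorphism for $i\leq n-2$ and surjective for $i=n-1$, so the pair $(BGL_n^+(A),BGL_{n-1}^+(A))$ is simple and $(n-1)$-connected in homology. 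The long exact sequence of integral homology of the pair yields the five-term exact sequence
$$H_n(GL_{n-1}) \to H_n(GL_n) \to H_n(GL_n, GL_{n-1}) \to H_{n-1}(GL_{n-1}) \to H_{n-1}(GL_n),$$
and the relative Hurewicz theorem for simple pairs identifies $\pi_n(BGL_n^+(A), BGL_{n-1}^+(A)) \cong H_n(GL_n(A), GL_{n-1}(A); \Z)$. Splicing with the long exact sequence of homotopy groups of the pair reduces the theorem to proving $H_n(GL_n(A), GL_{n-1}(A); \Z) \cong K^{MW}_n(A)$.

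For this final identification I would follow the Nesterenko--Suslin blueprint: let $GL_n(A)$ act on the complex of unimodular sequences (or partial frames) of length $\leq n$ in $A^n$. For $A$ local with infinite residue field this complex is highly acyclic, and its isotropy spectral sequence presents $H_n(GL_n(A), GL_{n-1}(A);\Z)$ as a cokernel of an explicit map whose source consists of symbols $[a_1,\ldots,a_n]$ attached to unimodular rows, modulo face-map relations. The main obstacle is the matching of this presentation with $K^{MW}_n(A)$: the classical Milnor presentation is insufficient, as it only sees the symbol's ``orientation-free'' quotient and misses the quadratic refinements encoded by the unit $\langle -1\rangle$ and the Hopf element $\eta$. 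Overcoming this is precisely why the paper develops a new presentation of $K^{MW}_n$ adapted to a symbol-by-symbol comparison with the spectral-sequence cokernel; once such a presentation is in hand, verifying that the face-map relations translate exactly into the Steinberg, anti-symmetry (with $\langle -1\rangle$) and $\eta$-relations of Milnor--Witt K-theory is careful but essentially mechanical bookkeeping, and the theorem then follows.
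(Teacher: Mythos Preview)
Your reduction contains a genuine error: the identification $\pi_n(BGL_n^+(A),BGL_{n-1}^+(A))\cong H_n(GL_n(A),GL_{n-1}(A);\Z)$ is false in general, and in fact $H_n(GL_n(A),GL_{n-1}(A);\Z)\cong K^M_n(A)$ (Nesterenko--Suslin), \emph{not} $K^{MW}_n(A)$. The mistake is the assertion that the pair $(BGL_n^+,BGL_{n-1}^+)$ is simple. The fundamental group $\pi_1BGL_{n-1}^+(A)=A^*$ acts on the relative homotopy group $\pi_n(BGL_n^+,BGL_{n-1}^+)=\pi_{n-1}\F_n$ via the usual $A^*$-action, and once one knows (as the paper proves) that $\pi_{n-1}\F_n\cong K^{MW}_n(A)$ this action is multiplication by $\langle a\rangle$, which is nontrivial whenever $A^*\neq A^{*2}$. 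The relative Hurewicz map therefore only identifies $H_n(GL_n,GL_{n-1})$ with the $A^*$-coinvariants of $K^{MW}_n(A)$, and those coinvariants are precisely $K^M_n(A)$.

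The paper avoids this by observing that for local $A$ with infinite residue field one has $E_r(A)=SL_r(A)$, so the \emph{same} homotopy fibre $\F_n$ sits in the fibration $\F_n\to BSL_{n-1}^+(A)\to BSL_n^+(A)$ with \emph{simply connected} base. The (relative) Serre spectral sequence for this fibration then gives $\pi_{n-1}\F_n\cong H_{n-1}(\F_n)\cong H_n(SL_n(A),SL_{n-1}(A))$ without any coinvariants (Theorem~\ref{thm:BassConjectureBis}), and the identification with $K^{MW}_n(A)$ is Theorem~\ref{thm:HnIsKMW}. Your outline of the last step is otherwise accurate in spirit, but the target of the spectral-sequence computation must be $H_n(SL_n,SL_{n-1})$, not $H_n(GL_n,GL_{n-1})$; replacing $GL$ by $SL$ throughout your final paragraph is exactly what is needed.
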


Theorem \ref{thm:Main1}
follows from the following homology stability result for elementary linear groups.
Recall \cite[\S 1]{Bass:StableK} that the group of elementary $r\times r$-matrices of a ring $A$ is the subgroup $E_r(A)$ of $GL_r(A)$ generated by the elementary matrices $e_{i,j}(a) = 1+ a\cdot e_ie_j^T$, $a\in A$ where $e_i\in A^r$ is the $i$-th standard column basis vector.

\begin{theorem}[Theorem \ref{thm:Estability}]
\label{thm:Main2}
Let $A$ be a ring with many units.
Then the natural homomorphism
$$H_i(E_{n-1}(A),\Z) \to H_i(E_n(A),\Z)$$
is an isomorphism for $n\geq i + \sr(A) +1$ and surjective for $n\geq i+\sr(A)$.
\end{theorem}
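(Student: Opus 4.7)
The plan is to prove this homology stability result by the standard paradigm: construct a highly connected semi-simplicial complex on which $E_n(A)$ acts with controlled stabilizers, then analyze the associated equivariant homology spectral sequence by induction on $n$.

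First, I would introduce the semi-simplicial set $U(n)_\bullet$ whose $p$-simplices are ordered unimodular sequences $(v_0, \dots, v_p)$ in $A^n$, meaning tuples that extend to a basis. The group $E_n(A)$ acts on $U(n)_\bullet$. The stabilizer in $E_n(A)$ of the standard simplex $(e_1,\dots,e_{p+1})$ fits in a split extension with unipotent kernel (``affine'' matrices) and a quotient mapping to $E_{n-p-1}(A)$. Under the many-units hypothesis a Hochschild-Serre argument should show that the inclusion $E_{n-p-1}(A) \hookrightarrow \Stab(\sigma_p)$ induces an isomorphism on integral homology, essentially because the unipotent contributions are $A$-modules and are killed by the averaging trick available when many central units are present.

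The $E_n(A)$-equivariant spectral sequence for the action on $U(n)_\bullet$ then reads
$$E^1_{p,q} = H_q(\Stab(\sigma_p),\Z) \Longrightarrow H_{p+q}(E_n(A),\Z),$$
valid in a range controlled by the connectivity of $U(n)_\bullet$. The key technical input is a sharp connectivity estimate: $U(n)_\bullet$ should be at least $(n-\sr(A)-1)$-connected. Here the many-units hypothesis is decisive: it allows generic perturbations of unimodular sequences using central invertible scalars, improving connectivity well beyond the $\lfloor n/2\rfloor-1$ range available otherwise. This is precisely the step that eliminates the $2i$-obstruction in the Suslin-van der Kallen stability theorem.

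With these two ingredients in place, the induction on $n$ becomes essentially formal. In the isomorphism range $n\geq i+\sr(A)+1$, the connectivity estimate forces the spectral sequence to contribute nothing to $H_i(E_n(A))$ from columns $p\geq 1$, while the column $p=0$ identifies by induction with $H_i(E_{n-1}(A))$, giving both surjectivity and injectivity; a parallel chase one degree lower handles the surjectivity-only range $n\geq i+\sr(A)$. I expect the main obstacle to be the sharp connectivity estimate for $U(n)_\bullet$ under the many-units hypothesis; the stabilizer analysis and the spectral-sequence bookkeeping should follow familiar patterns from the Suslin-van der Kallen framework and the author's prior work on hermitian $K$-theory.
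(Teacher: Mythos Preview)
Your overall architecture---act on a complex of unimodular frames, analyze a spectral sequence whose $E^1$-terms are stabilizer homology, induct on $n$---matches the paper's. But you have mislocated the role of the many-units hypothesis, and this hides the genuine gap.

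The connectivity of the unimodular complex is \emph{not} where many units enters: the acyclicity $H_i(C(A^n))=0$ for $i\leq n-\sr(A)$ holds for arbitrary rings (this is van der Kallen's bound, used verbatim in the paper as Lemma~\ref{lem:Acyclicity}). The improvement over the $\max(2i,\dots)$ range comes entirely from the stabilizer computation, and that is exactly where your sketch breaks.

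You assert that $E_{n-p-1}(A)\hookrightarrow\Stab(\sigma_p)$ induces an isomorphism on integral homology because the unipotent radical is ``killed by the averaging trick.'' This is the Nesterenko--Suslin argument for $GL_n$: the scalar $a\cdot 1_q\in GL_q(A)$ acts by inner automorphism (hence trivially on homology) while genuinely scaling the unipotent part $M_{p,q}(A)$, and combining this with the element $s_m$ annihilates $H_j(M_{p,q}A)$ for $j\geq 1$. For $E_n$ or $SL_n$ the scalar $a\cdot 1_q$ is \emph{not in the group} unless $a^q=1$, so the center-kills mechanism is unavailable. The inclusion of $SL_{n-q}$ into its affine stabilizer does \emph{not} in general induce an isomorphism on homology; the paper's introduction flags this explicitly as the obstacle (``\cite[Theorem 1.11]{SuslinNesterenko} \dots\ is false if one simply replaces $GL_n(A)$ with $SL_n(A)$'').

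The paper's fix (Section~\ref{Sec:HofAff}, culminating in Theorem~\ref{thm:HomologyOfAffineGps}) is to work not with $E_n$ directly but with auxiliary groups $SG_n\subset G_n$ sitting in an extension $1\to SG_n\to G_n\to\bar{A}^*\to 1$, which equips all homology groups with an external $\bar{A}^*$-action. One then chooses $t$ divisible by every $q\leq n_0$, so that $\langle a^{-t}\rangle\in\Z[\bar{A}^*]$ is realized by the scalar $a^{-t/q}\cdot 1_q$, which \emph{is} central in $G_q$ and hence acts trivially on $H_*(SG_q)$ while still scaling the unipotent radical. Localizing the entire spectral sequence at $\sigma=s_{m,-t}$ then kills the unipotent contributions without touching $H_*(SG_q)$. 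Only after this localized argument yields stability for $SG_n$ does one invoke $E_n(A)=SG_n(A)$ for $n>\sr(A)$ (Lemma~\ref{lem:MaxPerf}) to transfer the conclusion. Your proposal contains no analogue of this external action and localization, and without it the inductive identification of the $E^1$-terms fails.
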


For a division ring $A$ with infinite center, Theorem \ref{thm:Main2} proves a conjecture of Sah \cite[2.6 Conjecture]{Sah}.
From Theorem \ref{thm:Main2} one easily deduces the following homology stability result for the special linear groups of commutative rings.

\begin{theorem}[Theorem \ref{thm:SLstability}]
\label{thm:Main3}
Let $A$ be a commutative ring with many units.
Then the natural homomorphism
$$H_i(SL_{n-1}(A),\Z) \to H_i(SL_n(A),\Z)$$
is an isomorphism for $n\geq i + \sr(A) +1$ and surjective for $n\geq i+\sr(A)$.
\end{theorem}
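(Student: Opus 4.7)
The plan is to deduce Theorem \ref{thm:Main3} from Theorem \ref{thm:Main2} via a Hochschild--Serre spectral sequence comparison. The main algebraic inputs are that $E_n(A) \trianglelefteq SL_n(A)$ for $n \geq 3$ (since $E_n(A) = [GL_n(A), GL_n(A)]$ in this range for rings with many units) together with Bass' classical $K_1$-stability: the natural map $SL_n(A)/E_n(A) \to SK_1(A)$ is an isomorphism for $n \geq \sr(A)+1$ and surjective for $n \geq \sr(A)$.

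Writing $Q_m = SL_m(A)/E_m(A)$, I would consider the Hochschild--Serre spectral sequence
\[
E^2_{p,q}(m) = H_p\bigl(Q_m,\, H_q(E_m(A),\Z)\bigr) \Longrightarrow H_{p+q}(SL_m(A),\Z)
\]
and the morphism of spectral sequences for $m = n-1$ and $m = n$ coming from the block-diagonal inclusions. In the isomorphism range $n \geq i + \sr(A) + 1$ we have $n-1 \geq \sr(A) + 1$, so both $Q_{n-1}$ and $Q_n$ coincide with $SK_1(A)$; the $SK_1(A)$-action on $H_*(E_m(A),\Z)$ is compatible under the inclusion $E_{n-1}(A) \hookrightarrow E_n(A)$, since conjugation by block-diagonal elements of $SL_m(A)$ respects the block decomposition. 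Theorem \ref{thm:Main2} then implies that $H_q(E_{n-1}(A),\Z) \to H_q(E_n(A),\Z)$ is an isomorphism for $q \leq i$ and surjective for $q = i+1$, so the same holds for the map on $E^2$-pages in the corresponding total degrees; Zeeman's comparison theorem delivers the desired isomorphism on $H_i(SL_n(A),\Z)$. The surjectivity statement for $n \geq i + \sr(A)$ is analogous: the map $Q_{n-1} \to Q_n$ is still surjective in this range, and surjectivity on all $E^2$-terms of total degree $\leq i$ is enough for surjectivity on the abutment in degree $i$.

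The step I expect to require the most care is verifying the $SK_1(A)$-equivariance of the stabilization map on $H_*(E_m(A),\Z)$, together with handling the boundary cases (small $n$, including $i=0$, and the interval $\sr(A) \leq n-1 < \sr(A)+1$ relevant only to the surjectivity part of the statement). These edge cases may need to be checked directly rather than via the spectral sequence, for instance by noting that $H_0(SL_m(A),\Z) = \Z$ trivially stabilises and that $H_1(SL_m(A),\Z)$ is controlled by $Q_m$ in the lowest range. Once these points are settled, the proof reduces to a standard application of the comparison theorem for first-quadrant spectral sequences built on Theorem \ref{thm:Main2}.
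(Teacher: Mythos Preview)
Your approach is essentially the same as the paper's. The paper uses the extension $1 \to SG_r(A) \to SL_r(A) \to SK_1(A) \to 0$ for $r \geq \sr(A)$ (where $SG_r = E_r$ for $r > \sr(A)$ by Lemma~\ref{lem:MaxPerf}), and instead of comparing two Hochschild--Serre spectral sequences via Zeeman, it applies a single \emph{relative} Serre spectral sequence $H_p(SK_1(A), H_q(SG_nA, SG_{n-1}A)) \Rightarrow H_{p+q}(SL_nA, SL_{n-1}A)$ and invokes Theorem~\ref{thm:SGHstability} to kill the $E^2$-page in the required range. This packaging sidesteps the Zeeman comparison and the edge-case checks you flag, but the underlying idea --- bootstrap from $E_n$- (equivalently $SG_n$-) stability through the abelian quotient $SK_1(A)$ --- is identical.
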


When $A$ is a commutative local ring with infinite residue field Theorem \ref{thm:Main3} says that $H_i(SL_n(A),SL_{n-1}(A))=0$ for $i<n$.
The following theorem gives an explicit presentation of these groups for $i=n$.

\begin{theorem}[Theorem \ref{thm:HnIsKMW}]
\label{thm:main:SLnLocalA}
Let $A$ be a commutative local ring with infinite residue field.
Then for all $n\geq 2$ we  have
$$
H_{n}(SL_n(A),SL_{n-1}(A)) \cong K^{MW}_n(A).
$$
Moreover, for $n$ even, the map $H_n(SL_n(A)) \to H_n(SL_n(A),SL_{n-1}(A))$ is surjective.
In particular, the map $H_i(SL_{n-1}(A)) \to H_i(SL_n(A))$ is an isomorphism for 
$i \leq n-2$ and surjective (bijective) for $i = n-1$ and $n$ odd ($n$ even).
\end{theorem}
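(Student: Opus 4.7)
The strategy is to transfer the unstable $K$-theoretic information of Theorem~\ref{thm:BassConjLocalRings} to the relative homology of $SL_n$ via relative Hurewicz, using the determinant splitting $GL_n(A) = SL_n(A) \rtimes A^\times$ to move between the special and general linear groups.

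First, since $A$ is commutative local one has $SL_n(A) = E_n(A)$ for all $n \geq 2$, so Theorem~\ref{thm:Main2} (with $\sr(A) = 1$) immediately yields $H_i(SL_n(A), SL_{n-1}(A)) = 0$ for $i \leq n-1$, giving the stability portion of part (3) in degrees $\leq n-2$. Next, I would invoke the Hochschild--Serre spectral sequence for the split extension $SL_n(A) \to GL_n(A) \xrightarrow{\det} A^\times$; the vanishing just noted collapses its contribution to total degree $n$ to the edge piece, and a standard many-units argument shows that the conjugation action of $\diag(u,1,\ldots,1)$ on $H_n(SL_n, SL_{n-1})$ is trivial, yielding $H_n(GL_n, GL_{n-1}) \cong H_n(SL_n, SL_{n-1})$.

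To identify this group with $K^{MW}_n(A)$, I would observe that for $n \geq 3$ the group $SL_n(A) = E_n(A)$ is perfect by Whitehead's lemma, so $BSL_n^+(A)$ is simply connected. The fibration $BSL_n^+ \to BGL_n^+ \to BA^\times$ then identifies $\pi_n(BGL_n^+, BGL_{n-1}^+)$ with $\pi_n(BSL_n^+, BSL_{n-1}^+)$, and relative Hurewicz (applicable thanks to the vanishing of relative homology below $n$) identifies the latter with $H_n(SL_n, SL_{n-1}) \cong H_n(GL_n, GL_{n-1})$. Comparing the long exact homotopy sequence of the pair $(BGL_n^+, BGL_{n-1}^+)$ with the five-term sequence of Theorem~\ref{thm:BassConjLocalRings} pins down $K^{MW}_n(A) \cong \pi_n(BGL_n^+, BGL_{n-1}^+)$, proving (1). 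The case $n = 2$ requires a separate argument verifying that $SL_2(A)$ is perfect in this setting.

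For part (2), when $n$ is even the scalar $-I_n$ lies in $SL_n(A)$ and is central, so its action on any chain-level model of the relative complex provides a retraction which forces $H_n(SL_n(A)) \twoheadrightarrow H_n(SL_n, SL_{n-1})$; when $n$ is odd, $-I_n \notin SL_n$ and no such retraction is available. Part (3) is then formal from (1), (2), and Theorem~\ref{thm:Main2}. The main obstacle will be Step~3 above: verifying that the connecting homomorphism in the LES of the pair $(BGL_n^+, BGL_{n-1}^+)$ coincides (up to isomorphism of the middle term $K^{MW}_n(A)$) with the explicit connecting map in Theorem~\ref{thm:BassConjLocalRings}, which requires tracing through the construction of Milnor--Witt $K$-theory as used there together with careful control of the relative Hurewicz map in the non-simply-connected ambient space.
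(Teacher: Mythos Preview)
Your argument is circular. In this paper, Theorem~\ref{thm:BassConjLocalRings} (which is Theorem~\ref{thm:BassConjLocalRingsInText} in the body) is \emph{deduced from} Theorem~\ref{thm:main:SLnLocalA}, not the other way around: the exact sequence with $K^{MW}_n(A)$ in the middle arises precisely by combining Theorem~\ref{thm:BassConjectureBis}, which identifies $\pi_{n-1}\F_n(A)$ with $H_n(SL_n,SL_{n-1})$, with the computation $H_n(SL_n,SL_{n-1})\cong K^{MW}_n(A)$ that you are trying to prove. There is no independent route in the paper to the five-term sequence of Theorem~\ref{thm:BassConjLocalRings}; the identification of the obstruction group with Milnor--Witt $K$-theory is the content of the entire Section~\ref{sec:Obstruction}. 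The actual proof runs through the $\Z[A^*]$-algebra $S(A)$ built from cycles in the unimodular-frame complex: one shows $H_n(SL_n,SL_{n-1})\cong \sigma^{-1}\bar S_n(A)$ via the spectral sequence (Lemma~\ref{lem:RelHisSn}), and then establishes $\sigma^{-1}\bar S_n(A)\cong \hat K^{MW}_n(A)$ by proving the Steinberg relation holds in $\bar S_2$ (Proposition~\ref{prop:SteinbergInSbar}), that $[-1,1]$ is central (Proposition~\ref{prop:EcommutesWithA}), and that $S_n$ is decomposable for $n\geq 3$ (Proposition~\ref{prop:Decomposibility}). None of this can be shortcut by invoking the homotopy-group statement.

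Your argument for part~(2) is also incorrect. A central element of a group acts trivially on its own homology (inner automorphisms induce the identity), so the presence of $-I_n$ in $SL_n(A)$ gives no retraction and no information about surjectivity of $H_n(SL_n)\to H_n(SL_n,SL_{n-1})$. The paper's argument is genuinely multiplicative: one uses the graded $\Z[A^*]$-algebra map $\B(A)=\bigoplus_n H_n(SL_nA)\to \hat K^{MW}(A)$, observes it is an isomorphism in degree~$2$ (Theorem~\ref{thm:H2andKMW2}), and concludes surjectivity in even degrees because the even part of $\hat K^{MW}(A)$ is generated in degree~$2$. The parity enters because one must multiply degree-$2$ classes to reach even degrees, not because of $-I_n$. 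Finally, your claim that $H_n(GL_n,GL_{n-1})\cong H_n(SL_n,SL_{n-1})$ via Hochschild--Serre is not immediate: the $A^*$-action on $H_n(SL_n,SL_{n-1})\cong K^{MW}_n(A)$ is nontrivial (only square units act trivially), so the $E^2_{0,n}$ term is the coinvariants, not the full group.
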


Theorems \ref{thm:Main3} and \ref{thm:main:SLnLocalA} generalize a result of Hutchinson and Tao \cite{HutchinsonTao:Stability} who proved them for fields of characteristic zero, though for $n$ odd, the identification of the relative homology with Milnor-Witt $K$-theory is only implicit in their work.
Contrary to \cite{HutchinsonTao:Stability}, our proof is independent of the characteristic of the residue field, works for local rings other than fields and does not use the solution of the Milnor conjecture on quadratic forms.
In Theorem \ref{thm:KerCokerOdd} we also give explicit computations of the kernel and cokernel of the stabilization map in homology at the edge of stabilization recovering and generalizing the remaining results of \cite{HutchinsonTao:Stability}.
This, however, requires the solution of the Milnor conjecture.
\vspace{1ex}

Our proof of Theorem \ref{thm:main:SLnLocalA} uses a new presentation of the Milnor-Witt $K$-groups $K^{MW}_n(A)$ for $n\geq 2$.
Denote by $\Z[A^*]$ the group ring of the group of units $A^*$ in $A$, and $I[A^*]$
the augmentation ideal.
For $a\in A^*$ denote by $\langle a\rangle \in \Z[A^*]$ the corresponding element in the group ring, and by $[a]\in I[A^*]$ the element $\langle a\rangle -1$.
We define the graded ring $\hat{K}^{MW}(A)$ as the graded $\Z[A^*]$-algebra generated in degree $1$ by $I[A^*]$ modulo the two sided ideal generated by 
the Steinberg relations $[a][1-a]$ for all $a,1-a\in A^*$; see Definition \ref{dfn:KMWhat}.

\begin{theorem}[Theorem \ref{thm:KhatIsK}]
\label{thm:Intro:KMW}
Let $A$ be a commutative local ring.
If $A$ is not a field assume that the cardinality of its residue field is at least $4$.
Then the natural map of graded rings $\hat{K}^{MW}(A) \to K^{MW}(A)$
induces an isomorphism $\hat{K}^{MW}_n(A) \cong K^{MW}_n(A)$ for $n\geq 2$.
\end{theorem}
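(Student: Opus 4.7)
The natural map $\hat{K}^{MW}(A) \to K^{MW}(A)$ is well defined: in degree $0$ it is the canonical ring homomorphism $\Z[A^*] \to GW(A) = K^{MW}_0(A)$ sending $\langle a\rangle \mapsto \langle a\rangle$, and in degree $1$ it sends $[a] = \langle a\rangle - 1 \in I[A^*]$ to the Milnor--Witt symbol $[a] \in K^{MW}_1(A)$. The Steinberg relations hold by definition of $K^{MW}$, and the $\Z[A^*]$-module compatibility $\langle c\rangle\cdot[a] = [ca] - [c]$ in $I[A^*]$ is exactly Morel's relation $[ca] = [c] + \langle c\rangle[a]$ in $K^{MW}_1(A)$. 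Surjectivity in degrees $n \geq 2$ is immediate since $K^{MW}_n(A)$ is generated as an abelian group by products $\langle a\rangle[a_1]\cdots[a_n]$, all of which lie in the image.

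For injectivity in degree $n\geq 2$, the plan is to construct an inverse $\psi_n\colon K^{MW}_n(A)\to \hat{K}^{MW}_n(A)$ by using Morel's presentation of $K^{MW}$ and verifying that each of Morel's defining relations holds in $\hat{K}^{MW}$ after being transported to degree $\geq 2$. Morel's generators are $[a]$ (degree $1$) and $\eta$ (degree $-1$), with relations (R1) Steinberg, (R2) the twisted logarithm law $[ab] = [a] + [b] + \eta[a][b]$, (R3) $\eta[a] = [a]\eta$, and (R4) $\eta h = 0$ where $h = 1 + \langle -1\rangle$. Although $\hat{K}^{MW}$ has no separate $\eta$-generator, the effect of multiplying by $\eta$ on an element of degree $n\geq 1$ is encoded by the $\Z[A^*]$-module structure through the identification $\eta[a] = \langle a\rangle - 1 \in \Z[A^*]$.

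With this encoding, (R1) is part of the definition, and (R2), (R3) hold automatically from the commutative group-ring structure of $\Z[A^*]$. Indeed, $[ab] - [a] - \langle a\rangle[b]$ already equals $\langle ab\rangle - 1 - (\langle a\rangle - 1) - \langle a\rangle(\langle b\rangle - 1) = 0$ in $I[A^*]$, and the left and right $\Z[A^*]$-actions on $\hat{K}^{MW}$ coincide because $A$ is commutative. The essential content of the theorem is therefore relation (R4) in degree $\geq 2$: after unwinding the $\eta$-encoding, it becomes equivalent to the Witt identity $\langle a\rangle + \langle -a\rangle = 1 + \langle -1\rangle$ of $GW(A)$ holding on the $\Z[A^*]$-module $\hat{K}^{MW}_n$ for $n \geq 2$; equivalently, the element $\langle a\rangle + \langle -a\rangle - 1 - \langle -1\rangle \in \Z[A^*]$ must annihilate $\hat{K}^{MW}_{\geq 2}$ for every $a \in A^*$.

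I expect this annihilation to be the main obstacle. The plan is to derive it via a chain of Steinberg manipulations, mirroring the classical proof that $\{a,-a\} = 0$ in Milnor $K_2$. The factorizations $-a = (1-a)(1-a^{-1})^{-1}$ and $1 - a^{-1} = -a^{-1}(1-a)$, combined with the identity $[xy] = [x] + \langle x\rangle[y]$ valid in $I[A^*]$, express the $[-a]$-symbols in terms of $[a]$- and $[1-a]$-symbols; once multiplied by a further degree-$\geq 1$ factor, the Steinberg relations $[a][1-a]\cdot z = 0$ collapse the expression to zero. The hypothesis that the residue field has cardinality at least $4$ enters precisely here: it guarantees the existence of units $a \in A^*$ with $a$, $1-a$, $1 \pm a$ (and similar auxiliary elements) simultaneously invertible, so that the needed substitutions remain inside $A^*$. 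Once the annihilation is established, $\psi_n$ is well defined, and a direct check on generators verifies that $\psi_n$ is inverse to the natural map.
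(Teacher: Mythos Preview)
Your approach is essentially the paper's: construct an inverse $K^{MW}_n\to\hat K^{MW}_n$ for $n\ge 2$ by sending $\eta^m[u_1]\cdots[u_{n+m}]$ to $\prod_{i\le m}\langle\langle u_i\rangle\rangle\cdot[u_{m+1}]\cdots[u_{n+m}]$, and check Morel's relations; the paper just packages this via an intermediate ring $\Tens_{GW(A)}V(A)/\text{Steinberg}$ (Propositions \ref{prop:hatKMWisTensGWV} and \ref{prop:KMWisTensGWV}), which isolates exactly the extra relations one must verify in $\hat K^{MW}_{\ge 2}$.

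One point where you are too quick: the claim that ``(R1) is part of the definition'' only covers the case where the Steinberg pair sits entirely in the symbol part. When one or both of the consecutive entries $u_i,u_{i+1}$ with $u_i+u_{i+1}=1$ lie in the $\eta$-block, the required vanishing in $\hat K^{MW}_n$ is $\langle\langle a\rangle\rangle[1-a,c]=0$ or $\langle\langle a\rangle\rangle\langle\langle 1-a\rangle\rangle[c,d]=0$ (Lemma \ref{Hatlem:basicKMWformulas} (\ref{Hatlem:TowardsGWactionOnTwistedK:2}),(\ref{Hatlem:TowardsGWactionOnTwistedK:3})), and these are not definitional---they follow from the swap identity $\langle\langle a\rangle\rangle[b]=\langle\langle b\rangle\rangle[a]$. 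Similarly, verifying (R2) at an interior slot uses this same swap together with $\langle\langle c\rangle\rangle[b,d]=\langle\langle c\rangle\rangle[d,b]$. These are easy, but not literally ``automatic from the group-ring structure''. Your identification of (R4)---equivalently $\langle\langle a\rangle\rangle h\cdot\hat K^{MW}_{\ge 2}=0$---as the crux is exactly right, and your plan to derive it from $[a][-a]=0$ via the factorization $-a=(1-a)/(1-a^{-1})$ is precisely the paper's Lemma \ref{Hatlem:basicKMWformulasTwo}; the residue-field hypothesis enters only in the boundary case $\bar a=1$ there.
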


In particular, for a local ring $A$ with infinite residue field, the Schur multiplier $H_2(SL_2(A))$ has the pleasant presentation as the quotient of $I[A^*]\otimes_{A^*}I[A^*]$ by the Steinberg relations (Theorem \ref{thm:H2andKMW2}); compare \cite[Theorem 9.2]{moore}, \cite[Corollaire 5.11]{matsumoto}, \cite[Theorem 3.4]{vdKH2SL2}.
\vspace{1ex}

Theorems \ref{thm:Main3} and \ref{thm:main:SLnLocalA} are the $SL_n$-analogs of
a result of Nesterenko and Suslin \cite{SuslinNesterenko}.
They proved that
Theorems \ref{thm:Main3} and \ref{thm:main:SLnLocalA} hold when $SL_n(A)$ and $K^{MW}(A)$ are replaced with $GL_n(A)$ and Milnor K-theory $K^M(A)$.
Suslin and Nesterenko's proof rests on the computation of the homology of affine groups \cite[Theorem 1.11]{SuslinNesterenko} which is false if one simply replaces $GL_n(A)$ with $SL_n(A)$.
Our innovation is the correct replacement of \cite[Theorem 1.11]{SuslinNesterenko} in the context of $SL_n(A)$ and of groups related to $E_n(A)$.
This is done in Section \ref{Sec:HofAff} whose main result is Theorem \ref{thm:HomologyOfAffineGps} and its Corollary \ref{cor:HomologyOfAffineGps}.
With our new presentation of Milnor-Witt $K$-theory in Section \ref{sec:KMW}, 
Sections \ref{sec:Stability} and \ref{sec:Obstruction} more or less follow the treatment in \cite{SuslinNesterenko}.  
\vspace{1ex}

The importance of homology stability and the computation of the obstruction to further  stability in Theorem \ref{thm:main:SLnLocalA} lies in the following application.
Let $R$ be a commutative noetherian ring of dimension $n$
all of whose residue fields are infinite.
Let $P$ be an oriented  rank $n$ projective $R$-module.
In \S \ref{sec:EulerClGps}, we define a class
$$e(P)\in H^n_{\Zar}(R,\K^{MW}_n)$$
such that $e(P)=0$ if $P$ splits off a free direct summand of rank $1$.
Here, $\K^{MW}_n$ denotes the Zariski sheaf associated with the presheaf $A \mapsto K^{MW}_n(A)$, and $H^n_{\Zar}$ denotes Zariski cohomology.
We prove the following.

\begin{theorem}[Theorem \ref{thm:GeneralEulerclass}]
\label{thm:intro:Eulerclass}
Let $R$ be a commutative noetherian ring of dimension $n\geq 2$.
Assume that all residue fields of $R$ are infinite.
Let $P$ be an oriented rank $n$ projective $R$-module.
Then 
$$P\cong Q\oplus R \Leftrightarrow e(P)=0 \in H^n_{\Zar}(R,\K^{MW}_n).$$
\end{theorem}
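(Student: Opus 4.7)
The forward direction, $P \cong Q \oplus R \Rightarrow e(P) = 0$, is asserted as part of the construction in Section \ref{sec:EulerClGps} and follows formally from the definition: a free direct summand yields a nowhere-vanishing section which lifts the classifying data from $SL_n$ through $SL_{n-1}$, and so kills the class in view of the identification $H_n(SL_n, SL_{n-1}) \cong K^{MW}_n$ from Theorem \ref{thm:main:SLnLocalA}. The heart of the theorem is the converse, and the plan is to prove it via a Rost--Schmid/Gersten resolution of $\K^{MW}_n$ together with an iterative moving argument on ideals.

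First I would establish that the Zariski sheaf $\K^{MW}_n$ admits a flasque Gersten/Rost--Schmid type resolution on $\Spec R$. Given the stability results of Theorems \ref{thm:Main3} and \ref{thm:main:SLnLocalA} --- which identify the cokernel of stabilization with $K^{MW}_n$ at each local ring with infinite residue field --- such a resolution exists and identifies $H^n_{\Zar}(R, \K^{MW}_n)$ with the cokernel of a boundary map from $\bigoplus_{\mathfrak{p} \in R^{(n-1)}} K^{MW}_1(k(\mathfrak{p}))$ to $\bigoplus_{\mathfrak{m} \in R^{(n)}} K^{MW}_0(k(\mathfrak{m}))$, up to the appropriate orientation twists (these twists are where the orientation of $P$ enters).

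Next, assuming $e(P) = 0$, I would pick a generic section $\varphi \colon P \to R$ in the style of Nori and Bhatwadekar--Sridharan so that $I := \varphi(P)$ is an ideal of height $n$. Then $V(I) = \{\mathfrak{m}_1, \dots, \mathfrak{m}_r\}$ is a finite set of closed points, all with infinite residue field by hypothesis, and local trivializations of $P$ at these points produce an explicit $0$-cycle $z(\varphi)$ in the Rost--Schmid complex representing $e(P)$. Vanishing of $e(P)$ means $z(\varphi)$ lies in the image of the boundary.

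The main obstacle is to translate this algebraic vanishing into an actual modification of $\varphi$ producing a unimodular section. I would do this iteratively: each boundary relation arises from a height-$(n-1)$ prime $\mathfrak{p}$ together with a symbol in $K^{MW}_1(k(\mathfrak{p}))$, and using the explicit identification of the relative homology class with a Milnor--Witt symbol (Theorems \ref{thm:main:SLnLocalA} and \ref{thm:Intro:KMW}), one can lift this datum to an elementary/$SL_n$-type modification of $\varphi$ along the one-dimensional subscheme $V(\mathfrak{p})$, replacing $I$ by an ideal whose Rost--Schmid class is $z(\varphi)$ minus that boundary. After finitely many such steps, $z(\varphi)$ becomes zero on the nose, forcing $I$ to contain a unit, whence $\varphi$ is surjective and $P \cong \ker(\varphi) \oplus R$. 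The delicate point is verifying that the geometric moving operation matches the algebraic Rost--Schmid boundary symbol-for-symbol; this is where the new presentation of $K^{MW}$ (Theorem \ref{thm:Intro:KMW}), together with the Nesterenko--Suslin type analysis of affine groups and the stability isomorphism \ref{thm:main:SLnLocalA}, do the heavy lifting, by reducing the check to computable identities among the generators $[a]$ and $\langle a \rangle$.
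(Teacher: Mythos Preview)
Your approach has a serious gap at the very first substantive step. You assert that the stability results of Theorems~\ref{thm:Main3} and~\ref{thm:main:SLnLocalA} yield a flasque Gersten/Rost--Schmid resolution of $\K^{MW}_n$ on $\Spec R$, but homology stability by itself implies no such thing: Gersten-type exactness requires transfers, purity, and homotopy invariance for the cohomology theory in question, and for Milnor--Witt $K$-theory these are only established when $R$ is regular and contains an infinite field of characteristic $\neq 2$ (see the remark following Theorem~\ref{thm:GeneralEulerclass} and \cite{GilleEtAl}). The theorem, however, is stated for an arbitrary noetherian ring of dimension $n$ with infinite residue fields, and in that generality no flasque resolution of $\K^{MW}_n$ is available. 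Consequently your identification of $H^n_{\Zar}(R,\K^{MW}_n)$ with a cokernel of residue maps is unjustified, and the subsequent Bhatwadekar--Sridharan style moving argument --- even granting that the symbol-matching could be carried out --- would at best recover the smooth case, i.e.\ precisely Morel's theorem which the paper is trying to generalize.

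The paper's proof avoids Gersten complexes entirely and is purely homotopy-theoretic. The Euler class map is defined as a map of simplicial presheaves $BSL_n^+ \to P_{\leq n}(BSL_n^+/BSL_{n-1}^+)$; Lemma~\ref{lem:TruncatedQuotientIsEM} together with Theorem~\ref{thm:main:SLnLocalA} shows the target has a single nonvanishing homotopy sheaf $\K^{MW}_n$ in degree $n$, so $[X,-]_{\Zar}$ of it is $H^n_{\Zar}(X,\K^{MW}_n)$. Standard obstruction theory (Theorem~\ref{thm:ObstPlusExSeq}) then says that if $e(P)=0$ the class of $P$ in $[X,BSL_n^+]_{\Zar}$ lifts to $[X,BSL_{n-1}^+]_{\Zar}$. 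The remaining point --- getting from plus-constructed classifying spaces back to honest vector bundles --- is handled by a separate representability result (Theorem~\ref{thm:SingBGRepsVect}, Corollary~\ref{cor:BGLnPlusFactor}), proved via Quillen's patching theorem and the affine B.G.-property, which produces a retraction $[X,BGL_r^+]_{\Zar} \to [X,BGL_r]_{\Zar}$ for affine noetherian $X$. A short diagram chase with $r=n-1,n$ finishes the argument.
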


If $R$ has dimension $n$ and is of finite type over an algebraically closed field $k$, then the canonical map $\K^{MW}_n\to \K^M$ of sheaves on $X$ is an isomorphism. 
In particular, if $R$ has dimension $n$ and is smooth over an algebraically closed field, then $H^n_{\Zar}(R,\K^{MW}_n) = H^n_{\Zar}(R,\K^{M}_n)$ is isomorphic to the Chow group of codimension $n$ cycles on $X=\Spec R$, by \cite[Theorem 7.5]{Kerz}, and we recover a result of Murthy \cite{MurthyChernClass}.
If $R$ is smooth over a field of characteristic not $2$ (which is not assumed algebraically closed) then $H^n_{\Zar}(R,\K^{MW}_n)$ is isomorphic to the Chow-Witt groups introduced by Barge and Morel \cite{BargeMorel} and studied by Fasel \cite{Fasel:ChowWitt}.

Theorem \ref{thm:intro:Eulerclass} is a generalization of a theorem of Morel \cite[Theorem 8.14]{morel:book} who proved the result for $R$ smooth of finite type over a perfect field\footnote{Currently, the proof of \cite[Theorem 8.14]{morel:book} is only documented in the literature for infinite perfect fields; see 
\cite[Footnote on p. 5]{morel:book}}.
Our arguments don't use $\aaa^1$-homotopy theory but they can  be used to simplify some proofs in \cite{morel:book}; see proof of Theorem \ref{thm:MorelReps}.
There is also a definition of Euler class groups in terms of generators and relations for which one can prove a result similar to Theorem \ref{thm:intro:Eulerclass} in case $R$ is smooth over an infinite perfect field \cite{Bhatwadekar:Smooth} or in case $R$ contains the rational numbers \cite{Bhatwadekar:ContainsQ}.
The definitions in {\it loc.cit.} are not cohomological in nature, and the relationship with Theorem \ref{thm:intro:Eulerclass} is unclear.

The proof of Theorem \ref{thm:intro:Eulerclass} relies on Theorem \ref{thm:main:SLnLocalA} and a representability result of vector bundles on noetherian affine schemes (Theorem \ref{thm:SingBGRepsVect}) which is of independent interest. 
There is also a version (Theorem \ref{thm:GeneralEulerclassOrientL}) of Theorem \ref{thm:intro:Eulerclass} for projective modules with orientation in a line bundle other than $R$.
\vspace{1ex}

{\bf Conventions}.
All rings are associative with unit.
The group of units of a ring $A$ is denoted $A^*$.
The stable rank \cite{dfn:stableRank} of a ring $A$ is denoted $\sr (A)$.
By ``space'' we mean ``simplicial set''.
Unless otherwise stated, tensor products are over $\Z$ and homology has coefficients in $\Z$.
For a commutative ring $A$ and integer $n\geq 1$, the group $SL_n(A)$ is the group of $n\times n$ matrices with entries in $A$ and determinant $1$.
The symbol $SL_0(A)$ will stand for the discrete set (or discrete groupoid) $A^*$.
This has the effect that for all $n\geq 0$ and any $GL_n(A)$-module $M$, we have
$H_i(SL_n(A),M)=\Tor_i^{GL_n(A)}(\Z[A^*],M)$ where $\Z[A^*]$ is a $GL_n(A)$-module via the determinant map $GL_n(A)\to A^*$.
Moreover, for all $n\geq 0$, we have a homotopy fibration of classifying spaces $BSL_n(A) \to BGL_n(A) \to BA^*$.

We denote by $\sSets$ the category of simplicial sets endowed with its standard Kan model structure.
\vspace{1ex}

{\bf Acknowledgements}.
Most of the results of this article were found in Spring 2014 while the author was visiting Max-Planck-Institute for Mathematics in Bonn.
I would like to thank MPIM Bonn for its hospitality.
I also would like to thank Marc Hoyois whose comments lead to the appendix, 
and Wataru Kai for pointing out a mistake in a previous version in the proof of the centrality of the element $[-1,1]$.

\section{The homology of affine groups}
\label{Sec:HofAff}

For a group $G$, we denote by $\Z[G]$ its integral group ring
and we write $\langle g\rangle$ for the element in $\Z[G]$ corresponding to $g\in G$.
Furthermore, we denote by $\eps:\Z[G] \to \Z:\langle g\rangle \mapsto 1$ the augmentation ring homomorphism, and by $I[G]=\ker(\eps)$ its kernel, the augmentation ideal.
Let $G$ be an abelian group and $s\in \Z[G]$ an element in its group ring.
A $G$-module $M$ is called $s$-torsion if for every $x\in M$ there is $n \in \N$ such that $s^nx=0$, or equivalently if $[s^{-1}]M=0$.
The category of $s$-torsion $G$-modules is closed under taking subobjects, quotient objects and extensions in the category of all $G$-modules.

Many of our computations concern the homology $H_i(G,M)$ of a group $G$ with coefficients in a $G$-module $M$.
We recall the basic functoriality of this construction \cite[\S III.8]{brown:book}.
Let $G$, $G'$ be groups, and $M$, $M'$ be $G$, $G'$-modules, respectively.
A pair of maps $(\ffi,f):(G,M) \to (G',M')$ where $\ffi:G \to G'$ is a group homomorphism and $f:M \to M'$ is a homomorphism of abelian groups with $f(gx)=\ffi(g)f(x)$ for all $g\in G$ and $x\in M$ induces a map of homology groups
$(\ffi,f)_*:H_*(G,M) \to H_*(G',M')$.
Given two such pairs of maps $(\ffi_0,f_0), (\ffi_1,f_1): (G,M) \to (G',M')$.
If there is an element $h\in G'$ such that $\ffi_1(g)=h\ffi_0(g)h^{-1}$ and $f_1(x)=hf_0(x)$ for all $g\in G$ and $x\in M$, then the induced maps on homology agree:
$(\ffi_0,f_0)_* = (\ffi_1,f_1)_*:H_*(G,M) \to H_*(G',M')$.
\vspace{1ex}

For an integer $m\geq 1$, write $[m]$ for the set $\{1,...,m\}$ of integers between $1$ and $m$.
Let $\RR_m$ be the commutative ring
$$\RR_m = \Z[X_1,...,X_m][\Sigma^{-1}]$$ 
obtained by localizing the polynomial ring $\Z[X_1,...,X_m]$ in the $m$ variables $X_1,...,X_m$ 
at the set of all non-empty partial sums of the variables
$$\Sigma = \{X_J|\ \emptyset \neq J \subset [m]\},\hspace{2ex} \text{where}\ X_J=\sum_{j\in J}X_j.$$ 
A ring which admits an $\RR_m$-algebra structure for every $m$ is called a {\em ring with many units}.
For instance, a commutative local ring with infinite residue field has many units, and any algebra over a ring with many units has many units \cite[Corollary 1.3]{SuslinNesterenko}.

We will denote by $s_m\in \Z[\RR_m^*]$ the following element in the group ring of $\RR_m^*$:
$$s_m= -\sum_{\emptyset \neq J \subset [m]}(-1)^{|J|}\langle X_J\rangle.$$
Note that the augmentation homomorphism sends $s_m$ to $1$:
$$\eps(s_m) = -\sum_{\emptyset \neq J \subset [m]}(-1)^{|J|}=(-1)^{|\emptyset|} - \sum_{J \subset [m]}(-1)^{|J|}=  1 -(1-1)^m = 1.$$
More generally, for an integer $t\in \Z$ we will write $s_{m,t}$ for the image of $s_m$ under the ring homomorphism $t:\Z[\RR_m^*] \to \Z[\RR_m^*]:\langle a \rangle \mapsto \langle a^t\rangle$, that is,
$$s_{m,t}= -\sum_{\emptyset \neq J \subset [m]}(-1)^{|J|}\langle (X_J)^t\rangle.$$
For an $\RR_m$-algebra $A$ we will also write $s_m$ and $s_{m,t}$  for the images in $\Z[A^*]$ of $s_m$ and $s_{m,t}$ under the ring homomorphism $\Z[\RR_m^*] \to \Z[A^*]$.

For an integer $k\geq 1$, we denote by $V_k(A)$ the ring $(A^{\otimes k})^{\Sigma_k}$ of invariants of the natural action of the symmetric group $\Sigma_k$ on $A^{\otimes k}$ permuting the $k$ tensor factors.

\begin{lemma}
\label{lem:FollowingsaIdentityHolds}
Let $A$ be a commutative $\RR_m$-algebra and let $k,t\geq 1$ be integers with $k\cdot t < m$.
Then the ring homomorphism $\Z[A^*] \to V_k(A): \langle a\rangle \mapsto a\otimes \cdots \otimes a$ sends $s_{m,t}\in \Z[A^*]$ to $0 \in V_k(A)$.
\end{lemma}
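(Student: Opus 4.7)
The plan is a direct combinatorial computation carried out inside $A^{\otimes k}$. First I would apply the ring homomorphism $\Z[A^*] \to V_k(A) \subset A^{\otimes k}$, $\langle a\rangle \mapsto a^{\otimes k}$, to the defining formula for $s_{m,t}$, obtaining
$$-\sum_{\emptyset \neq J \subset [m]} (-1)^{|J|} (X_J^t)^{\otimes k}.$$
Since $k,t \geq 1$ and $X_\emptyset = 0$, appending the $J = \emptyset$ term leaves the sum unchanged, so it suffices to prove
$$\sum_{J \subset [m]} (-1)^{|J|} (X_J^t)^{\otimes k} = 0 \quad \text{in } A^{\otimes k}.$$

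Next I would expand each tensor power via the multinomial formula. Because $(ab)^{\otimes k} = a^{\otimes k}\cdot b^{\otimes k}$, the tensor power of $X_J^t = (\sum_{\ell \in J} X_\ell)^t$ reads
$$(X_J^t)^{\otimes k} = \sum_{\phi : [k]\times [t] \to J} a_\phi, \quad \text{where } a_\phi := \bigotimes_{i=1}^{k} \prod_{\ell=1}^{t} X_{\phi(i,\ell)}.$$
Exchanging the order of summation (a pair $(J,\phi)$ with $\phi\colon [k]\times[t] \to J$ is the same as a map $\phi\colon [k]\times[t] \to [m]$ together with a subset $J \supset \im(\phi)$) turns the target sum into
$$\sum_{\phi : [k] \times [t] \to [m]} a_\phi \cdot \sum_{\im(\phi) \subset J \subset [m]} (-1)^{|J|}.$$

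Finally, the hypothesis $kt < m$ enters: for every $\phi$ we have $|\im(\phi)| \leq kt < m$, so the complement $[m]\setminus \im(\phi)$ is non-empty. Writing $J = \im(\phi) \sqcup K$ with $K \subset [m]\setminus \im(\phi)$ yields
$$\sum_{\im(\phi) \subset J} (-1)^{|J|} = (-1)^{|\im(\phi)|} \sum_{K \subset [m]\setminus \im(\phi)} (-1)^{|K|} = (-1)^{|\im(\phi)|}(1-1)^{m - |\im(\phi)|} = 0,$$
so every inner sum vanishes and the image of $s_{m,t}$ is $0$. The argument is elementary inclusion-exclusion with no real obstacle; the only bookkeeping is to confirm the reindexing step and to note that, although the individual monomials $a_\phi$ are not themselves symmetric, the full alternating sum, which we identify with an element of $V_k(A) \subset A^{\otimes k}$, vanishes already in the ambient ring $A^{\otimes k}$.
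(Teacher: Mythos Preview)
Your proof is correct and is essentially the same inclusion--exclusion argument as the paper's. The only organizational difference is that the paper first treats the case $t=1$ (using functions $\sigma:[k]\to[m]$) and then reduces general $t$ to that case via the commutative square
\[
\xymatrix{\Z[A^*] \ar[r]^{\langle a\rangle\mapsto\langle a^t\rangle} \ar[d] & \Z[A^*] \ar[d] \\ V_{kt}(A) \ar[r]^{\mu^{\otimes k}} & V_k(A),}
\]
whereas you carry out the expansion for arbitrary $t$ in one pass by indexing over $\phi:[k]\times[t]\to[m]$; unwinding the paper's diagram gives exactly your computation.
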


\begin{proof}
Let $a_J$ be the image of $X_J$ under the algebra structure map $\RR_m \to A$.
For a function $\sigma: [k] \to [m]$ we write $a^{\sigma} = a_{\sigma(1)}\otimes \cdots \otimes a_{\sigma(k)}$.
Note that $a_{\emptyset}=0$ and $(a_{\emptyset})^{\otimes k}=0$.
In $V_k(A)$ we have
$$
-s_m = \sum_{J\subset [m]}(-1)^{|J|}(a_J)^{\otimes k} 
= \sum_{\stackrel{J\subset [m]}{\sigma:[k]\to J}}(-1)^{|J|}a^{\sigma}
=\sum_{\sigma:[k]\to [m]}a^{\sigma}\sum_{\im \sigma \subset J \subset [m]}(-1)^{|J|}
=0
$$
since for $I \subset [m]$ with $I\neq [m]$ we have
$$\sum_{I\subset J \subset [m]}(-1)^{|J|} = (-1)^{|I|}\sum_{J \subset [m]-I}(-1)^{|J|}
=(-1)^{|I|}(1-1)^{|[m]-I|}=0.$$
This shows that $s_m=s_{m,1}=0\in V_k(A)$, that is, the case $t=1$.
For general $t\geq 1$, the lemma follows from the case $t=1$ and the commutative diagram of rings
$$\xymatrix{\Z[A^*] \ar[r]^t \ar[d] & \Z[A^*] \ar[d] \\
V_{kt}(A) \ar[r]^{\mu^{\otimes k}} & V_k(A)
}$$
where the top horizontal map is induced by $\langle a\rangle \mapsto \langle a^t \rangle$ and 
the lower horizontal arrow is induced by the $\Sigma_k$-equivariant map $\mu^{\otimes k}: (A^{\otimes t})^{\otimes k} \to A^{\otimes k} $ where $\mu$ is the multiplication map
$\mu:A^{\otimes t} \to A:a_1\otimes \cdots \otimes a_t \mapsto a_1\cdots a_t$.
\end{proof}

For an integer $k\geq 1$ and an $A$-module $M$, consider the $k$-th exterior power $\Lambda^k_{\Z}M$ of $M$ over $\Z$; see \cite[\S V.6]{brown:book}.  
This is an $A^*$-module under the diagonal action $a\cdot (x_1\wedge \dots \wedge x_k) = ax_1\wedge \dots \wedge ax_k$ where $a\in A^*$ and $x_i\in M$.

\begin{corollary}
\label{cor:LambdaIsTorsion}
Let $M$ be an $\RR_m$-module.
Then for all integers $k,t\geq 1$  with $k\cdot t<m$ the $\RR_m^*$-module
$\Lambda^k_{\Z}M$ is $s_{m,t}$-torsion.
\end{corollary}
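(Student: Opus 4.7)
The plan is to derive this as a direct consequence of Lemma~\ref{lem:FollowingsaIdentityHolds}, in the strengthened form that $s_{m,t}$ in fact annihilates every element of $\Lambda^k_\Z M$. Since $\Lambda^k_\Z M$ is generated as an abelian group by pure wedges $x_1\wedge\cdots\wedge x_k$, it suffices to kill these.

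The key observation is that for any element $\xi=\sum_\alpha c_\alpha\langle a_\alpha\rangle\in\Z[\RR_m^*]$ and any pure wedge, one has by definition of the diagonal action
$$\xi\cdot(x_1\wedge\cdots\wedge x_k)=\sum_\alpha c_\alpha\,(a_\alpha x_1)\wedge\cdots\wedge(a_\alpha x_k),$$
and the right-hand side is visibly the image under the projection $M^{\otimes k}\twoheadrightarrow\Lambda^k_\Z M$ of
$$\sum_\alpha c_\alpha\,a_\alpha^{\otimes k}\cdot(x_1\otimes\cdots\otimes x_k)\;=\;\hat\xi\cdot(x_1\otimes\cdots\otimes x_k),$$
where $\hat\xi\in\RR_m^{\otimes k}$ is the image of $\xi$ under the ring homomorphism $\langle a\rangle\mapsto a^{\otimes k}$, and the action of $\RR_m^{\otimes k}$ on $M^{\otimes k}$ is the usual componentwise one. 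Note that this ring homomorphism factors through the subring $V_k(\RR_m)=(\RR_m^{\otimes k})^{\Sigma_k}$, because each $a^{\otimes k}$ is $\Sigma_k$-invariant.

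Now specialise to $\xi=s_{m,t}$: Lemma~\ref{lem:FollowingsaIdentityHolds} says exactly that $\hat\xi=0$ in $V_k(\RR_m)$ whenever $kt<m$, hence the lift $\hat\xi\cdot(x_1\otimes\cdots\otimes x_k)$ vanishes already in $M^{\otimes k}$, and therefore so does its image $s_{m,t}\cdot(x_1\wedge\cdots\wedge x_k)$ in $\Lambda^k_\Z M$. Consequently $s_{m,t}$ annihilates $\Lambda^k_\Z M$, which is a fortiori $s_{m,t}$-torsion. No real obstacle is encountered: the argument is a one-line unwinding of definitions together with a direct appeal to the preceding lemma, the only thing worth making explicit being the compatibility between the diagonal $\RR_m^*$-action on the exterior power and the componentwise $\RR_m^{\otimes k}$-action on the tensor power.
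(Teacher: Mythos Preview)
Your proof is correct and follows essentially the same approach as the paper: both observe that the diagonal $\RR_m^*$-action on $\Lambda^k_{\Z}M$ factors through the ring map $\Z[\RR_m^*]\to V_k(\RR_m)$, $\langle a\rangle\mapsto a^{\otimes k}$, and then invoke Lemma~\ref{lem:FollowingsaIdentityHolds}. The only cosmetic difference is that the paper cites the $V_k(\RR_m)$-module structure on $\Lambda^k_{\Z}M$ directly (from \cite[Lemma~1.7]{SuslinNesterenko}), whereas you lift to $M^{\otimes k}$ and project, which has the mild advantage of making the well-definedness of that module structure unnecessary to verify.
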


\begin{proof}
The abelian group $\Lambda^k_{\Z}M$ has a natural $V_k(\RR_m)$-module structure \cite[Lemma 1.7]{SuslinNesterenko}
$$V_k(\RR_m)\times \Lambda^kM \to \Lambda^kM: (a_1\otimes \cdots \otimes a_k ,\ x_1\wedge \dots \wedge x_k) \mapsto a_1x_1\wedge \dots \wedge a_kx_k$$
which induces the diagonal $\RR_m^*$-action on $\Lambda^kM$ via the ring map $\Z[\RR_m^*] \to V_k(\RR_m)$.
The result now follows from Lemma \ref{lem:FollowingsaIdentityHolds} with $A=\RR_m$.
\end{proof}

\begin{proposition}
\label{prop:HqTorsion}
Let $M$ be an $\RR_m$-module.
Then for all integers $t,q\geq 1$ with $tq <m$ the integral homology groups $H_q(M,\Z)$ of $M$ are $s_{m,t}$-torsion.
\end{proposition}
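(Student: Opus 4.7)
The plan is to build a first-quadrant, $\Z[\RR_m^*]$-equivariant spectral sequence converging to $H_*(M,\Z)$ whose $E^2$-entries are annihilated by $s_{m,t}$ in the required range, and then to conclude by the closure of $s_{m,t}$-torsion modules under subquotients and extensions.

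First I would choose a simplicial resolution $F_\bullet\to M$ of $M$ by free $\RR_m$-modules (say, the bar resolution of $M$ over $\RR_m$). Because $\RR_m\subset\mathbb{Q}(X_1,\ldots,X_m)$ is torsion-free over $\Z$, every free $\RR_m$-module is torsion-free as an abelian group. A standard filtered-colimit argument (every torsion-free abelian group is a filtered colimit of its finitely generated, hence free, subgroups, and both $H_*(-,\Z)$ and $\Lambda^*_\Z$ commute with such colimits) yields a natural isomorphism $H_k(F_n,\Z)\cong\Lambda^k_\Z F_n$ compatible with the $V_k(\RR_m)$-module structure appearing in Lemma \ref{lem:FollowingsaIdentityHolds}. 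Corollary \ref{cor:LambdaIsTorsion} then guarantees that $s_{m,t}$ acts as zero on $\Lambda^k_\Z F_n$ for every $n$, as soon as $tk<m$.

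Since $F_\bullet\to M$ is a weak equivalence of simplicial abelian groups, the induced map of bisimplicial sets $BF_\bullet\to BM$ realizes to an equivalence $|BF_\bullet|\simeq BM$, and the skeletal spectral sequence of $BF_\bullet$ takes the $\Z[\RR_m^*]$-equivariant form
$$E^2_{p,k}\;=\;H_p\bigl(\Lambda^k_\Z F_\bullet\bigr)\ \Longrightarrow\ H_{p+k}(M,\Z),$$
where $H_p$ on the left denotes the $p$-th homology of the simplicial abelian group $n\mapsto \Lambda^k_\Z F_n$. Because $s_{m,t}$ annihilates each level $\Lambda^k_\Z F_n$ as soon as $tk<m$, it annihilates $E^2_{p,k}$, and hence every subquotient, including $E^\infty_{p,k}$.

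Finally, fix $q\geq 1$ with $tq<m$ and examine the filtration on $H_q(M,\Z)$ whose associated graded pieces are $E^\infty_{p,k}$ with $p+k=q$. For $k\geq 1$ one has $tk\leq tq<m$, so $s_{m,t}$ acts as zero on $E^\infty_{p,k}$; for $k=0$ the simplicial abelian group $\Lambda^0_\Z F_\bullet$ is the constant simplicial abelian group $\Z$, whose simplicial homology vanishes in positive degrees, so $E^2_{q,0}=0=E^\infty_{q,0}$. Hence $H_q(M,\Z)$ is an iterated extension of $s_{m,t}$-torsion modules, and is itself $s_{m,t}$-torsion. The technical point that needs careful verification is the $\Z[\RR_m^*]$-equivariance of the skeletal spectral sequence together with the identification $H_k(F_n,\Z)\cong\Lambda^k_\Z F_n$ as a $V_k(\RR_m)$-module; both follow routinely from the naturality of the bar and exterior-power constructions.
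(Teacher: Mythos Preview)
Your proof is correct and follows essentially the same route as the paper: both take a simplicial resolution of $M$ by projective (or free) $\RR_m$-modules, use torsion-freeness over $\Z$ to identify the homology of each level with exterior powers via the Pontryagin map, invoke Corollary~\ref{cor:LambdaIsTorsion} to see that $s_{m,t}$ kills those exterior powers in the required range, and then run the bisimplicial spectral sequence, noting that the $k=0$ column contributes nothing in positive total degree. The only cosmetic differences are that the paper cites \cite[Theorem V.6.4(ii)]{brown:book} directly for the Pontryagin isomorphism rather than your filtered-colimit argument, and indexes the spectral sequence from the $E^1$-page with the roles of the two gradings swapped.
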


\begin{proof}
Choose a simplicial homotopy equivalence $P_* \to M$ in the category of simplicial $\RR_m$-modules such that $P_*$ is projective in each degree.
Applying the classifying space-functor degree-wise, we obtain an $\RR_m^*$-equivariant weak equivalence of simplicial sets $BP_* \to BM$ and hence $\RR_m^*$-equivariant isomorphisms $H_q(BP_*) \cong H_q(BM)$ of integral homology groups.
To the $\RR_m^*$-equivariant simplicial space $s\mapsto BP_s$ is associated a strongly convergent first quadrant spectral sequence of $\RR_m^*$-modules
$$E^1_{r,s} = H_r(BP_s, \Z) \Rightarrow H_{r+s}(BP_*, \Z)=H_{r+s}(BM,\Z)$$
where $d^1:H_r(P_s) \to H_r(P_{s-1})$ is the alternating sum of the face maps of the simplicial abelian group $s\mapsto H_r(BP_s)$.
Since the ring $\RR_m$ is flat over $\Z$, each $P_s$ is a torsion-free abelian group, and thus, the Pontryagin map $\Lambda^r_{\Z}P_s\to H_r(BP_s)$ is an isomorphism of $\RR_m^*$-modules \cite[Theorem V.6.4.(ii)]{brown:book}. 
By Corollary \ref{cor:LambdaIsTorsion} the $\RR_m^*$-module $\Lambda^r_{\Z}P_s$ is $s_{m,t}$-torsion for all $tr<m$.
Since $E^2_{0,s}=0$ for $s\geq 1$ it follows from the spectral sequence that $H_q(M)$ is $s_{m,t}$-torsion whenever $1\leq tq <m$.
\end{proof}

Let $A$ be a ring and $Z(A)$ its center.
Let $q\geq 1$ be an integer.
The inclusions 
$$GL_q(A) \subset GL_{q+1}(A) \subset GL(A):M \mapsto \left(\begin{smallmatrix}M & 0 \\ 0 & 1\end{smallmatrix}\right)$$
define group homomorphisms $\det:GL_q(A) \to GL(A)^{ab}=K_1(A)$ whose kernel we denote by 
$SG_q(A)$.
If $A$ is an $\RR_m$-algebra, we will need an action of $\RR_m^*$ on the integral homology groups of $SG_q(A)$.
For that end, let $\bar{A}^*$ be the image in $K_1(A)$ of
the map $Z(A)^* \subset GL_1(A) \to K_1(A)$.
We denote by $G_q(A)$ the subgroup of $GL_q(A)$
consisting of those matrices $T\in GL_q(A)$ whose class $\det(T)\in K_1(A)$ lies in the subgroup $\bar{A}^* \subset K_1(A)$. 
Note that $G_q(A)$ contains all invertible diagonal matrices with entries in $Z(A)$.
In particular, the map $\det:GL_q(A) \to K_1(A)$ restricts to a surjective group homomorphism $\det:G_q(A) \to \bar{A}^*$, and we have an exact sequence of groups
$$1 \to SG_q(A) \longrightarrow G_q(A) \stackrel{\det}{\longrightarrow} \bar{A}^* \to 1.$$

We will write $\Aff_{p,q}^G(A)$ and $\Aff_{p,q}^{SG}(A)$ for the following subgroups of $GL_{p+q}(A)$ 
$$\Aff_{p,q}^G = \left(\begin{smallmatrix} G_q(A) &  0\\  M_{p,q}(A) & 1_p\end{smallmatrix}\right)\hspace{3ex}\text{and}\hspace{3ex}
\Aff_{p,q}^{SG} = \left(\begin{smallmatrix} SG_q(A) &  0\\  M_{p,q}(A) & 1_p\end{smallmatrix}\right).$$
For any $M\in M_{p,q}(A)$ and $T\in GL_q(A)$, the matrices $T$ and
$$\left(\begin{smallmatrix} T & 0 \\ M& 1_p\end{smallmatrix}\right) = 
\left(\begin{smallmatrix} 1_q & 0 \\ MT^{-1}& 1_p\end{smallmatrix}\right)
\left(\begin{smallmatrix} T & 0 \\ 0& 1_p\end{smallmatrix}\right)
$$
 have the same class in $K_1(A)$. 
It follows that the map $\det:GL_{p+q}(A) \to K_1(A)$
restricts to a surjective group homomorphism
$\Aff_{p,q}^{G}(A) \to \bar{A}^*$
with kernel the group
$\Aff_{p,q}^{SG}(A)$.
Hence,
for integers $q\geq 1$, $p\geq 0$ the exact sequence of groups
$$1\to  \Aff_{p,q}^{SG}(A) \longrightarrow \Aff_{p,q}^{G}(A) \stackrel{\det}{\longrightarrow} 
\bar{A}^* \to 1$$
makes the homology groups $H_r(\Aff_{p,q}^{SG}(A))$ into $\bar{A}^*$-modules \cite[Corollary III.8.2]{brown:book}.
\vspace{1ex}

For an $\RR_m$-algebra $A$, we denote by $s_{m,t}\in \Z[\bar{A}^*]$ the image of $s_{m,t}\in \Z[\RR_m^*]$ under the ring homomorphism $\Z[R^*_m] \to \Z[\bar{A}^*]$
induced by the group homomorphism $\RR^*_m \to Z(A)^* \to \bar{A}^*$.
The following is our analog of \cite[Theorem 1.11]{SuslinNesterenko}.

\begin{theorem}
\label{thm:HomologyOfAffineGps}
Let $A$ be an $\RR_m$-algebra.
Let $t,q \geq 1$ be integers such that $q$ divides $t$.
Then for all integers $p,r\geq 0$ such that $rt<mq$
the inclusion 
$$SG_q(A) \to \Aff_{p,q}^{SG}(A): M \mapsto \left(\begin{smallmatrix} M &  0\\  0 & 1_p\end{smallmatrix}\right)$$
induces an isomorphism of $\bar{A}^*$-modules
$$H_r(SG_q(A)) \cong s_{m,-t}^{-1}\ H_r(\Aff_{p,q}^{SG}(A)).
$$
\end{theorem}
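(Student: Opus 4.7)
The plan is to apply the Lyndon--Hochschild--Serre spectral sequence of the extension of groups
$$1 \to M_{p,q}(A) \to \Aff_{p,q}^{SG}(A) \to SG_q(A) \to 1,$$
in which $T\in SG_q(A)$ acts on the abelian normal subgroup $M_{p,q}(A)$ by $M\mapsto MT^{-1}$. Conjugation by the overgroup $\Aff_{p,q}^{G}(A)$ preserves this short exact sequence, and since inner automorphisms act trivially on group homology, the quotient $\bar{A}^*=\Aff_{p,q}^{G}(A)/\Aff_{p,q}^{SG}(A)$ acts on
$$E^2_{r,s}=H_r\bigl(SG_q(A),\,H_s(M_{p,q}(A))\bigr)\ \Longrightarrow\ H_{r+s}(\Aff_{p,q}^{SG}(A)).$$
Localization at $s_{m,-t}$ is exact, so the localized spectral sequence still converges to $s_{m,-t}^{-1}H_{r+s}(\Aff_{p,q}^{SG}(A))$.

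The crux is an identification of actions that uses the hypothesis $q\mid t$. For $\alpha\in\RR_m^*\subset Z(A)^*$, the scalar matrix $\alpha^{-t/q}\cdot 1_q$ is central in $GL_q(A)$ with determinant $\alpha^{-t}$, so
$$\tilde\alpha\ =\ \left(\begin{smallmatrix}\alpha^{-t/q}1_q & 0\\ 0 & 1_p\end{smallmatrix}\right)\ \in\ \Aff_{p,q}^{G}(A)$$
is a lift of $\alpha^{-t}\in\bar{A}^*$. By centrality, conjugation by $\tilde\alpha$ acts trivially on the upper-left block $SG_q(A)$, whereas on $M\in M_{p,q}(A)$ it sends $M$ to $\alpha^{t/q}M$. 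Summing over non-empty $J\subset [m]$ with signs, the action of $s_{m,-t}\in\Z[\bar{A}^*]$ on $H_r(SG_q(A))$ equals $\eps(s_{m,-t})\cdot\mathrm{id}=\mathrm{id}$, while its action on $H_s(M_{p,q}(A))$ coincides with that of $s_{m,t/q}\in\Z[\RR_m^*]$ arising from the natural $\RR_m$-module structure on $M_{p,q}(A)\cong A^{pq}$.

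Proposition~\ref{prop:HqTorsion} then shows that $H_s(M_{p,q}(A))$ is $s_{m,t/q}$-torsion whenever $s\cdot (t/q)<m$, i.e.\ $st<mq$. Under the hypothesis $rt<mq$ this gives $s_{m,-t}^{-1}E^2_{r',s}=0$ for all $(r',s)$ with $s\geq 1$ and $r'+s\leq r$, since localization at the central element $s_{m,-t}$ commutes with $H_{r'}(SG_q(A),-)$. Hence the localized spectral sequence is concentrated on the $s=0$ row in total degrees $\leq r$, and since $s_{m,-t}$ acts as the identity on $E^2_{r,0}=H_r(SG_q(A))$, the edge map yields the desired isomorphism
$$H_r(SG_q(A))\ \xrightarrow{\ \cong\ }\ s_{m,-t}^{-1}H_r(\Aff_{p,q}^{SG}(A)).$$
The main subtlety is producing a single family of lifts that simultaneously reduces the $s_{m,-t}$-action on the fibre homology to a form covered by Proposition~\ref{prop:HqTorsion} and acts trivially on the base homology so that no further localization is needed; the central scalar $\alpha^{-t/q}1_q$ does both jobs, and is available precisely because $q\mid t$.
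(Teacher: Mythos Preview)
Your proof is correct and follows essentially the same approach as the paper: both use the Hochschild--Serre spectral sequence of the split extension $1\to M_{p,q}(A)\to \Aff_{p,q}^{SG}(A)\to SG_q(A)\to 1$, lift $\langle\alpha^{-t}\rangle\in\Z[\bar A^*]$ via the central scalar $\alpha^{-t/q}1_q\in G_q(A)$, and invoke Proposition~\ref{prop:HqTorsion} with exponent $k=t/q$ to kill the localized $E^2_{r',s}$ for $s\geq 1$ in the required range. One small point: what you call ``the edge map'' at the end is really the map induced by the splitting inclusion $SG_q(A)\hookrightarrow\Aff_{p,q}^{SG}(A)$; the Hochschild--Serre edge map goes the other way, $H_r(\Aff_{p,q}^{SG})\to E^\infty_{r,0}\subset E^2_{r,0}$, and it is the splitting that makes the two maps mutually inverse (the paper notes this explicitly).
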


\begin{proof}
A matrix $T \in G_q(A)$ defines an automorphism of the exact sequence of groups
\begin{equation}
\label{eqn0:thm:HomologyOfAffineGps}
0 \to M_{p,q}(A) \to \left(\begin{smallmatrix} SG_q(A) &  0\\  M_{p,q}(A) & 1_p\end{smallmatrix}\right) \to SG_q(A) \to 1
\end{equation}
through right multiplication by $T^{-1}$ on $M_{p,q}(A)$, through conjugation by 
$\left(\begin{smallmatrix} T &  0 \\  0 & 1_p\end{smallmatrix}\right)$
on the middle term and through conjugation by $T$ on $SG_q(A)$.
This defines an action of the group $G_q(A)$ on the exact sequence and hence an action on the associated Hochschild-Serre spectral sequence
\begin{equation}
\label{eqn1:thm:HomologyOfAffineGps}
E^2_{i,j} = H_i(SG_qA,H_j(M_{p,q}A)) \Rightarrow H_{i+j}\left(\begin{smallmatrix} SG_q(A) &  0 \\  M_{p,q}(A) & 1_p\end{smallmatrix}\right)
\end{equation}
which descents to an $\bar{A}^*$-action via the determinant map $G_q(A) \to \bar{A}^*$, in view of the basic functoriality of group homology recalled at the beginning of this section.
Since the surjection in the exact sequence (\ref{eqn0:thm:HomologyOfAffineGps}) splits, we have $H_i(SG_q,\Z) = E^2_{i,0}=E^{\infty}_{i,0}$.

On the homology groups $H_i(SG_qA,H_j(M_{p,q}A))$, the element $T\in G_q(A)$ acts through conjugation on $SG_q(A)$ and right multiplication by $T^{-1}$ on $M_{p,q}(A)$.  
Since $q$ divides $t$, we can write $t=q\cdot k$ for some integer $k\geq 1$.
For $\bar{a}\in \bar{A}^*$, the element $\langle \bar{a}^{-t}\rangle$ acts on the spectral sequence as the diagonal matrix $T = a^{-k}\cdot 1_q \in G_q$ where $a\in Z(A)^*$ is a lift of $\bar{a}\in \bar{A}^*$.
This element acts on the pair $(SG_qA, H_j(M_{p,q}A))$ through conjugation by $a^{-k}\cdot 1_q$ on $SG_qA$ which is the identity map, and through right translation by $T^{-1}=(a^{-k}\cdot 1_q)^{-1} = a^{k}\cdot 1_q$ on $M_{p,q}A$ which is the action by $\langle a^k\rangle \in \Z[\bar{A}^*]$ induced by the usual left $Z(A)$-module structure on $M_{p,q}A$.
In view of Proposition \ref{prop:HqTorsion} it follows that for $j\geq 1$ and $kj<m$ we have
$$s_{m,-t}^{-1}\ H_i(SG_qA,H_j(M_{p,q}A)) = H_i(SG_qA,s_{m,k}^{-1}H_j(M_{p,q}A)) = 0.$$ 
Moreover, 
$$s_{m,-t}^{-1}\ H_i(SG_qA,H_0(M_{p,q}A)) = H_i(SG_qA,s_{m,k}^{-1}H_0(M_{p,q}A)) = H_i(SG_qA,\Z)$$ since $s_{m,k}$ acts through $\eps(s_{m,k})=1$ on $H_0(M_{p,q}A)=\Z$.
Localizing the spectral sequence (\ref{eqn1:thm:HomologyOfAffineGps})
at $s_{m,-t}\in \Z[\bar{A}^*]$ yields a spectral sequence
which satisfies $s_{m,-t}^{-1}\ E_{i,j}^2 = s_{m,-t}^{-1}\ E_{i,j}^{\infty}=0$ for $tj<mq$
and $s_{m,-t}^{-1}\ E_{i,0}^2 = s_{m,-t}^{-1}\ E_{i,0}^{\infty} = H_i(SG_q,\Z)$.
The claim follows.
\end{proof}

It will be convenient to reinterpret this result in somewhat different notation.
To that end, we introduce the rings $\Lambda$ and $\Lambda_{m,t}$ as
$$\Lambda = \Z[\bar{A}^*],\hspace{3ex}\Lambda_{m,t}=(s_{m,-t})^{-1}\Lambda.$$
Note that the natural maps of groups $GL_q(A) \to K_1(A)$ induce ring homomorphisms $\Z[G_q(A)] \to \Lambda \to \Lambda_{m,t}$ compatible with the inclusions $G_q(A) \subset G_{q+1}(A)$.

Recall that for bounded below complexes of right, respectively left, $G$-modules $M$, respectively $N$, the derived tensor product 
$M\stackrel{L}{\otimes}_GN$ is the complex of abelian groups $P\otimes_GQ$ where $P\to M$ and $Q\to N$ are quasi-isomorphisms of bounded below complexes of  right and left $G$-modules with $P_i$ and $Q_j$ projective right and left $G$-modules, respectively.
The derived tensor product is well-defined up to quasi-isomorphism of complexes.
The natural maps $M\otimes_GQ \leftarrow P\otimes_GQ \rightarrow P\otimes_G N$ are quasi-isomorphisms, and one has
$$\Tor_i^G(M,N)=H_i(M\stackrel{L}{\otimes}_GN),\hspace{3ex}H_i(G,N)=H_i(\Z\stackrel{L}{\otimes}_GN).$$

\begin{corollary}
\label{cor:HomologyOfAffineGps}
Let $A$ be an $\RR_m$-algebra.
Let $t,q \geq 1$ be integers such that $q$ divides $t$.
Then for all integers $p\geq 0$ 
the canonical inclusions of groups and rings
$SG_q(A)\subset G_q(A) \subset \Aff_{p,q}^G(A)$ and $\Z \subset \Lambda_{m,t}$ induce maps of complexes
$$\Z\stackrel{L}{\otimes}_{SG_q(A)}\Z {\longrightarrow}
\Lambda_{m,t}\stackrel{L}{\otimes}_{G_q(A)}\Z {\longrightarrow}
\Lambda_{m,t}\stackrel{L}{\otimes}_{\Aff_{p,q}^G(A)}\Z.
$$
which are isomorphisms on homology groups in degrees $r<mq/t$.
\end{corollary}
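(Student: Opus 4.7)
The plan is to rewrite both derived tensor products as ordinary group homologies localized at $s_{m,-t}$ and then to invoke Theorem \ref{thm:HomologyOfAffineGps}. The necessary tools are a Shapiro-type change of rings and the flatness of the localization $\Lambda\to \Lambda_{m,t}$.

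For the first step I would exploit the short exact sequence $1\to SG_q(A)\to G_q(A)\to \bar{A}^*\to 1$ to identify the right $G_q(A)$-module $\Lambda=\Z[\bar{A}^*]$ with $\Z\otimes_{\Z[SG_q(A)]}\Z[G_q(A)]$. Change of rings then produces an isomorphism
\[
\Lambda \stackrel{L}{\otimes}_{G_q(A)} \Z \;\simeq\; \Z \stackrel{L}{\otimes}_{SG_q(A)} \Z
\]
of complexes of $\bar{A}^*$-modules, the residual $\bar{A}^*$-action corresponding to conjugation by a lift to $G_q(A)$. The same argument applies with $(\Aff_{p,q}^G(A),\Aff_{p,q}^{SG}(A))$ in place of $(G_q(A),SG_q(A))$. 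Flatness of $\Lambda\to \Lambda_{m,t}$ then yields
\[
H_r(\Lambda_{m,t}\stackrel{L}{\otimes}_{G_q(A)}\Z) = s_{m,-t}^{-1}H_r(SG_q(A),\Z),
\]
and analogously for $\Aff_{p,q}^G(A)$, where on both right-hand sides the $\bar{A}^*$-module structure is the conjugation action appearing in Theorem \ref{thm:HomologyOfAffineGps}.

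Under these identifications, the first arrow becomes the canonical localization map $H_r(SG_q(A),\Z)\to s_{m,-t}^{-1}H_r(SG_q(A),\Z)$, which I would show is an isomorphism for every $r$ by the scalar-matrix argument already used inside the proof of Theorem \ref{thm:HomologyOfAffineGps}: writing $t=qk$ and lifting $\bar{a}\in \bar{A}^*$ to $a\in Z(A)^*$, the element $\bar{a}^{-t}$ equals the determinant of the central matrix $a^{-k}\cdot 1_q\in G_q(A)$, whose conjugation action on $SG_q(A)$ is trivial; therefore every $\langle \bar{a}^{-t}\rangle$, and hence $s_{m,-t}$ itself (since $\eps(s_{m,-t})=1$), acts as the identity on $H_r(SG_q(A),\Z)$. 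The second arrow becomes the localization at $s_{m,-t}$ of the inclusion-induced map $H_r(SG_q(A),\Z)\to H_r(\Aff_{p,q}^{SG}(A),\Z)$, which is precisely the isomorphism furnished by Theorem \ref{thm:HomologyOfAffineGps} in the range $rt<mq$. The only real care needed is to check that the $\bar{A}^*$-actions obtained via the Shapiro identifications agree with those used in Theorem \ref{thm:HomologyOfAffineGps}, but both arise from conjugation by a common lift and the check is routine.
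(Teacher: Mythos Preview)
Your proposal is correct and follows essentially the same approach as the paper: both use Shapiro's lemma to identify $\Lambda\stackrel{L}{\otimes}_{G}\Z$ with the homology of the normal subgroup $N$ (for $(G,N)=(G_q,SG_q)$ and $(\Aff_{p,q}^G,\Aff_{p,q}^{SG})$), then localize at $s_{m,-t}$ and appeal to Theorem~\ref{thm:HomologyOfAffineGps}. Your treatment of the first arrow is slightly more explicit than the paper's --- you spell out that $s_{m,-t}$ acts as $1$ on $H_r(SG_q(A))$ via the central-scalar argument and $\eps(s_{m,-t})=1$ --- whereas the paper simply absorbs this into the $p=0$ case of Theorem~\ref{thm:HomologyOfAffineGps}; but the underlying idea is identical.
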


\begin{proof}
Recall that for a subgroup $N\subset G$ of a group, we have Shapiro's Lemma
$$\Z[N\backslash G]\stackrel{L}{\otimes}_G\Z = \Z \stackrel{L}{\otimes}_N\Z[G]\stackrel{L}{\otimes}_G\Z = \Z \stackrel{L}{\otimes}_N\Z $$
since $\Z[N\backslash G]=\Z\otimes_N\Z[G]=\Z\stackrel{L}{\otimes}_N\Z[G]$ as $\Z[G]$ is a free $N$-module.
If $N$ is normal in $G$ then $G/N=N\backslash G$ is a group and 
$\Z[G/N]\stackrel{L}{\otimes}_G\Z$ is a complex of left $G/N$-modules.
On homology, the isomorphism 
$$H_i(\Z[G/N]\stackrel{L}{\otimes}_G\Z)\cong 
H_i(\Z \stackrel{L}{\otimes}_N\Z)=H_i(N)$$
 is an isomorphism of $G/N$-modules where the action on $H_i(N)$ is the usual conjugation action.
Applied to $N=\Aff_{p,q}^{SG}$ and $G=\Aff_{p,q}^{G}$, we have an isomorphism
$$H_i(\Lambda\stackrel{L}{\otimes}_{\Aff_{p,q}^G}\Z)\cong H_i(\Aff_{p,q}^{SG})$$
of $\Lambda$-modules.
Localizing at $s_{m,-t}$ using Theorem \ref{thm:HomologyOfAffineGps}, the result follows.
\end{proof}

\section{Stability in homology and $K$-theory}
\label{sec:Stability}

Let $A$ be a ring and recall from \S \ref{Sec:HofAff} the definition of the groups $G_q(A)$ for $q\geq 1$.
We set $G_0(A)=\{1\}$, the one-element group.
Let $n\geq r\geq 0$ be integers.
We denote by $U_r(A^n) \subset M_{n,r}(A)$ the set of 
left invertible $n\times r$ matrices with entries in $A$,
and by $GU_r(A^n)\subset U_r(A^n)$ the subset of those left invertible matrices which can be completed to a matrix in $G_n(A)$.
For instance $U_0(A^n)=GU_0(A^n)=0$ is the one element set and $GU_n(A^n)= G_n(A)$.
By convention, $U_r(A^n)=GU_r(A^n)=\emptyset$ whenever $r<0$ or $r>n$.
By \cite[Lemma 2.1]{SuslinNesterenko} we have 
$U_r(A^n)=GU_r(A^n)$ for $r\leq n-\sr(A)$.

We define a complex $C(A^n)$ of abelian groups whose degree $r$ component
is the free abelian group $C_r(A^n) = \Z[GU_r(A^n)]$ generated by the 
set $GU_r(A^n)$.
For $i=1,...,r$ one has maps of abelian groups $\delta^i_r:C_r(A^n) \to C_{r-1}(A^n)$ defined on basis elements by $\delta^i_r(v_1,...,v_r) = (v_1,...,\hat{v}_i,...,v_r)$ omitting the $i$-th entry where $(v_i,...,v_r)$ is a left invertible matrix with $i$-th column the vector $v_i$.
We set 
\begin{equation}
\label{eqn:Crdifferential}
d_r=\sum_{i=1}^r(-1)^{i-1}\delta_r^i: C_r(A^n) \to C_{r-1}(A^n),
\end{equation} and it is standard that
$d_rd_{r+1}=0$.
This defines the chain complex $C(A^n)$.

\begin{lemma}
\label{lem:Acyclicity}
Let $A$ be a ring and $n \geq 0$ an integer.
Then for all $i\leq n-\sr (A)$ we have
$$H_i(C(A^n))= 0.$$
\end{lemma}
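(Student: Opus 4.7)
The plan is to construct an explicit nullhomotopy for cycles in the range $i\leq n-\sr(A)$.

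Given an $i$-cycle $z\in C_i(A^n)$, I would first write it as a finite $\Z$-linear combination $z=\sum_{\alpha=1}^N n_\alpha\sigma^\alpha$ of generators $\sigma^\alpha=(v_1^\alpha,\ldots,v_i^\alpha)\in GU_i(A^n)$. The key technical step is to produce a single vector $w\in A^n$ such that $(v_1^\alpha,\ldots,v_i^\alpha,w)\in GU_{i+1}(A^n)$ for every $\alpha=1,\ldots,N$ at once. Granting this, set
$$
b=(-1)^i\sum_{\alpha=1}^N n_\alpha(v_1^\alpha,\ldots,v_i^\alpha,w)\in C_{i+1}(A^n).
$$
Expanding $d_{i+1}$ on each summand via \eqref{eqn:Crdifferential} splits off the last face, contributing $(-1)^i\sigma^\alpha$, from the first $i$ faces, which assemble into $(d_i\sigma^\alpha,w)$. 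Weighting by $n_\alpha$ and summing gives $db=z+(-1)^i(dz,w)=z$, since $dz=0$. Well-definedness of the extensions $(v_1^\alpha,\ldots,\widehat{v_j^\alpha},\ldots,v_i^\alpha,w)$ as elements of $GU_i(A^n)$ uses Lemma~2.1 of \cite{SuslinNesterenko} in degree $i$.

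The main obstacle is the construction of the common extending vector $w$. For a single $\sigma\in GU_i(A^n)$, completing $\sigma$ to a matrix in $G_n(A)$ and taking its $(i+1)$-st column provides a valid $w_\sigma$. For a finite family $\sigma^1,\ldots,\sigma^N$, I would proceed by induction on $N$: given $w$ that works for $\sigma^1,\ldots,\sigma^{N-1}$, adjust $w$ by adding vectors lying in the $A$-span of the earlier tuples so that the modified $w$ also extends $\sigma^N$, while preserving the $GU$-property for $\sigma^1,\ldots,\sigma^{N-1}$ (such adjustments correspond to elementary column operations on the extended matrices and therefore leave the $GU$-property intact). The stable-rank hypothesis $i\leq n-\sr(A)$ enters crucially here: the quotient modules $A^n/\langle\sigma^\beta\rangle$ have rank at least $\sr(A)$, which via the defining property of stable rank provides enough room to carry out the adjustments and, in the relevant range, automatically upgrades unimodularity to $G_n(A)$-completability through Lemma~2.1 of \cite{SuslinNesterenko}.

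The hardest point will be the borderline case $i=n-\sr(A)$, where $i+1$ lies just outside the range where $U_r(A^n)=GU_r(A^n)$ holds automatically. Here the determinant classes of the completions must be tracked through the inductive modifications to guarantee that the final $(i+1)$-tuple $(\sigma^N,w)$ is completable to an element of $G_n(A)$, not merely to $GL_n(A)$. This determinant bookkeeping — together with the existence of the common $w$ — is the technical core of the argument; the chain-level identity $db=z$ is then purely formal.
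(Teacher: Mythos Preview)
Your overall chain-level identity $db=z$ is fine, and you correctly identify the borderline case $i=n-\sr(A)$ as the crux. However, the inductive construction of the common extending vector $w$ has a genuine gap.

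You propose to adjust $w$ by adding vectors in the $A$-span of the earlier tuples $\sigma^1,\ldots,\sigma^{N-1}$ so that the GU-property for these is preserved (via elementary column operations). But to preserve the property for \emph{all} $\sigma^\beta$ with $\beta<N$ simultaneously, the adjustment vector must lie in the intersection $\bigcap_{\beta<N}\operatorname{span}(\sigma^\beta)$, and there is no reason this intersection contains anything useful for fixing $\sigma^N$; it could well be zero. Alternatively, if you only adjust within the span of one $\sigma^\beta$ at a time, you may destroy what was already achieved for the others. For general rings of finite stable rank there is no simple ``general position'' mechanism to produce a single vector avoiding finitely many bad conditions; this is precisely why the acyclicity of such complexes (for the unrestricted $U_r$-complex $\tilde{C}(A^n)$) requires the more elaborate argument of van der Kallen \cite[2.6 Theorem (i)]{VanDerKallen:Invent}, which does not proceed by picking a common transversal vector.

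The paper's proof takes a different and shorter route: it invokes van der Kallen's result to get $H_i(\tilde{C}(A^n))=0$ for $i\leq n-\sr(A)$, notes that $C_q=\tilde{C}_q$ in that range by \cite[Lemma 2.1]{SuslinNesterenko}, and then handles only the boundary degree $i=n-\sr(A)$ by an explicit matrix manipulation showing that $dx$ is a $C$-boundary for each \emph{individual} generator $x\in U_{n-\sr(A)+1}(A^n)$. No common vector for a finite family is ever needed.
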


\begin{proof}
We check that the proof of \cite[Lemma 2.2]{SuslinNesterenko} goes through with $G_n(A)$ in place of $GL_n(A)$.
If we denote by $\tilde{C}(A^n)$ the complex with $\tilde{C}_q(A^n)=\Z[U_q(A^n)]$ in degree $q$ and differential given by the same formula as for ${C}(A^n)$, then we have an inclusion of complexes $C(A^n) \subset \tilde{C}(A^n)$ with 
$C_q(A^n)=\tilde{C}_q(A^n)$ for $q\leq n-\sr A$, by \cite[Lemma 2.1]{SuslinNesterenko}.
By \cite[2.6. Theorem (i)]{VanDerKallen:Invent} we have 
$H_i(\tilde{C}(A^n))= 0$ whenever $i\leq n-\sr A$.
Thus, it suffices to show that the boundary $dx$ of every $x\in U_{n-r+1}(A)$ is a boundary in ${C}(A^n)$ where $r = \sr(A)$.
By \cite[Lemma 2.1]{SuslinNesterenko}, there is a matrix $\alpha\in E_n(A)\subset G_n(A)$ such that 
$\alpha x = \left(\begin{smallmatrix} 1_{n-r} &  u\\  0 & v\end{smallmatrix}\right)$ 
with $u\in M_{n-r,1}(A)$ and $v\in M_{r,1}(A)$.
Left invertibility of $\alpha x$ implies that there are $T\in M_{n-r,r}(A)$ and $b\in M_{1,r}$ such that $u+Tv=0$ and $bv=1$.
The matrix $\beta= \left(\begin{smallmatrix} 1_{n-r} &  B\\  0 & 1_{r,r}\end{smallmatrix}\right)\left(\begin{smallmatrix} 1_{n-r} &  -T\\  0 & 1_{r,r}\end{smallmatrix}\right)$ is a product of elementary matrices where $B\in M_{n-r,r}(A)$ is the matrix all of whose rows equal $b$.
Then $\beta\alpha x = \left(\begin{smallmatrix} 1_{n-r} &  e_1+\cdots + e_{n-r}\\  0 & v\end{smallmatrix}\right)$ and
the matrix $w=(\beta\alpha x,e_{n-r+1})$ satisfies $\delta^{n-r+2}_{n-r+2}(w)=\beta \alpha x$.
For $i=1,...,n-r+1$, the matrix $\delta^i_{n-r+2}(w)$ can be completed to the matrix $(\delta^i_{n-r+2}(w),e_{n-r+2},...,e_n) \in SG_n(A)$.
It follows that 
$$y = x+(-1)^{n-r+2}d \alpha^{-1}\beta^{-1}w \in C_{n-r+1}(A^n)$$
and $dy=dx$.
\end{proof}

The group $GL_n(A)$ acts on $U_r(A^n)$ by left matrix multiplication, and so does its subgroup $G_n(A)$.
This makes the complex $C(A^n)$ into a complex of left $G_n(A)$-modules.
We may sometimes drop the letter $A$ in the notation $G_n(A)$, $SG_n(A)$ etc when the ring $A$ is understood.

\begin{lemma}
\label{lem:HMCAisTrivial}
For any ring $A$, right $G_n(A)$-module $M$ and integers $i,n \geq 0$ with $i\leq n-\sr (A)$ we have
$$H_{i}(M \stackrel{L}{\otimes}_{G_n(A)}C(A^n)) = 0.$$
\end{lemma}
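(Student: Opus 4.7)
The plan is to deduce this from Lemma \ref{lem:Acyclicity} by means of a standard hyperhomology spectral sequence, so that no new geometric input is needed. Concretely, I would choose a projective resolution $P_\bullet \to M$ of $M$ as a right $\Z[G_n(A)]$-module and consider the total complex of the double complex $P_\bullet \otimes_{G_n(A)} C(A^n)$; by definition this computes $M \stackrel{L}{\otimes}_{G_n(A)} C(A^n)$. Since $C(A^n)$ is concentrated in degrees $0,\ldots,n$ and $P_\bullet$ in nonnegative degrees, filtering by columns yields a strongly convergent first-quadrant spectral sequence
$$E^2_{p,q} = \Tor_p^{G_n(A)}\bigl(M,\, H_q(C(A^n))\bigr) \;\Longrightarrow\; H_{p+q}\bigl(M \stackrel{L}{\otimes}_{G_n(A)} C(A^n)\bigr).$$

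The crucial input is then Lemma \ref{lem:Acyclicity}, which provides $H_q(C(A^n)) = 0$ for all $q \leq n - \sr(A)$. This kills the entire strip $q \leq n - \sr(A)$ of the $E^2$-page. For $i \leq n - \sr(A)$, every nonzero contribution $E^\infty_{p,q}$ to the abutment in total degree $i$ would satisfy $p + q = i$ with $p,q \geq 0$, forcing $q \leq i \leq n - \sr(A)$; all such terms therefore vanish, and we conclude $H_i\bigl(M \stackrel{L}{\otimes}_{G_n(A)} C(A^n)\bigr) = 0$ as desired.

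I do not expect any genuine obstacle here: the substantive work sits entirely inside Lemma \ref{lem:Acyclicity} (where one bootstraps from van der Kallen's acyclicity of $\tilde C(A^n)$ via the explicit elementary-matrix completion argument), and the present lemma is the purely formal promotion from trivial $\Z$-coefficients to arbitrary twisted coefficients in a right $G_n(A)$-module. The only minor housekeeping point to verify is that $C(A^n)$ is bounded so that the spectral sequence is first-quadrant and converges strongly, which is immediate from $GU_r(A^n) = \emptyset$ for $r > n$.
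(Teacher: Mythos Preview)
Your proof is correct and follows exactly the same approach as the paper: both invoke the hyperhomology spectral sequence $E^2_{p,q} = \Tor^{G_n}_p(M, H_q(C(A^n))) \Rightarrow H_{p+q}(M \stackrel{L}{\otimes}_{G_n} C(A^n))$ and appeal to Lemma~\ref{lem:Acyclicity} to kill the relevant $E^2$-terms. The paper's proof is simply the one-line version of what you wrote out in detail.
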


\begin{proof}
This follows from Lemma \ref{lem:Acyclicity} in view of the spectral sequence
$$E_{p,q}^2= \Tor^{G_n}_p(M,H_q(C(A^n))) \Rightarrow H_{p+q}(M\stackrel{L}{\otimes}_{G_n}C(A^n)).$$
\end{proof}

For our arguments below we frequently need the following assumptions.

\begin{itemize}
\item[{\rm ($\ast$)}]
Let $m,n_0, t\geq 1$, $n\geq 0$ be integers such that
$A$ has an $\RR_m$-algebra structure,
 $n_0\cdot t <m$ and $0\leq n\leq n_0$, and
 $t$ is a multiple of every positive integer $\leq n_0$.
Set $\sigma = s_{m,-t}\in \Z[\bar{A}^*]$.
\end{itemize}
\vspace{1ex}

For an integer $r$, we denote by $C_{\leq r}(A^n)$ the subcomplex of $C(A^n)$ which is
$C_{\leq r}(A^n)_i = C_i(A^n)$ for $i\leq r$ and $C_{\leq r}(A^n)_i=0$ otherwise.
So, $C_{\leq r}(A^n)/C_{\leq r-1}(A^n)$ is  $C_r(A^n)$ placed in homological degree $r$.
This defines a filtration on $C(A^n)$ by complexes of $G_n(A)$-modules and thus a spectral sequence of $\Lambda$-modules
\begin{equation}
\label{eqn:SpSeq1}
E^1_{p,q}(A^n) = \Tor^{G_n}_{p}(\Lambda_{m,t},C_q(A^n)) \Rightarrow H_{p+q}(\Lambda_{m,t} \stackrel{L}{\otimes}_{G_n}C(A^n))
\end{equation}
with differential $d^r$ of bidegree $(r-1,-r)$.
The spectral sequence (\ref{eqn:SpSeq1}) comes with a filtration by $\Lambda$-modules
$$0 \subset F_{p+q,0} \subset F_{p+q-1,1} \subset F_{p+q-2,2} \subset \dots  \subset F_{0,p+q} = H_{p+q}(\Lambda_{m,t}\stackrel{L}{\otimes}_{G_n}C(A^n))$$
where $F_{p+q-s,s}$ is the image of
$$H_{p+q}(\Lambda_{m,t}\stackrel{L}{\otimes}_{G_n}C_{\leq s}(A^n)) \to H_{p+q}(\Lambda_{m,t}\stackrel{L}{\otimes}_{G_n}C(A^n)),$$
and $F_{p+q-s,s}/F_{p+q-s+1,s-1}\cong E^{\infty}_{p+q-s,s}(A^n)$.

\begin{lemma}
\label{lem:NS_2.3}
Assume {\rm ($\ast$)}.
Then the spectral sequence (\ref{eqn:SpSeq1}) has
$${E}^1_{p,q}(A^n)=\left\{ 
\renewcommand\arraystretch{1.5}
\begin{array}{ll}
H_p(SG_{n-q}(A),\Z), & 0\leq q< n,\ p\leq n_0\\
\Lambda_{m,t},& p=0,\ q=n\\
0, & q=n\ \text{and}\ p\neq 0,\ \text{or}\ q<0,\ \text{or}\ q>n.
\end{array}
\right.
$$
\end{lemma}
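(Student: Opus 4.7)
The plan is to identify the left $G_n(A)$-set $GU_q(A^n)$ as a homogeneous space $G_n(A)/\Aff^G_{q,n-q}(A)$ for $0 \leq q < n$, then use Shapiro's lemma to reduce the $E^1$-term to a group homology computation to which Corollary \ref{cor:HomologyOfAffineGps} applies directly. The degenerate cases $q=n$ and $q$ out of range are handled separately by inspection.

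First I would dispatch the easy cases. When $q<0$ or $q>n$, $C_q(A^n)=0$ by convention, so $E^1_{p,q}=0$. When $q=n$, the $G_n(A)$-action on $GU_n(A^n)=G_n(A)$ is the regular action, so $C_n(A^n)=\Z[G_n(A)]$ is a free left $\Z[G_n(A)]$-module; hence $\Tor^{G_n}_0(\Lambda_{m,t},\Z[G_n])=\Lambda_{m,t}$ and higher Tors vanish. This gives the second line of the table.

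For the main case $0 \leq q<n$, I would pick the base point $v_0=(e_{n-q+1},\dots,e_n)\in GU_q(A^n)$ (the last $q$ columns of $1_n$). Transitivity of the $G_n(A)$-action is immediate: given $v,w\in GU_q(A^n)$, choose extensions $V,W\in G_n(A)$ having $v$, respectively $w$, as their last $q$ columns; then $g=WV^{-1}\in G_n(A)$ sends $v$ to $w$, since $V^{-1}v$ consists of the last $q$ columns of $1_n$. A matrix $M\in G_n(A)$ fixes $v_0$ iff its last $q$ columns are $e_{n-q+1},\dots,e_n$, forcing $M = \bigl(\begin{smallmatrix} N & 0 \\ C & 1_q\end{smallmatrix}\bigr)$ with $C\in M_{q,n-q}(A)$ and $N\in G_{n-q}(A)$ (the last condition follows from $\det M=\det N\in\bar A^*$). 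This identifies the stabilizer as $\Aff^G_{q,n-q}(A)$, with its inclusion into $G_n(A)$ compatible with the determinant maps to $\bar A^*$. Thus $C_q(A^n)\cong \Z[G_n(A)/\Aff^G_{q,n-q}(A)]$ as left $G_n(A)$-modules, and by Shapiro's lemma
$$E^1_{p,q}=\Tor^{G_n(A)}_p(\Lambda_{m,t},C_q(A^n))\cong \Tor^{\Aff^G_{q,n-q}(A)}_p(\Lambda_{m,t},\Z).$$

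It then remains to verify the numerical hypotheses of Corollary \ref{cor:HomologyOfAffineGps} (applied with ``its $q$'' $= n-q$ and ``its $p$'' $= q$) and to conclude. Under $(\ast)$ we have $1\leq n-q\leq n\leq n_0$, so $n-q$ divides $t$; moreover $p\leq n_0$ together with $n_0t<m$ yields $p<m/t\leq m(n-q)/t$. The corollary therefore identifies the above Tor-group with $H_p(SG_{n-q}(A),\Z)$ for $0\leq q<n$ and $p\leq n_0$, as desired; the case $q=0$ of this computation gives the remaining edge value $H_p(SG_n(A),\Z)$. The only part requiring real care is the stabilizer identification, since one has to confirm that the determinant condition cutting out $G_n(A)$ pulls back precisely to the determinant condition cutting out $\Aff^G_{q,n-q}(A)$; everything else is bookkeeping.
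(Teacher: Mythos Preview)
Your proof is correct and follows essentially the same approach as the paper: identify $GU_q(A^n)$ as the homogeneous $G_n(A)$-set with stabilizer $\Aff^G_{q,n-q}(A)$ at $(e_{n-q+1},\dots,e_n)$, apply Shapiro's lemma, and then invoke Corollary~\ref{cor:HomologyOfAffineGps} after checking the divisibility and range conditions from~$(\ast)$. You have in fact supplied more detail on transitivity and the stabilizer computation than the paper does, and your verification of the numerical hypotheses is exactly what is needed.
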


\begin{proof}
By definition, the group $G_n(A)$ acts transitively on the set $GU_q(A^n)$ with stabilizer at $(e_{n-q+1},...,e_n)\in GU_q(A^n)$ the subgroup $\Aff_{q,n-q}^G(A)$.
Recall (Shapiro's Lemma) that for any right $G$-module $M$ we have the quasi-isomorphism 
$$M\stackrel{L}{\otimes}_N\Z \stackrel{\sim}{\to} M\stackrel{L}{\otimes}_G\Z[G/N]$$ induced by the inclusions $N\subset G$ and $\Z\subset \Z[G/N]$.
For $M=\Lambda_{m,t}$, $G=G_n(A)$ and $N=\Aff_{q,n-q}^G(A)$ we therefore have quasi-isomorphisms
$$\Lambda_{m,t}\stackrel{L}{\otimes}_{\Aff_{q,n-q}^G}\Z
\stackrel{\sim}{\to}
\Lambda_{m,t}\stackrel{L}{\otimes}_{G_n}\Z[G_n/\Aff_{q,n-q}^G]
=\Lambda_{m,t}\stackrel{L}{\otimes}_{G_n}C_q(A^n).$$
In view of Corollary \ref{cor:HomologyOfAffineGps}, for
 $q<n$ we have the map of complexes
$$\Z\stackrel{L}{\otimes}_{SG_{n-q}}\Z {\longrightarrow}
\Lambda_{m,t}\stackrel{L}{\otimes}_{\Aff_{q,n-q}^G}\Z.
$$
which induces an isomorphism on homology in degrees $\leq n_0<m/t \leq m(n-q)/t$.
For $q=n$ we have $C_n(A^n)=\Z[G_n]$ and thus,
$$\Lambda_{m,t}\stackrel{L}{\otimes}_{G_n}C_n(A^n)= \Lambda_{m,t}\stackrel{L}{\otimes}_{G_n}\Z[G_n]=\Lambda_{m,t}.$$
\end{proof}

\begin{lemma}
\label{lem:NS_lemma_2.4}
Assume {\rm ($\ast$)}.
Then for $0<q<n$ and $p\leq n_0$, the differential $d^1_{p,q}: E^1_{p,q}(A^n) \to E^1_{p,q-1}(A^n)$ in the spectral sequence (\ref{eqn:SpSeq1}) is zero if $q$ is even and for $q$ odd it is the map 
$$H_p(SG_{n-q}(A),\Z) \to H_p(SG_{n-q+1}(A),\Z)$$
induced by the standard inclusion $SG_{n-q}(A)  \to SG_{n-q+1}(A)$.

For $q=n$ we have $d^1_{p,n}=0$ for $p\neq 0$, and $d^1_{0,n}=0$ for $n$ even and for $n$ odd $d^1_{0,n}$ is the map
$\Lambda_{m,t} \to \Z$
induced by the augmentation $\eps:\Lambda=\Z[\bar{A}^*] \to \Z$.
\end{lemma}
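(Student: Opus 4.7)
I would analyze each face map $\delta_q^i$ on the $E^1$-page separately, show that each induces the standard inclusion $SG_{n-q}(A)\to SG_{n-q+1}(A)$ on homology via the identifications of Lemma \ref{lem:NS_2.3}, and then sum with alternating signs.

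The $G_n(A)$-set $GU_q(A^n)$ is transitive with basepoint $v_q=(e_{n-q+1},\dots,e_n)$ and stabilizer $\Aff^G_{q,n-q}$; similarly $GU_{q-1}(A^n)$ has basepoint $w_{q-1}=(e_{n-q+2},\dots,e_n)$ and stabilizer $\Aff^G_{q-1,n-q+1}$. One verifies $\delta_q^i(v_q)=\tau_i w_{q-1}$ where $\tau_i\in G_n(A)$ is the permutation matrix realizing the cyclic permutation of length $i$ on positions $n-q+1,\dots,n-q+i$, so $\det\tau_i=(-1)^{i-1}$. Via Shapiro's lemma, $(\delta_q^i)_*$ on the $E^1$-page is induced by the pair consisting of the group homomorphism $\phi_i\colon\Aff^G_{q,n-q}\to\Aff^G_{q-1,n-q+1}$, $n\mapsto\tau_i^{-1}n\tau_i$, together with the coefficient map $f_i\colon\Lambda_{m,t}\to\Lambda_{m,t}$, $\lambda\mapsto\lambda\tau_i$.

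I would next replace $\tau_i$ by $\tau_i':=\tau_i\cdot\diag((-1)^{i-1},1,\dots,1)$, whose determinant is $(-1)^{i-1}\cdot(-1)^{i-1}=1$, so that $\tau_i'\in SG_n(A)$. The correction factor $\diag((-1)^{i-1},1,\dots,1)$ lies inside $\Aff^G_{q-1,n-q+1}$, so by the basic functoriality of group homology recalled at the start of Section \ref{Sec:HofAff} the pair $(\phi_i',f_i')$ associated to $\tau_i'$ induces the same map on homology as $(\phi_i,f_i)$. Since $\det\tau_i'=1$, the new coefficient map $f_i'$ is the identity on $\Lambda_{m,t}$. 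Writing $\tau_i'$ in block form relative to the splitting $n=(n-q)+q$, its first block is $D_i':=\diag((-1)^{i-1},1,\dots,1)\in G_{n-q}(A)$ and its second block is a permutation matrix of determinant $(-1)^{i-1}$; hence for $T\in SG_{n-q}(A)$ embedded as $\left(\begin{smallmatrix}T&0\\0&1_q\end{smallmatrix}\right)\in\Aff^G_{q,n-q}$, the map $\phi_i'$ yields $\left(\begin{smallmatrix}D_i'TD_i'^{-1}&0\\0&1_q\end{smallmatrix}\right)\in\Aff^G_{q-1,n-q+1}$.

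The key observation is that this twisted conjugation becomes inner after passing to $SG_{n-q+1}(A)$. Set
\[E:=\diag\bigl((-1)^{i-1},1,\dots,1,(-1)^{i-1}\bigr)\in SL_{n-q+1}(A)=SG_{n-q+1}(A),\]
an element of determinant $\bigl((-1)^{i-1}\bigr)^2=1$. A direct computation shows
\[\left(\begin{smallmatrix}D_i'TD_i'^{-1}&0\\0&1\end{smallmatrix}\right)=E\cdot\left(\begin{smallmatrix}T&0\\0&1\end{smallmatrix}\right)\cdot E^{-1}\qquad\text{in }SG_{n-q+1}(A).\]
Thus $\phi_i'$ restricted to $SG_{n-q}(A)$ coincides with the standard inclusion $SG_{n-q}(A)\hookrightarrow SG_{n-q+1}(A)$ followed by the inner automorphism given by conjugation by $E$, and inner automorphisms act trivially on group homology.

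Under the identifications of Corollary \ref{cor:HomologyOfAffineGps}, each $(\delta_q^i)_*\colon H_p(SG_{n-q}(A),\Z)\to H_p(SG_{n-q+1}(A),\Z)$ therefore coincides with the map induced by the standard inclusion, independent of $i$, so
\[d^1_{p,q}=\sum_{i=1}^q(-1)^{i-1}(\delta_q^i)_*=\Bigl(\sum_{i=1}^q(-1)^{i-1}\Bigr)\cdot(\text{standard inclusion})_*,\]
which vanishes for $q$ even and equals the standard inclusion for $q$ odd.

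For $q=n$, the vanishing $d^1_{p,n}=0$ for $p>0$ is immediate from $E^1_{p,n}=0$. For $p=0$, since $GU_n(A^n)=G_n(A)$ has trivial stabilizer, Shapiro identifies $E^1_{0,n-1}$ with the coinvariants $\Lambda_{m,t}/(\langle\bar{a}\rangle-1)_{\bar{a}\in\bar{A}^*}\cong\Z$ via the augmentation $\eps$, and $(\delta_n^i)_*\colon\Lambda_{m,t}\to\Z$ becomes $\lambda\mapsto\eps(\lambda\tau_i)=\eps(\lambda)\cdot\eps(\langle(-1)^{i-1}\rangle)=\eps(\lambda)$, independent of $i$; the alternating sum gives $0$ for $n$ even and $\eps$ for $n$ odd.

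The main obstacle is the third step: absorbing the unwanted conjugation by $D_i'$ into an inner automorphism of the larger group $SG_{n-q+1}(A)$; this is achieved by the explicit diagonal element $E$, whose extra $(-1)^{i-1}$ in the last diagonal slot compensates the determinant of $D_i'$ and places $E$ into $SL_{n-q+1}(A)$.
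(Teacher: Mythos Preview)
Your proof is correct and follows the same overall strategy as the paper: show that each face map $(\delta^i)_*$ coincides, under the identifications of Lemma~\ref{lem:NS_2.3}, with the map induced by the standard inclusion $SG_{n-q}\hookrightarrow SG_{n-q+1}$, by exhibiting a suitable element of $SG_n(A)$ that carries the basepoint $u^n_{q-1}$ to $\delta^i(u^n_q)$ and induces an inner conjugation on the relevant subgroup.

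The only real difference is where you place the determinant-correcting sign. The paper chooses
\[
h=(e_1,\dots,e_{n-q},\ \tau e_{n-q+j},\ \delta^j u^n_q)\in SG_n(A),\qquad \tau=(-1)^{j-1},
\]
which has block form $\mathrm{diag}(1_{n-q},P)$; hence conjugation by $h$ fixes the embedded copy of $SG_{n-q}$ \emph{pointwise}, and the identification with the standard inclusion is immediate. You instead put the sign in position~$1$, so your $\tau_i'$ has block form $\mathrm{diag}(D_i',P_i)$, and conjugation by $\tau_i'$ twists $SG_{n-q}$ by $D_i'$-conjugation. You then need the extra element $E=\mathrm{diag}(D_i',(-1)^{i-1})\in SG_{n-q+1}$ to absorb this twist as an inner automorphism one level up. This works, but had you placed the sign in the permuted block (any column among $n-q+1,\dots,n$) the argument would collapse to a single step, exactly as in the paper. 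The $q=n$ case is handled the same way in both proofs.
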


\begin{proof}
The differential $d^1_{p,q}$ is the map 
$$d^1_{p,q} = \sum_{j=1}^q(-1)^{j-1}(1\otimes \delta^j)_*: H_p(\Lambda_{m,t}\stackrel{L}{\otimes}_{G_n}\Z[GU_q]) \to H_p(\Lambda_{m,t}\stackrel{L}{\otimes}_{G_n}\Z[GU_{q-1}])$$
where $\delta^j:\Z[GU_q] \to Z[GU_{q-1}]$ is induced by the map $\delta^j:GU_q \to GU_{q-1}$ defined by $(v_1,...,v_q) \mapsto (v_1,...,\hat{v}_j,...,v_n)$.

Assume first that $q<n$.
Consider the diagram
$$\xymatrix{
\Z
\stackrel{L}{\otimes}_{SG_{n-q}}\Z
\ar[r]^{\hspace{-4ex}1\otimes u_q^n} \ar[d] &
\Z
\stackrel{L}{\otimes}_{SG_n}
\Z[GU_q(A^n)] \ar[r] \ar[d]^{1\otimes \delta^j} &
\Lambda_{m,t}
\stackrel{L}{\otimes}_{G_n}
\Z[GU_q(A^n)]\ar[d]^{1\otimes \delta^j}\\
\Z
\stackrel{L}{\otimes}_{SG_{n-q+1}}\Z
\ar[r]^{\hspace{-4ex}1\otimes u^n_{q-1}}  &
\Z
\stackrel{L}{\otimes}_{SG_n}
\Z[GU_{q-1}(A^n)] \ar[r]  &
\Lambda_{m,t}
\stackrel{L}{\otimes}_{G_n}
\Z[GU_{q-1}(A^n)]
}
$$
induced by the natural inclusions of groups and rings $SG_{n-q} \subset \Aff_{q,n-q}^{SG} \subset G_n$, $SG_{n-q} \subset SG_{n-q+1}$, $\Z \subset \Lambda_{m,t}$ and where 
$u^n_q:\Z \to \Z[GU_q(A^n)]$ sends $1$ to the left invertible matrix
$(e_{n-q+1},...,e_n)$.
The horizontal compositions induce the isomorphisms in Lemma \ref{lem:NS_2.3} upon taking homology in degrees $\leq n_0$.
We will show that the outer diagram commutes upon taking homology groups.
This implies the claim for $q<n$.
Since the right square commutes, it suffices to show that the left square commutes upon taking homology.

We will use the basic functoriality of group homology as recalled at the beginning of section \ref{Sec:HofAff}.
Upon taking homology, the left hand square of the diagram is the diagram
$$
\xymatrix{
H_*(SG_{n-q},\Z) \ar[rr]^{(i^n_{n-q},u^n_q)_*} \ar[d]_{(i^{n-q+1}_{n-q},1)_*} &&
H_*(SG_{n},\Z[GU_q]) \ar[d]^{(1,\delta^j)_*}\\
H_*(SG_{n-q+1},\Z) \ar[rr]_{(i^n_{n-q+1},u^n_{q-1})_*} && H_*(SG_{n},\Z[GU_{q-1}])
}
$$
where $u^n_r$ is the left invertible matrix $(e_{n-r+1},...,e_n)$ and $i^s_{r}:SG_r \to SG_s$ denotes the standard embedding for $r\leq s$.
Consider the matrix 
$$h = (e_1,...,e_{n-q},\tau e_{n-q+j}, \delta^j u^n_q)\in SG_n$$
where $\tau\in \{1,-1\}$ is chosen so that $\det(h)=1\in K_1(A)$.
Then $h\cdot i^n_{n-q} \cdot h^{-1} = i^n_{n-q}$ and $\delta^j(u^n_q) = h\cdot u^n_{q-1}$.
In view of the basic functoriality of group homology, 
the two compositions $(i^n_{n-q},\delta^ju^n_q)_*$ and $(i^n_{n-q},u^n_{q-1})_*$ are equal, and the square commutes.
Hence the lemma for $q<n$.

For $q=n$ and $p\neq 0$ we have $d^1_{p,n}=0$ since $E^1_{p,n}(A^n)=0$.
To prove the claim for $q=n$ and $p=0$, we need to show the commutativity of the diagram
$$\xymatrix{
\Lambda_{m,t} \ar[r]^{\hspace{-7ex}1\otimes u^n_n}_{\hspace{-7ex}\cong} \ar[d]_{\eps} &
\Lambda_{m,t} {\otimes}_{G_n}\Z[G_n(A)]\ar[d]^{1\otimes \delta^j}\\
\Z \ar[r]^{\hspace{-12ex}1\otimes u^n_{n-1}}&
\Lambda_{m,t} {\otimes}_{G_n}\Z[GU_{n-1}(A^n)].
}$$
Since (\ref{eqn:SpSeq1}) is a spectral sequence of $\Lambda_{m,t}$-modules,
this diagram is a diagram of $\Lambda_{m,t}$-modules. 
So, it suffices to show that the two compositions send $1\in \Lambda_{m,t}$ to the same element, that is, we need to see that
$$1\otimes \delta^j(u^n_n)=1\otimes u^n_{n-1}\in \Lambda_{m,t} {\otimes}_{G_n}\Z[GU_{n-1}(A^n)]$$
for all $j=1,...,n$.
Consider the matrix $h_j=(\tau  e_j, e_1,e_2,...,\hat{e}_j,...,e_n) \in SG_n(A)$ where $\tau \in \{1,-1\}$ is chosen so that $\det(h_j)=1\in K_1(A)$.
Then $\delta^j(u^n_n)=h_j\cdot u^n_{n-1}$ and thus
$$1\otimes \delta^j(u^n_n)=1\otimes h_j\cdot u^n_{n-1} = h_j\otimes u^n_{n-1} = 1\otimes u^n_{n-1} \in \Lambda_{m,t} {\otimes}_{G_n}\Z[GU_{n-1}(A^n)]$$
since $h_j\in SG_n(A)$ goes to $1\in \bar{A}^*$ under the map $G_n(A) \to \bar{A}^*$.
\end{proof}

\begin{proposition}
\label{prop:NS_Prop_2.6}
Assume {\rm ($\ast$)}.
The differentials $d^r_{p,q}$ in the spectral sequence (\ref{eqn:SpSeq1}) are zero for $r\geq 2$ and $p\leq n_0$.
\end{proposition}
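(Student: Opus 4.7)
The plan is to proceed by induction on $n$. The base cases $n\leq 1$ are immediate: the $E^1$-page has at most two non-zero columns (at $q=0$ and $q=n$), leaving no bidegree for which $d^r$ with $r\geq 2$ could be non-zero while satisfying $p\leq n_0$.

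For the inductive step, fix $n\geq 2$ and suppose the proposition holds for all $n'<n$. By Lemmas \ref{lem:NS_2.3} and \ref{lem:NS_lemma_2.4}, the $E^2$-page restricted to $p\leq n_0$ is concentrated in bidegrees $(p,q)$ with $0\leq q\leq n$, and its entries are kernels and cokernels of the canonical stabilization maps $H_p(SG_{n-q-1}(A))\to H_p(SG_{n-q}(A))$, together with a distinguished contribution at $q=n$ from $\Lambda_{m,t}$ and the augmentation $\eps:\Lambda_{m,t}\to\Z$. I would combine this with two further ingredients: the vanishing of the abutment $H_{p+q}(\Lambda_{m,t}\stackrel{L}{\otimes}_{G_n(A)} C(A^n))$ for $p+q\leq n-\sr(A)$ coming from Lemma \ref{lem:HMCAisTrivial}, and a comparison with the analogous spectral sequence for $A^{n-1}$ obtained from the standard inclusion $A^{n-1}\hookrightarrow A^n$ into the first $n-1$ coordinates (together with the block embedding $G_{n-1}(A)\hookrightarrow G_n(A)$), which gives an equivariant map $C(A^{n-1})\to C(A^n)$ and hence a morphism of spectral sequences.

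By Lemma \ref{lem:NS_lemma_2.4}, the induced map on $E^1$ is identified with the usual stabilization in group homology, so the inductive hypothesis for $n-1$ gives $d^r=0$ for $r\geq 2$ on the source of the comparison. Any potentially non-zero higher differential on $E^r(A^n)$ must therefore live in (or land into) the cokernel of the comparison; this cokernel on $E^2$ is concentrated in the top column $q=n$ together with the edge rows where the stabilization map in $SG$-homology is not yet an isomorphism. The vanishing of the abutment then pins down these remaining pieces, forcing all higher differentials to vanish. The main obstacle I anticipate is verifying surjectivity of the comparison map in the correct bidegrees, and checking that the argument transports cleanly through the $\Lambda_{m,t}$-linear structure and the shift in the $q$-index; this will require careful bookkeeping, particularly at the boundary entries where the identification of $E^1$ mixes $SG$-homology with the $\Lambda_{m,t}$-contribution at $q=n$.
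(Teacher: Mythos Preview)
Your inductive strategy and the base cases are right, but the inductive step has a genuine gap, and the paper's argument differs from yours in exactly the place where it matters.

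The comparison you propose, via the inclusion $A^{n-1}\hookrightarrow A^n$ and $G_{n-1}(A)\subset G_n(A)$, induces on $E^1_{p,q}$ (for $q<n-1$, $p\leq n_0$) the stabilization map
\[
H_p(SG_{(n-1)-q}(A))\ \longrightarrow\ H_p(SG_{n-q}(A)).
\]
But this map is \emph{not} known to be an isomorphism at this stage; that is precisely the content of Theorem~\ref{thm:SGHstability}, which is proved \emph{using} the present proposition. So you cannot transport the vanishing of higher differentials from $E(A^{n-1})$ to $E(A^n)$ except in bidegrees where the comparison happens to be an isomorphism, and you have no independent control over those. Your attempt to fill the gap with the vanishing of the abutment (Lemma~\ref{lem:HMCAisTrivial}) does not work either: first, the abutment vanishes only for $p+q\leq n-\sr(A)$, while the proposition asserts $d^r_{p,q}=0$ for all $p\leq n_0$ and all $q$; second, and more fundamentally, knowing $E^\infty_{p,q}=0$ constrains what survives, not the differentials themselves --- it is entirely consistent with non-zero $d^r$ that kill classes. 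In fact the logical flow of the paper is the reverse of what you suggest: one first establishes $d^r=0$ for $r\geq2$, concludes $E^2=E^\infty$, and \emph{then} feeds in the abutment vanishing to deduce stability.

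The paper's fix is to compare not with $A^{n-1}$ but with $A^{n-2}$, via an explicit chain map $\psi:C(A^{n-2})[-2]\to C(A^n)$ built as an alternating sum $\psi_0-\psi_1+\psi_2$ of maps appending two specific vectors (involving $e_{n-1},e_n,e_n-e_{n-1}$). The degree shift by $2$ is the whole point: on $E^1_{p,q}$ with $2\leq q\leq n$ and $p\leq n_0$, the source is $E^1_{p,q-2}(A^{n-2})\cong H_p(SG_{(n-2)-(q-2)}(A))=H_p(SG_{n-q}(A))$, the \emph{same} group as the target $E^1_{p,q}(A^n)$, and one checks (Lemma~\ref{lem:PsiIsIso}) that $\psi$ is the identity under this identification. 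Since $\psi$ is an isomorphism on $E^1$ for $q\geq2$ and the source spectral sequence has vanishing higher differentials by the inductive hypothesis for $n-2$, an easy induction on $r$ gives $d^r=0$ on $E^r(A^n)$ for $r\geq2$ and $p\leq n_0$, with no appeal to the abutment.
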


\begin{proof}
We argue by induction on $n$.
For $n=0,1$, the differentials $d^r$, $r\geq 2$, are zero since $E_{p,q}^r(A^n)=0$ for $q\neq 0,1$.
Assume $n\geq 2$.
Similar to \cite{SuslinNesterenko}, consider the homomorphism of complexes 
$\psi:C(A^{n-2})[-2] \to C(A^n)$ defined in degree $q$ by $\psi=\psi_0-\psi_1+\psi_2$
where for $(v_1,...,v_{q-2}) \in GU_{q-2}(A^{n-2})$ the map $\psi_i$ is given by
$$
\renewcommand\arraystretch{1.5}
\begin{array}{rcl}
\psi_0(v_1,...,v_{q-2}) & = & (v_1,...,v_{q-2},e_{n-1},e_n) \\
\psi_1(v_1,...,v_{q-2}) & = & (v_1,...,v_{q-2},e_{n-1},e_n-e_{n-1}) \\
\psi_2(v_1,...,v_{q-2}) & = & (v_1,...,v_{q-2},e_{n},e_n-e_{n-1}).
\end{array}
$$
The map $\psi$ commutes with differentials and is compatible with the actions by $G_{n-2}(A)$ and $G_n(A)$ via the standard inclusion $G_{n-2}(A)\subset G_n(A)$.
Hence, $\psi$ induces a map of spectral sequences
$E(A^{n-2})[0,-2] \to E(A^n)$.
Denote by $E$ and $\tilde{E}$ the spectral sequences $E(A^n)$ and $E(A^{n-2})[0,-2]$.
From Lemma \ref{lem:NS_2.3}, we have
$$\tilde{E}^1_{p,q}=E^{1}_{p,q-2}(A^{n-2}) = \left\{
\renewcommand\arraystretch{1.5}
\begin{array}{ll}
H_p(SG_{n-q}(A),\Z)& 2 \leq q < n,\ p\leq n_0\\
\Lambda_{m,t} & q=n,\ p=0\\
0 & q=n\ \text{and}\ p\neq 0,\ \text{or}\ q<2,\ \text{or}\ q>n.
\end{array}
\right.
$$
The claim follows by induction on $r$ 
using the following lemma.
\end{proof}

\begin{lemma}
\label{lem:PsiIsIso}
Assume {\rm ($\ast$)}. 
Under the identifications of Lemma \ref{lem:NS_2.3}, the homomorphism $\psi:\tilde{E}^1_{p,q} \to E^1_{p,q}$ is the identity for $2\leq q \leq n$ and $p\leq n_0$.
\end{lemma}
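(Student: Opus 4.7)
The plan is to show that each of the three $G_{n-2}$-equivariant maps $\psi_i$ ($i=0,1,2$) induces the \emph{same} endomorphism of $E^1_{p,q}$, and that under the identifications of Lemma~\ref{lem:NS_2.3} this common endomorphism is the identity. Since $\psi = \psi_0 - \psi_1 + \psi_2$, it will then act as $1-1+1=1$, as desired.

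First I would record that $\psi_i(u_{q-2}^{n-2}) = h_i \cdot u_q^n$, where $u_r^s := (e_{s-r+1},\ldots,e_s)$ is the standard basepoint of $GU_r(A^s)$, and $h_0 = 1$, $h_1 = e_{n-1,n}(-1)$, and $h_2$ is the matrix with $1_{n-2}$ in its upper-left $(n-2)\times(n-2)$ block and $\left(\begin{smallmatrix}0 & -1 \\ 1 & 1\end{smallmatrix}\right)$ in its lower-right $2\times 2$ block. A short calculation writes each of these $2 \times 2$ blocks as a product of elementary matrices, so $h_i \in E_n(A) \subset SG_n(A)$, and the block-diagonal form of $h_i$ guarantees that it commutes with the standard inclusion $\iota: G_{n-2} \hookrightarrow G_n$ as upper-left block.

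Next I would trace $\psi_i$ through Shapiro's lemma. Because $G_{n-2}$ acts transitively on $GU_{q-2}(A^{n-2})$ with stabilizer $\Aff^G_{q-2,n-q}(A)$ and $G_n$ acts transitively on $GU_q(A^n)$ with stabilizer $\Aff^G_{q,n-q}(A)$, the map $\psi_i$ corresponds to the set map of coset spaces sending $g\Aff^G_{q-2,n-q}$ to $\iota(g) h_i \Aff^G_{q,n-q}$. By the basic functoriality of group homology recalled at the beginning of Section~\ref{Sec:HofAff}, the induced map $\Lambda_{m,t} \otimes^L_{\Aff^G_{q-2,n-q}} \Z \to \Lambda_{m,t} \otimes^L_{\Aff^G_{q,n-q}} \Z$ is determined by the subgroup map $\phi_i(s) = h_i^{-1}\iota(s) h_i$ together with right multiplication by $h_i$ on $\Lambda_{m,t}$. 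The commutation of $h_i$ with $\iota(G_{n-2})$ collapses $\phi_i$ to the standard block-diagonal inclusion, and the vanishing of the image of $h_i \in SG_n(A)$ in $\bar{A}^*$ makes right multiplication by $h_i$ the identity on $\Lambda_{m,t}$. Thus all three $\psi_i$ induce the same map, namely the one coming from the standard inclusion $\Aff^G_{q-2,n-q} \hookrightarrow \Aff^G_{q,n-q}$ together with the identity on $\Lambda_{m,t}$.

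Finally I would match this with the identifications of Lemma~\ref{lem:NS_2.3}. For $2 \leq q < n$ and $p \leq n_0$, Corollary~\ref{cor:HomologyOfAffineGps} identifies both sides in degree $p$ with $H_p(SG_{n-q}(A),\Z)$ via the standard inclusions of $SG_{n-q}(A)$ in the upper-left block; since the composition $SG_{n-q} \hookrightarrow \Aff^G_{q-2,n-q} \hookrightarrow \Aff^G_{q,n-q}$ coincides with the direct inclusion $SG_{n-q} \hookrightarrow \Aff^G_{q,n-q}$, the induced map on $H_p(SG_{n-q}(A),\Z)$ is the identity. For $q = n$ (forcing $p = 0$), the identification $\Lambda_{m,t} \otimes^L_{G_n} \Z[G_n] = \Lambda_{m,t}$ via $\lambda \otimes g \mapsto \lambda \cdot g$ turns $\psi_i$ into $\lambda \cdot g \mapsto \lambda \cdot \iota(g) h_i = \lambda \cdot g$, again the identity. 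The only technical obstacle is the Shapiro-lemma bookkeeping in the second paragraph; no new ideas beyond those of the previous two lemmas are required, merely careful tracking of left/right module conventions and basepoints.
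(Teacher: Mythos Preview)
Your proposal is correct and follows essentially the same approach as the paper: both arguments hinge on finding, for each $j=0,1,2$, a matrix $h_j\in SG_n(A)$ (your $h_i$, the paper's $h$) that commutes with the upper-left block inclusion and carries the basepoint $u_q^n$ to $\psi_j(u_{q-2}^{n-2})$, then invoking the basic functoriality of group homology. The paper phrases the verification as commutativity of a triangle through $H_*(SG_{n-q},\Z)$ directly, whereas you route it through the Shapiro identifications with $\Lambda_{m,t}\otimes^L_{\Aff^G}\Z$ before appealing to Corollary~\ref{cor:HomologyOfAffineGps}; this is a cosmetic difference, and your explicit matrices agree with the paper's.
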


\begin{proof}
Keeping the notations of the proof of Lemma \ref{lem:NS_lemma_2.4},
we will check the commutativity of the following diagrams for $j=0,1,2$ upon taking homology groups
$$
\xymatrix{
\Z\ \stackrel{L}{\otimes}_{SG_{n-q}}\Z \ar[r]^{\hspace{-8ex}1\otimes u^{n-2}_{q-2}} \ar[dr]_{1\otimes u^{n}_{q}} &
\Z \stackrel{L}{\otimes}_{SG_{n-2}}\Z[GU_{q-2}(A^{n-2})] \ar[r] \ar[d]^{1\otimes \psi_j}&
\Lambda_{m,t} \stackrel{L}{\otimes}_{G_{n-2}}\Z[GU_{q-2}(A^{n-2})] \ar[d]^{1\otimes \psi_j}\\
&
\Z \stackrel{L}{\otimes}_{SG_{n}}\Z[GU_{q}(A^{n})] \ar[r] &
\Lambda_{m,t} \stackrel{L}{\otimes}_{G_{n}}\Z[GU_{q}(A^{n})]
}$$
\begin{equation}
\label{eqn:lem:PsiIsIso1}
\xymatrix{
\Lambda_{m,t} \ar[rr]^{\hspace{-8ex}1\otimes u^{n-2}_{n-2}} \ar[drr]_{\hspace{-8ex}1\otimes u^{n}_{n}}  &&
\Lambda_{m,t} {\otimes}_{G_{n-2}}\Z[G_{n-2}] \ar[d]^{1\otimes \psi_j}\\
&&\Lambda_{m,t} {\otimes}_{G_{n}}\Z[G_{n}]
}
\end{equation}
where in the first diagram $2\leq q <n$, and the second diagram takes care of $q=n$.
Since $\psi=\psi_0-\psi_1+\psi_2$, commutativity of the diagrams on homology groups will imply the claim.

In the first diagram the right hand square commutes, and so we are left with showing the commutativity of the left hand square on homology, that is, the commutativity of the diagram
\begin{equation}
\label{eqn:lem:PsiIsIso2}
\xymatrix{
H_p(SG_{n-q},\Z) \ar[rr]^{\hspace{-8ex}(i^{n-2}_{n-q}, u^{n-2}_{q-2})_*} \ar[drr]_{\hspace{-3ex}(i^{n}_{n-q}, u^{n}_{q})_*} 
&& 
H_p(SG_{n-2},\Z[GU_{q-2}(A^{n-2})]) \ar[d]^{(i^n_{n-2},\psi_j)_*} 
\\
&&
H_p(SG_n,\Z[GU_q(A^{n})]).
}
\end{equation}
Commutativity for $j=0$ is clear.
For $j=1$ we consider the following matrix $h=(e_1,...,e_{n-1},e_n-e_{n-1})\in SG_n(A)$.
For $2\leq q <n$ we have $h\cdot i^n_{n-q}\cdot  h^{-1} = i^n_{n-q}$ and
$h\circ u_{q}^n = \psi_1(u^{n-2}_{q-2})$, this shows commutativity of diagram (\ref{eqn:lem:PsiIsIso2}) in this case.
For $j=2$, we replace the matrix $h$ with the matrix
$(e_1,...,e_{n-2},e_{n},e_n-e_{n-1})$ in $SG_n(A)$.
This finishes the proof of commutativity of (\ref{eqn:lem:PsiIsIso2}) for $j=0,1,2$.

To show commutativity of the second diagram (\ref{eqn:lem:PsiIsIso1}), note that it is a diagram of $\Lambda_{m,t}$-modules and that the horizontal and diagonal arrows are isomorphisms with inverses the multiplication maps
$\Lambda_{m,t}\otimes_G\Z[G] \stackrel{1\otimes \det}{\longrightarrow} \Lambda_{m,t}\otimes \Lambda_{m,t} \to \Lambda_{m,t}$.
Since $\psi_j(u^{n-2}_{n-2}) = 1 = u^n_n  \in \bar{A}^* \subset K_1(A)$, the claim follows.
\end{proof}

\begin{theorem}
\label{thm:SGHstability}
Let $A$ be a ring with many units.
Then the natural homomorphism
$$H_i(SG_{n-1}(A),\Z) \to H_i(SG_n(A),\Z)$$
is an isomorphism for $n\geq i + \sr(A) +1$ and surjective for $n\geq i+\sr(A)$.
\end{theorem}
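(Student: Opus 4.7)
The plan is to read off the stability statement directly from the spectral sequence (\ref{eqn:SpSeq1}), using all the structural information established in Section \ref{Sec:HofAff} and earlier in this section. Given integers $i\geq 0$ and $n\geq 2$ (the case $n\leq 1$ is either vacuous in the relevant range or reduces to the obvious identity $\Z\to\Z$ on $H_0$ since $SG_0(A)=\{1\}$), I would first fix auxiliary integers $m,n_0,t$ satisfying hypothesis $(\ast)$ with $n_0\geq i$. Concretely, one can pick $n_0\geq i$, set $t=\operatorname{lcm}(1,\dots,n_0)$, and choose any $m>n_0 t$; this is possible because $A$ has many units and hence is an $\RR_m$-algebra for every $m$.

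Writing $\sigma=\sr(A)$ for brevity, the argument then combines three facts. By Proposition \ref{prop:NS_Prop_2.6}, all differentials $d^s$ with $s\geq 2$ vanish for $p\leq n_0$, so $E^2_{p,q}=E^\infty_{p,q}$ throughout the region $p\leq n_0$ (noting that the bidegree $(s-1,-s)$ makes the incoming differentials to any $(p,q)$ with $p\leq n_0$ also originate in a column $\leq n_0$). By Lemma \ref{lem:HMCAisTrivial}, applied to $M=\Lambda_{m,t}$ viewed as a right $G_n(A)$-module via the determinant $G_n(A)\to \bar{A}^*$, the abutment $H_{p+q}(\Lambda_{m,t}\stackrel{L}{\otimes}_{G_n(A)} C(A^n))$ vanishes for $p+q\leq n-\sigma$. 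Together these force
$$E^2_{p,q}=0\quad\text{for all } p\leq n_0\text{ and } p+q\leq n-\sigma.$$

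Finally, because $d^1_{p,1}$ is the stabilization inclusion $H_p(SG_{n-1}(A))\to H_p(SG_n(A))$ and $d^1_{p,2}=0$ (Lemma \ref{lem:NS_lemma_2.4}), the two columns of interest on the $E^2$-page are
$$E^2_{i,0}=\coker\bigl(H_i(SG_{n-1}(A))\to H_i(SG_n(A))\bigr),\qquad E^2_{i,1}=\ker\bigl(H_i(SG_{n-1}(A))\to H_i(SG_n(A))\bigr).$$
The vanishing $E^2_{i,0}=0$ for $i\leq n-\sigma$ yields surjectivity and the vanishing $E^2_{i,1}=0$ for $i+1\leq n-\sigma$ yields injectivity, giving exactly the stated ranges $n\geq i+\sr(A)$ and $n\geq i+\sr(A)+1$. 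I do not anticipate a real obstacle here: the substantive work was done in proving Theorem \ref{thm:HomologyOfAffineGps} and Proposition \ref{prop:NS_Prop_2.6}, and the remainder is a clean bookkeeping extraction from a convergent, collapsing-at-$E^2$ spectral sequence; in particular no induction on $n$ is required.
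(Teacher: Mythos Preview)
Your proposal is correct and follows essentially the same approach as the paper: choose parameters as in $(\ast)$, use Proposition \ref{prop:NS_Prop_2.6} to collapse the spectral sequence at $E^2$ in the range $p\leq n_0$, use Lemma \ref{lem:HMCAisTrivial} to kill the abutment in degrees $\leq n-\sr(A)$, and read off the kernel and cokernel of the stabilization map from $E^2_{i,1}$ and $E^2_{i,0}$ via Lemma \ref{lem:NS_lemma_2.4}. One minor slip: your ``concretely'' recipe only asks $n_0\geq i$, but $(\ast)$ itself requires $n\leq n_0$, so you should take $n_0\geq n$ (which already forces $n_0>i$ in the stated range since $\sr(A)\geq 1$).
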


\begin{proof}
Choose $n_0,m,t$ as in {\rm ($\ast$)}.
In particular $n\leq n_0$.
Then we have the spectral sequence (\ref{eqn:SpSeq1}) with
$E^1$-term given by Lemma \ref{lem:NS_2.3} and $d^1_{p,q}$ was computed in Lemma \ref{lem:NS_lemma_2.4} for $p\leq n_0$.
By Proposition \ref{prop:NS_Prop_2.6} and Lemma \ref{lem:HMCAisTrivial}
we have $E^2_{p,q}=E^{\infty}_{p,q}=0$ for $n\geq p+q +\sr(A)$ and $p\leq n_0$.
The claim follows.
\end{proof}

We can reformulate Theorem \ref{thm:SGHstability} in terms of elementary linear groups.
Recall \cite[\S 1]{Bass:StableK} that the group of elementary $r\times r$-matrices of a ring $A$ is the subgroup $E_r(A)$ of $GL_r(A)$ generated by the elementary matrices $e_{i,j}(a) = 1+ a\cdot e_ie_j^T$, $a\in A$.

\begin{lemma}
\label{lem:MaxPerf}
Let $A$ be a ring with many units.
Then for $n>\sr(A)$, we have $E_n(A)=SG_n(A)$, and this group is the commutator and the maximal perfect subgroup of $GL_n(A)$.
Moreover, the natural map $GL_n(A) \to K_1(A)$ is surjective for $n\geq \sr(A)$.
\end{lemma}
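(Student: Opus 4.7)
My plan is to combine Bass's classical $K_1$-stability theorems \cite{Bass:StableK} with the Steinberg commutator calculus inside $E_n(A)$.

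First, the last assertion --- surjectivity of $\det\colon GL_n(A)\to K_1(A)$ for $n\geq \sr(A)$ --- is precisely Bass's surjective $K_1$-stability theorem; no ``many units'' hypothesis is needed here. For the identification $E_n(A)=SG_n(A)$ when $n>\sr(A)$: the inclusion $E_n(A)\subseteq SG_n(A)$ is immediate, since the classical Whitehead lemma places every elementary matrix in $[GL(A),GL(A)]$ stably, so $E(A)$ maps to $0$ in $K_1(A)=GL(A)^{\mathrm{ab}}$. The reverse inclusion is Bass's injective $K_1$-stability: the canonical map $GL_n(A)/E_n(A)\to K_1(A)$ is injective for $n\geq \sr(A)+1$, hence bijective, and therefore $SG_n(A)=\ker(GL_n(A)\to K_1(A))\subseteq E_n(A)$.

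Next I prove $E_n(A)=[GL_n(A),GL_n(A)]$. Since $K_1(A)$ is abelian, $[GL_n(A),GL_n(A)]\subseteq SG_n(A)=E_n(A)$. For the reverse inclusion and for the perfectness of $E_n(A)$ I invoke the Steinberg identity $e_{ik}(a)=[e_{ij}(a),e_{jk}(1)]$, valid for distinct indices $i,j,k$: when $n\geq 3$ this simultaneously realizes every generator of $E_n(A)$ as a commutator in $E_n(A)$, yielding both $E_n(A)\subseteq[GL_n(A),GL_n(A)]$ and $E_n(A)=[E_n(A),E_n(A)]$. The corner case $n=2$ (which can occur only if $\sr(A)=1$) uses the many-units hypothesis directly: one locates a central unit $u\in A^{*}$ with $u^2-1\in A^{*}$ and exploits the identity $[\diag(u,u^{-1}),e_{12}(b)]=e_{12}((u^2-1)b)$, together with its analogue for $e_{21}$, to realize every elementary matrix as a commutator in $GL_2(A)$; combining this with the standard many-units verification that $\diag(u,u^{-1})\in E_2(A)$ (as in \cite{SuslinNesterenko}) secures the perfectness of $E_2(A)$ as well.

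Finally, any perfect subgroup $P\subseteq GL_n(A)$ satisfies $P=[P,P]\subseteq[GL_n(A),GL_n(A)]=E_n(A)$, so $E_n(A)$, being perfect itself, is the maximal perfect subgroup. The main obstacle in the whole argument is the $n=2$ edge case: the three-index Steinberg trick is unavailable there, and both the commutator identity and perfectness have to be extracted from the many-units hypothesis by hand rather than from formal manipulations of elementary matrices.
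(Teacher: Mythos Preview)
Your proof is correct and close in structure to the paper's, but it is more elementary at two points.  For the equality $E_n(A)=SG_n(A)$ and the surjectivity $GL_n(A)\to K_1(A)$ both you and the paper invoke classical Bass/Vaser\v{s}te\u{\i}n $K_1$-stability (the paper cites \cite{Vaserstein:Stabilization} for $GL_n/E_n\cong K_1$).  The paper, however, obtains $GL_n(A)^{ab}=K_1(A)$ --- and hence $SG_n(A)=[GL_n(A),GL_n(A)]$ --- by quoting the Suslin--Nesterenko $GL_n$-homology stability theorem from \cite{SuslinNesterenko}, whereas you get the same conclusion directly from $GL_n/E_n\cong K_1$ plus the Steinberg commutator identity.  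For perfectness when $n=2$, the paper appeals to its own freshly proved Theorem~\ref{thm:SGHstability} (yielding $H_1(SG_n(A))\cong H_1(E(A))=0$), while you argue by the explicit commutator $[\diag(u,u^{-1}),e_{12}(b)]=e_{12}((u^2-1)b)$; this very trick is in fact used later in the paper (proof of the lemma in Section~\ref{sec:EulerClGps} showing $(\tilde{E}_n)_x\cong SL_n(O_{X,x})$).  One small correction: the fact that $\diag(u,u^{-1})\in E_2(A)$ is the classical Whitehead identity and needs no many-units hypothesis; the many-units assumption enters only in producing a \emph{central} unit $u$ with $u^2-1\in A^*$.
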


\begin{proof}
We clearly have $E_n(A) \subset SG_n(A)$.
It follows from the $GL_n$-version of Theorem \ref{thm:SGHstability} proved in \cite{SuslinNesterenko} that
the natural map $GL_n(A) \to K_1(A)$ is surjective for $r\geq \sr(A)$ and $GL_n(A)^{ab}=K_1(A)$ for $n>\sr(A)$.
Therefore, $GL_n(A)/SG_n(A) \cong K_1(A)$ for $r\geq \sr(A)$ and $SG_n(A)=[GL_n(A),GL_n(A)]$ for $n>\sr(A)$.

For the rest of the proof assume $n>\sr(A)$.
From \cite[Theorem 3.2]{Vaserstein:Stabilization}, we have $GL_n(A)/E_n(A)=K_1(A)$, hence $E_n(A)=SG_n(A)$.
Classically, the group $E_n(A)$ is perfect for $n\geq 3$ \cite[Corollary 1.5]{Bass:StableK}.
Alternatively, from Theorem \ref{thm:SGHstability}, we have an isomorphism
$$H_1(SG_n(A))\cong H_1(E(A)) = 0$$
since the infinite elementary linear group $E(A)$ is perfect and
$\colim_rSG_r(A) = E(A)$.
Hence, the group $E_n(A)$ is perfect (also when $n=2$).
Since the quotient $GL_n(A)/E_n(A)=K_1(A)$ is abelian, the group $E_n(A)$ is the maximal perfect subgroup of $GL_n(A)$.
\end{proof}

\begin{theorem}
\label{thm:Estability}
Let $A$ be a ring with many units.
Then the natural homomorphism
$$H_i(E_{n-1}(A),\Z) \to H_i(E_n(A),\Z)$$
is an isomorphism for $n\geq i + \sr(A) +1$ and surjective for $n\geq i+\sr(A)$.
\end{theorem}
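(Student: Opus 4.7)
The plan is to reduce Theorem \ref{thm:Estability} directly to the already-proved Theorem \ref{thm:SGHstability} using Lemma \ref{lem:MaxPerf}, which identifies $E_n(A)$ with $SG_n(A)$ as soon as $n>\sr(A)$. So the whole proof amounts to checking that the numerical ranges in the two theorems are compatible, and handling the one boundary case where $E_{n-1}(A)$ might still be a proper subgroup of $SG_{n-1}(A)$.

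First I would dispose of $i=0$ trivially ($H_0=\Z$ with both maps the identity). For $i\geq 1$ in the isomorphism range $n\geq i+\sr(A)+1$, we have $n\geq \sr(A)+2$, so both $n>\sr(A)$ and $n-1>\sr(A)$. Lemma \ref{lem:MaxPerf} then gives $E_n(A)=SG_n(A)$ and $E_{n-1}(A)=SG_{n-1}(A)$, and the claim is immediate from the isomorphism part of Theorem \ref{thm:SGHstability}.

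For the surjectivity range $n\geq i+\sr(A)$ with $i\geq 1$: certainly $n>\sr(A)$, so $E_n(A)=SG_n(A)$ by Lemma \ref{lem:MaxPerf}. If moreover $n\geq \sr(A)+2$, we also have $E_{n-1}(A)=SG_{n-1}(A)$ and surjectivity follows from Theorem \ref{thm:SGHstability}. The only remaining case is $n=\sr(A)+1$ (forcing $i=1$), where $E_{n-1}(A)$ might be strictly smaller than $SG_{n-1}(A)$; however the target
\[
H_1(E_n(A))=H_1(SG_{\sr(A)+1}(A))=0
\]
vanishes, since Lemma \ref{lem:MaxPerf} asserts that $E_n(A)=SG_n(A)$ is perfect. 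Hence surjectivity is automatic in this boundary case.

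There is essentially no obstacle, since all the substance has already been packaged into Theorem \ref{thm:SGHstability} and Lemma \ref{lem:MaxPerf}. The only point that requires a moment's attention is the boundary index $n=\sr(A)+1$ of the surjectivity statement, where one cannot simply replace $E$ by $SG$ on the source side; this is bypassed by noting that the target is trivially zero because $E_n(A)$ is perfect there.
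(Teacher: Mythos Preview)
Your proposal is correct and follows exactly the paper's approach: the paper's proof is the single line ``This follows from Theorem \ref{thm:SGHstability} in view of Lemma \ref{lem:MaxPerf}.'' Your write-up simply unpacks this, and your explicit treatment of the boundary case $n=\sr(A)+1$, $i=1$ (where $E_{n-1}$ need not equal $SG_{n-1}$ but the target $H_1(E_n(A))$ vanishes by perfection) is a welcome clarification of a point the paper leaves implicit.
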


\begin{proof}
This follows from Theorem \ref{thm:SGHstability} in view of Lemma \ref{lem:MaxPerf}.
\end{proof}

Denote by $BGL_n^+(A)$ the space obtained by applying Quillen's plus construction to the classifying space $BGL_n(A)$ of $GL_n(A)$ with respect to the maximal perfect subgroup of $GL_n(A)$.
The following proves a conjecture of Bass \cite[Conjecture XVI on p.\ 43]{BassConj} in the case of rings with many units.

\begin{theorem}
\label{thm:BassConjecture}
Let $A$ be a ring with many units, and let $n\geq 1$ be an integer. 
Then the natural homomorphism
$$\pi_iBGL_{n-1}^+(A) \to \pi_iBGL_n^+(A)$$
is an isomorphism for $n\geq i + \sr(A) +1$ and surjective for $n\geq i+\sr(A)$.
\end{theorem}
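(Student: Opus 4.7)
The strategy is to deduce homotopy stability of $BGL_n^+(A)$ from the homology stability of the elementary groups $E_n(A)$ established in Theorem~\ref{thm:Estability}, using a fibration linking the two. By Lemma~\ref{lem:MaxPerf}, for $n>\sr(A)$ the subgroup $E_n(A)=SG_n(A)$ is the maximal perfect subgroup of $GL_n(A)$, is normal, and has abelian quotient $K_1(A)$. Quillen's fibration theorem for the plus construction (applicable precisely because $E_n(A)$ is both normal in and the maximal perfect subgroup of $GL_n(A)$) then produces a homotopy fibration
$$BE_n(A)^+ \longrightarrow BGL_n^+(A) \longrightarrow BK_1(A),$$
compatible with the stabilization maps in $n$, whose fiber is simply connected since $E_n(A)$ is perfect.

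Because the plus construction is a homology equivalence, Theorem~\ref{thm:Estability} gives the same stability ranges for the homology of $BE_n(A)^+$: the map $H_i(BE_{n-1}(A)^+,\Z) \to H_i(BE_n(A)^+,\Z)$ is an isomorphism for $n\geq i+\sr(A)+1$ and a surjection for $n\geq i+\sr(A)$. Since both $BE_n(A)^+$ and $BE_{n-1}(A)^+$ are simply connected whenever $n-1>\sr(A)$, the relative Hurewicz theorem applied to the mapping cone (equivalently, a through-degree version of Whitehead's theorem) converts this into the analogous stability statement for the homotopy groups $\pi_i BE_n(A)^+$.

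Finally, the long exact sequence of the fibration, combined with $\pi_i BK_1(A)=0$ for $i\geq 2$, yields $\pi_i BGL_n^+(A) \cong \pi_i BE_n(A)^+$ for $i\geq 2$, so the stability transfers. The case $i=1$ follows directly: for $n>\sr(A)$ one has $\pi_1 BGL_n^+(A)=K_1(A)$, stable in $n$, and the edge surjectivity at $n=\sr(A)+1$ reduces to the surjectivity of $GL_{\sr(A)}(A)\to K_1(A)$ from Lemma~\ref{lem:MaxPerf}, via the factorization of this map through $GL_{\sr(A)}(A)/P$, where $P$ denotes the maximal perfect subgroup. The case $i=0$ is trivial. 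The main technical input is the fibration in the first step, whose existence rests entirely on Lemma~\ref{lem:MaxPerf}; once that is granted, the remainder is a routine application of standard comparison theorems in homotopy theory.
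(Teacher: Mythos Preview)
Your proof is correct and follows essentially the same strategy as the paper: reduce to the simply connected spaces $BE_n(A)^+$ using Lemma~\ref{lem:MaxPerf}, then convert the homology stability of Theorem~\ref{thm:Estability} into homotopy stability via a Hurewicz-type argument. The only organizational difference is that you pass through the vertical fibration $BE_n(A)^+ \to BGL_n^+(A) \to BK_1(A)$ and then invoke Whitehead's theorem for the map $BE_{n-1}(A)^+ \to BE_n(A)^+$, whereas the paper works directly with the horizontal homotopy fibre $\F_n(A)$ of $BGL_{n-1}^+(A)\to BGL_n^+(A)$, identifies it with the fibre of $BE_{n-1}^+(A)\to BE_n^+(A)$, and computes its homology via the relative Serre spectral sequence before applying Hurewicz; the two routes are equivalent.
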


\begin{proof}
The case $i=0$ is trivial and
the case $i=1$ follows at once from Lemma \ref{lem:MaxPerf}.

Now, assume $i\geq 2$ and $n\geq i+\sr(A)\geq 2 +\sr(A)$.
Denote by $\F_n(A)$ the homotopy fibre of the map 
$BGL_{n-1}^+(A) \to BGL_n^+(A)$.
If follows from Lemma \ref{lem:MaxPerf}, that $\F_n$ is also the homotopy fibre of $BE_{n-1}^+(A) \to BE_n^+(A)$.
Since $E_{n-1}(A)$ and $E_n(A)$ are perfect, $\F_n(A)$ is connected and $\pi_1\F_n(A)$ is abelian as a quotient of $\pi_2BE_n^+(A)$.
From Theorem \ref{thm:Estability} and the (relative) Serre spectral sequence
$$E^2_{r,s}=H_r(BE_n^+(A),H_s(\pt,\F_n)) \Rightarrow H_{r+s}(BE_n^+(A),BE_{n-1}^+(A))$$
associated to the fibration 
$\F_n \to BE_{n-1}^+(A) \to BE_n^+(A)$
with  simply connected base we find
$$H_{i-1}(\F_n,\pt)=H_i(\pt,\F_n)=H_i(E_n(A),E_{n-1}(A))$$
for $i\leq n-\sr(A)+1$ where $\pt\in \F_n(A)$ denotes the base point of $\F_n(A)$.
By Hurewicz's Theorem, it follows from Theorem \ref{thm:Estability} that the natural map $$\pi_{i-1}(\F_n(A),\pt) \to H_{i-1}(\F_n(A),\pt)$$ is an isomorphism for $i\leq n-\sr(A)+1$.
In particular, we have
$$\pi_{i-1}(\F_n(A),\pt) = H_i(E_n(A),E_{n-1}(A))=0,\hspace{3ex}i\leq n-\sr(A).$$
Hence the result.
\end{proof}

Recall that for a commutative ring $A$ and integer $n\geq 1$,  the special linear group $SL_n(A)$ of $A$ is the kernel of the determinant map $GL_n(A) \to A^*$.

\begin{theorem}
\label{thm:BassConjectureBis}
Let $A$ be a local commutative ring with infinite residue field, and let $n\geq 2$ be an integer. 
Denote by $\F_n(A)$ the homotopy fibre of the map 
$BGL_{n-1}^+(A) \to BGL_n^+(A)$.
Then for $i\geq 1$ we have 
$$\pi_{i-1}\F_n(A) = \left\{\begin{array}{ll} 0, & i <n \\ H_i(SL_n(A),SL_{n-1}(A)), & i=n.\end{array}\right.$$
\end{theorem}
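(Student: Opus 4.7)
The plan is to obtain the theorem as a direct consequence of the proof of Theorem \ref{thm:BassConjecture} specialised to the case $\sr(A) = 1$, combined with the identification $E_n(A) = SL_n(A)$ available for commutative local rings. Specifically, for a commutative local ring $A$ one has $\sr(A) = 1$ and the determinant induces an isomorphism $K_1(A) \cong A^*$ (since $A^* = GL_1(A) \to K_1(A)$ is surjective by $\sr(A) = 1$ and split by $a \mapsto \diag(a,1,\dots,1)$), so that $SG_n(A) = SL_n(A)$ for every $n \geq 1$. Combined with Lemma \ref{lem:MaxPerf}, this yields $E_n(A) = SL_n(A)$ for every $n \geq 2$ (while $E_1(A) = SL_1(A) = 1$ trivially).

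To handle the vanishing $\pi_{i-1}\F_n(A) = 0$ for $i < n$, I feed the isomorphism/surjection ranges provided by Theorem \ref{thm:BassConjecture} (for $\sr(A) = 1$) into the long exact sequence of homotopy groups of the fibration $\F_n(A) \to BGL_{n-1}^+(A) \to BGL_n^+(A)$. This immediately yields $\pi_{i-1}\F_n(A) = 0$ whenever $n \geq i+1$, that is, for $i \leq n-1$.

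For the edge case $i = n$, I rerun the Hurewicz and Serre spectral sequence argument from the proof of Theorem \ref{thm:BassConjecture}. The homotopy fibre $\F_n(A)$ agrees with the fibre of $BE_{n-1}^+(A) \to BE_n^+(A)$ by Lemma \ref{lem:MaxPerf}, and since $E_{n-1}(A)$ and $E_n(A)$ are perfect, $\F_n(A)$ is connected with abelian fundamental group (as a quotient of $\pi_2 BE_n^+(A)$). The relative Serre spectral sequence of the fibration $\F_n(A) \to BE_{n-1}^+(A) \to BE_n^+(A)$, combined with the vanishing $H_j(E_n(A), E_{n-1}(A)) = 0$ for $j \leq n-1$ supplied by Theorem \ref{thm:Estability}, gives the identification $H_{j-1}(\F_n(A), \pt) \cong H_j(E_n(A), E_{n-1}(A))$ for all $j \leq n$. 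In particular $\F_n(A)$ has vanishing reduced integral homology in degrees $\leq n-2$, so by Hurewicz (valid because $\pi_1 \F_n(A)$ is abelian) $\F_n(A)$ is $(n-2)$-connected and
\[
\pi_{n-1}\F_n(A) \;\cong\; H_{n-1}(\F_n(A), \pt) \;\cong\; H_n(E_n(A), E_{n-1}(A)) \;=\; H_n(SL_n(A), SL_{n-1}(A)),
\]
where the last equality uses the identifications from the first paragraph.

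No substantial obstacle is foreseen: the only delicate point is verifying that the Hurewicz/Serre spectral sequence identifications from the proof of Theorem \ref{thm:BassConjecture} remain valid at the very edge $i = n - \sr(A) + 1 = n$, which is in fact already made explicit in that proof (the Hurewicz isomorphism is stated there for the whole range $i \leq n-\sr(A)+1$, not merely for $i \leq n-\sr(A)$).
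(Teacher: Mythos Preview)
Your proposal is correct and follows essentially the same approach as the paper: specialise the Serre spectral sequence and Hurewicz argument from the proof of Theorem~\ref{thm:BassConjecture} to the case $\sr(A)=1$, after identifying $E_r(A)=SL_r(A)$ for all $r\geq 1$. The paper's own proof is in fact just a two-line remark to this effect, noting that the only improvement over Theorem~\ref{thm:BassConjecture} is that $\F_n$ is already the homotopy fibre of $BE_{n-1}^+(A)\to BE_n^+(A)$ for $n\geq 2$ (rather than $n\geq \sr(A)+2$), which you handle by treating $E_1(A)=SL_1(A)=1$ separately.
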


\begin{proof}
Note that for $A$ as in the theorem $E_r(A)=SL_r(A)$ is the maximal perfect subgroup of $GL_r(A)$ for all $r\geq 1$.
Now the proof is the same as for Theorem \ref{thm:BassConjecture}, the only improvement being that $\F_n$ is already the homotopy fibre of $BE_{n-1}(A)^+ \to BE_{n}(A)^+$ for $n\geq 2$.
\end{proof}

\begin{theorem}
\label{thm:SLstability}
Let $A$ be a commutative ring with many units and $n\geq 2$ an integer.
Then the natural homomorphism
$$H_i(SL_{n-1}(A),\Z) \to H_i(SL_n(A),\Z)$$
is an isomorphism for $n\geq i + \sr(A) +1$ and surjective for $n\geq i+\sr(A)$.
\end{theorem}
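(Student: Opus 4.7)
The plan is to deduce stability for $SL_n(A)$ from Theorem~\ref{thm:Estability} via a comparison of Hochschild--Serre spectral sequences associated to the short exact sequence
\[
1\to E_n(A)\to SL_n(A)\to SK_1(A)\to 1,
\]
valid for all $n>\sr(A)$, where $SK_1(A):=\ker(K_1(A)\to A^*)$. Its validity uses Lemma~\ref{lem:MaxPerf}: in this range $E_n(A)=SG_n(A)$ is precisely $\ker(GL_n(A)\to K_1(A))$, and since $GL_n(A)\to K_1(A)$ is already surjective, the image of $SL_n(A)$ in $K_1(A)$ equals the kernel of the determinant $K_1(A)\to A^*$, namely $SK_1(A)$, which is independent of $n$. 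The stabilization $SL_{n-1}(A)\hookrightarrow SL_n(A)$ thus fits into a ladder of such extensions, compatible with the identity on $SK_1(A)$.

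For $n-1>\sr(A)$, Hochschild--Serre produces a map of convergent first-quadrant spectral sequences
\[
E^2_{p,q}(n-1)=H_p\bigl(SK_1(A),H_q(E_{n-1}(A))\bigr)\;\Longrightarrow\;H_{p+q}(SL_{n-1}(A))
\]
into the analogous sequence for $n$. The $SK_1(A)$-action on $H_*(E_n(A))$ comes from the outer conjugation action of $SL_n(A)$ on $E_n(A)$: this is well-defined since inner automorphisms of $E_n(A)$ act trivially on $H_*(E_n(A))$, and is compatible with stabilization because any representative in $SL_{n-1}(A)\subset SL_n(A)$ acts compatibly under $E_{n-1}(A)\hookrightarrow E_n(A)$. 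By Theorem~\ref{thm:Estability}, $H_q(E_{n-1}(A))\to H_q(E_n(A))$ is an isomorphism for $n\geq q+\sr(A)+1$ and a surjection for $n\geq q+\sr(A)$; by functoriality of $H_p(SK_1(A),-)$, the same conclusions hold on the $E^2$-pages. The standard comparison theorem for convergent spectral sequences then yields the required isomorphism and surjectivity of $H_i(SL_{n-1}(A))\to H_i(SL_n(A))$ in the stated range, for all $n\geq \sr(A)+2$.

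The only remaining edge case is surjectivity at $i=1$, $n=\sr(A)+1$. Since $n>\sr(A)$ forces $[SL_n(A),SL_n(A)]\subset[GL_n(A),GL_n(A)]=E_n(A)$ (the reverse containment coming from perfectness of $E_n(A)$), we get $H_1(SL_{\sr(A)+1}(A))=SK_1(A)$. The map $SL_m(A)\to SK_1(A)$ is surjective already for $m\geq\sr(A)$: any $[g]\in SK_1(A)\subset K_1(A)$ lifts to $g_0\in GL_m(A)$ by the known surjectivity $GL_m(A)\twoheadrightarrow K_1(A)$, and $\det g_0=1$ since determinants agree in $K_1$. Passing to abelianizations gives the required surjectivity of $H_1(SL_{\sr(A)}(A))\to H_1(SL_{\sr(A)+1}(A))$. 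The main technical subtlety throughout is the compatibility of the $SK_1(A)$-action under stabilization, but this follows directly from naturality of the conjugation action on group homology.
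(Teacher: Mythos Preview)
Your proof is correct and follows essentially the same strategy as the paper: reduce to Theorem~\ref{thm:SGHstability}/\ref{thm:Estability} via the extension $1\to SG_n(A)\to SL_n(A)\to SK_1(A)\to 1$ and the associated Serre spectral sequence. The one difference is organizational: the paper applies the \emph{relative} Serre spectral sequence
\[
H_p\bigl(SK_1(A),H_q(SG_nA,SG_{n-1}A)\bigr)\ \Longrightarrow\ H_{p+q}(SL_nA,SL_{n-1}A),
\]
which already exists for $n\ge \sr(A)+1$ and directly yields vanishing of the relative groups in the required range, so no edge case is needed. Your variant---comparing two absolute spectral sequences and then treating $i=1$, $n=\sr(A)+1$ by hand---reaches the same conclusion with a little more bookkeeping; the relative form is cleaner but the content is identical.
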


\begin{proof}
For $r\geq \sr(A)$, the natural map $GL_r(A) \to K_1(A)$ is surjective, by Lemma \ref{lem:MaxPerf}.
When $A$ is commutative we have an exact sequence of groups for $r\geq \sr(A)$
$$1\to SG_r(A) \to SL_r(A) \to SK_1(A) \to 0.$$
Hence, for $n\geq \sr(A)+1$ we have the associated Serre spectral sequence
$$H_i(SK_1(A),H_j(SG_nA,SG_{n-1}A)) \Rightarrow H_{i+j}(SL_n(A),SL_{n-1}(A)).$$
The result now follows from Theorem \ref{thm:SGHstability}.
\end{proof}

\section{Milnor-Witt $K$-theory}
\label{sec:KMW}

Let $A$ be a commutative ring.
Recall that we denote by $A^*$ the group of units in $A$.
Elements in the integral group ring $\Z[A^*]$ of $A^*$ corresponding to $a\in A^*$ are denoted by $\langle a\rangle$.
Note that $\langle 1 \rangle =1 \in \Z[A^*]$.
We denote by $\langle\langle a\rangle\rangle$ the element $\langle\langle a\rangle\rangle = \langle a\rangle -1 \in \Z[A^*]$.
Let $I[A^*]$ be the augmentation ideal in $\Z[A^*]$,
that is, $I[A^*]$ is the kernel of the ring homomorphism $\Z[A^*] \to \Z:\langle a\rangle \mapsto 1$.
We denote by $[a]$ the element $[a]=\langle a\rangle -1\in I[A^*]$.
Under the canonical embedding $I[A^*] \subset \Z[A^*]$, the element $[a]$ maps to $\langle\langle a\rangle\rangle$.

\begin{lemma}
\label{lem:IAPresentation}
The augmentation ideal $I[A^*]$ is the $\Z[A^*]$-module generated by symbols $[a]$ for $a\in A^*$ subject to the relation
$$[ab]=[a]+\langle a \rangle [b].$$
\end{lemma}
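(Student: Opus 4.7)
The plan is to verify the relation, then exhibit an explicit two-sided inverse rather than appeal to any universal property.

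First I would check that the stated relation actually holds in $I[A^*]$: expanding $[ab] = \langle ab\rangle - 1 = (\langle a\rangle - 1) + \langle a\rangle(\langle b\rangle - 1) = [a] + \langle a\rangle[b]$. This is the only computation really needed. Letting $M$ denote the $\Z[A^*]$-module presented by the symbols $[a]$ modulo this relation, the verification above produces a well-defined surjective $\Z[A^*]$-linear map $\pi\colon M \twoheadrightarrow I[A^*]$ sending $[a]$ to $\langle a\rangle - 1$, since these generate $I[A^*]$ as an abelian group.

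Next I would construct the inverse $\varphi\colon I[A^*] \to M$. The key observation is that $I[A^*]$ is $\Z$-free on the basis $\{\langle a\rangle - 1 : a\in A^*, a\ne 1\}$, so it is legitimate to define $\varphi$ on this basis by $\varphi(\langle a\rangle - 1) = [a]$ (and $\varphi(0) = 0$, matching the fact that $[1] = 0$ in $M$, as forced by setting $a=b=1$ in the defining relation). The only nonobvious thing is $\Z[A^*]$-linearity, which reduces to checking it for the action of a generator $\langle c\rangle$: rewriting $\langle c\rangle(\langle a\rangle - 1) = (\langle ca\rangle - 1) - (\langle c\rangle - 1)$, applying $\varphi$ yields $[ca] - [c]$, and the defining relation for $M$ rewrites this as $\langle c\rangle[a] = \langle c\rangle\varphi(\langle a\rangle - 1)$.

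Finally $\pi\circ\varphi$ is the identity on the $\Z$-basis of $I[A^*]$, and $\varphi\circ\pi$ is the identity on the $\Z[A^*]$-generators $[a]$ of $M$, so both composites are the identity. This identifies $M$ with $I[A^*]$. The only conceptual point is that $M$ is generated over $\Z[A^*]$ (not over $\Z$) by the symbols $[a]$, which is why one must verify linearity of $\varphi$ under the $\Z[A^*]$-action; this is the step where the defining relation is used, and I would expect it to be the only real obstacle in an otherwise routine verification.
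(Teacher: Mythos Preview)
Your proof is correct and follows essentially the same approach as the paper: define the obvious map $\pi$ from the presented module to $I[A^*]$, then use the $\Z$-basis $\{\langle a\rangle - 1 : a \neq 1\}$ to construct a map $\varphi$ back. The only tactical difference is that the paper does not bother to verify $\Z[A^*]$-linearity of $\varphi$; instead it checks only that $\pi\circ\varphi = \mathrm{id}$ and that $\varphi$ is \emph{surjective} (via $\langle b\rangle[a] = [ab]-[b]$), which together force $\pi$ to be a bijection. Your route of verifying linearity and then checking both composites on generators is equally valid and only marginally longer.
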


\begin{proof}
Clearly, the equation $[ab]=[a]+\langle a \rangle [b]$ holds in $I[A^*]$.
Let $\tilde{I}_{A^*}$ be the $\Z[A^*]$-module generated by symbols $[a]$ for $a\in A^*$ subject to the relation
$[ab]=[a]+\langle a \rangle [b]$.
Note that $[1]=0$ in $\tilde{I}_{A^*}$ because $[1\cdot 1]=[1]+\langle 1 \rangle [1]$ and $\langle 1 \rangle =1 \in \Z[A^*]$.

Consider the $\Z[A^*]$-module map 
$\tilde{I}_{A^*} \to I[A^*]: [a]\mapsto \langle a\rangle -1$.
The set consisting of $\langle a\rangle -1 \in I[A^*]$, $a\in A^*$, $a\neq 1$, defines a $\Z$-basis of $I[A^*]$.
This allows us to define a $\Z$-linear homomorphism
$I[A^*] \to \tilde{I}_{A^*}:\langle a\rangle -1 \mapsto [a]$.
Clearly, the composition $I[A^*] \to \tilde{I}_{A^*} \to I[A^*]$ is the identity.
Finally, the map $I[A^*] \to \tilde{I}_{A^*}$ is surjective because
$\langle b \rangle [a] = [ab]-[b]$.
\end{proof}

\begin{definition}
\label{dfn:KMWhat}
Let $A$ be a commutative ring.
We define the graded ring $\hat{K}^{MW}_*(A)$ 
as the tensor algebra of the augmentation ideal $I[A^*]$ (placed in degree $1$) over the group ring $\Z[A^*]$ modulo the Steinberg relation $[a][1-a]=0$ for $a,1-a\in A^*$:
$$\hat{K}^{MW}_*(A)=\bigoplus_{n\geq 0}\hat{K}^{MW}_n(A)=\Tens_{\Z[A^*]}(I[A^*])/[a][1-a].$$
\end{definition}

In view of Lemma \ref{lem:IAPresentation}, the graded ring $\hat{K}^{MW}_*(A)$ is the $\Z[A^*]$-algebra generated by symbols $[a]$, $a\in A^*$, in degree $1$ subject to the relations
\begin{enumerate}
\item
For $a,b\in A^*$ we have $[ab]=[a]+\langle a\rangle [b]$.
\item
For $a,1-a\in A^*$ we have the Steinberg relation $[a][1-a]=0$.
\end{enumerate}

It is convenient to write $[a_1,...,a_n]$, $h$ and $\eps$ for the following elements in $\hat{K}^{MW}_*(A)$
$$[a_1,...,a_n] = [a_1]\cdots [a_n] \in \hat{K}_n^{MW}(A),$$
$$h=1+\langle -1\rangle,\hspace{2ex}
\eps = -\langle -1\rangle \in \hat{K}_0^{MW}(A)$$
where $a_1,...,a_n\in A^*$.

\begin{lemma}
\label{Hatlem:basicKMWformulas}
Let $A$ be a commutative ring.
Then in the ring $\hat{K}^{MW}_*(A)$
we have for all $a,b,c,d\in A^*$ the following relations.
\begin{enumerate}
\item
\label{Hatlem:basicKMWformulas:1}
$[ab]=[a]+\langle a\rangle [b]$
\item
\label{Hatlem:basicKMWformulas:2}
$\langle 1 \rangle = 1$ and $[1]=0$, 
\item
\label{Hatlem:basicKMWformulas:3}
$\langle a b\rangle = \langle a\rangle \cdot \langle b\rangle$
and $\langle a\rangle$ is central in $\hat{K}^{MW}_*(A)$.
\item
\label{Hatlem:basicKMWformulas:2.5}
$[\frac{a}{b}]=[a]-\langle \frac{a}{b}\rangle [b]$, in particular, $[b^{-1}]=-\langle b^{-1}\rangle [b]$.
\item
\label{Hatlem:TowardsGWactionOnTwistedK:1}
$\langle\langle a \rangle\rangle [b] = \langle\langle b\rangle\rangle [a]$.
\item
\label{Hatlem:TowardsGWactionOnTwistedK:2}
If $a+b=1$ then 
$\langle\langle a\rangle\rangle[b,c]=0$.
\item
\label{Hatlem:TowardsGWactionOnTwistedK:3}
If $a+b=1$ then
$\langle\langle a\rangle\rangle \langle\langle b\rangle\rangle [c,d]=0$.
\item
\label{Hatlem:TowardsGWactionOnTwistedK:4}
$\langle\langle a\rangle\rangle [b,c] = \langle\langle a\rangle\rangle [c,b]$.
\end{enumerate}
\end{lemma}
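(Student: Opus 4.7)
The plan is to prove the eight items in order, with each later one building on the earlier ones together with the Steinberg relation and the centrality of scalars.

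Items (1)--(4) are essentially formal. Specifically, (1) is just the defining relation of $I[A^*]$ from Lemma \ref{lem:IAPresentation} transported to $\hat{K}^{MW}_*(A)$; (2) follows by applying (1) to $a=b=1$, which gives $[1]=2[1]$ hence $[1]=0$, while $\langle 1\rangle=1$ holds already in $\Z[A^*]$; (3) is immediate because the group ring $\Z[A^*]$ is commutative and serves as the base ring of the tensor algebra $\Tens_{\Z[A^*]}(I[A^*])$, hence acts centrally in the quotient $\hat{K}^{MW}_*(A)$; and (4) follows by applying (1) to the factorization $a=(a/b)\cdot b$, with $b^{-1}$ as the case $a=1$. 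Item (5) uses commutativity of $A$: equating the two expansions of $[ab]=[ba]$ via (1) yields $[a]+\langle a\rangle[b]=[b]+\langle b\rangle[a]$, which rearranges to $(\langle a\rangle-1)[b]=(\langle b\rangle-1)[a]$.

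For the key items (6)--(8), the main device is to use (5) to "transpose" a scalar of the form $\langle\langle x\rangle\rangle$ past a neighboring generator $[y]$, combined with centrality from (3), in order to bring the generators into a position where the Steinberg relation can be applied. Concretely, for (6) one computes
\begin{equation*}
\langle\langle a\rangle\rangle[b,c]=\langle\langle b\rangle\rangle[a][c]=[a]\langle\langle b\rangle\rangle[c]=[a]\langle\langle c\rangle\rangle[b]=\langle\langle c\rangle\rangle[a,b],
\end{equation*}
using (5) twice and centrality, and then $[a,b]=[a,1-a]=0$ by Steinberg. Item (7) reduces to (6): slide $\langle\langle b\rangle\rangle$ past $[c]$ via (5) to rewrite
\begin{equation*}
\langle\langle a\rangle\rangle\langle\langle b\rangle\rangle[c,d]=\langle\langle c\rangle\rangle\langle\langle a\rangle\rangle[b,d],
\end{equation*}
and the inner factor vanishes by (6). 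Item (8) follows similarly by bringing each of $\langle\langle a\rangle\rangle[b,c]$ and $\langle\langle a\rangle\rangle[c,b]$ to the common form $\langle\langle c\rangle\rangle[a,b]$ via (5) and centrality.

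There is no essential obstacle in this lemma; the only substantive step is identifying the "slide via (5)" maneuver that produces the Steinberg factor in (6), after which (7) and (8) are immediate corollaries.
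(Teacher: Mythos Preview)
Your proof is correct and follows essentially the same approach as the paper's. The only cosmetic differences are that the paper handles (6) via the single slide $\langle\langle a\rangle\rangle[b,c]=\langle\langle c\rangle\rangle[b,a]$ (sliding past $[c]$ on the right) rather than your two slides ending at $\langle\langle c\rangle\rangle[a,b]$, and that the paper proves (7) directly as $\langle\langle a\rangle\rangle\langle\langle b\rangle\rangle[c,d]=\langle\langle c\rangle\rangle\langle\langle d\rangle\rangle[a,b]=0$ rather than reducing to (6); these are the same argument in different word order.
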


\begin{proof}
Items (\ref{Hatlem:basicKMWformulas:1})- (\ref{Hatlem:basicKMWformulas:3}) follow from the definition of $\hat{K}^{MW}_*(A)$.

(\ref{Hatlem:basicKMWformulas:2.5}) 
We have $[a]=[\frac{a}{b}\cdot b] = [\frac{a}{b}]+ \langle \frac{a}{b}\rangle [b]$.

(\ref{Hatlem:TowardsGWactionOnTwistedK:1})
We have $[a]+\langle a\rangle [b] = [ab] = [b] + \langle b\rangle [a]$ which is 
$\langle\langle a \rangle\rangle [b] = \langle\langle b\rangle\rangle [a]$.

(\ref{Hatlem:TowardsGWactionOnTwistedK:2})
By (\ref{Hatlem:TowardsGWactionOnTwistedK:1}) and the Steinberg relation we have $\langle\langle a\rangle\rangle[b,c] = \langle\langle c\rangle\rangle[b,a] = 0$.

(\ref{Hatlem:TowardsGWactionOnTwistedK:3})
Similarly, we have
 $\langle\langle a\rangle \rangle \langle\langle b\rangle\rangle [c,d] =
\langle\langle c\rangle \rangle \langle\langle d\rangle\rangle [a,b] = 0$,
by (\ref{Hatlem:TowardsGWactionOnTwistedK:1}) and the Steinberg relation.

(\ref{Hatlem:TowardsGWactionOnTwistedK:4})
From (\ref{Hatlem:TowardsGWactionOnTwistedK:1}) and (\ref{Hatlem:basicKMWformulas:3}) we have
$\langle\langle a\rangle\rangle [b,c]= \langle\langle b\rangle\rangle [a,c]
=\langle\langle c\rangle\rangle [a,b] = \langle\langle a\rangle\rangle [c,b]$.
\end{proof}

\begin{lemma}
\label{Hatlem:basicKMWformulasTwo}
Let $A$ be either a field or a commutative local ring whose residue field has at least $4$ elements.
Then for all $a,b,c\in A^*$ the following relations hold in $\hat{K}^{MW}_*(A)$.
\begin{enumerate}
\item
\label{Hatlem:basicKMWformulas:4}
$[a][-a]=0$
\item
\label{Hatlem:basicKMWformulas:5}
$[a][a]=[a][-1]=[-1][a]$
\item
\label{Hatlem:basicKMWformulas:7}
$[a][b]=\eps [b][a]$ where $\eps =-\langle -1 \rangle$
\item
\label{lem:TowardsGWactionOnTwistedK:5}
$\langle\langle a \rangle\rangle\cdot h\cdot [b,c] = 0$
where $h=1+ \langle -1 \rangle$.
\item
\label{lem:TowardsGWactionOnTwistedK:6}
$[a^2,b]=h\cdot [a,b]$
\item
\label{lem:TowardsGWactionOnTwistedK:7}
$\langle\langle a^2\rangle\rangle [b,c]=0$, in particular, 
$\langle a^2\rangle[b,c]=[b,c]$.
\end{enumerate}
\end{lemma}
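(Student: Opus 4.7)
The plan is to derive the six identities in sequence, each building on the earlier ones and on Lemma \ref{Hatlem:basicKMWformulas}.

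For (1), when $1-a \in A^*$, I substitute $-a = (1-a)(1-a^{-1})^{-1}$ and expand via Lemma \ref{Hatlem:basicKMWformulas}(1) and (4) to obtain $[-a] = [1-a] - \langle -a\rangle[1-a^{-1}]$. Multiplying by $[a]$, the first term vanishes by the Steinberg relation, while the second is converted via $[a^{-1}] = -\langle a^{-1}\rangle[a]$ into a scalar multiple of $[a^{-1}][1-a^{-1}] = 0$. This handles every $a$ in a field, and every $a$ with $\bar a \neq 1$ in a local ring. For the local-ring case $\bar a = 1$, I use the alternative factorisation $1+a = a(1+a^{-1})$: the Steinberg identity $[1+a^{-1},-a^{-1}] = 0$ (valid when $\bar a \neq -1$), together with $[-a^{-1}] = -\langle -a^{-1}\rangle[-a]$, yields $[1+a^{-1},-a] = 0$; expanding $[a] = [1+a] - \langle a\rangle[1+a^{-1}]$ then gives $[a,-a] = 0$.

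For (2), I feed (1) and its companion $[-a,a] = 0$ through the expansions $[-a] = [-1] + \langle -1\rangle[a]$ and $[-a] = [a] + \langle a\rangle[-1]$. The first gives $[a,-1] = \eps[a,a]$ and the second $[-1,a] = -\langle a^{-1}\rangle[a,a]$. Applying the first identity also to $a^{-1}$, and using $[a^{-1}] = -\langle a^{-1}\rangle[a]$, produces $(1+\langle a\rangle)[a,a] = 0$, whence $[-1,a] = [a,a]$. A direct computation gives $h[-1] = [-1] + \langle -1\rangle[-1] = [-1] - [-1] = 0$, so multiplying by $[a]$ yields $h[a,a] = h[-1,a] = 0$, and therefore $[a,-1] = \eps[a,a] = (1-h)[a,a] = [a,a]$. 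For (3), I expand $[ab,-ab] = 0$ using $[-ab] = [-a] + \langle -a\rangle[b]$: (1) kills the $[a,-a]$-term, and (2) combines the $[b,b]$-contribution into $\langle a^2\rangle h[b,b] = 0$, leaving $\langle -a\rangle[a,b] + \langle a\rangle[b,a] = 0$, which is (3).

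The remaining identities are short. For (5), I rewrite $[a^2,b] = (1+\langle a\rangle)[a,b]$ and observe that $(\langle a\rangle - \langle -1\rangle)[a,b] = \langle -1\rangle\langle\langle -a\rangle\rangle[a,b]$; Lemma \ref{Hatlem:basicKMWformulas}(5) applied in the second slot turns $\langle\langle -a\rangle\rangle[a,b]$ into $\langle\langle b\rangle\rangle[a,-a]$, which vanishes by (1). For (4), I use (3) to write $h[b,c] = [b,c] - [c,b]$, then multiply by $\langle\langle a\rangle\rangle$ and shift it via Lemma \ref{Hatlem:basicKMWformulas}(5), obtaining $\langle\langle a\rangle\rangle h[b,c] = \langle\langle b\rangle\rangle[a,c] - \langle\langle c\rangle\rangle[a,b]$; two further applications of (3) show $\langle\langle b\rangle\rangle[a,c] = \eps^2 \langle\langle c\rangle\rangle[a,b] = \langle\langle c\rangle\rangle[a,b]$, so the two terms cancel. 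Finally (6) follows from the factorisation $\langle\langle a^2\rangle\rangle = (1+\langle a\rangle)\langle\langle a\rangle\rangle$, Lemma \ref{Hatlem:basicKMWformulas}(5), and the identity $(1+\langle a\rangle)[a,c] = h[a,c]$ (a rewriting of (5)); these combine to give $\langle\langle a^2\rangle\rangle[b,c] = \langle\langle b\rangle\rangle h[a,c] = 0$ by (4). Identity (7) is a restatement of (6).

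The hard part will be (1) in the local-ring case when $\bar a \in \{\pm 1\}$: each of the two substitutions fails for exactly one of these residues, and no single substitution handles both at once, so the argument genuinely needs both. The hypothesis $|k|\geq 4$ is precisely what ensures at least one of them remains available for every residue class.
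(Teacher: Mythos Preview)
Your proof of part (1) has a genuine gap in residue characteristic $2$. Your two substitutions require, respectively, that $1-a$ be a unit (i.e.\ $\bar a\neq 1$) and that $1+a$ be a unit (i.e.\ $\bar a\neq -1$). When the residue field $k$ has characteristic $2$ these two conditions coincide, so if $\bar a=1=-1$ \emph{neither} substitution is available. Thus your closing claim, that ``$|k|\geq 4$ is precisely what ensures at least one of them remains available for every residue class,'' is false: for $k=\mathbb F_4$ (or any $k$ of characteristic $2$) and $\bar a=1$, both fail simultaneously. Your argument as written only covers fields and local rings with residue characteristic $\neq 2$.

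The paper handles the case $\bar a=1$ by a genuinely different trick that is characteristic-free. One expands $0=[ab][-ab]$ for any $b$ with $\bar b\neq 1$ (so that the already-established case applies to $ab$), obtaining
\[
[a][-a]=-\langle -a\rangle[a][b]-\langle a\rangle[b][a].
\]
Now choose $b_1,b_2\in A^*$ with $\bar b_1,\bar b_2,\bar b_1\bar b_2\neq 1$ --- this is exactly where $|k|\geq 4$ enters, and it works in every characteristic. Applying the displayed identity with $b=b_1b_2$ and expanding $[b_1b_2]$ gives $[a][-a]=(1+\langle b_1\rangle)[a][-a]$, hence $\langle b_1\rangle[a][-a]=0$ and therefore $[a][-a]=0$.

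Your arguments for (2)--(6) are correct, though they take somewhat more circuitous routes than the paper's (for instance, the paper gets (2) in one line from $[a][a]=[a][(-1)(-a)]$, and (4) directly from Lemma~\ref{Hatlem:basicKMWformulas}(\ref{Hatlem:TowardsGWactionOnTwistedK:4}) combined with (3)). In (4) your ``two further applications of (3)'' are in fact unnecessary: one application of Lemma~\ref{Hatlem:basicKMWformulas}(\ref{Hatlem:TowardsGWactionOnTwistedK:1}) in the second slot already gives $\langle\langle b\rangle\rangle[a,c]=\langle\langle c\rangle\rangle[a,b]$ directly.
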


\begin{proof}
(\ref{Hatlem:basicKMWformulas:4})
First assume $\bar{a}\neq 1$ where $\bar{a}$ means reduction modulo the maximal ideal in $A$.
Then $1-a, 1-a^{-1}\in A^*$ and $-a = \frac{1-a}{1-a^{-1}}$.
Therefore,
$[a][-a]= [a][\frac{1-a}{1-a^{-1}}] = [a]([1-a]-\langle -a\rangle [1-a^{-1}]) = 
-\langle -a\rangle [a][1-a^{-1}] = -\langle -a\rangle \langle a\rangle [a^{-1}][1-a^{-1}] =0$ where the second to last equation is from Lemma \ref{Hatlem:basicKMWformulas} (\ref{Hatlem:basicKMWformulas:2.5}).
This already implies the case when $A$ is a field.

Now assume that $A$ is local whose residue field has at least $4$ elements.
Assume $\bar{a}=1$ and choose $b\in A^*$ with $\bar{b}\neq 1$.
Then $\bar{a}\bar{b}\neq 1$.
Therefore,
$0=[ab][-ab]=([a]+\langle a\rangle[b])[-ab] = [a][-ab]+\langle a\rangle[b][-ab]
= [a]([-a]+\langle -a\rangle [b])+\langle a\rangle [b]([a]+\langle a\rangle[-b])
=[a][-a]+\langle -a\rangle[a][b] +\langle a\rangle [b][a]$.
Hence, for all $\bar{b}\neq 1$ we have
$[a][-a]=-\langle -a\rangle [a][b]-\langle a \rangle [b][a]$.
Now, choose $b_1,b_2\in A^*$ such that $\bar{b}_1,\bar{b}_2,\bar{b}_1\bar{b}_2\neq 1$. 
This is possible if the residue field of $A$ has at least $4$ elements.
Then
$[a][-a]=-\langle -a\rangle[a][b_1b_2]-\langle a\rangle[b_1b_2][a]
= -\langle -a\rangle[a]([b_1]+\langle b_1\rangle [b_2])
 -\langle a\rangle ([b_1]+\langle b_1\rangle [b_2])[a]
= -\langle -a\rangle [a][b_1] - \langle a\rangle [b_1][a] 
+\langle b_1\rangle (-\langle -a\rangle [a][b_2]-\langle a\rangle [b_2][a])
= [a][-a]+\langle b_1 \rangle [a][-a]$.
Hence, $\langle b_1\rangle [a][-a]=0$.
Multiplying with $\langle b_1^{-1}\rangle $ yields the result.

(\ref{Hatlem:basicKMWformulas:5})
We have 
$[a][a]=[(-1)(-a)][a]=([-1]+\langle -1\rangle [-a])[a] = [-1][a]$.
Similarly, $[a][a] = [a][(-1)(-a)] = [a]([-1]+\langle -1\rangle [-a]) = [a][-1]$.

(\ref{Hatlem:basicKMWformulas:7})
We have 
$0 = [ab][-ba]=([a]+\langle a\rangle [b])[-ab]
= [a][-ab]+\langle a\rangle [b][-ab]
= [a]([-a]+\langle -a\rangle [b]) + \langle a\rangle [b]([a]+\langle a\rangle [-b]) 
=\langle -a\rangle [a][b] + \langle a\rangle [b][a]$.
Multiplying with $\langle a^{-1}\rangle$ yields $[a][b]=-\langle -1\rangle [b][a]$.

(\ref{lem:TowardsGWactionOnTwistedK:5})
From Lemma \ref{Hatlem:basicKMWformulas} (\ref{Hatlem:TowardsGWactionOnTwistedK:4}) and Lemma \ref{Hatlem:basicKMWformulasTwo} (\ref{Hatlem:basicKMWformulas:7}) we have
$\langle \langle a \rangle\rangle [b,c] = -\langle -1\rangle \langle \langle a \rangle\rangle [b,c]$ and thus $\langle \langle a \rangle\rangle (1+\langle -1\rangle)  [b,c]=0$.

(\ref{lem:TowardsGWactionOnTwistedK:6})
We have $[a^2,b]=[a,b]+\langle a\rangle [a,b]=2[a,b]+\langle\langle a\rangle\rangle [a,b] = 2[a,b]+\langle\langle b\rangle\rangle [a,a]=
2[a,b]+\langle\langle b\rangle\rangle [-1,a] = 2[a,b]+\langle\langle -1\rangle\rangle [a,b] = h\cdot [a,b]$.

(\ref{lem:TowardsGWactionOnTwistedK:7})
We have $\langle\langle a^2\rangle\rangle [b,c] = \langle\langle b\rangle\rangle [a^2,c] = \langle\langle b\rangle\rangle \cdot h \cdot [a,c] =0$.
\end{proof}

\begin{corollary}
Let $A$ be a either a field or a commutative local ring whose residue field has at least $4$ elements.
Then in $\hat{K}^{MW}_*(A)$ we have $[a_1,\dots,a_n]=0$ if $a_i+a_j = 1$ or $a_i+a_j=0$ for some $i\neq j$.
\end{corollary}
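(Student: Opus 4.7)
The plan is to reduce the statement to the two explicit vanishings $[a][1-a]=0$ (the Steinberg relation built into Definition \ref{dfn:KMWhat}) and $[a][-a]=0$ (Lemma \ref{Hatlem:basicKMWformulasTwo}(\ref{Hatlem:basicKMWformulas:4})) by bringing the offending pair of generators into adjacent positions inside the product $[a_1,\dots,a_n]$. The main tool is the skew-commutativity relation $[a][b]=\eps[b][a]$ from Lemma \ref{Hatlem:basicKMWformulasTwo}(\ref{Hatlem:basicKMWformulas:7}), where the sign $\eps=-\langle -1\rangle$ is central in $\hat{K}^{MW}_*(A)$ by Lemma \ref{Hatlem:basicKMWformulas}(\ref{Hatlem:basicKMWformulas:3}), so that scalar factors accumulated by such transpositions may be pulled freely out of any product.

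Concretely, suppose $i<j$ and either $a_i+a_j=1$ or $a_i+a_j=0$. Applying $[a][b]=\eps[b][a]$ a total of $j-i-1$ times to transpose $[a_j]$ past each of $[a_{j-1}],\dots,[a_{i+1}]$ yields
\[
[a_1,\dots,a_n] \;=\; \eps^{\,j-i-1}\,[a_1,\dots,a_i,a_j,a_{i+1},\dots,\widehat{a_j},\dots,a_n],
\]
in which $[a_i]$ and $[a_j]$ now appear as consecutive factors. If $a_i+a_j=1$, then $a_j=1-a_i\in A^*$, so $[a_i][a_j]=[a_i][1-a_i]=0$ by the Steinberg relation; if $a_i+a_j=0$, then $a_j=-a_i$, so $[a_i][a_j]=[a_i][-a_i]=0$ by Lemma \ref{Hatlem:basicKMWformulasTwo}(\ref{Hatlem:basicKMWformulas:4}). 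In either case the product vanishes, giving $[a_1,\dots,a_n]=0$.

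There is no substantive obstacle: the argument is a direct combination of facts already established. The only subtlety is the centrality of $\eps$, which legitimises pulling $\eps^{j-i-1}$ outside the product; this was recorded in Lemma \ref{Hatlem:basicKMWformulas}(\ref{Hatlem:basicKMWformulas:3}). One could of course equally well bring $[a_i]$ leftward next to $[a_j]$ instead, and the two vanishing identities are symmetric in $(a_i,a_j)$, so the choice of which element to transport is immaterial.
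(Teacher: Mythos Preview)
Your proof is correct and follows exactly the approach indicated in the paper, which simply cites the Steinberg relation together with Lemma \ref{Hatlem:basicKMWformulasTwo}(\ref{Hatlem:basicKMWformulas:4}) and (\ref{Hatlem:basicKMWformulas:7}). Your write-up merely spells out the transposition argument that the paper leaves implicit.
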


\begin{proof}
This follows from the Steinberg relation and Lemma \ref{Hatlem:basicKMWformulasTwo} (\ref{Hatlem:basicKMWformulas:4}) and (\ref{Hatlem:basicKMWformulas:7}).
\end{proof}

\begin{definition}
Let $A$ be a commutative ring.
We define the ring 
$$GW(A)$$
 as the quotient of the group ring $\Z[A^*]$
modulo the following relations.
\begin{enumerate}
\item
For all $a\in A^*$ we have $\langle\langle a \rangle\rangle h =0$.
\item
(Steinberg relation) For all $a,1-a\in A^*$ we have $\langle\langle a\rangle\rangle \langle\langle 1-a\rangle\rangle =0$.
\end{enumerate}
\end{definition}

\begin{definition}
Let $A$ be a commutative ring.
We define the $\Z[A^*]$-module 
$$V(A)$$
as the quotient of the augmentation ideal $I[A^*]$ modulo the relations
\begin{enumerate}
\item
For all $a,b\in A^*$ we have $\langle\langle a\rangle\rangle \cdot h\cdot [b] =0$.
\item
(Steinberg relation)
For all $a,1-a\in A^*$ we have $\langle\langle a\rangle\rangle [1-a] = 0$.
\end{enumerate}
\end{definition}

Since $\langle\langle a\rangle\rangle [b] = \langle\langle b\rangle\rangle [a]$ in $I[A^*]$ and hence in $V(A)$, we see that the $\Z[A^*]$-module $V(A)$ is naturally a $GW(A)$-module.

\begin{proposition}
\label{prop:hatKMWisTensGWV}
Let $A$ be commutative ring.
Then the natural surjections $\Z[A^*] \to GW(A)$ and $I[A^*]\to V(A)$ induce a surjective map of graded rings
$$\hat{K}^{MW}_*(A) \to \Tens_{GW(A)}\, V(A)\, /\, [a][1-a].$$
If $A$ is either a field or a local ring whose residue field has at least $4$ elements then this map is an isomorphism in degrees $\geq 2$.
\end{proposition}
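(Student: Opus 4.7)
My plan is to construct the map, establish surjectivity in general, and then exhibit an inverse in degrees $\geq 2$ under the residue-field hypothesis. The construction is immediate: by definition, the $\Z[A^*]$-bimodule structure on $I[A^*]$ descends to a $GW(A)$-bimodule structure on $V(A)$, so the surjections $\Z[A^*]\twoheadrightarrow GW(A)$ and $I[A^*]\twoheadrightarrow V(A)$ induce a surjection of graded rings $\Tens_{\Z[A^*]}(I[A^*])\twoheadrightarrow \Tens_{GW(A)}(V(A))$. Under this map the element $[a][1-a]$ in the source goes to $[a][1-a]$ in the target, so it descends modulo the Steinberg relation to the desired surjective map.

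To prove injectivity in degrees $n\geq 2$ I would construct a section. Iterated multiplication in $\hat{K}^{MW}_*(A)$ gives a $\Z$-linear map $I[A^*]^{\otimes n} \to \hat{K}^{MW}_n(A)$. The plan is to show that, for $n\geq 2$ and $A$ as in the hypothesis, this map factors through the quotient $V(A)^{\otimes_{GW(A)} n}/[a][1-a]$; the induced factorization is then inverse to the map of the proposition on generators. The Steinberg quotient on the target is automatic since $[a][1-a]=0$ in $\hat{K}^{MW}$ by construction, so the substantive content is to verify that the $V(A)$-relations inserted in any slot, and the $GW(A)$-relations acting as scalars, both vanish in $\hat{K}^{MW}_{\geq 2}(A)$.

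Since the elements $\langle a\rangle$ are central in $\hat{K}^{MW}_*(A)$ by Lemma \ref{Hatlem:basicKMWformulas}(\ref{Hatlem:basicKMWformulas:3}), the verification reduces to four identities valid for $a,b,c\in A^*$ with $1-a\in A^*$, namely $\langle\langle a\rangle\rangle [1-a][c]=0$ and $\langle\langle a\rangle\rangle h[b][c]=0$ (to kill the $V(A)$-relations placed in the left slot of a tensor with one further generator), and $\langle\langle a\rangle\rangle h[b,c]=0$ and $\langle\langle a\rangle\rangle\langle\langle 1-a\rangle\rangle[b,c]=0$ (to kill the $GW(A)$-relations as scalars on a degree-two product). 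These are exactly Lemmas \ref{Hatlem:basicKMWformulas}(\ref{Hatlem:TowardsGWactionOnTwistedK:2}), \ref{Hatlem:basicKMWformulasTwo}(\ref{lem:TowardsGWactionOnTwistedK:5}), \ref{Hatlem:basicKMWformulasTwo}(\ref{lem:TowardsGWactionOnTwistedK:5}) again, and \ref{Hatlem:basicKMWformulas}(\ref{Hatlem:TowardsGWactionOnTwistedK:3}). Relations placed in other slots reduce to the left-slot case by iteratively swapping adjacent bracket symbols via Lemma \ref{Hatlem:basicKMWformulas}(\ref{Hatlem:TowardsGWactionOnTwistedK:4}) (using also centrality of $h$ and of the $\langle\cdot\rangle$). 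Once the factorization is established, checking that the two compositions are the identity is immediate on generators.

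The main obstacle is really packaged into the preliminary identity Lemma \ref{Hatlem:basicKMWformulasTwo}(\ref{lem:TowardsGWactionOnTwistedK:5}), where the hypothesis that the residue field of $A$ has at least four elements (or that $A$ is a field) enters. Granted the relations collected in Lemmas \ref{Hatlem:basicKMWformulas} and \ref{Hatlem:basicKMWformulasTwo}, the proposition itself is essentially a bookkeeping exercise about tensor products and their quotients.
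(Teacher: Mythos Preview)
Your proposal is correct and follows essentially the same route as the paper: both arguments reduce to showing that the three families of relations $\langle\langle a\rangle\rangle h$, $\langle\langle a\rangle\rangle\langle\langle 1-a\rangle\rangle$, and $\langle\langle a\rangle\rangle[1-a]$ vanish in $\hat{K}^{MW}_{\geq 2}(A)$, invoking exactly Lemmas \ref{Hatlem:basicKMWformulas}(\ref{Hatlem:TowardsGWactionOnTwistedK:2}),(\ref{Hatlem:TowardsGWactionOnTwistedK:3}) and \ref{Hatlem:basicKMWformulasTwo}(\ref{lem:TowardsGWactionOnTwistedK:5}). The only cosmetic difference is that the paper phrases this as ``the kernel is the ideal generated by these elements, and that ideal is zero in degrees $\geq 2$'', whereas you phrase it as ``the multiplication map factors through the quotient, giving a section''; and the paper cites $\eps$-commutativity (Lemma \ref{Hatlem:basicKMWformulasTwo}(\ref{Hatlem:basicKMWformulas:7})) to handle arbitrary slots where you use the swap identity (\ref{Hatlem:TowardsGWactionOnTwistedK:4}).
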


\begin{proof}
It is clear that the map in the proposition is a surjective ring homomorphism.
Its kernel 
is the (homogeneous) ideal in $\hat{K}^{MW}_*(A)$ generated by the elements of the form $\langle\langle a \rangle\rangle h$ ($a\in A^*$), $\langle\langle a\rangle\rangle \langle\langle 1-a\rangle\rangle$ and $\langle\langle a\rangle\rangle [1-a]$ ($a,1-a\in A^*$).
If $A$ is a field or a local ring with residue field cardinality $\geq 4$, then
this ideal is zero in degrees $\geq 2$, in view of
Lemmas \ref{Hatlem:basicKMWformulas} (\ref{Hatlem:TowardsGWactionOnTwistedK:2}), (\ref{Hatlem:TowardsGWactionOnTwistedK:3}) and \ref{Hatlem:basicKMWformulasTwo} (\ref{Hatlem:basicKMWformulas:7}), (\ref{lem:TowardsGWactionOnTwistedK:5}). 
\end{proof}

\begin{remark}
\label{rmk:hatKMWTensGenlA}
Since the homomorphism of graded rings in Proposition \ref{prop:hatKMWisTensGWV}
is surjective for any commutative ring $A$, all formulas 
in Lemma \ref{Hatlem:basicKMWformulas} also hold in 
the target of that map.
\end{remark}

In case $A$ is a field the following definition is due to Hopkins and Morel
 \cite[Definition 3.1]{morel:book}.
For commutative local rings, the definition was also considered in \cite{GilleEtAl}.

\begin{definition}
\label{dfn:HopkinsMorelMilnorWittK}
Let $A$ be a commutative ring.
The Milnor-Witt $K$-theory of $A$ is the graded associative ring $K_*^{MW}(A)$ generated by symbols $[a]$, $a\in A^*$, of degree $1$ and one symbol $\eta$ of degree $-1$ subject to the following relations.

\begin{enumerate}
\item
For $a,1-a\in A^*$ we have $[a][1-a]=0$.
\item
For $a,b\in A^*$, we have $[ab]=[a]+[b]+\eta[a][b]$.
\item
For each $a\in A^*$, we have $\eta[a]=[a]\eta$, and
\item $\eta^2 [-1] +2\eta =0$.
\end{enumerate}
\end{definition}

\begin{definition}
\label{dfn:KMWtilde}
Let $A$ be a commutative ring and $n$ an integer.
Let $\tilde{K}_n^{MW}(A)$ be the abelian group generated by symbols of the form $[\eta^m,u_1,...,u_{n+m}]$ with $m\geq 0$, $n+m\geq 0$, $u_i\in A^*$, subject to the following three relations.
\begin{enumerate}
\item
$[\eta^m,u_1,...,u_{n+m}]=0$ if $u_i+u_{i+1}=1$ for some $i=1,...,n+m-1$.
\item
For all $a,b\in A^*$, $m\geq 0$ and $i=1,...n+m$ we have
$$
\renewcommand\arraystretch{1.5}
\begin{array}{l}
[\eta^m,u_1,...,u_{i-1},ab,u_{i+1},...]= \\
\phantom{}
[\eta^m,...,u_{i-1},a,u_{i+1},...] + [\eta^m,...,u_{i-1},b,u_{i+1},...] + [\eta^{m+1},...,u_{i-1},a,b,u_{i+1},...]
\end{array}
$$
\item
For each $m\geq 0$ and $i=1,...,n+m+2$ we have
$$[\eta^{m+2},u_1,...,u_{i-1},-1,u_{i+1},...,u_{n+m+2}] + 2[\eta^{m+1},...,u_{i-1},u_{i+1},...,u_{n+m+2}]=0.$$
\end{enumerate}
\end{definition}

We make $\tilde{K}_*^{MW}(A) = \bigoplus_n\tilde{K}_n^{MW}(A)$ into a graded ring with multiplication 
$$\tilde{K}_r^{MW}(A) \otimes \tilde{K}_s^{MW}(A) \to \tilde{K}_{r+s}^{MW}(A)$$
defined by 
$$[\eta^m,u_1,\dots,u_{r+m}]\otimes [\eta^{n},v_1,...,v_{s+n}] \mapsto 
[\eta^{m+n},u_1,\dots,u_{r+m},v_1,\dots v_{s+n}].$$
By going through the 3 relations in Definition \ref{dfn:KMWtilde} this map is well-defined as
map of abelian groups.
The multiplication is obviously associative and unital with unit $1 = [\eta^0]$.
We define a map of graded rings
$$K_*^{MW}(A) \to \tilde{K}_*^{MW}(A)$$
by sending $\eta$ to $[\eta]$ and $[u]$ to $[\eta^0,u]$.
It is easy to check that the defining relations for $K_*^{MW}(A)$ hold in $\tilde{K}_*^{MW}(A)$.

\begin{lemma}{\cite[Lemma 3.4]{morel:book}}
\label{lem:KMWntrivialPresentation}
For any commutative ring $A$ , the maps 
$$\tilde{K}_n^{MW}(A) \to K_n^{MW}(A):[\eta^m,u_1,...,u_{n+m}] \to \eta^m[u_1]\cdots [u_{n+m}]$$
define an isomorphism of graded rings
$$\tilde{K}^{MW}_*(A) \stackrel{\cong}{\longrightarrow}K^{MW}_*(A).$$
\end{lemma}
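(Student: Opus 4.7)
The plan is to produce the inverse map explicitly and to check that both maps are well-defined on the given generators and relations; no deeper content is required. Define a candidate inverse
$$\phi: K^{MW}_*(A) \longrightarrow \tilde{K}^{MW}_*(A),\qquad \eta \mapsto [\eta],\quad [u]\mapsto [\eta^0, u],$$
extended multiplicatively using the graded ring structure of $\tilde{K}^{MW}_*(A)$. One checks that this respects the four defining relations of $K^{MW}_*(A)$: the Steinberg relation $[u][1-u]=0$ becomes $[\eta^0,u,1-u]=0$, which is relation (1) of Definition \ref{dfn:KMWtilde} with $m=0$; the product formula $[ab]=[a]+[b]+\eta[a][b]$ becomes $[\eta^0,ab]=[\eta^0,a]+[\eta^0,b]+[\eta,a,b]$, which is relation (2) with $m=0$, $n=1$, $i=1$; the commutativity $\eta[a]=[a]\eta$ follows from the definition of multiplication in $\tilde{K}^{MW}_*(A)$, which yields $[\eta,u]$ on both sides; and the relation $\eta^2[-1]+2\eta=0$ becomes $[\eta^2,-1]+2[\eta]=0$, which is relation (3) applied with $m=n=0$, $i=1$ and no auxiliary $u_j$'s.

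Next I would verify that the forward map
$$[\eta^m,u_1,\dots,u_{n+m}]\longmapsto \eta^m[u_1]\cdots[u_{n+m}]$$
is well-defined. Relation (1) of Definition \ref{dfn:KMWtilde} is sent to zero by the Steinberg relation in $K^{MW}_*(A)$, since $u_i+u_{i+1}=1$ yields the factor $[u_i][1-u_i]=0$. Relation (2) is obtained from the identity $[ab]=[a]+[b]+\eta[a][b]$ by multiplying on the left by $\eta^m[u_1]\cdots[u_{i-1}]$ and on the right by $[u_{i+1}]\cdots[u_{n+m}]$, using that $\eta$ is central. Relation (3) is obtained similarly from $\eta^2[-1]+2\eta=0$ after multiplying by $\eta^m[u_1]\cdots[u_{i-1}]$ on the left and $[u_{i+1}]\cdots[u_{n+m+2}]$ on the right.

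Finally, I would note that the two compositions are the identity on generators: starting from $K^{MW}_*(A)$, the composition sends $\eta\mapsto [\eta]\mapsto \eta$ and $[u]\mapsto [\eta^0,u]\mapsto [u]$; starting from $\tilde{K}^{MW}_*(A)$, the element $[\eta^m,u_1,\dots,u_{n+m}]$ maps to $\eta^m[u_1]\cdots[u_{n+m}]$ and back to $[\eta]^m[\eta^0,u_1]\cdots[\eta^0,u_{n+m}]=[\eta^m,u_1,\dots,u_{n+m}]$ by the definition of the product in $\tilde{K}^{MW}_*(A)$.

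There is no real obstacle here: the content of the lemma is purely that the two presentations encode the same ring. The only mild bookkeeping point to be careful about is that for negative degrees $n<0$, generators in $\tilde{K}^{MW}_n(A)$ require $m\geq -n$, so the image $\eta^m[u_1]\cdots[u_{n+m}]\in K^{MW}_n(A)$ indeed has the correct degree $-m+(n+m)=n$; once this is observed, the verifications above go through uniformly in all degrees.
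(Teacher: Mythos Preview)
Your proof is correct and follows essentially the same approach as the paper. The only minor difference is that the paper, having already defined the map $K^{MW}_*(A)\to \tilde{K}^{MW}_*(A)$ before the lemma, argues more tersely: it observes that the composition $\tilde{K}^{MW}_*(A)\to K^{MW}_*(A)\to \tilde{K}^{MW}_*(A)$ is the identity and that the first map is surjective (since $\eta$ and each $[u]$ lie in its image), which immediately gives bijectivity without checking the other composition.
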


\begin{proof}
The composition 
$\tilde{K}_*^{MW}(A) \to K_*^{MW}(A) \to \tilde{K}_*^{MW}(A)$
is the identity, and
the first map is surjective.
\end{proof}

For $a\in A^*$ set $\langle a \rangle = 1 + \eta[a]\in K^{MW}_0(A)$ and $\langle\langle a\rangle \rangle = \langle a \rangle -1 = \eta[a] \in K^{MW}_0(A)$.

\begin{lemma}
\label{lem:basicKMWformulas}
Let $A$ be a commutative ring.
Then for all $a,b\in A^*$ we have in
$K^{MW}_*(A)$ the following.
\begin{enumerate}
\item
\label{lem:basicKMWformulas:1}
$[ab]=[a]+\langle a\rangle [b]$
\item
\label{lem:basicKMWformulas:2}
$\langle 1 \rangle = 1$ and $[1]=0$, 
\item
\label{lem:basicKMWformulas:2.5}
$[\frac{a}{b}]=[a]-\langle \frac{a}{b}\rangle [b]$, in particular, $[b^{-1}]=-\langle b^{-1}\rangle [b]$.
\item
\label{lem:basicKMWformulas:3}
$\langle a b\rangle = \langle a\rangle \cdot \langle b\rangle$
and $\langle a\rangle$ is central in $K^{MW}_*(A)$.
\item
\label{lem:KMWmodG:1}
The map $\Z[A^*] \to K^{MW}_0(A):\langle a\rangle \mapsto \langle a \rangle$ is a surjective ring homomorphism making $K^{MW}_*(A)$ into a $\Z[A^*]$-algebra.
\item
\label{lem:K0MWformulas:2}
$\langle \langle a \rangle\rangle \cdot h = 0$ where $h=1+\langle -1\rangle$.
\item
\label{lem:K0MWformulas:3}
If $a+b=1$ then $\langle\langle a\rangle\rangle [b]=0$.
\end{enumerate}
\end{lemma}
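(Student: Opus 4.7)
The plan is to establish the seven identities in an order that lets each feed into the next: (1) from the defining relation, then (4) (multiplicativity and centrality of $\langle\cdot\rangle$), then the main technical step (2), and finally (3), (5), (6), (7) as quick consequences. Throughout, relations (1)--(4) of Definition \ref{dfn:HopkinsMorelMilnorWittK} are used freely, and in particular the centrality of $\eta$ (relation (3) there).

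Item (1) is a rewrite of defining relation (2) of Definition \ref{dfn:HopkinsMorelMilnorWittK}:
\[
[ab]=[a]+[b]+\eta[a][b]=[a]+(1+\eta[a])[b]=[a]+\langle a\rangle [b].
\]
For (4), expand $\langle a\rangle\langle b\rangle=(1+\eta[a])(1+\eta[b])$ and collect using centrality of $\eta$ to recognise $1+\eta[ab]=\langle ab\rangle$. Centrality of $\langle a\rangle$ then reduces to showing $\eta[a][c]=\eta[c][a]$, which one obtains by equating the two expansions of $[ab]=[ba]$ given by (1) and cancelling the common $[a]+[b]$ term; the resulting identity lets one conjugate $\langle a\rangle=1+\eta[a]$ past any generator $[c]$, hence past any element.

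The main obstacle is (2), the vanishing of $[1]$. From (1) with $b=1$ one gets $\langle a\rangle [1]=0$ for every $a\in A^*$; specialising to $a=-1$ yields $[1]+\eta[-1][1]=0$. Multiplying by $\eta$ and invoking defining relation (4) in the form $\eta^2[-1]=-2\eta$ collapses this to $\eta[1]-2\eta[1]=0$, so $\eta[1]=0$. Hence $\langle 1\rangle=1+\eta[1]=1$ and then $[1]=\langle 1\rangle[1]=0$. This is the only step that genuinely uses all of (1), (4), and the relation $\eta^2[-1]+2\eta=0$.

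The remaining items fall out cleanly. For (3), applying (1) to $b\cdot b^{-1}=1$ gives $[b]+\langle b\rangle [b^{-1}]=0$, and multiplying by $\langle b^{-1}\rangle=\langle b\rangle^{-1}$ (legal by (4) and $\langle 1\rangle=1$) yields $[b^{-1}]=-\langle b^{-1}\rangle[b]$; the general formula then follows by writing $a/b=a\cdot b^{-1}$ and using (1). For (5), (2) and (4) make the assignment $\langle a\rangle\mapsto\langle a\rangle$ into a well-defined ring homomorphism $\Z[A^*]\to K^{MW}_0(A)$, and surjectivity follows because, by centrality of $\eta$, every degree-zero monomial $\eta^m[a_1]\cdots[a_m]$ equals $\langle\langle a_1\rangle\rangle\cdots\langle\langle a_m\rangle\rangle$. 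For (6), factor defining relation (4) as $\eta(\eta[-1]+2)=\eta(\langle -1\rangle+1)=\eta h=0$; then $\langle\langle a\rangle\rangle h=\eta[a]h=[a](\eta h)=0$ by centrality of $\eta$. Finally (7) is immediate from Steinberg since $b=1-a$ gives $\langle\langle a\rangle\rangle[b]=\eta[a][1-a]=0$.
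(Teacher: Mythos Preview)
Your proof is correct and follows essentially the same approach as the paper's. The only differences are cosmetic: you establish (4) before (2) rather than after, derive (3) via $[b^{-1}]$ first instead of directly from $[a]=[\tfrac{a}{b}\cdot b]$, and argue surjectivity in (5) by rewriting degree-zero monomials $\eta^m[a_1]\cdots[a_m]$ as $\prod_i\langle\langle a_i\rangle\rangle$ rather than invoking the explicit presentation of Lemma~\ref{lem:KMWntrivialPresentation}; none of these amounts to a genuinely different route.
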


\begin{proof}
(\ref{lem:basicKMWformulas:1})
We have
$[ab] = [a] + [b] + \eta[a][b] = [a]+(1+\eta[a])[b]=[a]+\langle a\rangle [b]$.

(\ref{lem:basicKMWformulas:2})
We have $[-1]=[-1]+[1] + \eta[-1][1]$,
hence $0 = \eta [1] + \eta^2[-1][1] = \eta[1] -2\eta [1] = -\eta[1]$.
This shows $\langle 1 \rangle = 1$.
Now, $[1]=[1\cdot 1]=[1]+\langle 1\rangle [1] = [1]+[1]$ from which we obtain $[1]=0$.

(\ref{lem:basicKMWformulas:2.5})
We have $[a]=[\frac{a}{b}b] = [\frac{a}{b}]+\langle\frac{a}{b}\rangle [b]$.

(\ref{lem:basicKMWformulas:3})
We have
$\langle a \rangle \cdot \langle b \rangle = (1+ \eta[a])(1+\eta[b]) = 
1+ \eta([a] + [b] + \eta [a][b]) = 1+\eta[ab] = \langle ab\rangle$.
Moreover,
$\langle a \rangle [b] = (1+\eta[a])[b]=[b]+\eta[a][b]=[ab]-[a]=[ba]-[a]$ whereas
$[b]\langle a\rangle = [b](1+\eta[a])=[b]+\eta[b][a]=[ba]-[a]$.
Hence $\langle a\rangle[b]=[b]\langle a \rangle$.

(\ref{lem:KMWmodG:1})
It is clear that $\Z[A^*] \to K^{MW}_0(A)$ is a ring homomorphism.
By Lemma \ref{lem:KMWntrivialPresentation}, the group $K^{MW}_0(A)$ is additively generated by $[\eta^{0}]$ and $[\eta,a]$, equivalently, by $[\eta^0] = 1 = \langle 1\rangle$ and $[\eta^0] + [\eta,a] = \langle a \rangle$.
Hence, the map of rings is surjective.
The ring $K^{MW}_*(A)$ is a $\Z[A^*]$-algebra since $K_0^{MW}(A)$ is central in $K^{MW}_*(A)$.

(\ref{lem:K0MWformulas:2})
We have
$\langle a \rangle \cdot  h = (1+\eta[a])\cdot h = h$ because $\eta h = 0$.
Hence $\langle\langle a\rangle\rangle\cdot h = 0$

(\ref{lem:K0MWformulas:3})
If $a+b=1$ then $\langle\langle a\rangle\rangle [b] = \eta[a][b]=0$.
\end{proof}

\begin{lemma}
\label{lem:a2is1inGW}
Let $A$ be a commutative ring.
Then the following map defines an isomorphism of rings 
$$GW(A) \stackrel{\cong}{\longrightarrow} K^{MW}_0(A):\langle a\rangle \mapsto \langle a\rangle$$
\end{lemma}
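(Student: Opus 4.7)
The plan is to exhibit a two–sided inverse to the natural map $GW(A) \to K^{MW}_0(A)$. The forward direction, that the assignment $\langle a \rangle \mapsto \langle a \rangle = 1 + \eta[a]$ descends to a well-defined ring homomorphism $GW(A) \to K^{MW}_0(A)$, is already mostly done for us. Indeed, Lemma \ref{lem:basicKMWformulas}(\ref{lem:KMWmodG:1}) provides a surjective ring map $\Z[A^*] \to K^{MW}_0(A)$; relation (1) in the definition of $GW(A)$, namely $\langle\langle a\rangle\rangle h = 0$, is exactly Lemma \ref{lem:basicKMWformulas}(\ref{lem:K0MWformulas:2}); and the Steinberg relation (2) follows from Lemma \ref{lem:basicKMWformulas}(\ref{lem:K0MWformulas:3}) after multiplying by $\eta$, since $\langle\langle 1-a\rangle\rangle = \eta[1-a]$ and $\eta$ is central. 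Thus the map factors through $GW(A)$ and is surjective.

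For the inverse, I would use the presentation of $\tilde{K}^{MW}_0(A) \cong K^{MW}_0(A)$ of Lemma \ref{lem:KMWntrivialPresentation} and define
\[
\Phi : K^{MW}_0(A) \longrightarrow GW(A),\qquad [\eta^m, u_1, \ldots, u_m] \longmapsto \langle\langle u_1\rangle\rangle \cdots \langle\langle u_m\rangle\rangle.
\]
To see $\Phi$ is well-defined I would verify the three defining relations of $\tilde{K}^{MW}_0(A)$ in $GW(A)$: relation (1) follows from the Steinberg relation in $GW(A)$; relation (2) reduces to the purely formal identity $\langle\langle ab\rangle\rangle = \langle\langle a\rangle\rangle + \langle\langle b\rangle\rangle + \langle\langle a\rangle\rangle\langle\langle b\rangle\rangle$, which holds already in $\Z[A^*]$ since $\langle ab\rangle = \langle a \rangle \langle b \rangle$; and relation (3) unfolds to the identity $\langle\langle -1\rangle\rangle + 2 = h$, so the image becomes a product of one factor $h$ with the remaining factors $\langle\langle u_j\rangle\rangle$, which vanishes by relation (1) in $GW(A)$.

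Finally I would check that $\Phi$ is the inverse of the forward map. One composition sends $\langle a\rangle \mapsto 1 + \eta[a] = 1 + [\eta, a] \mapsto 1 + \langle\langle a\rangle\rangle = \langle a\rangle$, and the other sends $[\eta^m, u_1,\ldots,u_m] \mapsto \prod_i \langle\langle u_i\rangle\rangle \mapsto \prod_i \eta[u_i] = \eta^m[u_1]\cdots[u_m] = [\eta^m, u_1,\ldots,u_m]$. Both agree with the identity on generators, so both compositions are identities and we obtain the desired ring isomorphism.

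The only genuine issue is the well-definedness of $\Phi$, and the slightly delicate point there is relation (3), which needs the rewriting $\langle\langle -1\rangle\rangle + 2 = h$ together with the freedom to commute $h$ past any $\langle\langle u_j\rangle\rangle$ (valid because the ambient ring $\Z[A^*]$, and hence $GW(A)$, is commutative). Everything else is purely formal algebra on symbols.
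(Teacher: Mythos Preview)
Your proof is correct and follows essentially the same approach as the paper: the paper also uses Lemma~\ref{lem:basicKMWformulas} to obtain the surjective forward map, then constructs the inverse via the presentation $\tilde{K}^{MW}_0(A)$ of Lemma~\ref{lem:KMWntrivialPresentation} by exactly your formula $[\eta^m,u_1,\ldots,u_m]\mapsto\prod_i\langle\langle u_i\rangle\rangle$. The only cosmetic difference is that the paper checks just the one composition $GW(A)\to K_0^{MW}(A)\to GW(A)$ together with surjectivity, whereas you verify both compositions; your additional detail on relations (1)--(3) simply fills in what the paper leaves as ``easy to check.''
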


\begin{proof}
By Lemma \ref{lem:basicKMWformulas} the map in the lemma is a surjective ring homomorphism.
Using Lemma \ref{lem:KMWntrivialPresentation}, we define the inverse by
$$\tilde{K}_0^{MW}(A) \to GW(A): [\eta^m,a_1,\dots,a_m]\mapsto \prod_{i=1}^m \langle\langle a_i\rangle\rangle$$
It is easy to check that this also defines a surjective ring homomorphism.
Since the composition 
$GW(A) \to \tilde{K}_0^{MW}(A) \to GW(A)$ is the identity, we are done.
\end{proof}

\begin{lemma}
\label{lem:SomeVARln}
Let $A$ be a commutative ring.
Then for $a,b\in A^*$ we have in $V(A)$
\begin{enumerate}
\item
\label{lem:SomeVARln:1}
$\langle\langle a\rangle\rangle[b] = \langle\langle b\rangle\rangle [a]$
\item
\label{lem:SomeVARln:2}
$h[b] = 2[b]+\langle\langle b\rangle\rangle [-1]$
\end{enumerate}
\end{lemma}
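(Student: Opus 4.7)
My plan is to prove both identities by direct calculation from the defining relations of $I[A^*]$, observing that both equalities actually already hold in $I[A^*]$ (so the further quotient to $V(A)$ plays no role here). The key input is Lemma \ref{lem:IAPresentation}, which says $[ab] = [a] + \langle a\rangle[b]$ in $I[A^*]$.

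For (\ref{lem:SomeVARln:1}), I would exploit commutativity of $A^*$: since $ab=ba$, applying the relation from Lemma \ref{lem:IAPresentation} in two ways gives
$$[a]+\langle a\rangle [b] \;=\; [ab] \;=\; [ba] \;=\; [b]+\langle b\rangle [a],$$
and rearranging yields $(\langle a\rangle-1)[b]=(\langle b\rangle-1)[a]$, i.e.\ $\langle\langle a\rangle\rangle[b]=\langle\langle b\rangle\rangle [a]$. This is exactly the calculation already recorded in Lemma \ref{Hatlem:basicKMWformulas}(\ref{Hatlem:TowardsGWactionOnTwistedK:1}); it passes from $I[A^*]$ to the quotient $V(A)$ without change.

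For (\ref{lem:SomeVARln:2}), expand using $h=1+\langle -1\rangle$:
$$h[b] \;=\; [b]+\langle -1\rangle [b] \;=\; [b] + \bigl([b]+\langle\langle -1\rangle\rangle [b]\bigr) \;=\; 2[b]+\langle\langle -1\rangle\rangle [b],$$
where the middle equality just rewrites $\langle -1\rangle=1+\langle\langle -1\rangle\rangle$. Now apply part (\ref{lem:SomeVARln:1}) with $a=-1$ to obtain $\langle\langle -1\rangle\rangle [b]=\langle\langle b\rangle\rangle [-1]$, giving the claim.

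There is no serious obstacle; both identities follow formally from the module relation in $I[A^*]$ together with commutativity of multiplication in $A^*$. The only mild subtlety is keeping track of which side $\langle a\rangle$ acts on, but since $\Z[A^*]$ is a commutative ring this is automatic.
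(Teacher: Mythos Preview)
Your proof is correct and follows essentially the same approach as the paper: both identities are established directly in $I[A^*]$ (hence in $V(A)$), with part (\ref{lem:SomeVARln:2}) reduced to part (\ref{lem:SomeVARln:1}) via the expansion $h[b]=2[b]+\langle\langle -1\rangle\rangle[b]$.
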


\begin{proof}
(\ref{lem:SomeVARln:1}) 
The equation holds in $I[A^*]$ and hence in its quotient $V(A)$.

(\ref{lem:SomeVARln:2})
We have
$h[b] = [b] + \langle -1 \rangle [b] = 2[b] + \langle\langle -1\rangle\rangle [b] = 2[b] + \langle\langle b \rangle\rangle [-1]$.
\end{proof}

\begin{lemma}
\label{lem:VisKMW1}
Let $A$ be a commutative ring.
Then we have an isomorphism of $GW(A)=K_0^{MW}(A)$-modules
$$V(A) \stackrel{\cong}{\longrightarrow} K_1^{MW}(A): [a] \mapsto [a].$$
\end{lemma}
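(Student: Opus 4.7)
The plan is to construct an inverse via the alternative presentation $\tilde{K}^{MW}_*(A)$ from Lemma \ref{lem:KMWntrivialPresentation}, then check that the two maps are mutually inverse $K^{MW}_0(A)$-linear maps.

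First I would check that the assignment $V(A) \to K^{MW}_1(A)$, $[a]\mapsto [a]$, is a well-defined $\Z[A^*]$-module map (hence $GW(A)$-linear once we invoke Lemma \ref{lem:a2is1inGW}). The defining relation $[ab]=[a]+\langle a\rangle[b]$ of $I[A^*]$ (Lemma \ref{lem:IAPresentation}) holds in $K^{MW}_1(A)$ by Lemma \ref{lem:basicKMWformulas}(\ref{lem:basicKMWformulas:1}); the relation $\langle\langle a\rangle\rangle h[b]=0$ follows from Lemma \ref{lem:basicKMWformulas}(\ref{lem:K0MWformulas:2}); and $\langle\langle a\rangle\rangle[1-a]=\eta[a][1-a]=0$ is the Steinberg relation.

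Next I would define the inverse
$$\Phi:\tilde{K}^{MW}_1(A)\longrightarrow V(A),\qquad [\eta^m,u_1,\dots,u_{m+1}]\longmapsto \langle\langle u_1\rangle\rangle\cdots\langle\langle u_m\rangle\rangle\,[u_{m+1}],$$
which makes sense because $V(A)$ is a $GW(A)=\Z[A^*]$-module. I would then verify that $\Phi$ respects the three relations in Definition \ref{dfn:KMWtilde}. For relation (2), note that $[ab]=[a]+[b]+\langle\langle a\rangle\rangle[b]$ in $V(A)$ handles the last-slot case, while for earlier slots one uses $\langle\langle ab\rangle\rangle=\langle\langle a\rangle\rangle+\langle\langle b\rangle\rangle+\langle\langle a\rangle\rangle\langle\langle b\rangle\rangle$ in $GW(A)$. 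For relation (1), if the Steinberg pair $u_i+u_{i+1}=1$ sits at the end ($i=m$), we directly get $\langle\langle u_m\rangle\rangle[1-u_m]=0$; otherwise, I shuffle factors using the identity $\langle\langle a\rangle\rangle[b]=\langle\langle b\rangle\rangle[a]$ from Lemma \ref{lem:SomeVARln}(\ref{lem:SomeVARln:1}) to bring $\langle\langle u_i\rangle\rangle$ and $[1-u_i]$ together and apply Steinberg in $V(A)$.

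The main obstacle is relation (3) of Definition \ref{dfn:KMWtilde}. I would rewrite its image under $\Phi$ by factoring: the two terms combine to
$$\langle\langle u_1\rangle\rangle\cdots\langle\langle u_{i-1}\rangle\rangle\bigl(\langle\langle -1\rangle\rangle+2\bigr)\langle\langle u_{i+1}\rangle\rangle\cdots\langle\langle u_{m+2}\rangle\rangle\,[u_{m+3}]$$
when the $-1$ is not in the last slot, and since $\langle\langle -1\rangle\rangle+2=h$, this becomes $\langle\langle u_j\rangle\rangle h\cdot Y=0$ by relation (1) in $V(A)$ (extended by $\Z[A^*]$-linearity in the other factors). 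When $-1$ is in the last slot, the same pair of terms rearranges to $\langle\langle u_1\rangle\rangle\cdots\langle\langle u_{m+1}\rangle\rangle\bigl(2[u_{m+2}]+\langle\langle u_{m+2}\rangle\rangle[-1]\bigr)$, which equals $\langle\langle u_1\rangle\rangle\cdots\langle\langle u_{m+1}\rangle\rangle\,h\,[u_{m+2}]=0$ by Lemma \ref{lem:SomeVARln}(\ref{lem:SomeVARln:2}). Finally, both compositions are the identity on generators: $[a]\mapsto[\eta^0,a]\mapsto[a]$ one way, and $[\eta^m,u_1,\dots,u_{m+1}]\mapsto\langle\langle u_1\rangle\rangle\cdots\langle\langle u_m\rangle\rangle[u_{m+1}]\mapsto\eta^m[u_1]\cdots[u_{m+1}]=[\eta^m,u_1,\dots,u_{m+1}]$ the other, using that $V(A)\to K^{MW}_1(A)$ is $\Z[A^*]$-linear so the prefactors translate correctly.
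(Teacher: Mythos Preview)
Your proposal is correct and follows essentially the same approach as the paper: the paper also defines the inverse via the presentation $\tilde{K}^{MW}_1(A)$ by the same formula $[\eta^m,u_1,\dots,u_{m+1}] \mapsto \bigl(\prod_{i=1}^m\langle\langle u_i\rangle\rangle\bigr)\,[u_{m+1}]$, and singles out relation (3) of Definition \ref{dfn:KMWtilde} as the main check, handling it via Lemma \ref{lem:SomeVARln}(\ref{lem:SomeVARln:2}) in the form $\langle\langle a\rangle\rangle\langle\langle b\rangle\rangle[-1]+2\langle\langle a\rangle\rangle[b]=\langle\langle a\rangle\rangle h[b]=0$. Your write-up simply supplies more detail (relations (1), (2), and the two composition checks) than the paper's terse proof.
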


\begin{proof}
By Lemma \ref{lem:basicKMWformulas} the map in the lemma is well-defined.
Using Lemma \ref{lem:KMWntrivialPresentation} we define the inverse by 
$$\tilde{K}_1^{MW}(A) \to V(A): [\eta^m,u_1,\dots,u_{m+1}] \mapsto \left(\prod_{i=1}^m \langle\langle u_i\rangle\rangle\right) [u_{m+1}].$$
This map preserves the relations of Definition \ref{dfn:KMWtilde}, the last one follows from Lemma \ref{lem:SomeVARln} (\ref{lem:SomeVARln:2}) which implies
$0 = \langle\langle a\rangle\rangle h [b] = \langle\langle a\rangle\rangle \langle\langle b\rangle\rangle [-1] + 2\langle\langle a\rangle\rangle [b]\in V(A)$.
\end{proof}

For $A$ a field, the following is a remark (without proof) in \cite{morel:book}.

\begin{proposition}
\label{prop:KMWisTensGWV}
Let $A$ be a commutative ring.
The isomorphisms $GW(A)\cong K_0^{MW}(A)$ and $V(A)\cong K_1^{MW}(A)$ in Lemmas \ref{lem:a2is1inGW} and \ref{lem:VisKMW1}
extend to an isomorphism of graded rings
$$\Tens_{GW(A)}V(A)/\{[a][1-a]|\ a,1-a\in A^*\} \stackrel{\cong}{\longrightarrow} K^{MW}_{\geq 0}(A).$$
\end{proposition}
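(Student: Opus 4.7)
My plan is to construct mutually inverse graded ring maps between $T := \Tens_{GW(A)}V(A)/\{[a][1-a] : a, 1-a \in A^*\}$ and $K^{MW}_{\geq 0}(A)$. The forward map $\phi: T \to K^{MW}_{\geq 0}(A)$ will arise from the universal property of the tensor $GW(A)$-algebra on $V(A)$: Lemmas \ref{lem:a2is1inGW} and \ref{lem:VisKMW1} identify $GW(A) \cong K^{MW}_0(A)$ and $V(A) \cong K^{MW}_1(A)$ compatibly with the $GW(A)$-action, so these identifications assemble into a graded ring map out of $\Tens_{GW(A)} V(A)$, which descends modulo the Steinberg ideal since $[a][1-a] = 0$ in $K^{MW}_2(A)$.

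For the inverse $\psi: K^{MW}_{\geq 0}(A) \to T$, I will use the presentation $\tilde{K}^{MW}_*(A) \cong K^{MW}_*(A)$ of Lemma \ref{lem:KMWntrivialPresentation} and define
$$\psi([\eta^m, u_1, \ldots, u_{n+m}]) := \langle\langle u_1 \rangle\rangle \cdots \langle\langle u_m \rangle\rangle \cdot [u_{m+1}] \otimes \cdots \otimes [u_{n+m}] \in T_n.$$
The central auxiliary tool will be a permutation principle: for $n \geq 2$, $a, v_1, \ldots, v_n \in A^*$, and any permutation $\sigma \in S_n$, one has $\langle\langle a\rangle\rangle [v_1] \otimes \cdots \otimes [v_n] = \langle\langle a\rangle\rangle [v_{\sigma(1)}] \otimes \cdots \otimes [v_{\sigma(n)}]$ in $T_n$. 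This follows from item (8) of Lemma \ref{Hatlem:basicKMWformulas} (valid in $T$ by Remark \ref{rmk:hatKMWTensGenlA}) by induction on $n$: any adjacent transposition reduces to the two-factor case after pulling $\langle\langle a\rangle\rangle$ through unaffected tensor slots.

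With this tool I then verify that $\psi$ respects the three relations of Definition \ref{dfn:KMWtilde} and is multiplicative on generators. Relation (1) collapses to one of $\langle\langle u_i\rangle\rangle\langle\langle u_{i+1}\rangle\rangle = 0$ in $GW(A)$, $\langle\langle u_i\rangle\rangle[u_{i+1}] = 0$ in $V(A)$, or the defining Steinberg relation of $T$, according to where the pair sits relative to the GW/V boundary. Relation (2) uses $\langle\langle ab\rangle\rangle = \langle\langle a\rangle\rangle + \langle\langle b\rangle\rangle + \langle\langle a\rangle\rangle\langle\langle b\rangle\rangle$ in $GW(A)$ and $[ab] = [a] + [b] + \langle\langle a\rangle\rangle[b]$ in $V(A)$; when the additional $\eta$ shifts a factor across the GW/V boundary, the permutation principle is invoked to realign tensor slots. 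Multiplicativity $\psi(xy)=\psi(x)\psi(y)$ is checked on generators by transporting $\langle\langle v_i\rangle\rangle$-factors of $\psi(y)$ into the V-part of $\psi(x)$ via $\langle\langle v_i\rangle\rangle[u_{m+i}] = \langle\langle u_{m+i}\rangle\rangle[v_i]$ in $V(A)$ and then reordering by the permutation principle. Relation (3) is the chief obstacle: when the designated $u_i=-1$ sits in the GW-part of the first symbol, the $\psi$-image of the sum reduces to $\left(\prod_{j\neq i}\langle\langle u_j\rangle\rangle\right)\cdot h\cdot(\text{V-tail})$, which vanishes by $\langle\langle u_j\rangle\rangle h=0$ in $GW(A)$; when $-1$ sits in the V-part, I will first absorb it into a GW-factor via $\langle\langle u_{m+2}\rangle\rangle[-1]=\langle\langle -1\rangle\rangle[u_{m+2}]$ in $V(A)$, invoke the permutation principle to reposition the freed $[u_{m+2}]$ so as to match the second symbol's V-part, and conclude via $\langle\langle u_{m+1}\rangle\rangle h = 0$. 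Mutual inverseness is then automatic: $\phi\psi([\eta^m, u_1, \ldots, u_{n+m}])=\eta^m[u_1]\cdots[u_{n+m}]$, while $\psi\phi$ fixes the elements $[u_1]\otimes\cdots\otimes[u_n]$ generating $T_n$ as a $GW(A)$-module.
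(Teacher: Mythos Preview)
Your proposal is correct and follows essentially the same strategy as the paper's proof: define the forward map via the universal property of the tensor algebra, construct the inverse on generators $[\eta^m,u_1,\dots,u_{n+m}]$ via the presentation of Lemma~\ref{lem:KMWntrivialPresentation}, and verify the relations of Definition~\ref{dfn:KMWtilde} using the identities of Lemmas~\ref{Hatlem:basicKMWformulas} and~\ref{lem:SomeVARln} (transported to $T$ by Remark~\ref{rmk:hatKMWTensGenlA}). The paper's proof is terse, citing only Lemma~\ref{lem:SomeVARln} and Remark~\ref{rmk:hatKMWTensGenlA}, whereas you spell out the mechanism---the permutation principle $\langle\langle a\rangle\rangle[b,c]=\langle\langle a\rangle\rangle[c,b]$ and the swap $\langle\langle a\rangle\rangle[b]=\langle\langle b\rangle\rangle[a]$---explicitly; but the substance is the same.
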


\begin{proof}
Using Lemma \ref{lem:KMWntrivialPresentation}, the inverse is given by the ring map
$$
\renewcommand\arraystretch{2.5}
\begin{array}{rcl}
\tilde{K}^{MW}_{\geq 0}(A) & \to & \Tens_{GW(A)}V(A)/\{[a][1-a]|\ a,1-a\in A^*\}\\
\phantom{}[\eta^m,u_1,\dots,u_{m+n}] & \mapsto & \displaystyle{\left(\prod_{i=1}^m \langle\langle u_i\rangle\rangle\right) [u_{m+1}]\cdots [u_{m+n}]}
\end{array}
$$
This map is well-defined and indeed the required inverse in view of
Lemma \ref{lem:SomeVARln} and Remark \ref{rmk:hatKMWTensGenlA}.
\end{proof}

Now we come to the main result of this section.
For fields, a related but different presentation is given in \cite{HutchinsonTao:AugmIdeal}.

\begin{theorem}
\label{thm:KhatIsK}
Let $A$ be either a field or a local ring whose residue field has at least $4$ elements.
Then the homomorphism of graded $\Z[A^*]$-algebras
$$\hat{K}^{MW}_*(A) \to K^{MW}_*(A): [a] \mapsto [a]$$
induces an isomorphism for all $n\geq 2$
$$\hat{K}^{MW}_n(A) \stackrel{\cong}{\longrightarrow} K^{MW}_n(A).$$
\end{theorem}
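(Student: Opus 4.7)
The plan is to observe that all the hard work has already been done in Propositions \ref{prop:hatKMWisTensGWV} and \ref{prop:KMWisTensGWV}; the theorem reduces to combining these and verifying the composition agrees with the natural map.

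First I would factor the natural homomorphism $\hat{K}^{MW}_*(A) \to K^{MW}_*(A)$ as the composite
$$\hat{K}^{MW}_*(A) \longrightarrow \Tens_{GW(A)} V(A)/\{[a][1-a]\mid a,1-a \in A^*\} \longrightarrow K^{MW}_{\geq 0}(A),$$
where the first map is the one constructed in Proposition \ref{prop:hatKMWisTensGWV} and the second is the isomorphism of Proposition \ref{prop:KMWisTensGWV}. To justify the existence of the composite as the natural map, it suffices to check on the degree-one generators $[a]$: on $\hat{K}^{MW}_1(A) = I[A^*]$ the first arrow sends $[a]$ to its class in $V(A)$, and the second sends that class to $[a] \in K^{MW}_1(A)$ by Lemma \ref{lem:VisKMW1}. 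Since $\hat{K}^{MW}_*(A)$ is generated in degree $1$ over $\Z[A^*]$, and the $\Z[A^*]$-actions are compatible by Lemma \ref{lem:a2is1inGW}, the composite indeed coincides with the natural map $\hat{K}^{MW}_*(A) \to K^{MW}_*(A)$.

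Next I would apply the two propositions. Under the hypothesis that $A$ is a field or a local ring whose residue field has at least $4$ elements, Proposition \ref{prop:hatKMWisTensGWV} asserts that the first arrow is an isomorphism in degrees $\geq 2$. Proposition \ref{prop:KMWisTensGWV} asserts that the second arrow is an isomorphism of graded rings (in all nonnegative degrees) without any hypothesis on $A$. Composing gives the desired isomorphism $\hat{K}^{MW}_n(A) \cong K^{MW}_n(A)$ for $n \geq 2$.

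There is really no further obstacle here: the only slightly subtle point is that the factorization requires that the four relations cutting out $GW(A)$ from $\Z[A^*]$ and $V(A)$ from $I[A^*]$ — namely $\langle\langle a\rangle\rangle h = 0$, $\langle\langle a\rangle\rangle \langle\langle 1-a\rangle\rangle = 0$, $\langle\langle a\rangle\rangle h [b] = 0$, and $\langle\langle a\rangle\rangle[1-a] = 0$ — hold in $K^{MW}_*(A)$, and this has been recorded in Lemma \ref{lem:basicKMWformulas} (together with the Steinberg relation for the last one). The degree restriction $n \geq 2$ is sharp because the first factor is not an isomorphism in degrees $0$ and $1$ (the passage from $\Z[A^*]$ to $GW(A)$ and from $I[A^*]$ to $V(A)$ is generally not injective).
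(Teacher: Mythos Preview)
Your proposal is correct and follows exactly the paper's approach: the proof in the paper is the single sentence ``The theorem follows from Propositions \ref{prop:hatKMWisTensGWV} and \ref{prop:KMWisTensGWV},'' and your argument is a faithful unpacking of how those two propositions combine. The additional verification that the composite agrees with the natural map on generators is routine but a reasonable thing to spell out.
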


\begin{proof}
The theorem follows from Propositions \ref{prop:hatKMWisTensGWV} and \ref{prop:KMWisTensGWV}.
\end{proof}

\begin{proposition}
\label{prop:KMWnSnPresentation}
Let $A$ be either a field or a local ring with residue field cardinality at least $4$.
Let $n\geq 1$ an integer.
Then for $a_i\in A^*$ $(1\leq i \leq n)$ and $\lambda_i\in A^*$ $(1\leq i \leq n)$ with $\bar{\lambda}_i\neq \bar{\lambda}_j$ for $i\neq j$ ($\bar{\lambda}$ denotes reduction of $\lambda$ modulo the maximal ideal), the following relation holds in $\hat{K}^{MW}_*(A)$:
$$
\renewcommand\arraystretch{2}
\begin{array}{rl}
& [\lambda_1 a_1,\dots, \lambda_n a_n] - [a_1,\dots,a_n]\\
= &
\sum_{i=1}^n \eps^{i+n}\cdot \langle a_i\rangle \cdot [(\lambda_1 -\lambda_i)a_1,\dots,\widehat{(\lambda_i-\lambda_i)a_i},\dots (\lambda_n-\lambda_i)a_n,\lambda_i].
\end{array}
$$
\end{proposition}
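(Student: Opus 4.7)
The plan is to prove the identity by induction on $n$, using the multiplicative formula $[\lambda a] = [\lambda] + \langle\lambda\rangle[a]$, the symmetry $\langle\langle a\rangle\rangle[b] = \langle\langle b\rangle\rangle[a]$, and the Steinberg relation $[a][1-a]=0$ as the key tools. For the base case $n=1$, the right-hand side is $\langle a_1\rangle [\lambda_1]$, and one expands $[\lambda_1 a_1] = [\lambda_1] + \langle\lambda_1\rangle[a_1] = [\lambda_1] + [a_1] + \langle\langle\lambda_1\rangle\rangle[a_1]$ and applies Lemma \ref{Hatlem:basicKMWformulas}(5) to rewrite $\langle\langle\lambda_1\rangle\rangle[a_1] = \langle\langle a_1\rangle\rangle[\lambda_1]$; combining gives $[\lambda_1 a_1]-[a_1] = \langle a_1\rangle[\lambda_1]$ as required.

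For the inductive step, I would first isolate the special case $a_1=\cdots=a_n=1$, which reduces the claim to the purely multiplicative identity
\[
[\lambda_1,\dots,\lambda_n] \;=\; \sum_{i=1}^{n} \eps^{i+n}\,[\lambda_1-\lambda_i,\dots,\widehat{(\lambda_i-\lambda_i)},\dots,\lambda_n-\lambda_i,\lambda_i].
\]
The hypothesis $\bar\lambda_i\neq\bar\lambda_j$ makes every difference $\lambda_i-\lambda_j$ a unit, so the Steinberg relation $[\lambda_j/\lambda_i,(\lambda_i-\lambda_j)/\lambda_i]=0$ applies. Expanding via Lemma \ref{Hatlem:basicKMWformulas}(4), namely $[\mu/\nu] = [\mu]-\langle\mu/\nu\rangle[\nu]$, converts each such Steinberg vanishing into a relation among the symbols $[\lambda_i,\lambda_j]$ and $[\lambda_i-\lambda_j,\lambda_i]$. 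A secondary induction over pairs $(i,j)$, using the graded-commutativity $[a,b]=\eps[b,a]$ of Lemma \ref{Hatlem:basicKMWformulasTwo}(3) to move the "distinguished" entry $\lambda_i$ into the last slot, produces the signs $\eps^{i+n}=\eps^{n-i}$ and closes this reduced identity.

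The general case then follows by combining the pure-$\lambda$ case with multiplicativity. Writing $[\lambda_i a_i] = [\lambda_i] + \langle\lambda_i\rangle[a_i]$ in each slot and expanding the product $[\lambda_1 a_1,\dots,\lambda_n a_n]$ gives a sum over subsets $S\subseteq\{1,\dots,n\}$, with the term indexed by $S=\{1,\dots,n\}$ being $\langle\lambda_1\cdots\lambda_n\rangle[a_1,\dots,a_n]$ and the term indexed by $S=\emptyset$ being $[\lambda_1,\dots,\lambda_n]$. Applying the pure-$\lambda$ identity to the latter, and then rewriting the intermediate "mixed" terms using Lemma \ref{Hatlem:basicKMWformulas}(5) to exchange $\langle\langle\lambda_j\rangle\rangle$-factors for $\langle\langle a_j\rangle\rangle$-factors (combined again with the Steinberg relation to cancel terms with $a_j+(1-a_j)$-type pairs), should reassemble precisely the right-hand side of the proposition.

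The main obstacle will be the bookkeeping of the signs $\eps^{i+n}$ and of the accumulated unit coefficients $\langle a_i\rangle$ that appear after the rearrangements. The commutativity rule $[a,b]=\eps[b,a]$ and several of the cancellation identities of Lemma \ref{Hatlem:basicKMWformulasTwo} require the residue field to have cardinality at least $4$, which is precisely the hypothesis assumed; without it some of the intermediate manipulations break. A cleaner route might be to verify the identity in a generic/universal situation (over an appropriate localization of a polynomial ring in the $\lambda_i$ and $a_i$) and then specialize, but even in that setting the combinatorial matching of the two sides is the essential content.
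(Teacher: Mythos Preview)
Your base case $n=1$ is fine and matches the paper. The inductive strategy, however, diverges from the paper and has a real gap.

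You propose a two-stage induction: first prove the identity for $a_1=\cdots=a_n=1$, then reduce the general case to that one by expanding each $[\lambda_ia_i]=[\lambda_i]+\langle\lambda_i\rangle[a_i]$ and ``reassembling''. The second stage is where the argument breaks down. Expanding the left-hand side that way produces $2^n$ terms in the variables $[\lambda_i]$ and $[a_j]$, but the right-hand side of the proposition involves the \emph{mixed} symbols $[(\lambda_j-\lambda_i)a_j]$, which do not arise from such an expansion. Your proposed tool for bridging this---swapping $\langle\langle\lambda_j\rangle\rangle$ for $\langle\langle a_j\rangle\rangle$ via Lemma~\ref{Hatlem:basicKMWformulas}(\ref{Hatlem:TowardsGWactionOnTwistedK:1}) and cancelling ``$a_j+(1-a_j)$-type pairs'' with Steinberg---does not apply: there are no such pairs in sight, and the exchange identity only moves unit coefficients around, it does not create the differences $\lambda_j-\lambda_i$ inside the brackets. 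As written, the reduction step is a hope rather than an argument.

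The paper avoids this entirely by running the induction with the $a_i$ general from the start. The key manipulation is to peel off the last coordinate: write $(\lambda_n-\lambda_i)a_n=(1-\lambda_i/\lambda_n)\cdot\lambda_na_n$, so that $[(\lambda_n-\lambda_i)a_n]=[\lambda_na_n]+\langle\lambda_na_n\rangle[1-\lambda_i/\lambda_n]$. The Steinberg relation then gives $[1-\lambda_i/\lambda_n,\lambda_i]=[1-\lambda_i/\lambda_n,\lambda_n]$ (since $[\lambda_i]=[\lambda_n]+\langle\lambda_n\rangle[\lambda_i/\lambda_n]$ and $[\lambda_i/\lambda_n][1-\lambda_i/\lambda_n]=0$). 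This lets one split the sum over $i<n$ into two pieces: one that regroups as a product with $[\lambda_na_n]$ and invokes the induction hypothesis for $n-1$, and a second to which the induction hypothesis applies with the shifted parameters $\lambda_i/\lambda_n$. After further use of $[-a/\lambda,\lambda]=[a,\lambda]$ (from $[a,-a]=0$) and $\eps$-commutativity, the two pieces combine to yield exactly the left-hand side minus the $i=n$ term. No reduction to the case $a_i=1$ is needed, and the mixed symbols $[(\lambda_j-\lambda_i)a_j]$ are present throughout.
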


\begin{proof}
We will prove the statement by induction on $n$.
For $n=1$ this is Lemma \ref{Hatlem:basicKMWformulas} (\ref{Hatlem:basicKMWformulas:1}).
For $n\geq 2$ we have
$$
\renewcommand\arraystretch{2}
\begin{array}{cl}
 &
\sum_{i=1}^{n-1} \eps^{i+n}\cdot \langle a_i\rangle \cdot [(\lambda_1 -\lambda_i)a_1,\dots,\widehat{(\lambda_i-\lambda_i)a_i},\dots, (\lambda_n-\lambda_i)a_n,\lambda_i]\\
\stackrel{(1)}{=} &
\sum_{i=1}^{n-1} \eps^{i+n}\cdot \langle a_i\rangle \cdot [(\lambda_1 -\lambda_i)a_1,\dots,\widehat{(\lambda_i-\lambda_i)a_i},\dots, \lambda_n a_n,\lambda_i]\\
 &
+ \sum_{i=1}^{n-1} \eps^{i+n}  \langle a_i\rangle   \langle \lambda_na_n\rangle  [(\lambda_1 -\lambda_i)a_1,\dots,
\widehat{\phantom{12}},
\dots, (\lambda_{n-1}-\lambda_i)a_{n-1},1-\lambda_i/\lambda_n,\lambda_i]\\
\stackrel{(2)}{=} &
\left(\sum_{i=1}^{n-1} \eps^{i+n-1}  \langle a_i\rangle   [(\lambda_1 -\lambda_i)a_1,\dots,\widehat{(\lambda_i-\lambda_i)a_i},\dots, (\lambda_{n-1}-\lambda_i) a_{n-1},\lambda_i]\right) [\lambda_na_n]\\
 &
+ \langle \lambda_na_n\rangle \sum_{i=1}^{n-1} \eps^{i+n}  \langle a_i\rangle    [(\lambda_1 -\lambda_i)a_1,\dots,
\widehat{\phantom{12}},
\dots, (\lambda_{n-1}-\lambda_i)a_{n-1},1-\lambda_i/\lambda_n,\lambda_n]\\
\stackrel{(3)}{=} &
[\lambda_1a_1,\dots,\lambda_na_n] - [a_1,\dots,a_{n-1},\lambda_na_n]\\
& +
\eps \langle \lambda_na_n\rangle  
\sum_{i=1}^{n-1} \eps^{i+n-1}  \langle a_i\rangle   [(\frac{\lambda_i}{\lambda_n}-\frac{\lambda_1}{\lambda_n})a_1,\dots,
\widehat{\phantom{12}},
\dots, (\frac{\lambda_i}{\lambda_n}-\frac{\lambda_{n-1}}{\lambda_n})a_{n-1},1-\frac{\lambda_i}{\lambda_n},\lambda_n] \\
\stackrel{(4)}{=} &
[\lambda_1a_1,\dots,\lambda_na_n] - [a_1,\dots,a_n] - \langle a_n\rangle [a_1,\dots,a_{n-1},\lambda_n]\\
&+
\eps \langle \lambda_na_n\rangle \left( 
[(1-\frac{\lambda_1}{\lambda_n})a_1,\dots,(1-\frac{\lambda_{n-1}}{\lambda_n})a_{n-1},\lambda_n]
-[a_1,\dots,a_{n-1},\lambda_n]\right)\\
\stackrel{(5)}{=} &
[\lambda_1a_1,\dots,\lambda_na_n] - [a_1,\dots,a_n] - \langle a_n\rangle [a_1,\dots,a_{n-1},\lambda_n]\\
&-
 \langle a_n\rangle  
[(\lambda_1 - \lambda_n)a_1,\dots,(\lambda_{n-1} - \lambda_n)a_{n-1},\lambda_n]
+ \langle a_n\rangle [a_1,\dots,a_{n-1},\lambda_n]\\
= & 
[\lambda_1a_1,\dots,\lambda_na_n] - [a_1,\dots,a_n] - 
 \langle a_n\rangle [(\lambda_1 - \lambda_n)a_1,\dots,(\lambda_{n-1} - \lambda_n)a_{n-1},\lambda_n].
\end{array}
$$
Here, equation $(1)$ follows from 
$$[(\lambda_n-\lambda_i)a_n] = [(1-\lambda_i/\lambda_n)\cdot \lambda_n a_n]
= [\lambda_n a_n] + \langle \lambda_n a_n\rangle [1-\lambda_i/\lambda_n],$$
equation $(2)$ follows from
$[\lambda_i]=[\lambda_n\frac{\lambda_i}{\lambda_n}] = [\lambda_n] + \langle \lambda_n\rangle [\frac{\lambda_i}{\lambda_n}]$ together with the Steinberg relation which yields
$$[1 - \lambda_i/\lambda_n,\lambda_i] = [1 - \lambda_i/\lambda_n,\lambda_n].$$
Equation $(3)$ follows from the induction hypothesis and
$$[-a/\lambda,\lambda] = [a,\lambda]+\langle a\rangle [-1/\lambda,\lambda] = 
[a,\lambda]+\langle -a/\lambda\rangle [-\lambda,\lambda] = [a,\lambda].$$
Equation $(4)$ follows from the induction hypothesis.
Equation $(5)$ follows from $[-a/\lambda,\lambda] = [a,\lambda]$ and 
$\langle -1\rangle[a,\lambda]=\langle -1\rangle [(-\lambda)(-\frac{a}{\lambda})][\lambda] = \langle -1\rangle ([-\lambda] + \langle -\lambda\rangle [-a/\lambda])[\lambda] = \langle \lambda\rangle [-a/\lambda,\lambda]=\langle \lambda \rangle [a,\lambda].$
\end{proof}

\section{The obstruction to further stability}
\label{sec:Obstruction}

The purpose of this section is to prove Theorems \ref{thm:BassConjLocalRings} and \ref{thm:main:SLnLocalA} from the Introduction.

In subsection \ref{subsec:SpSeqLocRings}, $A$ can be any commutative ring, unless otherwise stated.
In the remaining subsections, $A$ will be a local commutative ring with infinite residue field $k$.
In this case, $A$ has many units, its stable rank is $\sr(A)=1$, and we write
$\bar{a}\in k$ for the reduction of $a\in A$ modulo the maximal ideal in $A$.

\subsection{Multiplicative properties of the spectral sequence}
\label{subsec:SpSeqLocRings}

Let $A$ be a commutative ring.
Recall from Section \ref{sec:Stability} the complexes $C(A^n)$ with $C_r(A^n)$ the free abelian  group generated by the set $U_r(A^n)$ of left invertible $n\times r$-matrices with entries in $A$.
Matrix multiplication makes $C(A^n)$ into a complex of left $GL_n(A)$-modules.
Recall also the spectral sequence 
\begin{equation}
\label{eqn:SpSeqLocalA}
E^1_{p,q}(A^n) = \Tor^{GL_n}_{p}(\Z[A^*],C_q(A^n)) \Rightarrow H_{p+q}(\Z[A^*] \stackrel{L}{\otimes}_{GL_n}C(A^n))
\end{equation}
with differential $d^r$ of bidegree $(r-1,-r)$.
This is the spectral sequence $E^1_{p,q}(A^n) \Rightarrow G_{p+q}(A^n)$ of the exact couple 
\begin{equation}
\label{eqn:ExCouple}
\xymatrix{
\bigoplus_{p,q}E^r_{p,q}(A^n) \ar[r]^i & \bigoplus_{p,q}D^r_{p,q}(A^n) \ar[r]^{\rho} \ar[d]^{j} & \bigoplus_{p+q}G_{p+q}(A^n)\\
& \bigoplus_{p,q}D^r_{p,q}(A^n) \ar[ul]^k \ar[ur]_{\rho} & 
}
\end{equation}
where
$$
\renewcommand\arraystretch{2}
\begin{array}{lcl}
D^1_{p,q}(A^n) &=& H_{p+q}(\Z[A^*]\stackrel{L}{\otimes}_{GL_n}C_{\leq q}(A^n))\\
E^1_{p,q}(A^n) &=& H_{p+q}(\Z[A^*]\stackrel{L}{\otimes}_{GL_n}C_{q}(A^n)[q]) = H_{p}(\Z[A^*]\stackrel{L}{\otimes}_{GL_n}C_{q}(A^n))\\
G_{p+q}(A^n) &=& H_{p+q}(\Z[A^*]\stackrel{L}{\otimes}_{GL_n}C(A^n)).
\end{array}
$$
For $r=1$, the maps $i,j,k$ are the maps of the long exact sequence of homology groups associated with the exact sequence of complexes
$$0 \to C_{\leq q-1}(A^n) \to C_{\leq q}(A^n) \to C_{q}(A^n)[q] \to 0$$
and the map $\rho$ is induced by the inclusion
$C_{\leq q}(A^n) \subset C(A^n)$.
The derived couple is obtained by keeping $G$ and replacing $D$ with $\im(j)$, $E$ with the homology of $(E,k\circ i)$, and $i,j,k,\rho$ with certain induced maps.

The spectral sequence comes with a filtration of the abuttment
\begin{equation}
\label{eqn:LocalSpSeqFiltration}
0 \subset F_{p+q,0}(A^n) \subset F_{p+q-1,1}(A^n) \subset \dots  \subset F_{0,p+q}(A^n) = G_{p+q}(A^n)
\end{equation}
where $F_{p,q}(A^n)$ is the image of $\rho: D_{p,q}(A^n) \to G_{p+q}(A^n)$ (that image is independent of $r$) and an exact sequence
\begin{equation}
\label{eqn:FiltrationExSeq}
0\to F_{p+1,q-1}(A^n) \to F_{p,q}(A^n) \to  E^{\infty}_{p,q}(A^n) \to 0.
\end{equation}
For large $r$ (depending on $(p,q)$), the map $i$ is zero, $E^r_{p,q}=E^{\infty}_{p,q}$, and the exact sequence (\ref{eqn:FiltrationExSeq}) is the exact sequence
$$0\to D^r_{p+1,q-1}(A^n) \to D^r_{p,q}(A^n) \to  E^r_{p,q}(A^n) \to 0.$$
\vspace{1ex}

In this section, we are interested in the degree $n$-part of the spectral sequence $E(A^n)$, and we set
$$
S_n(A) = H_n(\Z[A^*]\stackrel{L}{\otimes}_{GL_n}C(A^n))\hspace{3ex}(=G_n(A^n))
.$$
For instance, we have 
$$\renewcommand\arraystretch{2}
\begin{array}{ll}
S_0(A)=  \Z[A^*],&   S_1(A)=  I[A^*]\\
\end{array}
$$
since $\Z[GL_0(A)]=\Z$, and $C(A^1)$ is the augmentation complex $\eps:\Z[A^*] \to \Z$ with $\Z$ placed in degree $0$.

\begin{remark}
If $A$ is a local ring with infinite residue field, we
let $Z_n(A^n)$ be the $GL_n(A)$-module
$$Z_n(A^n)=\ker(d_n:C_n(A^n) \to C_{n-1}(A^n)).$$
From Lemma \ref{lem:Acyclicity} or Lemma \ref{lem:QuillenAcyclicity} below, the canonical map 
$Z_n(A^n)[n] \to C(A^n)$ is a quasi-isomorphism of complexes of $GL_n(A)$-modules, and thus, 
\begin{equation}
\label{eqn:DfnSn}
S_n(A)=\Z[A^*]{\otimes}_{GL_n}Z_n(A^n) = \left\{
\renewcommand\arraystretch{1.5}
\begin{array}{ll}
\Z{\otimes}_{SL_n}Z_n(A^n)= SL_n\backslash Z_n(A^n),& n\geq 1\\
\Z[A^*], & n=0.
\end{array}
\right.
\end{equation}
\end{remark}
\vspace{2ex}

Let $A$ be a commutative ring.
Since the complex $C(A^n)$ is concentrated in degrees between $0$ and $n$, the spectral sequence $E(A^n)$ has two edge maps.
Set 
\begin{equation}
\label{eqn:DfnBn}
\renewcommand\arraystretch{2}
\begin{array}{rl}
\B_n(A) &=H_n(\Z[A^*]\stackrel{L}{\otimes}_{GL_n}\Z) = \left\{
\renewcommand\arraystretch{1.5}
\begin{array}{ll}
H_n(SL_nA,\Z),& n\geq 1\\
\Z[A^*], & n=0
\end{array}
\right.\\
& (= D^1_{n,0}(A^n)=E^1_{n,0}(A^n)).
\end{array}
\end{equation}
Then the {\em incoming edge map} is the map $B_n(A)\to S_n(A)$ induced by the inclusion $\Z = C_0(A^n) \subset C(A^n)$.
The {\em outgoing edge map} is the map 
$$S_n(A) \to H_n(\Z[A^*]\stackrel{L}{\otimes}_{GL_n}\Z[GL_n][n])=\Z[A^*]$$
induced by the projection $C(A^n) \to C_n(A^n)[n]=\Z[GL_n(A)][n]$.

\begin{remark}
Let $A$ be a local ring with infinite residue field.
In the description (\ref{eqn:DfnSn}) of $S_n(A)$, the outgoing edge map is the determinant map
$$\det: S_n(A) \to \Z[A^*]:z\mapsto \det(z)$$
sending the class of a cycle $z=\sum_in_i\cdot [\alpha_i]\in Z_n(A^n)$, $\alpha_i\in GL_n(A)$, $n_i\in \Z$, to its determinant 
$$\det(z) = \sum_i n_i\langle \det \alpha_i\rangle \hspace{2ex}\in\hspace{1ex} \Z[A^*].$$
\end{remark}

Let $A$ be a commutative ring.
We will need some multiplicative properties of the spectral sequence which we explain now.
For a group $G$, denote by $E(G)$ the standard contractible simplicial set with free $G$-action on the right, and by $\Z E(G)$ the chain complex associated with the free simplicial abelian group generated by $E(G)$.
So, $\Z E(G)$ is the standard free $\Z[G]$-resolution of the trivial $G$-module $\Z$ with $\Z E_q(G) = \Z[G^{q+1}]$ and differential $d_q=\sum_{i=0}^q(-1)^i\delta^i$ defined by $\delta^i(g_0,...,g_q) =(g_0,...,\hat{g_i},...g_q)$. 
The group $G$ acts from the right on $\Z E(G)$ by the formula $(g_0,...,g_q)g=(g_0g,...,g_qg)$.
Recall that the shuffle map
$$\nabla:\Z E(G_1)  \otimes \Z E(G_2) \to \Z E(G_1\times G_2)$$
is a quasi-isomorphism and a lax monoidal transformation and thus makes the usual unit and associativity diagrams commute; see for instance \cite[\S 2.2 and 2.3]{ShipSchwede:Equivalences}.

Let $\Lambda$ be a commutative ring which is flat over $\Z$.
Let $G$ be a group equipped with a group homomorphism $G \to \Lambda^*$ to the group of units of $\Lambda$, that is, a ring homomorphism $\Z[G] \to \Lambda$.
We consider $\Lambda$ as a right $G$-module via this ring homomorphism.
We let $G$ act on $\Lambda E(G)=\Lambda \otimes \Z E(G)$ via the formula
$(\lambda \otimes (g_0,...,g_q))\cdot g = \lambda g \otimes (g_0g,...,g_qg)$.
Then $\Lambda E(G)$ is a free resolution of $\Lambda$ in the category of $\Lambda[G]$-modules.
Let $M$ be a bounded complex of $G$-modules.
Then $\Lambda \otimes M$ is a bounded complex of $\Lambda[G]$-modules.
Since $\Lambda$ is flat over $\Z$, the canonical map
$$\Lambda \stackrel{L}{\otimes}_{\Lambda G} (\Lambda \otimes M) \longrightarrow 
\Lambda \stackrel{L}{\otimes}_{G}  M$$
induced by the isomorphism 
$$\Lambda {\otimes}_{\Lambda G} (\Lambda \otimes M) \stackrel{\cong}{\longrightarrow} \Lambda {\otimes}_{G} M:  \lambda_1 \otimes (\lambda_2 \otimes m)  \mapsto \lambda_1\lambda_2 \otimes m$$
is a quasi-isomorphism.
This can be checked using a $\Z[G]$-projective resolution of $M$.
In particular, 
$$\Lambda E(G)\otimes_{\Lambda[G]}(\Lambda \otimes M)\cong \Lambda E(G)\otimes_{G} M$$
represents $\Lambda\stackrel{L}{\otimes}_GM$.

Let $G_1,G_2$ be groups equipped with group homomorphisms $G_1,G_2\to \Lambda^*$.
Since $\Lambda$ is commutative, we obtain a group homomorphism $G_1\times G_2\to \Lambda^*$.
Let $M_1,M_2$ be bounded complexes of $G_1$ and $G_2$-modules, respectively.
Then $M_1\otimes M_2$ is a complex of $G_1\times G_2$-modules.
The cross product
$$\times: \Tor_p^{G_1}(\Lambda,M_1) \otimes_{\Lambda}\Tor_q^{G_2}(\Lambda,M_2)
\longrightarrow \Tor_{p+q}^{G_1\times G_2}(\Lambda,M_1\otimes M_2)$$
is the map on homology induced by the map of complexes
$$
\renewcommand\arraystretch{2}
\begin{array}{cl}
& \left\{ \Lambda E(G_1)\otimes_{G_1}M_1\right\} \otimes_{\Lambda} \left\{ \Lambda E(G_2)\otimes_{G_2}M_2\right\}\\
\cong & \left\{ \Lambda E(G_1)\otimes_{\Lambda}\Lambda E(G_2) \right\} \otimes_{G_1\times G_2}(M_1\otimes M_2)\\
\stackrel{\nabla \otimes 1}{\longrightarrow}&
\Lambda E(G_1\times G_2) \otimes_{G_1\times G_2}(M_1\otimes M_2).
\end{array}
$$
Since the shuffle map is unital and associative, so is the cross product.
\vspace{1ex}

We apply the previous considerations to $\Lambda = \Z[A^*]$, $G_1= GL_m(A)$, $G_2=GL_n(A)$, $M_1=C(A^m)$, and $M_2=C(A^n)$.
The cross product together with the
$GL_m(A)\times GL_n(A)$-equivariant map of complexes \cite[\S 3]{SuslinNesterenko}
\begin{equation}
\label{eqn:Cprods}
C(A^m)\otimes_{\Z}C(A^n) \to C(A^{m+n}):x\otimes y\mapsto (x,y)
\end{equation}
given on basis elements by concatenation of sequences,
then defines a product
$$S_m(A) \otimes_{A^*}S_n(A) \to S_{m+n}(A)$$ wich makes 
$$S(A) = \bigoplus_{n\geq 0}S_n(A)$$
into an associative and unital $\Z[A^*]$-algebra.

\begin{remark}
Let $A$ be a local ring with infinite residue field.
In terms of cycles (\ref{eqn:DfnSn}), the product is given by
$$\left(\sum_im_i[\alpha_i]\right) \cdot \left(\sum_j n_j[\beta_j]\right) = \sum_{i,j}m_in_j\left[\begin{smallmatrix}\alpha_i & 0 \\ 0 & \beta_j\end{smallmatrix}\right]$$
where $\alpha_i\in GL_m(A)$ and $\beta_j\in GL_n(A)$, $m_i,n_j\in \Z$.
In particular, the outgoing edge map $\det:S(A) \to \Z[A^*]$ is a ring homomorphism.
\end{remark}

Let $A$ be a commutative ring.
The map of complexes (\ref{eqn:Cprods}) restricts to maps 
$$C_{\leq r}(A^m)\otimes C_{\leq s} (A^n) \to C_{\leq r+s}(A^{m+n})$$
which, together with the cross products, define pairings
$$D^1_{m-r,r}(A^m) \otimes_{A^*} D^1_{n-s,s}(A^n) \to D^1_{m+n-r-s,r+s}(A^{m+n})$$
that are suitably associative and unital.
In particular, 
$$\B(A)= \bigoplus_{n\geq 0}\B_n(A) = \bigoplus_{n\geq 0}D^1_{n,0}(A^n)$$
is a unital and associative $\Z[A^*]$-algebra, and for all $r\geq 0$ the graded $A^*$-module
$$\bigoplus_{n\geq 0} D^1_{n-r,r}(A^n)$$
is a graded $\B(A)$ bimodule.
The map $\B(A) \to S(A)$ is a map of graded $\Z[A^*]$-algebras,
and the maps
$$\B(A) \to \bigoplus_{n\geq 0} D^1_{n-r,r}(A^n) \twoheadrightarrow \bigoplus_{n\geq 0} F_{n-r,r}(A^n) \hookrightarrow S(A)$$
are $\B(A)$-bimodule maps.
Since all maps in (\ref{eqn:ExCouple}) are $\B(A)$-bimodule maps, the resulting spectral sequence 
$$\bigoplus_{n\geq 0} E^1_{p,q}(A^n) \Rightarrow \bigoplus_{n\geq 0}G_{p+q}(A^n)$$
is a spectral of $\B(A)$-bimodules.
\vspace{1ex}

For a group $G$ and a subgroup $H\leq G$, write $C(G,H)$ for the complex
of $G$-modules $\Z[G/H] \to \Z: gH \mapsto 1$ with $\Z$ placed in degree $0$.
By Shapiro's Lemma, we have a canonical isomorphism
$H_n(G,H;\Z)\cong H_n(G,C(G,H))$.

For $n\geq 2$, the inclusion $SL_{n-1}(A) \subset \Aff^{SL}_{1,n-1}(A)$ defines a
map of complexes of $SL_n(A)$-modules
\begin{equation}
\label{eqn:PreCanMapToSn}
C(SL_n(A),SL_{n-1}(A)) \to C(SL_nA,\Aff^{SL}_{1,n-1}A) = C_{\leq 1}(A^n).
\end{equation}

\begin{lemma}
\label{Lem:DandF}
Let $A$ be a commutative ring with many units.
Let $2\leq n \leq n_0$ and $\sigma\in \Z[A^*]$ as in $(\ast)$.
\begin{enumerate}
\item
\label{Lem:DandF:1}
The map (\ref{eqn:PreCanMapToSn}) induces an isomorphism of $A^*$-modules
$$H_n(SL_nA,SL_{n-1}A) \stackrel{\cong}{\longrightarrow} \sigma^{-1}D^1_{n-1,1}(A).$$
\item
\label{Lem:DandF:2}
The natural surjetion $D^1_{n-1,1}(A) \to F_{n-1,1}(A)$ induces an isomorphism $A^*$-modules
$$\sigma^{-1}D^1_{n-1,1}(A) \stackrel{\cong}{\longrightarrow} \sigma^{-1}F_{n-1,1}(A).$$
\end{enumerate}
\end{lemma}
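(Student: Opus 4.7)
My plan is to deduce both parts from a comparison of long exact sequences, using Corollary \ref{cor:HomologyOfAffineGps} and the structural results of Section \ref{sec:Stability}. For Part (\ref{Lem:DandF:1}), I begin by identifying $D^1_{n-1,1}(A)$ with the relative group homology $H_n(SL_n, \Aff^{SL}_{1, n-1})$ via Shapiro's lemma (using that $SL_n$ acts transitively on $GU_1(A^n)$ with stabilizer $\Aff^{SL}_{1, n-1}$, and that $\Z[A^*] \otimes^L_{GL_n}(-) \cong \Z \otimes^L_{SL_n}(-)$ on $GL_n$-modules since $\Z[A^*]$ is free as $A^*$-module). The map (\ref{eqn:PreCanMapToSn}) then induces a morphism from the long exact sequence of the pair $(SL_n, SL_{n-1})$ to that of $(SL_n, \Aff^{SL}_{1, n-1})$: the identity on the $H_i(SL_n)$ terms, and the inclusion-induced map $H_i(SL_{n-1}) \to H_i(\Aff^{SL}_{1, n-1})$ on the remaining terms. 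After $\sigma$-localizing the target sequence, Corollary \ref{cor:HomologyOfAffineGps} (with $p = 1$ and $q = n-1$; the hypotheses hold because $(n-1) \mid t$ by $(*)$ and $i \leq n < m(n-1)/t$ since $n \leq n_0 < m/t$) renders these inclusion maps isomorphisms.

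The remaining ingredient for a five-lemma conclusion is that $\sigma$ acts trivially on $H_i(SL_n)$, $H_i(SL_{n-1})$ and $H_n(SL_n, SL_{n-1})$, so that the source sequence requires no localization. This is where the divisibility $n \mid t$ from $(*)$ is essential: for any $a \in A^*$, the element $\langle a^{-t}\rangle$ corresponds under $\det: GL_n \to A^*$ to the central scalar $a^{-t/n} \cdot 1_n \in GL_n$; conjugation by a central element is trivial on any subgroup, so $\langle a^{-t}\rangle$ acts as the identity on $H_*(SL_n)$, on $H_*(SL_{n-1})$ (viewed as a subgroup of $SL_n$), and hence on the pair. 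Consequently $\sigma$ acts as $\eps(\sigma) = 1$ on all three groups, and the five lemma yields the isomorphism of Part (\ref{Lem:DandF:1}) (the $A^*$-equivariance being automatic from the naturality of all constructions).

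For Part (\ref{Lem:DandF:2}), I will compare two short exact sequences. The filtration of $\sigma^{-1} G_n(A^n)$ coming from the spectral sequence (\ref{eqn:SpSeqLocalA}) produces
$$0 \to \sigma^{-1}F_{n, 0}(A) \to \sigma^{-1}F_{n-1, 1}(A) \to \sigma^{-1}E^\infty_{n-1, 1}(A^n) \to 0.$$
Proposition \ref{prop:NS_Prop_2.6} annihilates all higher differentials at the positions of interest (since $p \leq n_0$ there), so $\sigma^{-1}E^\infty = \sigma^{-1}E^2$; Lemma \ref{lem:NS_lemma_2.4} then identifies the end terms as $\sigma^{-1}F_{n, 0}(A) = \coker(H_n(SL_{n-1}) \to H_n(SL_n))$ and $\sigma^{-1}E^\infty_{n-1, 1}(A^n) = \ker(H_{n-1}(SL_{n-1}) \to H_{n-1}(SL_n))$. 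On the other hand, the long exact sequence of the pair $(SL_n, SL_{n-1})$ exhibits $H_n(SL_n, SL_{n-1})$ as an extension with exactly the same end terms. Via Part (\ref{Lem:DandF:1}), the natural map $H_n(SL_n, SL_{n-1}) \cong \sigma^{-1}D^1_{n-1, 1}(A) \twoheadrightarrow \sigma^{-1}F_{n-1, 1}(A)$ defines a morphism of short exact sequences that is the identity on both end terms, and a final application of the five lemma concludes.

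I expect the main obstacle to be the careful bookkeeping of the $A^*$-action in Part (\ref{Lem:DandF:1}) — in particular, establishing that $\sigma$ acts as the identity on $H_n(SL_n, SL_{n-1})$, which is what allows the left-hand side of the asserted isomorphism to appear without any further localization. This step rests entirely on the central-scalar lift trick and the divisibility $n \mid t$ built into hypothesis $(*)$.
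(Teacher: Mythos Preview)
Your proof is correct and follows essentially the same route as the paper's: Part (\ref{Lem:DandF:1}) via the five lemma on the map of long exact sequences of pairs (the paper cites Theorem \ref{thm:HomologyOfAffineGps} directly rather than its Corollary, but this is cosmetic), and Part (\ref{Lem:DandF:2}) via a comparison of the two short exact sequences with matching end terms. Your explicit verification that $\sigma$ acts as $1$ on $H_*(SL_n)$ and $H_*(SL_{n-1})$ via the central-scalar lift is exactly the mechanism already built into the proof of Theorem \ref{thm:HomologyOfAffineGps}; the paper leaves this implicit, but making it visible does no harm. One small wording point: the natural $A^*$-action on the pair $(SL_n,SL_{n-1})$ comes from conjugation by $GL_{n-1}$, not $GL_n$, so the cleanest lift for the $SL_{n-1}$ term is $a^{-t/(n-1)}\cdot 1_{n-1}$ (using $(n-1)\mid t$); your $a^{-t/n}\cdot 1_n$ still works because the induced action on homology depends only on the image in $A^*$, but it is worth saying so.
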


\begin{proof}
It follows from Theorem \ref{thm:HomologyOfAffineGps} and the five lemma that the map (\ref{eqn:PreCanMapToSn}) induces an isomorphism 
$$H_p(SL_n(A),SL_{n-1}(A)) \to \sigma^{-1}H_p(SL_nA,\Aff^{SL}_{1,n-1}A)$$
for all $p\leq n_0$.
This proves (\ref{Lem:DandF:1}).

For part (\ref{Lem:DandF:2}), consider the exact sequence
$$D^1_{n,0} \stackrel{j}{\to} D^1_{n-1,1} \stackrel{k}{\to} E^1_{n-1,1} \stackrel{i}{\to} D^1_{n-1,0}$$
and the isomorphism $k:D^1_{n-1,0} \cong E^1_{n-1,0}$.
By Lemma \ref{lem:NS_lemma_2.4}, the differential $d=ki:\sigma^{-1}E^1_{n-1,2} \to \sigma^{-1}E^1_{n-1,1}$ is zero.
Hence, $\sigma^{-1}E^2_{n-1,1}$ is the kernel of the map 
$i:\sigma^{-1}E^1_{n-1,1} \to \sigma^{-1}D^1_{n-1,0}$.
The exact sequence above induces the exact sequence
$0\to \im(j) \to D^1_{n-1,1} \to \ker(d) \to 0$
which, after localization at $\sigma$, is
$0\to \sigma^{-1}D^2_{n,0} \to \sigma^{-1}D^1_{n-1,1} \to \sigma^{-1}E^2_{n-1,1} \to 0$.
By Proposition \ref{prop:NS_Prop_2.6}, we have $\sigma^{-1}D^2_{n,0}=\sigma^{-1}F_{n,0}$ and $\sigma^{-1}E^2_{n-1,1}=\sigma^{-1}E^{\infty}_{n-1,1}$.
Now, the last exact sequence maps to 
$0 \to \sigma^{-1}F_{n,0} \to \sigma^{-1}F_{n-1,1}\to \sigma^{-1}E^{\infty}_{n-1,1} \to 0$.
By the five lemma, we are done.
\end{proof}

\subsection{Presentation and decomposability}

\begin{center}
{\em In this subsection, $A$ is a commutative local ring with infinite residue field.}
\end{center}

In order to obtain a presentation of $S_n(A)$, we need to recall from \cite{SuslinNesterenko} the definition of the complex of $GL_n(A)$-modules $\tilde{C}(A^n)$.
A sequence $(v_1,...,v_r)$ of $r$ vectors in $A^n$ is said to be in general position if any $\min(r,n)$ of the vectors $v_1,...,v_r$ span a free submodule of rank $\min(r,n)$.
A {\em rank $r$ general position sequence} in $A^n$ is a sequence $(v_1,...,v_r)$ of $r$ vectors in $A^n$ which are in general position.
Note that $(v_1,...,v_r)$ is in general position in $A^n$ if and only if 
their reduction $(\bar{v}_1,...,\bar{v}_r)$ modulo the maximal ideal of $A$ is in general position in $k^n$.
This is because a set of vectors $v_1,...,v_s$ spans a free submodule of rank $s$ if and only if the matrix $(v_1,...,v_s)$ has a left inverse.

Let $V=(v_1,...,v_r)$ be a general position sequence, we call a vector $w\in A^n$ {\em transversal to $V$} if
$(V,w) = (v_1,...,v_r,w)$ is also in general position.

\begin{lemma}
\label{lem:Transversality}
Let $(A,m,k)$ be a local ring with infinite residue field $k$.
Let $V^1,...,V^s$ be a finite set of rank $r$ general position sequences in $A^n$.
Then there is an element $e\in A^n$ which is transversal to $V^1,...,V^s$.
\end{lemma}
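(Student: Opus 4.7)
My plan is to reduce the statement to an assertion in the residue field $k$, where it becomes the classical fact that a vector space over an infinite field is not a finite union of proper subspaces.

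The first step is to observe that, because $A$ is local, a sequence $(w_1,\ldots,w_s)$ of at most $n$ vectors in $A^n$ spans a free submodule of rank $s$ if and only if its reduction $(\bar w_1,\ldots,\bar w_s)$ modulo $m$ is linearly independent in $k^n$ (this is because freeness is detected by the existence of a left-inverse matrix, equivalently a unit among the maximal minors, and units in $A$ are exactly the elements whose reduction is nonzero). Consequently a sequence in $A^n$ is in general position if and only if its reduction modulo $m$ is in general position in $k^n$. Hence the question is equivalent to finding $\bar e\in k^n$ transversal to each $\bar V^i$, after which any lift $e\in A^n$ of $\bar e$ will do.

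Next, for each $V^i$ I would describe the set $B_i\subset k^n$ of bad vectors (those $\bar e$ for which $(\bar V^i,\bar e)$ fails to be in general position) as a finite union of proper $k$-subspaces:
\begin{itemize}
\item If $r+1\leq n$, then $(V^i,e)$ fails general position exactly when the whole length-$(r+1)$ sequence does not span a free rank-$(r+1)$ submodule, i.e.\ exactly when $\bar e$ lies in the span of $\bar V^i$, a single subspace of dimension $r<n$.
\item If $r+1>n$, i.e.\ $r\geq n$, then $\min(r+1,n)=n$ and $(V^i,e)$ fails general position exactly when some $n$-element subset of $(V^i,e)$ is not a basis of $k^n$. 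Since the $n$-element subsets of $V^i$ alone are all bases (by general position of $V^i$), the offending subset must contain $\bar e$; equivalently $\bar e$ must lie in the $k$-span of some $(n-1)$-element subset $\bar W$ of $\bar V^i$. Each such $\bar W$ is linearly independent (any $n$-subset of $V^i$ is a basis, so every $(n-1)$-subset extends to one), so its span is a hyperplane in $k^n$. Thus $B_i$ is a finite union of hyperplanes.
\end{itemize}

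Finally, $\bigcup_{i=1}^s B_i$ is a finite union of proper subspaces of $k^n$. Since $k$ is infinite, $k^n$ is not the union of finitely many proper $k$-subspaces (a standard fact proved by induction on $n$, or by noting that each proper subspace meets a generic line in finitely many points). Therefore there exists $\bar e\in k^n$ avoiding every $B_i$, and any lift $e\in A^n$ is transversal to each $V^i$. There is no serious obstacle here; the only care needed is in the case $r\geq n$ to verify that $(n-1)$-subsets of a general-position sequence really do span hyperplanes, which is the argument above.
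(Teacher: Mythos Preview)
Your proof is correct and follows essentially the same strategy as the paper: reduce modulo the maximal ideal to the infinite residue field, and then avoid a finite union of proper $k$-subspaces of $k^n$. Your case split according to whether $r+1\le n$ or $r\ge n$ is in fact a bit more careful than the paper's uniform choice of $r_0=\min(r,n)-1$, but the underlying idea is identical.
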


\begin{proof}
Since a set of vectors is in general position in $A^n$ if and only if it is modulo $m$, we can assume $A=k$ is an infinite field.
Let $\V$ be the union of the vectors occurring in the sequences $V^1$,...,$V^s$.
Let $r_0=\min(r,n)-1$.
Each subset of $\V$ of cardinality $r_0$ generates
a $k$-linear subspace of $k^n$ of dimension $\leq r_0$.
Since $k$ is infinite and  $r_0<n$  there is $e\in A^n$ 
which is not in any of these finitely many subspaces.
Any such $e$ is transversal to $V^1,...,V^s$.
\end{proof}

Let $\tilde{U}_r(A^n)$ be the set of sequences $(v_1,...,v_r)$ of vectors
$v_1,...,v_r$ which are in general position in $A^n$.
For integers $r$, $n$ with $n\geq 0$, let $\tilde{C}_r(A^n)=\Z[\tilde{U}_r^n(A)]$ be the free abelian group with basis the rank $r$ general position sequences 
$(v_1,...,v_r)$ in $A^n$.
For instance, $\tilde{C}_r(A^n) = 0$ for $r<0$, $\tilde{C}_0(A^n)=\Z$ generated by the empty sequence, and $\tilde{C}_n(A^n)=\Z[GL_n(A)]$.
For $i=1,...,r$ one has maps $\delta^i_r:\tilde{C}_r^n \to \tilde{C}_{r-1}^n$ defined on basis elements by $\delta^i_r(v_1,...,v_r) = (v_1,...,\hat{v}_i,...,v_r)$ omitting the $i$-th entry.
We set $d_r=\sum_{i=1}^r(-1)^{i-1}\delta_r^i: \tilde{C}_r \to \tilde{C}_{r-1}$, and it is standard that $d_rd_{r+1}=0$.
This defines the chain complex $\tilde{C}(A^n)$.
The group $GL_n(A)$ acts on this complex by left matrix multiplication.

\begin{lemma}
\label{lem:QuillenAcyclicity}
Let $A$ be a local ring with infinite residue field.
Then the complex $\tilde{C}(A^n)$ is acyclic, that is, for all $r\in \Z$ we have
$$H_r(\tilde{C}(A^n))= 0.$$
\end{lemma}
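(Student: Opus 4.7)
The plan is to produce, for any cycle, an explicit bounding chain by the Quillen trick: prepend a vector $e$ that is transversal to all of the general position sequences that occur with nonzero coefficient, and verify that this operation gives a contracting chain homotopy on the finite subcomplex where it is defined.

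More precisely, I would first dispose of the trivial ranges. For $r<0$ the group $\tilde C_r(A^n)=0$ by definition, and for $r=0$ the group $\tilde C_0(A^n)=\Z$ is generated by the empty sequence; since $n\geq 1$ we may pick any unimodular $v\in A^n$ (e.g.\ $v=e_1$), and then $d_1(v)$ is the empty sequence, so $H_0=0$. For $r\geq 1$, let $z=\sum_{i=1}^s n_i\,V^i\in \tilde C_r(A^n)$ be a cycle, where $V^i=(v_1^i,\ldots,v_r^i)$ are rank $r$ general position sequences. Apply Lemma~\ref{lem:Transversality} to the finite family $V^1,\ldots,V^s$ to obtain a vector $e\in A^n$ transversal to each $V^i$, i.e.\ such that each concatenation $(e,V^i)=(e,v_1^i,\ldots,v_r^i)$ lies in $\tilde U_{r+1}(A^n)$. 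Observe that because general position is preserved by passing to subsequences, $(e,\delta^j V^i)$ is also in general position for every $j=1,\ldots,r$.

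Next I would define, on the finitely-generated subgroup of $\tilde C_r(A^n)$ spanned by the $V^i$ and their faces, the map $h(V)=(e,V)$, landing in $\tilde C_{r+1}(A^n)$. Expanding the differential of a prepended sequence gives
$$d_{r+1}(e,v_1,\ldots,v_r)=(v_1,\ldots,v_r)-\sum_{j=1}^{r}(-1)^{j-1}(e,v_1,\ldots,\hat v_j,\ldots,v_r),$$
that is, $d\,h(V)=V-h(d V)$ on any $V$ for which $(e,V)$ is in general position. Setting $y=h(z)=\sum_i n_i(e,V^i)$, well-defined by the choice of $e$, we therefore have $d_{r+1}y = z - h(d_r z)=z$, so $z$ is a boundary and $H_r(\tilde C(A^n))=0$.

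The only genuine obstacle is the fact that no single $e$ can be transversal to \emph{all} general position sequences at once, so $h$ is not globally defined as a chain homotopy; this is precisely why one argues one cycle at a time, exploiting that each cycle has finite support and that Lemma~\ref{lem:Transversality} produces an $e$ transversal to any finite family. Everything else is a formal calculation with the simplicial differential.
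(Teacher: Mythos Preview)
Your proof is correct and follows essentially the same approach as the paper's: both use Lemma~\ref{lem:Transversality} to find a vector $e$ transversal to the finitely many general position sequences appearing in a given cycle, and then bound the cycle by concatenating with $e$. The only cosmetic difference is that you prepend $e$ (obtaining the chain homotopy identity $dh(V)=V-h(dV)$) whereas the paper appends $e$ (obtaining $d(\xi,e)=(-1)^r\xi$ directly from $d\xi=0$); both computations are equivalent and rest on the same observation that subsequences of a general position sequence are again in general position.
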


\begin{proof}
Let $\xi=\sum_{i=1}^s n_iV^i \in \tilde{C}_r(A^n)$ where $V^i$ are rank $r$ general position sequences.
By Lemma \ref{lem:Transversality} we can choose $e\in A^n$ which is transversal to $V^1,...,V^s$.
Set $(\xi,e) = \sum_{i=1}^s n_i(V^i,e) \in \tilde{C}_{r+1}(A^n)$.
If $d_r\xi=0$ 
then 
$$
\renewcommand\arraystretch{2}
\begin{array}{ll}
d_{r+1}(\xi,e) & = \sum_{j=1}^{r+1}(-1)^{j-1}\delta_{r+1}^j(\xi,e) = \sum_{j=1}^{r}(-1)^{j-1}\delta_{r+1}^j(\xi,e) +(-1)^{r}\delta_{r+1}^{r+1}(\xi,e)\\ 
& = (d_r\xi,e) + (-1)^{r}\delta_{r+1}^{r+1}(\xi,e) = (-1)^{r}\xi.
\end{array}$$
This shows that $\xi$ is a boundary.
\end{proof}

In the following proposition, we consider the empty symbol $[]$ as a symbol, the unique symbol of length zero.

\begin{proposition}
\label{prop:SnPresentation}
For $n\geq 0$, the $\Z[A^*]$-module $S_n(A)$ has the following presentation.
Generators are the symbols $[a_1,...,a_n]$ with $a_i\in A^*$.
A system of defining relations has the form
$$
\renewcommand\arraystretch{2}
\begin{array}{rl}
& [\lambda_1 a_1,\dots, \lambda_n a_n] - [a_1,\dots,a_n]\\
= &
\sum_{i=1}^n \eps^{i+n}\cdot \langle a_i\rangle \cdot [(\lambda_1 -\lambda_i)a_1,\dots,\widehat{(\lambda_i-\lambda_i)a_i},\dots (\lambda_n-\lambda_i)a_n,\lambda_i]
\end{array}
$$
where $\lambda_i\in A^*$ and $\bar{\lambda}_i\neq \bar{\lambda}_j \in k$ for $i\neq j$ and $\eps = -\langle -1\rangle \in \Z[A^*]$.
\end{proposition}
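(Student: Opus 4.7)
The plan is to use the acyclic complex $\tilde{C}(A^n)$ of general-position sequences from Lemma \ref{lem:QuillenAcyclicity} to present $S_n(A)$ explicitly. Since $\tilde{C}_r(A^n)=C_r(A^n)$ for $r\leq n$, the acyclicity of $\tilde{C}(A^n)$ yields an exact sequence of $GL_n(A)$-modules
$$\tilde{C}_{n+2}(A^n)\xrightarrow{d}\tilde{C}_{n+1}(A^n)\xrightarrow{d}Z_n(A^n)\to 0.$$
Taking $SL_n(A)$-coinvariants (right exact) and combining with (\ref{eqn:DfnSn}), I would conclude that $S_n(A)$ is generated by $SL_n(A)$-orbits on $\tilde{U}_{n+1}(A^n)$, modulo boundaries of orbits on $\tilde{U}_{n+2}(A^n)$.

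To identify generators I would put each orbit in canonical form. Given $(v_1,\dots,v_{n+1})\in\tilde{U}_{n+1}(A^n)$, general position forces $(v_1,\dots,v_n)$ to be a basis of determinant $d=\det(v_1,\dots,v_n)\in A^*$, and a unique element of $SL_n(A)$ carries this basis to $(de_1,e_2,\dots,e_n)$. Using the $\Z[A^*]$-module structure on $S_n(A)$ via the lift $\diag(d^{-1},1,\dots,1)\in GL_n(A)$ to reduce the basis to $(e_1,\dots,e_n)$, one extracts a factor $\langle d\rangle$ and obtains a representative of the form $(e_1,\dots,e_n,a_1e_1+\cdots+a_ne_n)$ with $a_i\in A^*$ (general position makes the $a_i$ units). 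Defining $[a_1,\dots,a_n]$ to be this class produces the stated $\Z[A^*]$-generators.

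For the relations, I would normalize $\tilde{U}_{n+2}(A^n)$ the same way: every orbit is a $\Z[A^*]$-multiple of the class of $(e_1,\dots,e_n,v,w)$, where setting $v=(a_1,\dots,a_n)^T$ and $w_i=\lambda_ia_i$, general position is equivalent to $a_i,\lambda_i\in A^*$ together with $\bar{\lambda}_i\neq\bar{\lambda}_j$ for $i\neq j$ (the $2\times 2$ minor in rows $i,j$ of the last two columns is $a_ia_j(\lambda_j-\lambda_i)$). Computing
$$d(e_1,\dots,e_n,v,w)=\sum_{i=1}^n(-1)^{i-1}V_i+(-1)^n(e_1,\dots,e_n,w)+(-1)^{n+1}(e_1,\dots,e_n,v),$$
where $V_i=(e_1,\dots,\hat{e}_i,\dots,e_n,v,w)$, the last two summands directly contribute $(-1)^n[\lambda_1a_1,\dots,\lambda_na_n]$ and $-(-1)^n[a_1,\dots,a_n]$. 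For the $V_i$, the basis part $M_i=(e_1,\dots,\hat{e}_i,\dots,e_n,v)$ has determinant $(-1)^{n-i}a_i$, and solving $M_ix=w$ gives $M_i^{-1}w$ with entries $(\lambda_k-\lambda_i)a_k$ (for $k\neq i$, in natural order) followed by $\lambda_i$. Applying the canonical-form argument as above yields
$$V_i=\langle(-1)^{n-i}a_i\rangle\cdot[(\lambda_1-\lambda_i)a_1,\dots,\widehat{(\lambda_i-\lambda_i)a_i},\dots,(\lambda_n-\lambda_i)a_n,\lambda_i].$$
Using $\langle-1\rangle=-\eps$ and $\eps^2=1$ (so $\langle(-1)^{n-i}\rangle=\eps^{n-i}=\eps^{n+i}$), the identity $d(\dots)=0\in S_n(A)$ rearranges to the claimed formula. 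Completeness is automatic because every $SL_n(A)$-orbit in $\tilde{U}_{n+2}(A^n)$ is a $\Z[A^*]$-multiple of one of these forms and the differential is $GL_n(A)$-equivariant.

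The main obstacle is sign bookkeeping: verifying $\det M_i=(-1)^{n-i}a_i$ by a cofactor computation, inverting $M_i$ explicitly to obtain the coordinates $(\lambda_k-\lambda_i)a_k$ and $\lambda_i$, and tracking the substitution $\langle-1\rangle=-\eps$ through the alternating boundary sum to recover the prefactor $\eps^{n+i}\langle a_i\rangle$ rather than the natural $(-1)^{n-i}\langle -1\rangle^{n-i}\langle a_i\rangle$ coming from the normalization.
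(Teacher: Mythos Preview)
Your proposal is correct and follows essentially the same approach as the paper: use the acyclicity of $\tilde{C}(A^n)$ to obtain the exact sequence $\tilde{C}_{n+2}\to\tilde{C}_{n+1}\to Z_n\to 0$, take $SL_n(A)$-coinvariants to present $S_n(A)$, normalize orbits on $\tilde{U}_{n+1}$ and $\tilde{U}_{n+2}$ via the $GL_n(A)$-action and determinant, and compute $d$ on $(e_1,\dots,e_n,v,w)$ using $M_i^{-1}w$. The paper additionally records the degenerate cases $n=0,1$ separately (via the definition of $S_0$ and Lemma~\ref{lem:IAPresentation}), which you should mention since your exact-sequence argument as written applies for $n\geq 1$; otherwise the arguments coincide, including the sign manipulation $\langle(-1)^{n-i}\rangle=(-1)^{n-i}\eps^{n-i}$ leading to $\eps^{n+i}\langle a_i\rangle$.
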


\begin{proof}
For $n=0$, the module given by the presentation is generated by the empty symbol $[]$ subject to the trivial relation $[]-[]=0$.
Hence, this module is $\Z[A^*]$ which is $S_0(A)$.
For $n=1$, the module given by the presentation is generated by symbols $[a]$ for $a\in A^*$ subject to the relation $[\lambda a]-[a]=\langle a\rangle[\lambda]$ for $a,\lambda \in A^*$.
By Lemma \ref{lem:IAPresentation}, this module is the augmentation ideal $I[A^*]$ which is $S_1(A)$.

For $n\geq 2$, the proof is the same as in \cite[Theorem 3.3]{HutchinsonTao:Stability}.
We have 
$$S_n(A)=H_0(\Z[A^*]\stackrel{L}{\otimes}_{GL_n}Z_n(A^n)) =H_0(SL_n,Z_n(A^n))$$ since $\det:GL_n(A) \to A^*$ is surjective with kernel $SL_n(A)$ (Shapiro's Lemma).
In view of Lemma \ref{lem:QuillenAcyclicity} we have an exact sequence of $GL_n(A)$-modules
$$\tilde{C}_{n+2}(A^n) \stackrel{d_{n+2}}{\longrightarrow} \tilde{C}_{n+1}(A^n) \to Z_n(A^n).$$
Taking $SL_n(A)$-coinvariants yields a presentation of $S_n(A)$ as $A^*=GL_n(A)/SL_n(A)$-module.

As $GL_n(A)$-sets we have equalities
$$\tilde{U}_{n+1}(A^n) = \bigsqcup_{a_1,...,a_n\in A^*} GL_n(A)\cdot (e_1,...,e_n,a)$$
where $a=a_1e_n+\cdots +a_ne_n$ and
$$\tilde{U}_{n+2}(A^n) = \bigsqcup_{\stackrel{a_1,...,a_n}{b_1,...,b_n}\in A^*} GL_n(A)\cdot (e_1,...,e_n,a,b)$$
where $a=a_1e_n+\cdots +a_ne_n$, $b=b_1e_n+\cdots +b_ne_n$ with $\bar{a}_i\bar{b}_j\neq \bar{a}_j\bar{b}_i \in k$ for $i\neq j$.
In other words $b_i=\lambda_ia_i$ for some $\lambda_i\in A^*$ such that 
$\bar{\lambda}_i\neq \bar{\lambda}_j$.

For $i=1,...,n$ we have
$$d^i_{n+2}(e_1,...,e_n,a,b) = (e_1,...,\widehat{e_i},...,e_n,a,b) = 
M^i(a)\cdot(e_1,...,e_n,c)$$
where $M^i(a)$ is the matrix $(e_1,...,\widehat{e_i},...,e_n,a)$
and 
$$c=M^i(a)^{-1}b = ((\lambda_1 -\lambda_i)a_1,\dots,\widehat{(\lambda_i-\lambda_i)a_i},\dots (\lambda_n-\lambda_i)a_n,\lambda_i).$$
Since $\det M^i(a) = (-1)^{n+i}a_i$ we have the following presentation for $S_n(A)$ as $\Z[A^*]$-module.
Generators are the symbols $[a] = [a_1,...,a_n]$ with $a=a_1e_1+\cdots + a_ne_n$ where $a_i\in A^*$,
the symbol $[a]$ representing the $SL_n(A)$-orbit of $(e_1,...,e_n,a)$.
The relations are 
$$0=d_{n+2}(e_1,...,e_n,a,b) =
(-1)^{n}[b] - (-1)^n[a] + \sum_{i=1}^n(-1)^{i-1}\langle (-1)^{n+i}a_i\rangle [M^i(a)^{-1}b]$$
where $b_i=\lambda_ia_i$ with $\bar{\lambda}_i\neq \bar{\lambda}_j$ for $i\neq j$.
This can be written as
$$[b] - [a] = \sum_{i=1}^n\eps^{i+n}\langle a_i\rangle [M^i(a)^{-1}b].$$
\end{proof}

\begin{remark}
\label{rmk:detComp}
We compute the determinant of $[a_1,...,a_n]$ as
$$
\renewcommand\arraystretch{1.5}
\begin{array}{rcl}
\det [a_1,...,a_n] & = & (-1)^{n}\langle 1\rangle  + \sum_{i=1}^{n}(-1)^{i+1}\langle \det(e_1,...,\hat{e_i},...,e_n,a)\rangle\\
&=& (-1)^{n}\langle 1\rangle + \sum_{i=1}^{n}(-1)^{i+1}\langle (-1)^{n+i}a_i\rangle
\end{array}
$$
where $a=a_1e_1+\cdots + a_ne_n$.
For instance, 
$$
\renewcommand\arraystretch{1.5}
\begin{array}{lcl}
\det[a,b] & = & \langle -a\rangle - \langle b \rangle +\langle 1\rangle\\
\det [a,b,c] & = & \langle a\rangle - \langle -b\rangle + \langle c\rangle - \langle 1 \rangle.
\end{array}
$$
\end{remark}

Let $S_{\geq 1}(A) = \bigoplus_{n\geq 1}S_n(A) \subset S(A)$ be the ideal of positive degree elements,
and define the graded ideal $S(A)^{dec}$ of decomposable elements as
$$S(A)^{dec}=S_{\geq 1}(A)\cdot S_{\geq 1}(A)\subset S(A).$$
The algebra of indecomposable elements is the quotient
$$S(A)^{ind}=S(A)/S(A)^{dec}.$$

We will show in Proposition \ref{prop:Decomposibility} that $\sigma^{-1}S_n^{ind}(A)=0$ for $n\geq 3$.
The proof requires the complexes $C(V,W)$ from \cite[\S 3]{SuslinNesterenko} which we recall now.
Let $V$ and $W$ be finitely generated free $A$-modules.
Let $U_m(V,W)=U_m(V)\times W^m \subset U_m(V\oplus W)$ be the set of sequences 
$$\left(\begin{matrix}v_1\\w_1\end{matrix}\right),
\left(\begin{matrix}v_2\\w_2\end{matrix}\right),\dots,
\left(\begin{matrix}v_m\\w_m\end{matrix}\right)$$
with $v_i\in V$ and $w_i\in W$ such that
$(v_1,v_2,...,v_m)$ can be completed to a basis of $V$.
Let $C_m(V,W)=\Z[U_m(V,W)]$ be the free abelian group generated by $U_m(V,W)$, and define a differential $d:C_m(V,W) \to C_{m-1}(V,W)$ as in (\ref{eqn:Crdifferential}).
In particular, 
$C(V,W)$ is a subcomplex of $C_m(V\oplus W)$, and $C(V,0)=C(V)$.
Note that $C_m(V,W)=0$ for $m > \rk V$ where $\rk V$ denotes the rank of the free $A$-module $V$.
The group 
$$\Aff(V,W)=\left( \begin{matrix} 
GL(V) & 0 \\ \Hom(V,W) & 1_W
\end{matrix}\right)
$$
acts on $U_m(V,W)$ by multiplication from the left making the complex
$C(V,W)$ into a complex of left $\Aff(V,W)$-modules.
For $\rk V = n$ let 
$$Z_n(V,W)=\ker (d:C_n(V,W) \to C_{n-1}(V,W)).$$

\begin{lemma}
If $n=\rk V$, then the canonical map of complexes of $\Aff(V,W)$-modules
$$Z_n(V,W)[n] \to C(V,W)$$
is a quasi-isomorphism.
\end{lemma}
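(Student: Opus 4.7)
The plan is to carry out the cone-construction acyclicity argument of Lemma \ref{lem:QuillenAcyclicity} in the extended complex $C(V,W)$, treating the $W$-coordinates as passive bookkeeping. Since $C_m(V,W)=0$ for $m>n$, we automatically have $H_n(C(V,W))=Z_n(V,W)$, and the canonical map $Z_n(V,W)[n]\to C(V,W)$ is a quasi-isomorphism if and only if $H_r(C(V,W))=0$ for all $r<n$. So what remains is to show that every cycle in $C_r(V,W)$ is a boundary when $r<n$.

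Given such a cycle $\xi=\sum_{i=1}^s n_i\bigl((v^i_1,w^i_1),\dots,(v^i_r,w^i_r)\bigr)$, the first step is to produce a single vector $e\in V$ that is transversal to every $v$-sequence occurring in $\xi$, i.e.\ such that $(v^i_1,\dots,v^i_r,e)$ can be completed to a basis of $V$ for each $i=1,\dots,s$. Reducing modulo the maximal ideal of $A$, this amounts to finding a vector in $V\otimes_A k$ lying outside each of the finitely many proper subspaces $\mathrm{span}(\bar v^i_1,\dots,\bar v^i_r)$ of dimension $r<n$; since the residue field $k$ is infinite, such a vector exists, and any lift $e\in V$ works by Nakayama. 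This is the exact analogue of Lemma \ref{lem:Transversality}, to which I would refer.

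With $e$ in hand, set $(\xi,(e,0))=\sum_i n_i\bigl((v^i_1,w^i_1),\dots,(v^i_r,w^i_r),(e,0)\bigr)\in C_{r+1}(V,W)$, which is a well-defined chain by construction of $e$. Separating the term that omits the last entry from those that omit an earlier entry gives
\[
d(\xi,(e,0))=(d\xi,(e,0))+(-1)^r\xi=(-1)^r\xi,
\]
since $d\xi=0$. This exhibits $\xi$ as a boundary and finishes the proof.

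I do not anticipate any real obstacle: appending the element $(e,0)\in V\oplus W$ makes the $W$-part contribute nothing to the combinatorics, so the whole argument reduces to the transversality step, which is handled by the infinite residue field hypothesis already used in Lemmas \ref{lem:Transversality} and \ref{lem:QuillenAcyclicity}. The $\Aff(V,W)$-equivariance of the map is automatic, since $Z_n(V,W)$ is defined as a subobject of $C_n(V,W)$ as complexes of $\Aff(V,W)$-modules.
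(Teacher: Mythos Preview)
Your argument is correct. The paper itself does not give a proof but simply cites \cite[Corollary 3.6]{SuslinNesterenko}; your cone-construction argument is precisely the standard proof behind that citation, namely the adaptation of the acyclicity argument of Lemma~\ref{lem:QuillenAcyclicity} to $C(V,W)$, with the $W$-coordinates carried along passively. One small terminological point: the transversality you need here is slightly weaker than the ``general position'' of Lemma~\ref{lem:Transversality} (you only need $(v^i_1,\dots,v^i_r,e)\in U_{r+1}(V)$, i.e.\ completability to a basis, not the stronger general-position condition of $\tilde U$), but your residue-field avoidance argument establishes exactly this, so no change is required.
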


\begin{proof}
This is \cite[Corollary 3.6]{SuslinNesterenko}.
\end{proof}

The determinant map $\Aff(V,W) \to A^*$ makes 
$\Z[A^*]$ into a right $\Aff(V,W)$-module.
Write $S(V,W)$ for the group 
$$S(V,W) = \Tor_n^{\Aff(V,W)}(\Z[A^*],C(V,W)) = \Z[A^*]\otimes_{\Aff(V,W)}Z_n(V,W).$$

\begin{lemma}
\label{lem:SVWisSV}
Let $V$, $W$ be finitely generated free $A$-modules with $n=\rk V$.
Let $n \leq n_0$ and $\sigma\in \Z[A^*]$ as in $(\ast)$.
Then for $0 \leq p \leq n_0$, the inclusion $C(V)=C(V,0)  \subset C(V,W)$ induces isomorphisms
$$\sigma^{-1} \Tor_p^{GL(V)}(\Z[A^*],C(V)) \stackrel{\cong}{\longrightarrow} 
\sigma^{-1} \Tor_p^{\Aff(V,W)}(\Z[A^*],C(V,W)).$$
In particular, the inclusion $0 \subset W$ induces an isomorphism $A^*$-modules
$$\sigma^{-1}S(V) \stackrel{\cong}{\longrightarrow} \sigma^{-1}S(V,W).$$
\end{lemma}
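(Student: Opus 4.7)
My plan is to filter $C(V)$ and $C(V,W)$ by the subcomplexes $C_{\leq r}$ and compare the resulting hyperhomology spectral sequences
\begin{align*}
E^1_{p,q}(V) &= \Tor^{GL(V)}_p(\Z[A^*], C_q(V)) \Rightarrow \Tor^{GL(V)}_{p+q}(\Z[A^*], C(V)),\\
E^1_{p,q}(V,W) &= \Tor^{\Aff(V,W)}_p(\Z[A^*], C_q(V,W)) \Rightarrow \Tor^{\Aff(V,W)}_{p+q}(\Z[A^*], C(V,W)).
\end{align*}
The inclusions $C(V) \subset C(V,W)$ and $GL(V) \subset \Aff(V,W)$ induce a morphism $E(V) \to E(V,W)$ compatible with the map of abutments that we aim to prove is a $\sigma$-local isomorphism for $p \leq n_0$.

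For the $E^1$-comparison, $\Aff(V,W)$ acts transitively on $U_q(V,W) = U_q(V)\times W^q$ for $0 \leq q \leq n$; fixing a basis $e_1,\ldots,e_n$ of $V$ and writing $V_q = \langle e_1,\ldots,e_q\rangle$, a direct computation identifies the stabilizer of $((e_1,0),\ldots,(e_q,0))$ with $\Aff(V/V_q,\,V_q\oplus W)$, while the stabilizer in $GL(V)$ is $\Aff(V/V_q,V_q)$. Shapiro's Lemma then gives
$$E^1_{p,q}(V,W)\cong\Tor_p^{\Aff(V/V_q,V_q\oplus W)}(\Z[A^*],\Z),\quad E^1_{p,q}(V)\cong\Tor_p^{\Aff(V/V_q,V_q)}(\Z[A^*],\Z),$$
and the comparison map is induced by the obvious stabilizer inclusion. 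Applying Corollary~\ref{cor:HomologyOfAffineGps} (with $GL = G$, $SL = SG$ since $A$ is commutative) to both affine groups yields a common $\sigma$-local identification with $\Tor_p^{GL(V/V_q)}(\Z[A^*],\Z)$ in the range $pt < m(n-q)$, which by $(\ast)$ covers all $p \leq n_0$ with $q \leq n-1$; two-out-of-three then gives the $\sigma$-local iso on $E^1$ for such $(p,q)$. The case $q = n$ is immediate since $C_n(V) = \Z[GL(V)]$ and $C_n(V,W) = \Z[\Aff(V,W)]$, whose $\Tor$'s reduce to $\Z[A^*]$ in degree $0$.

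The main difficulty will be passing from $E^1$-iso to iso on abutments, since the $E^1$-iso lives only in the half-plane $p \leq n_0$, which is insufficient for naive spectral-sequence comparison. To handle this I plan to adapt Proposition~\ref{prop:NS_Prop_2.6}: construct $\psi : C(V'',W)[-2] \to C(V,W)$ by the same three-term formula, with $V'' \subset V$ a rank-$(n-2)$ direct summand and with zero $W$-components on the appended vectors, and verify using the same stabilizer and Corollary calculations as above that $\psi$ induces the identity on $E^1$-pages after the degree shift. The inductive argument of Proposition~\ref{prop:NS_Prop_2.6} then yields $d^r = 0$ on $\sigma^{-1}E^r_{p,q}(V,W)$ for $r \geq 2$ and $p \leq n_0$, and likewise for $E(V)$. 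Since $d^1$ preserves $p$, the iso on $E^1$ persists to $E^2 = E^\infty$ in this range, producing the iso on $\sigma^{-1}\Tor_p$ as required. The ``in particular'' assertion is the case $p = n \leq n_0$, combined with the identification $\Tor_n^G(\Z[A^*], C(\cdot)) = \Z[A^*]\otimes_G Z_n(\cdot) = S(\cdot)$ arising from the quasi-isomorphism $Z_n(\cdot)[n] \to C(\cdot)$.
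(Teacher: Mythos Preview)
Your approach is correct and is precisely what the paper's one-line proof (``same as \cite[Proposition~3.9]{SuslinNesterenko} replacing their Theorem~1.11 with Theorem~\ref{thm:HomologyOfAffineGps}'') unpacks to. You have correctly identified the stabilizers, the Shapiro reduction, and the application of Corollary~\ref{cor:HomologyOfAffineGps} on both sides.

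One remark worth making explicit: in Suslin--Nesterenko's original $GL$-setting, their Theorem~1.11 gives the $E^1$-isomorphism for \emph{all} $p$, so the comparison of abutments is immediate and no degeneration argument is needed. In the present setting, Corollary~\ref{cor:HomologyOfAffineGps} only gives the $E^1$-isomorphism in the range $p\leq n_0$, and since $d^r$ has bidegree $(r-1,-r)$ (increasing $p$), a naive comparison would lose control as $r$ grows. You are right that the degeneration $d^r=0$ for $r\geq 2$, $p\leq n_0$ is what closes this gap, and your plan to adapt the $\psi$-map of Proposition~\ref{prop:NS_Prop_2.6} to $C(V,W)$ (appending vectors in $V'\subset V$ with zero $W$-component, so that the $\Aff(V'',W)$-equivariance is preserved) is exactly the right move. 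The verification that $\psi$ induces the identity on $E^1$ goes through verbatim using the same conjugating elements in $SL_n(A)\subset \Aff(V,W)$ as in Lemma~\ref{lem:PsiIsIso}, since those elements fix the appended vectors. So your unpacking is a faithful and complete rendering of what the paper's reference entails.
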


\begin{proof}
The proof is the same as in \cite[Proposition 3.9]{SuslinNesterenko}
replacing \cite[Theorem 1.11]{SuslinNesterenko} with Theorem \ref{thm:HomologyOfAffineGps}.
\end{proof}

If $n=\rk V$ and $m=\rk W$ , then
concatenation of sequences defines a map
$$
\renewcommand\arraystretch{1.5}
\begin{array}{rclcl}
Z_n(V,W) & \otimes & Z_m(W) & \to & Z_{n+m}(V\oplus W)\\
\left( \begin{smallmatrix} v_1 & \dots & v_n \\ w_1 & \dots & w_n \end{smallmatrix}\right) & \otimes & (z_1,...,z_m) &  \mapsto & \left( \begin{smallmatrix}  v_1 & \dots & v_n & 0 & \dots & 0  \\  w_1 & \dots & w_n & z_1 & \dots & z_m  \end{smallmatrix}\right)
\end{array}$$ 
which induces a well-definied  multiplication 
$$S(V,W) \otimes Z_m(W) \to S(V\oplus W).$$
Similarly, concatenation defines a product
$$
\renewcommand\arraystretch{1.5}
\begin{array}{rclcl}
Z_m(W) & \otimes & S(V,W) &  \to & S(V\oplus W)\\
(z_1,...,z_m) & \otimes & \left( \begin{smallmatrix} v_1 & \dots & v_n \\ w_1 & \dots & w_n \end{smallmatrix}\right) & \mapsto & \left( \begin{smallmatrix} 0 & \dots & 0 & v_1 & \dots & v_n \\ z_1 & \dots & z_m & w_1 & \dots & w_n \end{smallmatrix}\right).
\end{array}$$

\begin{proposition}
\label{prop:ProductSimplification}
Let $V$, $W$ be finitely generated free $A$-modules with $\rk W=m$.
If $\rk V \leq n_0$ and $\sigma\in \Z[A^*]$ as in $(\ast)$, then
the map $p:S(V,W) \to S(V)$ induced by the unique map of $A$-modules $W \to 0$ yields commutative diagrams
$$\xymatrix{
\sigma^{-1} S(V,W) \otimes Z_m(W) \ar[d]_{p\otimes 1}^{\cong} \ar[r]& \sigma^{-1} S(V\oplus W) & Z_m(W)  \otimes \sigma^{-1} S(V,W) \ar[l] \ar[d]^{1 \otimes p}_{\cong} \\
\sigma^{-1} S(V) \otimes Z_m(W) \ar[ur] & & Z_m(W) \otimes \sigma^{-1} S(V) \ar[ul]
}
$$
\end{proposition}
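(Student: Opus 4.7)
The plan is first to explain why the vertical maps $p\otimes 1$ and $1\otimes p$ are isomorphisms after inverting $\sigma$, and then to verify each triangle by an elementary matrix identity at the cycle level.

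For the first step, let $\iota:S(V)\to S(V,W)$ denote the map induced by the inclusion $0\hookrightarrow W$. Functoriality of $S(V,-)$ applied to the trivial factorization $0\to W\to 0$ yields $p\circ \iota=\mathrm{id}_{S(V)}$. Lemma \ref{lem:SVWisSV} exhibits $\iota$ as an isomorphism after inverting $\sigma$, so its one-sided inverse $p$ is also an isomorphism there; this justifies the ``$\cong$'' labels on the vertical arrows.

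For commutativity of the first triangle, I will work on cycle representatives. A typical generator of $Z_n(V,W)$ has block form $\xi = \bigl(\begin{smallmatrix} X \\ Y \end{smallmatrix}\bigr)$, where $X\in GL(V)$ records a basis $(v_1,\ldots,v_n)$ of $V$ and $Y\in\Hom(V,W)$ records $(w_1,\ldots,w_n)$. Given $z\in Z_m(W)$, the concatenation product at the top sends $\xi\otimes z$ to the class of $\bigl(\begin{smallmatrix} X & 0 \\ Y & z \end{smallmatrix}\bigr)$, whereas passing through $p\otimes 1$ first produces the representative $\bigl(\begin{smallmatrix} X & 0 \\ 0 & z \end{smallmatrix}\bigr)$. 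These two classes agree in $S(V\oplus W) = \Z[A^*]\otimes_{GL(V\oplus W)}Z_{n+m}(V\oplus W)$ because of the factorization
$$\Bigl(\begin{smallmatrix} X & 0 \\ Y & z \end{smallmatrix}\Bigr) = \Bigl(\begin{smallmatrix} 1_V & 0 \\ YX^{-1} & 1_W \end{smallmatrix}\Bigr) \Bigl(\begin{smallmatrix} X & 0 \\ 0 & z \end{smallmatrix}\Bigr),$$
whose left factor lies in $SL(V\oplus W)$ and so acts trivially after tensoring with $\Z[A^*]$ over $GL(V\oplus W)$. The second triangle is handled by the completely analogous factorization
$$\Bigl(\begin{smallmatrix} 0 & X \\ z & Y \end{smallmatrix}\Bigr) = \Bigl(\begin{smallmatrix} 1_V & 0 \\ YX^{-1} & 1_W \end{smallmatrix}\Bigr) \Bigl(\begin{smallmatrix} 0 & X \\ z & 0 \end{smallmatrix}\Bigr),$$
with the same determinant-one left factor.

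I do not anticipate any genuine obstacle: the substance of the proposition is that the concatenation product is independent of the off-diagonal entry $Y$ already at the level of cycles, before any localization. The only role of $\sigma^{-1}$ is to turn $p$ into an isomorphism via Lemma \ref{lem:SVWisSV}.
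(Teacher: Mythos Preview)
Your first step is fine: $p\circ\iota=\mathrm{id}$ together with Lemma \ref{lem:SVWisSV} shows that $p$ is the inverse of $\iota$ after inverting $\sigma$.

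The commutativity argument, however, has a genuine gap. You write ``a typical generator of $Z_n(V,W)$ has block form $\xi=\bigl(\begin{smallmatrix}X\\Y\end{smallmatrix}\bigr)$ with $X\in GL(V)$'', but such a block is a basis element of $C_n(V,W)$, not a cycle. An actual element of $Z_n(V,W)$ is a $\Z$-linear combination $\xi=\sum_i n_i\bigl(\begin{smallmatrix}X_i\\Y_i\end{smallmatrix}\bigr)$, and the shearing matrix $\bigl(\begin{smallmatrix}1&0\\Y_iX_i^{-1}&1\end{smallmatrix}\bigr)$ depends on $i$. Your factorization therefore only yields an equality in $\Z[A^*]\otimes_{GL(V\oplus W)}C_{n+m}(V\oplus W)$, term by term. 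But $S(V\oplus W)=\Z[A^*]\otimes_{GL(V\oplus W)}Z_{n+m}(V\oplus W)$, and the relation $[gc]=[c]$ for $g\in SL$ is only available when $c$ is a \emph{cycle}. The element $\bigl(\begin{smallmatrix}X_i&0\\0&z\end{smallmatrix}\bigr)=\sum_j m_j\bigl(\begin{smallmatrix}X_i&0\\0&z_j\end{smallmatrix}\bigr)$ is not a cycle for a single $i$ (its differential picks up the nonzero contributions from the $V$-columns), so you cannot apply the coinvariance relation there. Since the map $\Z[A^*]\otimes_{GL}Z_{n+m}\to\Z[A^*]\otimes_{GL}C_{n+m}=\Z[A^*]$ is just the determinant map and is far from injective, agreement in the latter does not give agreement in $S(V\oplus W)$.

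The paper's fix is already implicit in your first step: check commutativity with $\iota$ rather than $p$. For $\eta\in Z_n(V)$ one has $\iota(\eta)=\bigl(\begin{smallmatrix}\eta\\0\end{smallmatrix}\bigr)$, and the concatenation $(\iota(\eta),z)=\bigl(\begin{smallmatrix}\eta&0\\0&z\end{smallmatrix}\bigr)$ equals $(\eta,z)$ literally, as genuine cycles, with no coinvariance relation needed. Since $\iota=p^{-1}$ after localization, this gives the triangle with $p$. So the substance of the proposition is not that the off-diagonal block can be sheared away (which fails at the cycle level), but that one may start from $S(V)$ and go up via $\iota$.
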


\begin{proof}
This is because the diagrams obviously commutes when the isomorphism 
$p:\sigma^{-1}S(V,W) \to \sigma^{-1}S(V)$ is replaced with its inverse given in Lemma \ref{lem:SVWisSV}.
\end{proof}

\begin{lemma}
\label{lem:IndecGens}
Let $n_0 \geq n\geq 2$ and $\sigma \in \Z[A^*]$ as in $(\ast)$.
Then for $a_1,...,a_n,b\in A^*$ and $1\leq i\leq n$ we have in $\sigma^{-1}S(A)^{ind}$
$$[a_1,...,ba_i,...,a_n]=\langle b\rangle [a_1,...,a_n].$$
\end{lemma}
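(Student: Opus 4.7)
\textit{Proof plan.} The aim is to show that the difference
$$D := [a_1, \ldots, ba_i, \ldots, a_n] - \langle b\rangle[a_1, \ldots, a_n]$$
lies in $\sigma^{-1}S(A)^{dec}$. The argument proceeds in two main parts: first, compute $D$ as an explicit cycle in $Z_n(A^n)$; second, identify this cycle (modulo $SL_n$-equivalence and residual scalar terms) with an element in the image of the product map from Proposition \ref{prop:ProductSimplification}.

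For the cycle-level computation, let $g_i \in GL_n(A)$ be the diagonal matrix scaling $e_i$ by $b$ (so $\det g_i = b$), and write $a' = \sum_{j\neq i} a_j e_j$. By the $GL_n$-equivariance of the cycle construction and the presentation of Proposition \ref{prop:SnPresentation},
$$\langle b\rangle[a_1, \ldots, a_n] = 1\otimes d_{n+1}(e_1, \ldots, e_{i-1}, be_i, e_{i+1}, \ldots, e_n, a'+ba_i e_i),$$
while
$$[a_1, \ldots, ba_i, \ldots, a_n] = 1\otimes d_{n+1}(e_1, \ldots, e_n, a'+ba_i e_i).$$
In the alternating-sum expansions of the two $d_{n+1}$'s, the term omitting the last entry ($I_n$ in both) and the term omitting the $i$-th basis vector coincide; hence $D$ reduces to an explicit sum indexed by $k \in \{1, \ldots, n\}\setminus\{i\}$ of pairs of matrices differing only in whether their $i$-th basis-vector column is $e_i$ or $be_i$.

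To identify $D$ as decomposable, apply Proposition \ref{prop:ProductSimplification} with $V = \bigoplus_{j\neq i}Ae_j$ and $W = Ae_i$: the product $[a_1, \ldots, \widehat{a_i}, \ldots, a_n]\cdot [b] \in \sigma^{-1}S_n^{dec}(A)$ is represented at the cycle level by the concatenation $\xi_{n-1}\boxplus(\langle b\rangle - 1)$. Each matrix in this concatenation differs from its counterpart in $D$ by swapping the $V$-column in position $n$ with the $W$-column in position $i$. Right-multiplication by this column transposition, having determinant $-1$, corresponds in $\Z[A^*]\otimes_{GL_n}Z_n(A^n)$ (via conjugation) to multiplication of the $\Z[A^*]$-coefficient by $\langle -1\rangle$. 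This identifies $D$, modulo residual ``pure-scalar'' terms, with $-\langle -1\rangle \cdot \bigl([a_1, \ldots, \widehat{a_i}, \ldots, a_n]\cdot [b]\bigr)$, which is decomposable since $\langle -1\rangle$ is a unit.

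The main obstacle lies in the meticulous cycle-level matching and in handling the residual contributions. These residuals arise from summands that are not cycles individually but whose combinations are, and they take the form $\langle c\rangle\cdot\bigl(\mathrm{diag}(I_{n-1}, b) - I_n\bigr)$-type expressions; one must recognize each such term as itself coming from a product (e.g.\ of the form $\langle c\rangle\cdot([1,\ldots,1]\cdot[b])$) via further application of Proposition \ref{prop:ProductSimplification}, possibly inducting on $n$. Once the residuals are absorbed, $D \in \sigma^{-1}S_n^{dec}(A)$, yielding the desired identity $[a_1, \ldots, ba_i, \ldots, a_n] \equiv \langle b\rangle[a_1, \ldots, a_n]$ in $\sigma^{-1}S(A)^{ind}$.
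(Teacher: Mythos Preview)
Your overall strategy---write $D$ as a cycle-level difference and identify it as decomposable via Proposition~\ref{prop:ProductSimplification}---is the right one and matches the paper's approach (spelled out for $n=3$ in Lemma~\ref{lem:S3prods}). But the execution has a concrete error and the key step is not carried out.

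The error: the term omitting the last entry does \emph{not} cancel. Dropping $x=a'+ba_ie_i$ from $(e_1,\dots,e_n,x)$ leaves $(e_1,\dots,e_n)$, while dropping it from $(e_1,\dots,be_i,\dots,e_n,x)$ leaves $(e_1,\dots,be_i,\dots,e_n)$; these are different in $C_n(A^n)$. Only the $j=i$ term cancels. Thus $D$ has $n$ surviving pairs, not $n-1$, and the extra $j=n+1$ pair has to be absorbed into the product structure rather than ignored.

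The column-swap step is also not how the argument works. Right multiplication by a transposition is not the $GL_n$-action defining $S_n$, and for $1<i<n$ the difference $D$ simply is not (up to sign) a single product $[a_1,\dots,\widehat{a_i},\dots,a_n]\cdot[b]$. The paper's method, following \cite[Proposition~3.19]{SuslinNesterenko} and \cite[Theorem~6.2]{HutchinsonTao:Stability}, factors $D$ \emph{directly} via the concatenation products: for $i=1$ one has, as cycles,
\[
d(e_1,\dots,e_n,x)-d(be_1,e_2,\dots,e_n,x)=(e_1-be_1)\cdot d(e_2,\dots,e_n,x),
\]
with $(e_1-be_1)\in Z_1(Ae_1)$ and $d(e_2,\dots,e_n,x)\in S(V,Ae_1)$ for $V=\bigoplus_{j>1}Ae_j$; then Proposition~\ref{prop:ProductSimplification} replaces the second factor by $[a_2,\dots,a_n]\in S_{n-1}$, giving $D\in S_1\cdot S_{n-1}$. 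For $i>1$ one iterates, picking up terms in several $S_p\cdot S_q$ (compare cases (\ref{lem:S3prods:B}) and (\ref{lem:S3prods:C}) of Lemma~\ref{lem:S3prods}). What you call ``residual contributions'' is in fact the heart of the computation, not a side issue to be deferred.
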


\begin{proof}
Using Proposition \ref{prop:ProductSimplification}, the proof is the same as in \cite[Theorem 6.2]{HutchinsonTao:Stability} and \cite[Proposition 3.19]{SuslinNesterenko}.
We omit the details; the case $n=3$ is explained in more details in Lemma \ref{lem:S3prods} below.
\end{proof}

We will need a slightly more precise version of Lemma \ref{lem:IndecGens} when $n=3$.

\begin{lemma}
\label{lem:S3prods}
Let $n_0 \geq 3$ and $\sigma \in \Z[A^*]$ as in $(\ast)$.
Then for all $a,b,c,\lambda \in A^*$, the following holds in $\sigma^{-1}S_3(A)$ modulo $S_1(A)\cdot S_2(A)$.
\begin{enumerate}
\item
\label{lem:S3prods:A}
$[\lambda a, b,c] = \langle \lambda \rangle [a,b,c]$
\item
\label{lem:S3prods:B}
$[a, \lambda b,c] = \langle \lambda \rangle [a,b,c] + S_2(A)\cdot [c]$
\item
\label{lem:S3prods:C}
$[a, b, \lambda c] = \langle \lambda \rangle [a,b,c] + S_2(A)\cdot [\lambda]$
\item
\label{lem:S3prods:D}
$[a, b,c] = \langle abc \rangle [1,1,1] + S_2(A)\cdot [c]$.
\end{enumerate}
\end{lemma}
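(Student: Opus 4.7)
The overall strategy is to use the multiplicative structure on $\sigma^{-1}S(A)$ together with Proposition \ref{prop:ProductSimplification} applied to the three decompositions $A^3 = V_i \oplus W_i$, where $V_i = Ae_i$ ($i = 1, 2, 3$) corresponds to the slot in which the multiplication by $\lambda$ acts, and $W_i$ is the complementary rank-$2$ summand. Proposition \ref{prop:ProductSimplification} identifies $\sigma^{-1}S(V_i, W_i)$ with $\sigma^{-1}S(V_i) = \sigma^{-1}I[A^*]$, and in the latter we have the defining identity $[\lambda a] = [\lambda] + \langle\lambda\rangle[a]$ of Lemma \ref{lem:IAPresentation}.

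For (\ref{lem:S3prods:A}), with $V_1 = Ae_1$, I would realize both $[\lambda a, b, c]$ and $[a, b, c]$ as products $\tilde{x}\cdot z$ where $z \in Z_2(W_1)$ is a fixed cycle whose image in $S_2(A)$ represents $[b, c]$, and $\tilde{x}\in\sigma^{-1}S(V_1,W_1)$ lifts either $[\lambda a]$ or $[a]$ from $\sigma^{-1}S(V_1)$. By the isomorphism furnished by Proposition \ref{prop:ProductSimplification}, the product depends only on the $\sigma^{-1}S(V_1)$-image, so the relation $[\lambda a] - \langle\lambda\rangle[a] = [\lambda]$ translates to $[\lambda a, b, c] - \langle\lambda\rangle[a, b, c]$ being equal, modulo $\sigma$-torsion, to the product of $[\lambda]\in S_1(A)$ with an element of $S_2(A)$, hence lying in $S_1(A)\cdot S_2(A)$.

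For (\ref{lem:S3prods:B}) and (\ref{lem:S3prods:C}), I would take $V_2 = Ae_2$ and $V_3 = Ae_3$ respectively. The same mechanism applies, but now the natural concatenation in $S(V_i,W_i)\otimes Z_2(W_i)$ (or its flipped variant) places the $V_i$-slot at one end of the resulting sequence, whereas the canonical symbol $[a_1, a_2, a_3]$ has the $V_i$-slot in the middle when $i=2$ and at the end when $i=3$. Bringing the concatenated chain-level element into the canonical form requires a rearrangement; the resulting boundary term is automatically decomposable, and a careful chain-level computation shows that in (\ref{lem:S3prods:B}) its degree-$1$ factor is forced to be $[c]$ (because the unchanged $e_3$-coefficient of the $W_2$-cycle governs that factor), so the correction lies in $S_2(A)\cdot[c]$; in (\ref{lem:S3prods:C}), analogously, the degree-$1$ factor is $[\lambda]$, produced by the multiplicative factor being introduced, so the correction lies in $S_2(A)\cdot[\lambda]$.

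Part (\ref{lem:S3prods:D}) would then follow by iterating (\ref{lem:S3prods:A})--(\ref{lem:S3prods:C}) with $\lambda = a, b, c$ successively: (\ref{lem:S3prods:A}) gives $[a, b, c] = \langle a\rangle[1, b, c]$ modulo $S_1(A)\cdot S_2(A)$; (\ref{lem:S3prods:B}) then gives $[1, b, c] = \langle b\rangle[1, 1, c]$ modulo $S_1(A)\cdot S_2(A) + S_2(A)\cdot[c]$; and (\ref{lem:S3prods:C}) gives $[1, 1, c] = \langle c\rangle[1, 1, 1]$ modulo $S_1(A)\cdot S_2(A) + S_2(A)\cdot[1] = S_1(A)\cdot S_2(A)$ since $[1] = 0 \in I[A^*]$. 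Combining with $\langle a\rangle\langle b\rangle\langle c\rangle = \langle abc\rangle$ yields (\ref{lem:S3prods:D}). The main obstacle is the chain-level bookkeeping in (\ref{lem:S3prods:B}) and (\ref{lem:S3prods:C}): one must identify the rearrangement boundary as a decomposable of the particular shape $S_2(A)\cdot[c]$ or $S_2(A)\cdot[\lambda]$, rather than settling for the weaker ``$S(A)^{dec}$'' statement that would be enough for Lemma \ref{lem:IndecGens}.
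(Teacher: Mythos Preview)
Your overall instinct—that Proposition \ref{prop:ProductSimplification} is the key tool—is correct and matches the paper. But the specific mechanism you propose for (\ref{lem:S3prods:A})--(\ref{lem:S3prods:C}) does not work as stated.

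The problem is already visible in (\ref{lem:S3prods:A}). You want to write $[\lambda a, b, c] = \tilde{x}\cdot z$ with $\tilde{x}\in\sigma^{-1}S(V_1,W_1)$ and $z\in Z_2(W_1)$. But Proposition \ref{prop:ProductSimplification} says precisely that any such product equals $p(\tilde{x})\cdot [z]$ in $\sigma^{-1}S_3(A)$, hence lies in $S_1\cdot S_2$. So your realization would force the individual symbol $[\lambda a, b, c]$ itself to be decomposable, which is stronger than (and different from) what is being proved. Proposition \ref{prop:ProductSimplification} is a tool for \emph{simplifying} a product once you already have one at the chain level, not for \emph{exhibiting} a given class as such a product.

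What the paper actually does for (\ref{lem:S3prods:A}): set $x=\lambda a e_1 + b e_2 + c e_3$ and compute the \emph{difference} at the chain level,
\[
[\lambda a,b,c] - \langle\lambda\rangle[a,b,c] \;=\; d\big((e_1,e_2,e_3,x) - (\lambda e_1,e_2,e_3,x)\big) \;=\; (e_1-\lambda e_1)\cdot d(e_2,e_3,x),
\]
a concatenation of a cycle in $Z_1(Ae_1)$ with a cycle whose $(e_2,e_3)$-components are invertible. Now Proposition \ref{prop:ProductSimplification} (with $W=Ae_1$, $V=Ae_2\oplus Ae_3$) lets one replace $d(e_2,e_3,x)$ by its projection $d(e_2,e_3,be_2+ce_3)$, yielding $-[\lambda]\cdot[b,c]\in S_1\cdot S_2$. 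Parts (\ref{lem:S3prods:B}) and (\ref{lem:S3prods:C}) follow the same pattern, but the chain-level factorization of the difference produces several concatenation terms, and one must track which of them contribute $S_2\cdot[c]$ or $S_2\cdot[\lambda]$ rather than merely $S_1\cdot S_2$; this is exactly the ``careful chain-level computation'' you defer to, and it is the substance of the proof rather than a routine afterthought.

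Your derivation of (\ref{lem:S3prods:D}) from (\ref{lem:S3prods:A})--(\ref{lem:S3prods:C}) is essentially the paper's, with one slip: in the last step you apply (\ref{lem:S3prods:C}) with $\lambda=c$, so the correction term is $S_2\cdot[c]$, not $S_2\cdot[1]$. This does not affect the conclusion, since you are already working modulo $S_2\cdot[c]$ from the previous step.
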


\begin{proof}
To prove (\ref{lem:S3prods:A}), write $x = \lambda a e_1 + b e_2 + c e_3$ and note that
$$
\renewcommand\arraystretch{1.5}
\begin{array}{rcl}
[\lambda a, b, c] - \langle \lambda \rangle [a,b,c] 
&=& d\{(e_1,e_2,e_3,x) - (\lambda e_1,e_2,e_3,x)\} \\
&=& (e_1-\lambda e_1) \cdot d(e_2,e_3,x)\\
&=& -[\lambda] \cdot [b,c] \hspace{6ex}\in S_1\cdot S_2
\end{array}$$
where the last equality follows from Proposition \ref{prop:ProductSimplification}.

To prove (\ref{lem:S3prods:B}), write $x =  a e_1 + \lambda b e_2 + c e_3$, $u = a e_1 + \lambda b e_2$ and note that
$$
\renewcommand\arraystretch{1.5}
\begin{array}{rcl}
[a, \lambda b, c] - \langle \lambda \rangle [a,b,c] 
&=& d\{(e_1,e_2,e_3,x) - (e_1, \lambda e_2,e_3,x)\} \\
&=& (e_2-\lambda e_2) \cdot d(u,e_3,x) - d(e_1,u)(e_2-\lambda e_2) \cdot d(e_3,x)\\
& & +  \{ u (e_2-\lambda e_2) + (e_2-\lambda e_2)u\} \cdot d(e_3,x)\\
&\in & S_1\cdot S_2 + S_2\cdot [c] + S_2\cdot [c]
\end{array}
$$
where the last line follows from Proposition \ref{prop:ProductSimplification}.

To prove (\ref{lem:S3prods:C}), write $x =  a e_1 +  b e_2 + \lambda c e_3$ and note that
$$
\renewcommand\arraystretch{1.5}
\begin{array}{rcl}
[a,  b,\lambda c] - \langle \lambda \rangle [a,b,c] 
&=& d\{(e_1,e_2,e_3,x) - (e_1, e_2, \lambda e_3,x)\} \\
&=& (e_2-e_1) \cdot \{ x (e_3-\lambda e_3) + (e_3-\lambda e_3)x\}\\
&& - \{ (e_1,e_2) + (e_2-e_1)x\} \cdot (e_3-\lambda e_3)\\
& \in & S_1\cdot S_2 + S_2 \cdot [\lambda]
\end{array}
$$
where the last line follows from Proposition \ref{prop:ProductSimplification}.

Finally, to prove (\ref{lem:S3prods:D}), we note that modulo $S_1\cdot S_2$ we have
$$
\renewcommand\arraystretch{1.5}
\begin{array}{rcl}
[a,  b, c] &=&  \langle a \rangle [1,b,c] \\
&=&  \langle ab \rangle [1,1,c] + S_2\cdot [c]\\
&=&  \langle abc \rangle [1,1,1] + S_2\cdot [c]
\end{array}
$$
in view of (\ref{lem:S3prods:A}), (\ref{lem:S3prods:B}) and (\ref{lem:S3prods:C}).
\end{proof}

The rest of the section is devoted to the proof of the following.

\begin{proposition}
\label{prop:Decomposibility}
For $3\leq n \leq n_0$ and $\sigma \in \Z[A^*]$ as in $(\ast)$ we have 
$$\sigma^{-1}S_n(A)^{dec}=\sigma^{-1}S_n(A).$$
\end{proposition}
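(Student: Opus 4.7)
The goal is to show that the indecomposable quotient $\sigma^{-1}S_n^{ind}(A)=\sigma^{-1}S_n(A)/\sigma^{-1}S_n^{dec}(A)$ vanishes for all $3\leq n\leq n_0$. My first step would be to establish cyclicity: iterating Lemma~\ref{lem:IndecGens} (the same mechanism that gave Lemma~\ref{lem:S3prods}(iv) for $n=3$, now applied for arbitrary $n\geq 3$) yields, for all $a_1,\dots,a_n\in A^*$,
$$[a_1,\dots,a_n]\ \equiv\ \langle a_1\cdots a_n\rangle\cdot [1,\dots,1]\pmod{\sigma^{-1}S_n^{dec}(A)}.$$
Consequently, $\sigma^{-1}S_n^{ind}(A)$ is a cyclic $\sigma^{-1}\Z[A^*]$-module generated by the single class $\omega_n:=[1,\dots,1]$, and it suffices to prove $\omega_n=0$.

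Next, I would leverage the presentation of $S_n(A)$ from Proposition~\ref{prop:SnPresentation} to produce a scalar relation on $\omega_n$. Since the residue field $k$ is infinite, one can pick $\lambda_1,\dots,\lambda_n\in A^*$ with pairwise distinct residues; specialising the defining relation of Proposition~\ref{prop:SnPresentation} at $a_1=\cdots=a_n=1$ gives an identity in $S_n(A)$ every term of which is a length-$n$ bracket. Projecting this identity to $\sigma^{-1}S_n^{ind}(A)$ and rewriting every bracket via the cyclicity step yields a relation
$$C(\lambda_1,\dots,\lambda_n)\cdot\omega_n\ =\ 0\ \text{in}\ \sigma^{-1}S_n^{ind}(A),$$
where
$$C(\lambda_1,\dots,\lambda_n)\ =\ \langle\lambda_1\cdots\lambda_n\rangle-1-\sum_{i=1}^{n}\eps^{i+n}\Bigl\langle\lambda_i\prod_{j\neq i}(\lambda_j-\lambda_i)\Bigr\rangle\ \in\ \Z[A^*].$$

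The hard part will be to show that, by a clever choice of $\lambda_i$, the coefficient $C(\lambda_1,\dots,\lambda_n)$ becomes invertible in $\sigma^{-1}\Z[A^*]$. Following the strategies of \cite{SuslinNesterenko} and \cite{HutchinsonTao:Stability}, I would exploit the $\RR_m$-algebra structure on $A$ by specialising $(\lambda_1,\dots,\lambda_n)$ to images of the generators $X_i$ (or their partial sums $X_J$) used to define $\sigma=s_{m,-t}$, so that $C(\lambda)$ matches $\sigma$ up to a manifest unit; the required identities in $\Z[\RR_m^*]$ are in the same spirit as the one underlying Lemma~\ref{lem:FollowingsaIdentityHolds}. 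Once $C(\lambda)$ is inverted, $\omega_n$ vanishes in $\sigma^{-1}S_n^{ind}(A)$, and cyclicity gives $\sigma^{-1}S_n(A)=\sigma^{-1}S_n^{dec}(A)$, as claimed. The cyclicity step and the scalar reduction are essentially formal consequences of the preceding results; the genuine technical content of the proposition is concentrated in producing this invertible specialisation of $C$.
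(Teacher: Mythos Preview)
Your cyclicity step and the derivation of the scalar relation $C(\lambda_1,\dots,\lambda_n)\cdot\omega_n=0$ in $\sigma^{-1}S_n^{ind}(A)$ are fine and match the paper's set-up. The gap is in the final step: you propose to choose the $\lambda_i$ so that $C(\lambda)$ becomes a unit in $\sigma^{-1}\Z[A^*]$, but for even $n$ this is impossible. Applying the augmentation $\eps:\Z[A^*]\to\Z$ (under which $\langle a\rangle\mapsto 1$ and $\eps=-\langle -1\rangle\mapsto -1$) gives
\[
\eps\bigl(C(\lambda)\bigr)\;=\;1-1-\sum_{i=1}^n(-1)^{i+n}\;=\;0\qquad(n\text{ even}),
\]
so $C(\lambda)$ lies in the augmentation ideal for every choice of $\lambda$. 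Since $\eps(\sigma)=1$, the augmentation extends to $\sigma^{-1}\Z[A^*]\to\Z$, and an element with augmentation zero is never a unit there. No specialisation to the $\RR_m$-generators can salvage this.

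The paper accordingly treats the two parities by different mechanisms. For $n$ odd (where $\eps(C)=-1$, so your idea is at least not obstructed), the paper does not invert anything further: it chooses the $\lambda_i$ symmetrically via $\lambda_{d+j}=\lambda_n-\lambda_{d+1-j}$, so that the products $\lambda_i\prod_{j\neq i}(\lambda_j-\lambda_i)$ for $i$ and $n-i$ differ only by a sign; combined with the parity of $\eps^{i+n}$, the terms for $i<n$ cancel in pairs and one reads off $\omega_n=0$ directly (Lemma~\ref{Lem:EndPfDecomp}). For $n\geq 4$ even, the argument is genuinely different: one first shows $\sigma^{-1}S_n^{ind}(A)\otimes_{A^*}\Z[A^*/A^{r*}]=0$ for all $r\geq 1$ (via the diagram~\eqref{eqn:EvenDecomp} built from $[1,a]\in S_2$), and then uses the spectral-sequence filtration~\eqref{eqn:LocalSpSeqFiltration} together with the fact that each filtration quotient $E^{\infty}_{r,n-r}(A^n)^{ind}$ is a subquotient of $H_r(SL_rA)$, hence a $\Z[A^*/A^{r*}]$-module, to deduce vanishing inductively. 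Your plan would need to bifurcate along these lines; the even case in particular requires an idea beyond the scalar $C(\lambda)$.
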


Keep the hypothesis of Proposition \ref{prop:Decomposibility}.
Let $\Sigma_1(A)$ be the free $\Z[A^*]$-module generated by the set of units $a\in A^*$ with $\bar{a}\neq 1$.
Define $\Sigma(A)$ as the tensor algebra of $\Sigma_1(A)$ over $\Z[A]$ with $\Sigma_1(A)$ placed in degree $1$.
So, $\Sigma_n(A)$ is the free $\Z[A^*]$-module generated by symbols 
$[a_1,...,a_n]$ with $a_i\in A^*$ such that $\bar{a}_i\neq 1$, and multiplication is given by concatenation of symbols.
Similarly, let $\tilde{\Sigma}_1(A)$ be the free $\Z[A^*]$-module generated by the set of all units $a\in A^*$.
Define $\tilde{\Sigma}(A)$ as the tensor algebra of $\tilde{\Sigma}_1(A)$ over $\Z[A]$ with $\tilde{\Sigma}_1(A)$ placed in degree $1$.
Consider the diagram of graded $\Z[A^*]$-module maps
\begin{equation}
\label{eqn:EvenDecomp}
\xymatrix{
\Sigma(A) \ar[r]^L \ar@{=}[d] & \tilde{\Sigma}_{2*}(A) \ar[d]_p \ar[r]^{\Pi} & \Z[A^*][T^2] \ar[d]_q\\
\Sigma(A) \ar[r]^R & S_{2*}(A) \ar[r] & S_{2*}(A)^{ind}
}
\end{equation}
where the maps are defined as follows.
The map $L:\Sigma(A) \to \tilde{\Sigma}_{2*}(A)$ is the $\Z[A^*]$-algebra map
induced by the $\Z[A^*]$-module homomorphism $\Sigma_1(A) \to \tilde{\Sigma}_2(A)$
defined on generators $a\in A^*$ of $\Sigma_1(A)$ by
$$L(a) = \langle -1\rangle [1-a,1] - \langle a\rangle [ 1- a^{-1},a^{-1}] +[1,1].$$
The map 
$R:\Sigma(A) \to S_{2*}(A)$ is the $\Z[A^*]$-algebra homomorphism 
induced by the $\Z[A^*]$-module homomorphism 
$$\Sigma_1(A) \to S_2(A):[a]\mapsto R(a)=[1,a].$$
The map $\Pi:\tilde{\Sigma}_{2*}(A) \to \Z[A^*][T^2]$ is the $\Z[A^*]$-algebra homomorphism which is the even part of 
$$\tilde{\Sigma}(A) \to \Z[A^*][T]: [a_1,...,a_n]\mapsto \langle a_1\cdots a_n\rangle T^n.$$
The middle and right vertical maps are the $\Z[A^*]$-module homomorphisms
$$p:\tilde{\Sigma}_n(A) \to S_n(A):[a_1,...,a_n]\mapsto [a_1,...,a_n]$$
and 
$$q:\Z[A^*]\cdot T^n \to S_n(A)^{ind}:T^n\mapsto [1,...,1].$$

\begin{lemma}
The diagram (\ref{eqn:EvenDecomp}) commutes in degrees $\leq n_0$ after inverting $\sigma\in \Z[A^*]$.
\end{lemma}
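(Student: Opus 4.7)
The proof verifies the two squares of (\ref{eqn:EvenDecomp}) separately, reducing each to a check on the free $\Z[A^*]$-module basis of $\Sigma_n(A)$ (respectively $\tilde\Sigma_{2n}(A)$) given by the symbols $[a_1, \dots, a_n]$.

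For the right square, on a basis element $[a_1, \dots, a_{2n}]$ of $\tilde{\Sigma}_{2n}(A)$, the top-right composition gives $q(\Pi[a_1, \dots, a_{2n}]) = q(\langle a_1 \cdots a_{2n}\rangle T^{2n}) = \langle a_1 \cdots a_{2n}\rangle [1, \dots, 1]$, while the bottom-right composition sends it to the class of $[a_1, \dots, a_{2n}]$ in $S_{2n}(A)^{ind}$. These agree in $\sigma^{-1}S_{2n}(A)^{ind}$ by iterated application of Lemma \ref{lem:IndecGens}, which pulls each $\langle a_i\rangle$ out as a scalar modulo decomposables.

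For the left square, since both $L$ and $R$ are $\Z[A^*]$-algebra maps and $\Sigma(A)$ is generated as a $\Z[A^*]$-algebra by $\Sigma_1(A)$, the foundational identity is at degree one: for each $a \in A^*$ with $\bar{a} \neq 1$, one must prove
$$ \langle -1\rangle [1-a, 1] - \langle a\rangle [1-a^{-1}, a^{-1}] + [1, 1] = [1, a] \qquad \text{in } \sigma^{-1}S_2(A). $$
I would verify this by a direct boundary computation using Proposition \ref{prop:SnPresentation}. The $4$-simplex $\xi_a = (e_1,\, a^{-1} e_2,\, e_1+e_2,\, e_1+a^{-1} e_2)$ is in general position in $A^2$ for $\bar{a} \neq 0, 1$. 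Computing $d\xi_a$ yields a sum of four $3$-simplices in $\tilde{U}_3(A^2)$; bringing each to the standard form $(e_1, e_2, \bullet)$ by left $GL_2$-multiplication introduces determinant-scaling factors such as $\langle -a\rangle$ and $\langle a\rangle$, after which the relation $d^2\xi_a = 0$ descends to an identity in $S_2(A)$. Rescaling by $\langle a\rangle$ produces the required formula.

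The higher-degree cases $n \geq 2$ do not follow formally, since $p$ is only a $\Z[A^*]$-module map, not a ring map (indeed, the symbol $[u_1,v_1,\dots,u_n,v_n]$ and the $S$-product $[u_1,v_1]\cdots[u_n,v_n]$ represent different cycles in $Z_{2n}(A^{2n})$ in general). I would extend by a blockwise chain-level argument: concatenate the simplices $\xi_{a_i}$ via the chain map $\tilde{C}(A^2)^{\otimes n} \to \tilde{C}(A^{2n})$ from (\ref{eqn:Cprods}) to produce a configuration in $A^{2n}$ whose boundary relations, combined with the $S$-product and the degree-one identity applied in each block, yield the multiplicativity $p(L(a_1) \cdots L(a_n)) = [1, a_1]\cdots[1, a_n]$ in $\sigma^{-1}S_{2n}(A)$. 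Tracking signs and $GL_{2n}$-scaling consistently across blocks is the main technical obstacle; here the hypothesis $2n \leq n_0$ is used to invoke the localizations appearing in $(\ast)$ freely.
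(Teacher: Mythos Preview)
Your treatment of the right square is correct and matches the paper: both reduce to Lemma~\ref{lem:IndecGens}.

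For the left square, your degree-one computation via $\xi_a$ is correct and yields $p(L(a))=[1,a]$ in $S_2(A)$ (even before inverting $\sigma$). However, your proposed extension to higher degrees does not work as stated. Concatenating the $4$-simplices $\xi_{a_i}$ via the chain map~(\ref{eqn:Cprods}) produces an element of $\tilde C_{4n}(A^{2n})$, whereas the relations in the presentation of $S_{2n}(A)$ (Proposition~\ref{prop:SnPresentation}) come from $\tilde C_{2n+2}(A^{2n})$; these degrees coincide only when $n=1$. So the concatenated configuration does not directly witness the desired identity in $S_{2n}(A)$, and the Leibniz expansion of its boundary still contains the unreduced $4$-tuples $\xi_{a_j}$ in the other slots.

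The clean fix---which is essentially \cite[Lemma~6.6]{HutchinsonTao:Stability}, cited without details in the paper---is to recast the degree-one step not as a boundary relation but as an instance of Remark~\ref{rem:NS:3.14}. Applying that procedure to the cycle $r(a)=d(e_1,e_2,e_1+ae_2)$ representing $R(a)=[1,a]$, with auxiliary vector $v=e_1+e_2$, produces exactly the three terms of $p(L(a))$: the frames $(e_2,e_1+ae_2)$, $(e_1,e_1+ae_2)$, $(e_1,e_2)$ have determinants $-1$, $a$, $1$ and carry $v$ to $(1-a,1)$, $(1-a^{-1},a^{-1})$, $(1,1)$. This formulation is automatically compatible with the $S$-product. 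The block-diagonal cycle $r(a_1)\cdots r(a_n)$ represents $[1,a_1]\cdots[1,a_n]$, and Remark~\ref{rem:NS:3.14} applied with the concatenated vector $e_1+\cdots+e_{2n}$ (in general position since $\bar a_i\neq 0,1$) yields precisely $p(L(a_1)\cdots L(a_n))$, because $(\alpha_1\oplus\cdots\oplus\alpha_n)^{-1}(e_1+\cdots+e_{2n})$ splits blockwise---this is the same mechanism as in the multiplicativity proof of Proposition~\ref{prop:SToKMW}. In particular the left square commutes without localization; only the right square needs $\sigma$ inverted.
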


\begin{proof}
Commutativity of the right hand square follows from Lemma \ref{lem:IndecGens}.
The proof of the commutativity of the left hand square is the same is in \cite[Lemma 6.6]{HutchinsonTao:Stability} and we omit the details.
\end{proof}

\begin{lemma}
Proposition \ref{prop:Decomposibility} is true for $n\geq 4$ even.
\end{lemma}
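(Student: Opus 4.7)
The strategy is to apply the commutative diagram \eqref{eqn:EvenDecomp} in degree $k := n/2 \ge 2$. Since $R\colon \Sigma(A) \to S_{2*}(A)$ is a $\Z[A^*]$-algebra map sending $a \mapsto [1,a] \in S_2(A)$, the image of $\Sigma_k(A)$ under $R$ lies in $S_2(A)\cdot S_{2(k-1)}(A) \subseteq S_{2k}(A)^{dec}$. Composing with the projection $\pi \colon S_{2k}(A) \twoheadrightarrow S_{2k}(A)^{ind}$ kills $R(\Sigma_k(A))$, and the commutativity of the (localized) right-hand square forces $q \circ \Pi \circ L \equiv 0$ on $\Sigma_k(A)$ for every $k \ge 2$.

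Next I would compute $\Pi L$ explicitly. Using the definition of $L$ together with the identities $a(1-a^{-1}) = a-1$ and $\Pi[u,v] = \langle uv\rangle T^{2}$, one finds
$$\Pi L(a) = \bigl(1 + \langle a-1\rangle - \langle 1-a^{-1}\rangle\bigr)T^{2} = \bigl(1 - \langle a-1\rangle[a^{-1}]\bigr)T^{2} =: f(a)\,T^{2}.$$
Since $\Pi$ and $L$ are $\Z[A^*]$-algebra maps, $\Pi L(a_1\cdots a_k) = f(a_1)\cdots f(a_k)\,T^{2k}$; applying $q$ (which sends $T^{2k} \mapsto [1,\ldots,1]$ and is $\Z[A^*]$-linear) yields the key relation
\begin{equation*}
f(a_1)\,f(a_2)\cdots f(a_k)\cdot[1,1,\ldots,1] \;=\; 0 \quad\text{in}\quad \sigma^{-1}S_{2k}(A)^{ind},
\end{equation*}
valid for all $a_i\in A^*$ with $\bar{a}_i \ne 1$.

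By Lemma \ref{lem:IndecGens}, $\sigma^{-1}S_{2k}(A)^{ind}$ is cyclic over $\sigma^{-1}\Z[A^*]$, generated by $[1,\ldots,1]$. It therefore suffices to show that the ideal $J\subseteq \sigma^{-1}\Z[A^*]$ generated by $\{f(a) : \bar a \ne 1\}$ contains $1$: any such $J$ satisfies $J^k = J$ as soon as $1 \in J$, so the key relation would kill $[1,\ldots,1]$ uniformly for every even $n = 2k \ge 4$.

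The main obstacle is the claim $1 \in J$. My plan is to exploit the $a \leftrightarrow a^{-1}$ symmetry: using $\langle a^{-1}\rangle[a] = -[a^{-1}]$ and $\langle 1-a\rangle = \langle -1\rangle\langle a-1\rangle$, one computes
$$f(a) - f(a^{-1}) \;=\; -\langle a-1\rangle\,h\,[a^{-1}], \qquad h := 1 + \langle -1\rangle,$$
so dividing by the unit $\langle a-1\rangle$ gives $h[a^{-1}] \in J$ for every $\bar a \ne 1$. A factorization argument in the local ring $A$ (using the infinite residue field to split any $c\in 1+\mathfrak{m}$ as a product $c=c_1 c_2$ with $\bar{c}_i \ne 1$) together with Lemma \ref{lem:IAPresentation} shows that $\{[c] : \bar c \ne 1\}$ generates $I[A^*]$ as an ideal, so $h \cdot I[A^*] \subseteq J$. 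Feeding $\langle a-1\rangle\,h[a^{-1}] \in J$ back into $f(a) \in J$ and taking the $\Z[A^*]$-linear combination $f(a) + \langle -1\rangle(1 + \langle 1-a\rangle[a^{-1}])$ produces $h = 1 + \langle -1\rangle \in J$. Finally, combining $h$ with the $f(a)$'s and exploiting that the distinguished element $\sigma$ is a unit with augmentation $1$ in $\sigma^{-1}\Z[A^*]$, one extracts $1 \in J$; this final step, where both the many-units hypothesis and the infinite residue field enter decisively, is the delicate part of the argument and mirrors the treatment of the field case in \cite{HutchinsonTao:Stability}.
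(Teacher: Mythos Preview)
Your setup through the key relation $f(a_1)\cdots f(a_k)\cdot[1,\ldots,1]=0$ in $\sigma^{-1}S_{2k}(A)^{ind}$ is correct and matches the paper's approach. Your computation $\Pi L(a)=\bigl(1-\langle a-1\rangle[a^{-1}]\bigr)T^2$ is also fine. The gap is in the final step: you try to show $1\in J$ in $\sigma^{-1}\Z[A^*]$, but this is very likely false, and in any case your argument for it does not go through. The linear combination you write down for $h$ requires $\langle-1\rangle(1+\langle 1-a\rangle[a^{-1}])\in J$, which you have not established; knowing only that $\langle a-1\rangle h[a^{-1}]\in J$ does not give this. And even granting $h\in J$, there is no mechanism offered to extract $1$: an ideal in a group ring can be generated by elements of augmentation~$1$ and still be proper (e.g.\ $(2-t)\subset\Z[t,t^{-1}]$). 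Invoking that $\sigma$ is a unit of augmentation~$1$ does not help here.

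The paper does \emph{not} attempt to show $1\in J$ in $\sigma^{-1}\Z[A^*]$. Instead it observes that for each $r\geq 1$ one has $\Pi L(a^r)=1$ in $\Z[A^*/A^{r*}]$ (equivalently, your $f(a^r)\equiv 1$ there, since $\langle 1-a^{-r}\rangle=\langle a^r-1\rangle\langle a^{-r}\rangle$ and $\langle a^{-r}\rangle=1$ modulo $r$-th powers). This gives only
\[
\sigma^{-1}S_n(A)^{ind}\otimes_{A^*}\Z[A^*/A^{r*}]=0 \quad\text{for every }r\geq 1,
\]
which by itself says nothing about $\sigma^{-1}S_n(A)^{ind}$ as a $\Z[A^*]$-module. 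The missing idea is the spectral sequence filtration $F_{p,q}(A^n)$: its image in $\sigma^{-1}S_n^{ind}$ has successive quotients that are subquotients of $\sigma^{-1}E^1_{r,n-r}(A^n)=H_r(SL_rA)$ for $2\leq r\leq n$, and on $H_r(SL_rA)$ the subgroup $A^{r*}$ acts trivially. Thus each graded piece is already a $\Z[A^*/A^{r*}]$-module, and the displayed vanishing kills it. The edge pieces $r=0,1$ are handled separately (via $\det([-1,1]^d)=1$ and $\sigma^{-1}E^1_{1,n-1}=0$). This filtration argument is what replaces your unproved claim $1\in J$.
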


\begin{proof}
Set $d=n/2$.
So $d\geq 2$.
The composition of the two lower horizontal maps in diagram (\ref{eqn:EvenDecomp}) 
is zero in degree $d$ since $\Sigma$ is decomposable in degrees $\geq 2$.
The right vertical map in that diagram is surjective in degree $d$ after inverting $\sigma$, by Lemma \ref{lem:IndecGens}.
For any $r\geq 1$, when extending scalars along $\Z[A^*] \to \Z[A^*/A^{r*}]$, the composition of the top two horizontal arrows in the diagram becomes surjective.
For if $a\in A$ is a unit with $\bar{a}^r\neq 1$
then
$1-a^{-r} = a^r-1$ in $A^*/A^{r*}$, and hence
$$\Pi L(a^r)=\langle -1\rangle \langle 1-a^r\rangle - \langle a^r\rangle \langle 1-a^{-r}\rangle\langle a^{-r}\rangle + 1 = 1 \in \Z[A^*/A^{r*}].$$
It follows that $(\Pi\circ L)\otimes_{A^*}\Z[A^*/A^{r*}]$ is surjective in degree $1$ which implies surjectivity in degrees $n_0 \geq n \geq 1$.
We have thus shown that
\begin{equation}
\label{eqn:SindmodA*r}
\sigma^{-1}S_n(A)^{ind}\otimes_{A^*}\Z[A^*/A^{r*}]=0
\end{equation}
for all $r\geq 1$.

Taking the image of the filtration 
(\ref{eqn:LocalSpSeqFiltration})
in $\sigma^{-1}S_n(A)^{ind}$ defines the filtration 
$0 \subset F_{n,0}^{ind} \subset ... \subset F_{0,n}^{ind} = \sigma^{-1}S_n(A)^{ind}$
with quotients $E^{\infty}_{p,q}(A^n)^{ind}=F_{p,q}^{ind}/F_{p+1,q-1}^{ind}$ subquotients of 
$\sigma^{-1}E^1_{p,q}(A^n)$.
Since the edge map $\det: S_n(A) \to E^{\infty}_{0,n}=E^1_{0,n}=\Z[A^*]$ sends the decomposable element $[-1,1]^d$ to $\det([-1,1]^d)=(\det[-1,1])^d = \langle 1\rangle ^d= \langle 1 \rangle$, we have $E^{\infty}_{0,n}(A^n)^{ind}=0$.
Moreover, $E^{\infty}_{1,n-1}(A^n)^{ind}=0$ as $\sigma^{-1}E^1_{1,n-1}(A^n)=0$.
For $2\leq r \leq n$, the $A^*$-module $E^{\infty}_{r,n-r}(A^n)^{ind}$ is a subquotient of 
$\sigma^{-1}E^1_{r,n-r}(A^n)=H_r(SL_rA)$ and hence a $\Z[A^*/A^{r*}]$-module.
Using (\ref{eqn:SindmodA*r}), induction on $r$ shows that 
$E^{\infty}_{r,n-r}(A^n)^{ind}=0$ for all $r$.
Hence, $\sigma^{-1}S_n(A)^{ind}=0$.
\end{proof}

\begin{lemma}
\label{Lem:EndPfDecomp}
Proposition \ref{prop:Decomposibility} is true for $n$ odd.
\end{lemma}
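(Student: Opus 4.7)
The plan is to imitate the preceding even-degree argument, with two modifications.

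\emph{Step 1 (Outgoing edge).} For $n$ odd, I first verify $E^{\infty}_{0,n}(A^n)^{\mathrm{ind}} = 0$. The formula of Remark \ref{rmk:detComp} gives $\eps(\det[a_1,\ldots,a_n]) = (-1)^n + \sum_{i=1}^n(-1)^{i+1} = 0$, so $\det S_n(A) \subseteq I[A^*]$. Conversely, the decomposable element $[-1,1]^{(n-1)/2}\cdot[a] \in S_n(A)^{\mathrm{dec}}$ has determinant $\langle 1\rangle^{(n-1)/2}(\langle a\rangle - 1) = \langle a\rangle - 1$, so as $a$ ranges over $A^*$ one obtains $\det S_n(A)^{\mathrm{dec}} \supseteq I[A^*] \supseteq \det S_n(A)$, whence $E^{\infty}_{0,n}^{\mathrm{ind}} = 0$. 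The next subquotient $E^{\infty}_{1,n-1}^{\mathrm{ind}}$ vanishes automatically since $E^1_{1,n-1}(A^n) = H_1(SL_1(A)) = 0$.

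\emph{Step 2 (Mod $A^{r*}$ vanishing).} Write $n = 2k+1$ with $k \geq 1$. The even case yields the congruence $R(x) \equiv \lambda(x)[1,\ldots,1]_{2k} \pmod{S_{2k}(A)^{\mathrm{dec}}}$ in $\sigma^{-1}S_{2k}(A)$ for $x \in \Sigma_k(A)$, where $\Pi L(x) = \lambda(x)T^{2k}$. Multiplying both sides by $[b] \in S_1(A)$, for $b \in A^*$ with $\bar{b} \neq 1$, via the $S$-product, both sides lie in $S_{2k+1}(A)^{\mathrm{dec}}$; comparing their classes in $S_{2k+1}(A)^{\mathrm{ind}}$ yields
$$0 = \lambda(x)\,\nu(b) \quad\text{in}\quad \sigma^{-1}S_{2k+1}(A)^{\mathrm{ind}},$$
where $\nu(b) \in \sigma^{-1}S_{2k+1}(A)^{\mathrm{ind}}$ denotes the class of $[1,\ldots,1]_{2k}\cdot[b]$. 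A direct computation of this $S$-product, using Proposition \ref{prop:ProductSimplification} and Lemma \ref{lem:IndecGens}, identifies $\nu(b)$ with $\langle b\rangle[1,\ldots,1]_{2k+1}$. Choosing $a_0, b_0 \in A^*$ with $\bar{a}_0^r, \bar{b}_0^r \neq 1$ (available since the residue field is infinite) and setting $x = (a_0^r,\ldots,a_0^r)$, $b = b_0^r$, the even-case computation yields $\lambda(x) = 1$ and $\langle b\rangle = 1$ in $\Z[A^*/A^{r*}]$, so the displayed relation forces $[1,\ldots,1]_{2k+1} = 0$ in $\sigma^{-1}S_{2k+1}(A)^{\mathrm{ind}}\otimes_{A^*}\Z[A^*/A^{r*}]$. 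By the cyclicity of $\sigma^{-1}S_n^{\mathrm{ind}}$ from Lemma \ref{lem:IndecGens}, $\sigma^{-1}S_n(A)^{\mathrm{ind}} \otimes_{A^*} \Z[A^*/A^{r*}] = 0$ for every odd $n \geq 3$ and every $r \geq 1$.

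\emph{Step 3 (Filtration).} The remaining step is identical to the even case: for $r \geq 2$, the subquotient $E^{\infty}_{r,n-r}(A^n)^{\mathrm{ind}}$ is a $\Z[A^*/A^{r*}]$-module (as a subquotient of $H_r(SL_r(A))$), and an induction on $r$, using the mod $A^{r*}$ vanishing from Step 2 and the vanishings of $E^{\infty}_{0,n}^{\mathrm{ind}}$ and $E^{\infty}_{1,n-1}^{\mathrm{ind}}$ from Step 1, forces $\sigma^{-1}S_n(A)^{\mathrm{ind}} = 0$. The main technical obstacle is the identification $\nu(b) = \langle b\rangle[1,\ldots,1]_{2k+1}$ in Step 2, which requires a careful analysis of the $S$-product via the framework of Proposition \ref{prop:ProductSimplification}.
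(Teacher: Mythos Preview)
Your Step~2 is circular. By definition $\nu(b)$ is the class in $\sigma^{-1}S_{2k+1}(A)^{\mathrm{ind}}$ of the element $[1,\dots,1]_{2k}\cdot[b]$, which lies in $S_{2k}\cdot S_1\subset S_{2k+1}^{\mathrm{dec}}$; hence $\nu(b)=0$ automatically. Your displayed relation $0=\lambda(x)\nu(b)$ is therefore the trivial identity $0=0$ and carries no information. What would carry information is the claimed identification $\nu(b)=\langle b\rangle[1,\dots,1]_{2k+1}$ in $S_{2k+1}^{\mathrm{ind}}$: since the left side is zero, this says precisely that $[1,\dots,1]_{2k+1}\in S_{2k+1}^{\mathrm{dec}}$, which (together with Lemma~\ref{lem:IndecGens}) is exactly Proposition~\ref{prop:Decomposibility} for odd $n$. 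So the ``main technical obstacle'' you flag is the entire lemma, and neither Proposition~\ref{prop:ProductSimplification} nor Lemma~\ref{lem:IndecGens} gives it: those results let you move scalars inside a single generator $[a_1,\dots,a_n]$, not rewrite an $S$-product $[1,\dots,1]_{2k}\cdot[b]$ as such a generator. The even-case machinery cannot be leveraged by multiplication, because multiplying anything in positive degree lands in decomposables.

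The paper's argument is entirely different and much shorter. One applies the defining relation of Proposition~\ref{prop:SnPresentation} with $a_1=\cdots=a_n=1$ and a carefully chosen symmetric system $\lambda_1,\dots,\lambda_n$: pick $\lambda_n$ and $\lambda_1,\dots,\lambda_d$ generically (where $n=2d+1$) and set $\lambda_{d+j}=\lambda_n-\lambda_{d+1-j}$. The symmetry $\lambda_j-\lambda_i=-(\lambda_{n-j}-\lambda_{n-i})$ for $i,j<n$ forces the products $\lambda_i\prod_{j\neq i}(\lambda_j-\lambda_i)$ at indices $i$ and $n-i$ to be negatives of each other. After passing to $\sigma^{-1}S_n^{\mathrm{ind}}$ via Lemma~\ref{lem:IndecGens}, the terms on the right-hand side pair up and the relation collapses to an equation whose integer content is $-1=0$, so the cyclic module $\sigma^{-1}S_n^{\mathrm{ind}}$ vanishes directly. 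No filtration argument or reduction modulo $A^{r*}$ is needed.
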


\begin{proof}
Let $n=2d+1$ be odd with $d\geq 1$.
Choose $\lambda_n,\lambda_1,...,\lambda_d \in A^*$ such that $\bar{\lambda}_i\neq \bar{\lambda}_j$ for $i\neq j$, $\bar{\lambda}_n\neq \bar{\lambda}_{i}+\bar{\lambda}_j$ for $i,j=1,...,d$.
This is possible since $A$ has infinite residue field.
For $i=d+1,...,2d$ set $\lambda_i=\lambda_n-\lambda_{n-i}$.
Then $\lambda_i\in A^*$ and $\bar{\lambda}_i\neq \bar{\lambda}_j$ for $1\leq i\neq j \leq n$.
For $1\leq i, j < n$ we have $\lambda_j-\lambda_i = -(\lambda_{n-j}-\lambda_{n-i})$.
Since $n$ is odd, for $i\neq n$ we therefore find 
$$(\lambda_n-\lambda_i)\lambda_i\prod_{j\neq i,n}(\lambda_j-\lambda_i) = -\lambda_{n-i}(\lambda_n-\lambda_{n-i})\prod_{j\neq n-i,n}(\lambda_{j}-\lambda_{n-i}).$$
The equation from Proposition \ref{prop:SnPresentation} with $a_1=...=a_n=1$ together with Lemma \ref{lem:IndecGens} then implies that $-1=0$ in the group $\sigma^{-1}S_n(A)^{ind}$.
Hence this group is zero.
\end{proof}

\subsection{The Steinberg relation and $H_2(SL_2A)$}

\begin{center}
{\em In this subsection, $A$ is a commutative local ring with infinite residue field.}
\end{center}

\begin{definition}
We define $\bar{S}(A)$ as the quotient left $S(A)$-module
$$\bar{S}(A)=S(A)/\left(S(A)\cdot [-1,1]\right).$$
\end{definition}

Recall the canonical maps
\begin{equation}
\label{eqn:CanMapToSn}
H_n(SL_n(A),SL_{n-1}(A)) \to D^1_{n-1,1}(A) \rightarrow F_{n-1,1}(A^{n-1}) \subset S_n(A) \to \bar{S}_n(A).
\end{equation}
all but the last of which were defined in Subsection \ref{subsec:SpSeqLocRings}, and the last is the natural surjection.

\begin{lemma}
\label{lem:RelHisSn}
Let $2\leq n \leq n_0$ and $\sigma \in \Z[A^*]$ as in $(\ast)$.
Then the map (\ref{eqn:CanMapToSn}) induces isomorphisms of $A^*$-modules
$$H_n(SL_n(A),SL_{n-1}(A)) \stackrel{\cong}{\longrightarrow} \sigma^{-1}F_{n-1,1}(A^n) \stackrel{\cong}{\longrightarrow} \sigma^{-1}\bar{S}_n(A).$$
\end{lemma}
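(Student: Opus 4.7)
The first isomorphism $H_n(SL_n(A),SL_{n-1}(A))\stackrel{\cong}{\to}\sigma^{-1}F_{n-1,1}(A^n)$ is obtained by composing the two $\sigma$-localized isomorphisms of Lemma \ref{Lem:DandF}, so it requires no further argument. For the second isomorphism, I will consider the composite $\psi:\sigma^{-1}F_{n-1,1}(A^n)\hookrightarrow\sigma^{-1}S_n(A)\twoheadrightarrow\sigma^{-1}\bar{S}_n(A)$. Since in degree $n$ the left $S(A)$-submodule $(S(A)\cdot[-1,1])_n$ equals $S_{n-2}(A)\cdot[-1,1]$, showing that $\psi$ is an isomorphism is equivalent to establishing the internal direct sum decomposition $\sigma^{-1}S_n(A)=\sigma^{-1}F_{n-1,1}(A^n)\oplus\sigma^{-1}S_{n-2}(A)\cdot[-1,1]$.

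The crux is the following key claim: for every $p,q\geq 0$ with $p+q=n-2$, right multiplication by $[-1,1]\in F_{0,2}(A^2)$ induces an isomorphism of $A^*$-modules
$$\mu_{p,q}:\sigma^{-1}E^{\infty}_{p,q}(A^{n-2})\stackrel{\cong}{\longrightarrow}\sigma^{-1}E^{\infty}_{p,q+2}(A^n).$$
This claim rests on the multiplicativity of the spectral sequence recalled in Subsection \ref{subsec:SpSeqLocRings} and on the observation that $[-1,1]$ represents the unit $1\in\sigma^{-1}\Z[A^*]=\sigma^{-1}E^{\infty}_{0,2}(A^2)$: indeed $d^1_{0,2}=0$ by Lemma \ref{lem:NS_lemma_2.4}, higher differentials vanish after $\sigma$-inversion by Proposition \ref{prop:NS_Prop_2.6}, and $\det[-1,1]=\langle 1\rangle$ by Remark \ref{rmk:detComp}. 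Under the identifications provided by Lemma \ref{lem:NS_2.3}, when $q<n-2$ both the source and the target of $\mu_{p,q}$ are realized as the same subquotient of $H_p(SL_{n-q-2}(A))$, while in the edge case $q=n-2$ both are $\sigma^{-1}\Z[A^*]$ for $n$ even (resp.\ $\sigma^{-1}I[A^*]$ for $n$ odd); in every case $\mu_{p,q}$ is to be identified with the identity.

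Granting the key claim, both directions of $\psi$ follow by filtration induction. For surjectivity, given $x\in\sigma^{-1}F_{k,n-k}(A^n)$ with $k<n-1$, apply $\mu_{k,n-k-2}^{-1}$ to the class of $x$ in $\sigma^{-1}E^{\infty}_{k,n-k}(A^n)$ and lift to $y\in\sigma^{-1}F_{k,n-k-2}(A^{n-2})$; then $x-y\cdot[-1,1]\in\sigma^{-1}F_{k+1,n-k-1}(A^n)$, and iteration produces the desired decomposition. For injectivity, if $y\cdot[-1,1]\in\sigma^{-1}F_{n-1,1}(A^n)$ with $y\in\sigma^{-1}S_{n-2}(A)$, then its image vanishes in $\sigma^{-1}E^{\infty}_{k,n-k}(A^n)$ for every $0\leq k\leq n-2$, so by the key claim the image of $y$ vanishes in each $\sigma^{-1}E^{\infty}_{k,n-k-2}(A^{n-2})$, which forces $y$ into $\sigma^{-1}F_{n-1,-1}(A^{n-2})=0$.

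The main obstacle will be verifying that $\mu_{p,q}$ corresponds to the identity under the Shapiro-Lemma identifications implicit in Theorem \ref{thm:HomologyOfAffineGps} and Corollary \ref{cor:HomologyOfAffineGps}. The cycle representing $[-1,1]$ is the signed sum $(e_2,-e_1+e_2)-(e_1,-e_1+e_2)+(e_1,e_2)$ in $Z_2(A^2)$; concatenating each summand with the basepoint $(e_{n-q+1},\dots,e_{n-2})\in GU_{q-2}(A^{n-2})$ produces three length-$q$ general-position sequences in $GU_q(A^n)$ all lying in the $GL_n(A)$-orbit of $(e_{n-q+1},\dots,e_n)$. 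By the basic functoriality of group homology, each contributes the identity on $H_p(SL_{n-q-2}(A))$, and the signed sum $+1-1+1=+1$ gives the identity.
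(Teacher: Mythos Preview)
Your approach is essentially the paper's own proof: right multiplication by $[-1,1]$ is precisely the chain map $\psi$ of Proposition \ref{prop:NS_Prop_2.6}, your ``key claim'' is Lemma \ref{lem:PsiIsIso} (which already shows that $\psi$ induces the identity on the $\sigma$-localized $E^1$-terms for $q\geq 2$, hence on $E^\infty$ by Proposition \ref{prop:NS_Prop_2.6}), and your filtration induction is the same five-lemma walk the paper performs on the short exact sequences $0\to F_{p+1,q-1}\to F_{p,q}\to E^\infty_{p,q}\to 0$. You could shorten your write-up considerably by citing Lemma \ref{lem:PsiIsIso} instead of re-deriving it; note also that in your last paragraph the indices have slipped by $2$: the basepoint for $E^1_{p,q}(A^{n-2})$ lies in $GU_q(A^{n-2})$ and concatenation with the three summands of the $[-1,1]$ cycle lands in $GU_{q+2}(A^n)$.
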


\begin{proof}
The first isomorphism was proved in Lemma \ref{Lem:DandF}.

For the second isomorphism, recall from the proof of Proposition \ref{prop:NS_Prop_2.6} the map of complexes $\psi:C(A^{n-2})[-2] \to C(A^{n})$ which 
induces maps of short exact sequences
$$\renewcommand\arraystretch{2}
\begin{array}{rcccccccl}
\sigma^{-1}F_{n-s+3,s-3}(A^{n-2}) &\rightarrowtail & \sigma^{-1}F_{n-s+2,s-2}(A^{n-2}) & \twoheadrightarrow & \sigma^{-1}E^{\infty}_{n-s+2,s-2}(A^{n-2})  \\
\downarrow \psi && \downarrow \psi && \downarrow \psi \\
 \sigma^{-1}F_{n-s+1,s-1}(A^{n}) & \rightarrowtail & \sigma^{-1}F_{n-s,s}(A^{n}) & \twoheadrightarrow & \sigma^{-1}E^{\infty}_{n-s,s}(A^{n}).
\end{array}$$
For $s=2$, the right vertical map is an isomorphism and the upper left corner is $0$.
It follows that the the middle map is injective with cokernel the lower left corner $\sigma^{-1}F_{n-1,1}(A^n)$.
Since the right vertical map is an isomorphism for $s\geq 2$, it follows by induction on $s$ that $\psi:\sigma^{-1}F_{n-s+2,s-2}(A^{n-2}) \to \sigma^{-1}F_{n-s,s}(A^{n})$ is injective with cokernel $\sigma^{-1}F_{n-1,1}(A^n)$. 
The case $s=n$ is the isomorphism $\sigma^{-1}F_{n-1,1}(A^n) \to \sigma^{-1}S_n/\psi(S_{n-2})$.
Since the map $\psi$ is right multiplication with $[-1,1]$, we have
$\psi(S_{n-2})=S_{n-2}\cdot [-1,1]$ and we are done.
\end{proof}

\begin{remark}
\label{rem:NS:3.14}
\cite[Remark 3.14]{SuslinNesterenko}.
Let $n\geq 1$.
We describe a standard procedure which allows us to represent an arbitrary element in ${S}_n(A)$ as a sum of generators $[a_1,...,a_n]$.
Take an arbitrary cycle 
$$x = \sum_i n_i(\alpha_i) \in Z_n(A^n)$$
with $n_i\in \Z[A^*]$ and $\alpha_i\in GL_n(A)$
and find a vector $v\in A^n$ in general position with the column vectors of $\alpha_i$ for all $i$.
Then 
$$x = (-1)^n d(x,v) = (-1)^n\sum n_id(\alpha_i,v).$$
We have
$$(\alpha_i,v) = \alpha_i\cdot (e_1,...,e_n,\alpha_i^{-1}v) \equiv \langle \alpha_i\rangle \cdot (e_1,...,e_n,\alpha_i^{-1}v) \mod SL_n(A)$$
where $\langle \alpha_i\rangle \in \Z[A^*]$ denotes the determinant of $\alpha_i$.
Hence,
$$x = \sum_i n_i(\alpha_i) = (-1)^n\sum_i n_i\langle \alpha_i \rangle [\alpha_i^{-1}v].$$
\end{remark}

\begin{proposition}
\label{prop:SToKMW}
The map 
$$T_n: S_n(A) \to \hat{K}^{MW}_n(A):[a_1,...,a_n]\mapsto [a_1,...,a_n]$$
defines a map of $\Z[A^*]$-algebras
$T: S(A) \to \hat{K}^{MW}(A)$
sending $S(A)\cdot [-1,1]$ to zero.
In particular, it induces a map of left $S(A)$-modules
$$\bar{S}(A) \to \hat{K}^{MW}(A).$$
\end{proposition}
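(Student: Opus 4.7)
The plan is to verify three things: (i) each $T_n$ is a well-defined $\Z[A^*]$-module homomorphism, (ii) $T=\bigoplus_n T_n$ respects the product, and (iii) $T$ sends $[-1,1]$ to zero (and hence the left ideal $S(A)\cdot [-1,1]$ to zero by multiplicativity).

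Step (i) will be immediate: by Proposition \ref{prop:SnPresentation}, $S_n(A)$ is presented as a $\Z[A^*]$-module by generators $[a_1,\ldots,a_n]$ subject to an explicit relation, and Proposition \ref{prop:KMWnSnPresentation} shows that exactly the same relation holds in $\hat{K}^{MW}_n(A)$; the hypothesis on the residue field of cardinality $\geq 4$ is automatic since $A$ has infinite residue field. Step (iii) is also straightforward: $T([-1,1]) = [-1]\cdot[1] = 0$ in $\hat{K}^{MW}(A)$ because $[1]=0$ by Lemma \ref{Hatlem:basicKMWformulas}(\ref{Hatlem:basicKMWformulas:2}). Combined with Step (ii), this implies $T(S(A)\cdot [-1,1])=0$, yielding the claimed induced map on $\bar{S}(A)$.

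The main obstacle is Step (ii). The product on $S(A)$ is induced at the chain level by concatenation $C(A^m)\otimes C(A^n)\to C(A^{m+n})$. On generators, $[a_1,\ldots,a_m]\cdot [b_1,\ldots,b_n]$ is represented in $Z_{m+n}(A^{m+n})$ by the concatenation of the cycles $d(e_1,\ldots,e_m,a)\in Z_m(A^m)$ and $d(e_1,\ldots,e_n,b)\in Z_n(A^n)$, where $a=\sum a_ie_i$ and $b=\sum b_je_j$. To compare with the right-hand side, I would use the standardization procedure of Remark \ref{rem:NS:3.14}: pick a vector $w\in A^{m+n}$ in general position with every column appearing in the concatenated cycle, rewrite the cycle as $(-1)^{m+n} d(\cdot,w)$, and then use the $GL_{m+n}$-action to put each resulting $(m+n+1)$-tuple into the standard form $(e_1,\ldots,e_{m+n},c)$. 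This expresses the product as an explicit $\Z[A^*]$-linear combination of standard generators $[c_1,\ldots,c_{m+n}]$. Applying $T$ and repeatedly using the relations $[1]=0$ and $[b^{-1}]=-\langle b^{-1}\rangle[b]$ from Lemma \ref{Hatlem:basicKMWformulas}, all summands containing a $[1]$-entry vanish in $\hat{K}^{MW}$, and the single surviving summand simplifies to $[a_1,\ldots,a_m,b_1,\ldots,b_n]=T([a_1,\ldots,a_m])\cdot T([b_1,\ldots,b_n])$. Already the case $m=n=1$ illustrates the mechanism: choosing $w=e_1+e_2$ one computes in $S_2(A)$
$$[a_1]\cdot[b_1] = \langle a_1b_1\rangle[a_1^{-1},b_1^{-1}] - \langle a_1\rangle[a_1^{-1},1] - \langle b_1\rangle[1,b_1^{-1}] + [1,1],$$
the last three summands of which are killed by $T$ since they contain a $[1]$-entry, while the first simplifies under $T$ to $[a_1][b_1]=[a_1,b_1]$ in $\hat{K}^{MW}_2(A)$, giving $T([a_1]\cdot[b_1])=[a_1,b_1]=T([a_1])\cdot T([b_1])$. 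The general case follows the same cancellation pattern, the key combinatorial input being that in every standardized summand other than the ``main'' one, at least one of the standardized entries becomes $1$.
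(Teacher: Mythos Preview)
Your steps (i) and (iii) match the paper exactly. The issue is step (ii): your proposed mechanism for multiplicativity is not the right one, and the ``cancellation'' you describe does not occur in general.

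Your $m=n=1$ example works only because the matrices appearing in the cycles for $[a_1]$ and $[b_1]$ are diagonal $1\times 1$ matrices and you chose the standardizing vector $e_1+e_2=(1,1)$; this forces some coordinates of $\alpha^{-1}w$ to equal $1$. For $m\geq 2$, the matrices occurring in the cycle $d(e_1,\dots,e_m,a)$ are of the form $(e_1,\dots,\hat{e}_i,\dots,e_m,a)$, which are not diagonal, and for a generic standardizing vector none of the coordinates of $\alpha^{-1}w$ will be $1$. So your claim that ``in every standardized summand other than the main one, at least one entry becomes $1$'' fails, and there is no ``single surviving summand''. (One cannot rescue this by taking $w=(a,b)$: a short check shows this vector is \emph{not} in general position with the block-diagonal frames $\alpha_i\oplus\beta_j$ when $\alpha_i$ already has $a$ as a column.)

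The paper's argument avoids any cancellation. One works with arbitrary cycles $x=\sum_i m_i(\alpha_i)\in Z_m(A^m)$ and $y=\sum_j n_j(\beta_j)\in Z_n(A^n)$, chooses $v\in A^m$ in general position with respect to the $\alpha_i$ and $w\in A^n$ with respect to the $\beta_j$, and then observes that $(v,w)\in A^{m+n}$ is automatically in general position with respect to the block-diagonal frames $\alpha_i\oplus\beta_j$. Since $(\alpha_i\oplus\beta_j)^{-1}(v,w)=(\alpha_i^{-1}v,\beta_j^{-1}w)$, Remark~\ref{rem:NS:3.14} gives
\[
x\cdot y=(-1)^{m+n}\sum_{i,j}m_in_j\langle\alpha_i\rangle\langle\beta_j\rangle[\alpha_i^{-1}v,\beta_j^{-1}w],
\quad
x=(-1)^m\sum_i m_i\langle\alpha_i\rangle[\alpha_i^{-1}v],
\quad
y=(-1)^n\sum_j n_j\langle\beta_j\rangle[\beta_j^{-1}w].
\]
Because in $\hat{K}^{MW}$ one has $[\alpha_i^{-1}v,\beta_j^{-1}w]=[\alpha_i^{-1}v]\cdot[\beta_j^{-1}w]$ by definition, the identity $T(x\cdot y)=T(x)\cdot T(y)$ is immediate, term by term, with no terms dropping out. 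Your $m=n=1$ computation is the special case $v=w=1$, where the ``extra'' terms happen to contain $[1]$; but that is an accident of the choice, not the reason the argument works.
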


\begin{proof}
The map $S_n(A) \to \hat{K}^{MW}_n(A)$ given by the formula in the proposition
is a well-defined map of $\Z[A^*]$-modules in view of Propositions \ref{prop:SnPresentation} and \ref{prop:KMWnSnPresentation}.
In order to check multiplicativity of this map, take
$x = \sum_i m_i(\alpha_i) \in Z_m(A^m)$ and $y = \sum_j n_j(\beta_j) \in Z_n(A^n)$
with $m_i,n_j\in \Z[A^*]$ and $\alpha_i\in GL_m(A)$ and $\beta_j\in GL_n(A)$.
Choose vectors $v\in A^m$ (and $w\in A^n$) which are in general position with respect to $\alpha_i$ (and $\beta_j$ respectively).
Then $(v,w)\in A^{m+n}$ is in general position with respect to the frames 
$\alpha_i\oplus \beta_j = \left(\begin{smallmatrix}\alpha_i& 0 \\ 0 & \beta_j\end{smallmatrix}\right)$.
By Remark \ref{rem:NS:3.14}, we have in $S(A)$
$$
x\cdot y  =  \sum m_in_j(\alpha_i\oplus \beta_j) = 
\sum m_in_j \langle \alpha_i \oplus \beta_j \rangle [\alpha_i^{-1}v,\beta_j^{-1}w]
$$
whereas 
$$x = \sum_i m_i(\alpha_i)  = \sum_i m_i\langle \alpha_i\rangle [\alpha_i^{-1}v]$$
$$y = \sum_j n_j(\beta_j)  = \sum_j n_j\langle \beta_j\rangle [\beta_j^{-1}w].$$
This proves multiplicativity.
Since $[-1,1]=0\in \hat{K}_2^{MW}$, we are done.
\end{proof}

\begin{lemma}
\label{lem:SnprodOfGens}
For arbitrary $a_1,...,a_n \in A^*$, the following formula holds in ${S}_n(A)$
$$[a_1]\cdots [a_n] = \sum_{1 \leq i_1 < \cdots < i_k \leq n} (-1)^k\left\langle {\prod_{s=1}^k a_{i_s}}\right\rangle [a_1,\dots,1,\dots,1,\dots,a_n]$$
where the summand $[a_1,\dots,1,\dots,1,\dots,a_n]$, corresponding to the index $(i_1,\dots,i_k)$, is obtained from $[a_1,...,a_n]$ by replacing $a_{i_s}$ with $1$ for $s=1,...,k$.
\end{lemma}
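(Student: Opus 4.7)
The plan is to realize $[a_1]\cdots[a_n]$ as an explicit cycle in $Z_n(A^n)$ and then re-express it in terms of the standard generators of Proposition \ref{prop:SnPresentation} by applying Remark \ref{rem:NS:3.14} to a carefully chosen transversal vector.

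First, I will use the identification $S_1(A)=I[A^*]=Z_1(A^1)$ under which the generator $[a_j]$ corresponds to the cycle $(a_j)-(1)$. Since the algebra structure on $S(A)$ is induced by the concatenation map (\ref{eqn:Cprods}), the product $[a_1]\cdots[a_n]\in S_n(A)$ will then be represented by the cycle
\[
\bigl((a_1)-(1)\bigr)\star\cdots\star\bigl((a_n)-(1)\bigr) \;=\; \sum_{S\subseteq [n]}(-1)^{|S|}D_S \;\in\; Z_n(A^n),
\]
where $D_S=\diag(c_1^S,\dots,c_n^S)$ with $c_j^S=1$ for $j\in S$ and $c_j^S=a_j$ otherwise.

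Next, I will take the transversal vector $v=a_1e_1+\cdots+a_ne_n\in A^n$. Since each column of $D_S$ is a unit multiple of a standard basis vector, the matrix obtained from $D_S$ by deleting its $j$-th column and appending $v$ has determinant $\pm a_j\prod_{i\neq j}c_i^S\in A^*$; hence $v$ will be simultaneously transversal to the columns of every $D_S$. Remark \ref{rem:NS:3.14} then gives, in $S_n(A)$,
\[
[a_1]\cdots[a_n] \;=\; (-1)^n\sum_{S\subseteq[n]}(-1)^{|S|}\langle\det D_S\rangle\,[D_S^{-1}v].
\]

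Finally, a direct computation yields $\det D_S=\prod_{j\notin S}a_j$ and $D_S^{-1}v=\sum_{j\in S}a_je_j+\sum_{j\notin S}e_j$, so $[D_S^{-1}v]$ is the generator carrying $a_j$ in each position $j\in S$ and $1$ elsewhere. Re-indexing by $T=[n]\setminus S=\{i_1<\cdots<i_k\}$, the set $T$ becomes exactly the set of positions at which the symbol has $1$, so $[D_S^{-1}v]$ matches the symbol $[a_1,\dots,1,\dots,1,\dots,a_n]$ of the lemma; and the coefficient $(-1)^{n+|S|}\langle\prod_{j\in T}a_j\rangle$ simplifies to $(-1)^{|T|}\langle\prod_{j\in T}a_j\rangle$. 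Summing over $T\subseteq[n]$ recovers the stated formula. The only genuinely non-routine step is the simultaneous transversality check in Step~2, which is automatic here because every $a_j$ lies in $A^*$.
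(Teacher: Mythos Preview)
Your proof is correct and follows essentially the same route as the paper's: expand the product as a signed sum of diagonal frames, pick the transversal vector $v=\sum a_ie_i$, and apply Remark~\ref{rem:NS:3.14}. The only cosmetic difference is that the paper indexes the expansion by the set of positions carrying $a_i$ (rather than your $S$ of positions carrying $1$) and then arrives at the same sign $(-1)^k$ after the $(-1)^n$ from Remark~\ref{rem:NS:3.14}.
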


\begin{proof}
We have $[a] = d(e_1,ae_1) = (ae_1) - (e_1)$.
Hence,
$$
\renewcommand\arraystretch{2}
\begin{array}{rcl}
[a_1]\cdots[a_n] & = & \prod_{i=1}^n ((a_ie_i) - (e_i))\\
& = &
\sum_{1 \leq i_1 < \cdots < i_k \leq n} (-1)^{n-k}(e_1,\dots,a_{i_1}e_{i_1},\dots \dots,a_{i_k}e_{i_k},....,e_n)
\end{array}$$
The vector $v=a_1e_1+\cdots + a_ne_n$ is in general position with respect to this cycle.
Hence,
$$[a_1]\cdots[a_n] = \sum_{1 \leq i_1 < \cdots < i_k \leq n} (-1)^{k}
\langle \alpha_i \rangle [\alpha_{i_1,...,i_k}^{-1}v]$$
where $\alpha_{i_1,...,i_k}$ is the matrix 
$$\alpha_{i_1,...,i_k} = (e_1,\dots,a_{i_1}e_{i_1},\dots \dots,a_{i_k}e_{i_k},....,e_n)\in GL_n(A).$$
Since $v = \alpha_{i_1,...,i_k} \cdot (a_1,...,1,...,1,....,a_n)^T$ and
$\det \alpha_{i_1,...,i_k} = a_{i_1}\cdots a_{i_k}$ we are done.
\end{proof}

\begin{lemma}
\label{lem:InvertibleElementInH2SL2}
For $\lambda \in A^*$ such that $\bar{\lambda}\neq 1$, the element 
$$s(\lambda) = 1 - \langle 1-\lambda\rangle - \langle \lambda\rangle \in \Z[A^*]$$
 acts as a unit on the $A^*$-module $H_2(SL_2(A))$.
\end{lemma}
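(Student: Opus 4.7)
The key algebraic observation is the identity
$$(\langle \lambda\rangle - 1)(\langle 1-\lambda\rangle - 1) = \langle \lambda(1-\lambda)\rangle + s(\lambda)$$
in the group ring $\Z[A^*]$. Since $\langle \lambda(1-\lambda)\rangle$ is manifestly a unit in $\Z[A^*]$, with inverse $\langle (\lambda(1-\lambda))^{-1}\rangle$, it acts invertibly on every $A^*$-module. Thus invertibility of $s(\lambda)$ on $H_2(SL_2(A))$ is equivalent to the vanishing of the Steinberg-type operator $\langle\langle \lambda\rangle\rangle\cdot\langle\langle 1-\lambda\rangle\rangle$ on $H_2(SL_2(A))$, and my plan is to establish this vanishing directly.

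A useful preliminary simplification is that $\langle a^2\rangle$ acts as the identity on $H_*(SL_2(A))$ for every $a\in A^*$. Indeed, $\diag(a^2,1)\in GL_2(A)$ factors as $(aI_2)\cdot h(a)$ with $h(a) = \diag(a,a^{-1})\in SL_2(A)$; conjugation by the central scalar $aI_2$ is trivial on all of $GL_2(A)$, while conjugation by $h(a)$ is an inner automorphism of $SL_2(A)$ and hence trivial on $H_*(SL_2(A))$. Consequently, the $A^*$-action on $H_2(SL_2(A))$ factors through $A^*/A^{*2}$, so the Steinberg-type vanishing only needs to be checked modulo squares.

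To realize the vanishing at the chain level, I would work with the bar complex of $SL_2(A)$ and construct an explicit 3-chain whose boundary equals the combination
$$\diag(\lambda(1-\lambda),1)\cdot z \;-\; \diag(\lambda,1)\cdot z \;-\; \diag(1-\lambda,1)\cdot z \;+\; z$$
for an arbitrary 2-cycle $z$, where $\cdot$ denotes the conjugation action. The construction should exploit the additive relation $\lambda+(1-\lambda)=1$ via matrix identities in $SL_2(A)$: namely, by expressing each of the diagonal matrices $\diag(t,1)$ for $t\in\{\lambda,1-\lambda,\lambda(1-\lambda)\}$ through its Bruhat decomposition into the unipotent generators $x(s)=\left(\begin{smallmatrix}1&s\\0&1\end{smallmatrix}\right)$, $y(s)=\left(\begin{smallmatrix}1&0\\s&1\end{smallmatrix}\right)$, and the Weyl element $w(t)=\left(\begin{smallmatrix}0&t\\-t^{-1}&0\end{smallmatrix}\right)$, and comparing the resulting factorizations.

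The principal obstacle is the explicit assembly of the 3-chain: matching the bar differential to precisely the four terms of the Steinberg combination, with consistent signs and keeping track of conjugations by the three distinct diagonal matrices, requires delicate bookkeeping. A reduction that I expect will simplify the argument is to first treat the universal case $A = \Z[\lambda,(\lambda(1-\lambda))^{-1}]$, where $\lambda$ and $1-\lambda$ are algebraically independent units, and then transfer the identity to the arbitrary local ring $A$ along the unique ring homomorphism sending the universal parameter to the chosen $\lambda\in A^*$.
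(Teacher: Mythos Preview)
Your reduction is correct and elegant: the identity $\langle\langle\lambda\rangle\rangle\langle\langle 1-\lambda\rangle\rangle = \langle\lambda(1-\lambda)\rangle + s(\lambda)$ in $\Z[A^*]$, together with the (correct) observation that squares act trivially on $H_*(SL_2A)$, does reduce the lemma to showing that $\langle\langle\lambda\rangle\rangle\langle\langle 1-\lambda\rangle\rangle$ annihilates $H_2(SL_2A)$. But this last statement is precisely where all the content lies, and you have not proved it. What you would need is a chain homotopy between the chain map ``conjugate by $\diag(\lambda(1-\lambda),1)$ minus conjugate by $\diag(\lambda,1)$ minus conjugate by $\diag(1-\lambda,1)$ plus identity'' and the zero map on the bar complex of $SL_2(A)$; your proposal only records that such a homotopy \emph{ought} to be assembled from Bruhat-type factorizations, and you yourself flag the assembly as the principal obstacle. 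The universal-ring reduction does not help here: the 2-cycle $z$ in the target ring need not lift to $\Z[\lambda,(\lambda(1-\lambda))^{-1}]$, so what you would really need is a functorial homotopy operator, and that is exactly the hard part. A posteriori your Steinberg-type vanishing is true (it follows from the eventual identification $H_2(SL_2A)\cong K_2^{MW}(A)$, where it is Lemma~\ref{Hatlem:basicKMWformulas}~(\ref{Hatlem:TowardsGWactionOnTwistedK:3})), but in the paper's logic that identification \emph{uses} the present lemma, so you cannot invoke it.

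The paper takes a completely different route that avoids chain-level Steinberg relations. It builds a complex $\ppp\tilde{C}(A^2)$ of $GL_2(A)$-modules from configurations of points in $\ppp^1$, giving a spectral sequence $E^1_{p,q}\Rightarrow H_{p+q}(SL_2A)$ of $\Z[A^*]$-modules. For $q\leq 1$ the stabilizers are (up to homology) diagonal tori, on which $A^*$ acts trivially, so $s(\lambda)$ acts as $\eps(s(\lambda))=-1$ there. The only remaining contribution to $H_2$ is a quotient of $E^1_{0,2}\cong\Z[A^*/A^{2*}]$, and an explicit computation of the differential $d^1:E^1_{0,3}\to E^1_{0,2}$ shows that in that quotient one has the relation $\langle(a-b)a\rangle-\langle(a-b)b\rangle+\langle ab\rangle-1=0$; specializing $a=1$, $b=\lambda$ forces $s(\lambda)=-\langle\lambda(1-\lambda)\rangle$ there, which is a unit. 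Thus $s(\lambda)$ acts invertibly on every graded piece of the filtration, hence on $H_2(SL_2A)$. This argument never needs a bar-complex homotopy; the Steinberg-type information is extracted from the geometry of four points on a projective line rather than from matrix factorizations.
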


\begin{proof}
The proof is essentially contained in \cite[\S 2]{Mazzoleni}.
Let $(a_1,...,a_r)$ be a sequence of units $a_i\in A^*$.
Then a sequence $(v_1,...,v_r)$ of vectors $v_i\in A^n$ is in general position if and only if the sequence $(a_1v_1,...,a_rv_r)$ is in general position.
For $r\geq 1$ we can therefore define the set 
$\ppp\tilde{U}_r(A)$ as the quotient of the set $\tilde{U}_r(A)$ by the equivalence relation $(v_1,...,v_r)\sim  (a_1v_1,...,a_rv_r)$.
We define the complex $\ppp\tilde{C}(A^n)$ by 
$$\ppp\tilde{C}_r(A^n)=\Z[ \ppp\tilde{U}_{r+1}(A^n)]$$
 for $r\geq 0$ and $\ppp\tilde{C}_r(A^n)=0$ for $r<0$ (note the shift in degree compared to $\tilde{C}_r(A^n)$). 
The differential $\ppp\tilde{C}_r(A^n) \to \ppp\tilde{C}_{r-1}(A^n)$ is given by the same formula as for $\tilde{C}_r(A^n)$.
The action of $GL_n(A)$ on $A^n$ makes the complex $\ppp\tilde{C}(A^n)$ into a complex of $GL_n(A)$-modules.
The unique map $\ppp\tilde{U}_1 \to \pt$ defines a map of complexes
$\ppp\tilde{C}(A^n) \to \Z$ of $GL_n(A)$-modules where $\pt$ is the one-element set.
The proof of Lemma \ref{lem:QuillenAcyclicity} shows that this map of complexes induces an isomorphism on homology.
Hence, for $n\geq 1$ the homology of 
\begin{equation}
\label{eqn:UnitActionResolution}
\Z \stackrel{^L}{\otimes}_{SL_n} \ppp\tilde{C}(A^n) \simeq \Z[A^*]\stackrel{L}{\otimes}_{GL_n}\ppp\tilde{C}(A^n)
\end{equation}
computes the homology of $SL_n(A)$.
Let $\ppp\tilde{C}_{\leq r}(A^n) \subset \ppp\tilde{C}(A^n)$ be the subcomplex with 
$\ppp\tilde{C}_{\leq r}(A^n)_i = \ppp\tilde{C}_i(A^n)$ for $i\leq r$ and zero otherwise.
This defines a filtration on 
(\ref{eqn:UnitActionResolution})
by the complexes  $\Z[A^*]\stackrel{L}{\otimes}_{GL_n}\ppp\tilde{C}_{\leq r}(A^n)$ of $\Z[A^*]$-modules and thus a spectral sequence of $\Z[A^*]$-modules
$$E^1_{p,q}(A^n) = H_p(\Z[A^*]\stackrel{L}{\otimes}_{GL_n}\ppp\tilde{C}_q(A^n)) \Rightarrow H_{p+q}(SL_n(A))$$
with differentials $d^r$ of bidegree $(r-1,-r)$.
For $1\leq q \leq n$, the group $SL_n(A)$ acts transitively on the set $\ppp\tilde{U}_q(A^n)$ with stabilizer at $(e_{n-q+1},...,e_n)$ the group 
$$\renewcommand\arraystretch{2}
\begin{array}{l}
\ppp\Aff^{SL}_{q,n-q}(A) = \\
\left\{\left(\begin{smallmatrix}M&0\\ N& D\end{smallmatrix}\right)|\ M\in GL_{n-q}(A),\ D\in T_q(A), N \in M_{q,n-q}(A),\ \det M\det D =1\right\}
\end{array}$$
where $(A^*)^q = T_q(A)\subset GL_q(A)$ is the subgroup of diagonal matrices.
By \cite[Lemma 9]{Hutchinson:Matsumoto} (whose proof works for local rings with infinite residue field), the inclusion of groups
$$\left\{\left(\begin{smallmatrix}M&0\\ 0& D\end{smallmatrix}\right)|\ M\in GL_{n-q}(A),\ D\in T_q(A),\ \det M\det D =1\right\} \subset \ppp\Aff^{SL}_{q,n-q}(A)$$
induces an isomorphism on integral homology groups.
For $n=2$ and $q=1,2$, the left hand side is $A^*$ and thus, its homology has trivial $A^*$-action.
Thus, $A^*$ acts trivially on $E_{p,q}^1(A^2)$ for $q\leq 1$.
It follows that $A^*$ acts trivially on $E_{p,q}^{\infty}(A^2)$ for $q\leq 1$.
In particular, the element $s(\lambda)$ acts as $-1$, hence as a unit on $E_{p,q}^{\infty}(A^2)$ for $q\leq 1$.
To finish the proof of the lemma, it suffices to show that $s(\lambda)$ acts as a unit on the cokernel of the $\Z[A^*]$-module map
\begin{equation}
\label{eqn:sIsInvertibleInH2}
d^1:E^1_{0,3}(A^2) \to E^1_{0,2}(A^2).\end{equation}
As a $GL_2(A)$-set we have
$$\ppp\tilde{U}_3(A^2) = GL_2(A)/D_2(A)\cdot (e_1,e_2,e_1+e_2)$$
where $D_2(A)=A^*\cdot 1$ is the group of invertible scalar matrices.
It follows that we have isomorphisms of $A^*$-modules
$$E^1_{0,2}(A^2) = \Z[A^*]\otimes_{GL_2}\Z[\ppp\tilde{U}_3(A^2)]
\cong \Z[A^*]\otimes_{D_2}\Z \cong \Z[A^*/A^{2*}]$$
where $1\in \Z[A^*/A^{2*}]$ corresponds to 
$1\otimes (e_1,e_2,e_1+e_2)\in \Z[A^*]\otimes_{GL_2}\Z[\ppp\tilde{U}_3(A^2)]$.
As a $GL_2(A)$-set we have
$$\ppp\tilde{U}_4(A^2) =\bigsqcup_{a,b\in A^*,\ \bar{a}\neq \bar{b}} GL_2(A)/D_2(A)\cdot (e_1,e_2,e_1+e_2,ae_1+be_2).$$
The map (\ref{eqn:sIsInvertibleInH2}) sends the element $(e_1,e_2,e_1+e_2,ae_1+be_2)$ to
the following element in $E^1_{0,2}(A^2)$
$$\renewcommand\arraystretch{2}
\begin{array}{l}
\phantom{=} \left(\begin{smallmatrix}0&1&a\\ 1&1&b\end{smallmatrix}\right)
- \left(\begin{smallmatrix}1&1&a\\ 0&1&b\end{smallmatrix}\right)
+ \left(\begin{smallmatrix}1&0&a\\ 0&1&b\end{smallmatrix}\right)
-\left(\begin{smallmatrix}1&0&1\\ 0&1&1\end{smallmatrix}\right)\\
= \langle -1 \rangle \left(\begin{smallmatrix}-1&1\\ 1&0\end{smallmatrix}\right) \left(\begin{smallmatrix}0&1&a\\ 1&1&b\end{smallmatrix}\right)
- \left(\begin{smallmatrix}1&-1\\ 0&1\end{smallmatrix}\right) \left(\begin{smallmatrix}1&1&a\\ 0&1&b\end{smallmatrix}\right)
+ \left(\begin{smallmatrix}1&0&a\\ 0&1&b\end{smallmatrix}\right)
-\left(\begin{smallmatrix}1&0&1\\ 0&1&1\end{smallmatrix}\right)\\
= \langle -1\rangle \left(\begin{smallmatrix}1&0&b-a\\ 0&1&a\end{smallmatrix}\right)
- \left(\begin{smallmatrix}1&0&a-b\\ 0&1&b\end{smallmatrix}\right)
+ \left(\begin{smallmatrix}1&0&a\\ 0&1&b\end{smallmatrix}\right)
-\left(\begin{smallmatrix}1&0&1\\ 0&1&1\end{smallmatrix}\right)\\
=(\langle -(b-a)a\rangle - \langle(a-b)b\rangle +\langle ab\rangle -\langle 1\rangle ) \cdot (e_1,e_2,e_1+e_2).
\end{array}
$$
It follows that the cokernel of the map (\ref{eqn:sIsInvertibleInH2}) is the quotient of $\Z[A^*/A^{2*}]$ modulo the $A^*$-submodule generated by 
$\langle (a-b)a\rangle - \langle(a-b)b\rangle +\langle ab\rangle -\langle 1\rangle$ whenever $a,b\in A^*$ with $\bar{a}\neq \bar{b}$.
Setting $a=1$ and $b=\lambda$, we see that $s(\lambda) =-\langle \lambda(1-\lambda)\rangle$ acts invertibly on the cokernel of (\ref{eqn:sIsInvertibleInH2}).
\end{proof}

The following proposition shows that the Steinberg relation holds in $\sigma^{-1}\bar{S}_2(A)$.

\begin{proposition}
\label{prop:SteinbergInSbar}
Let $2\leq n_0$ and let $\sigma\in \Z[A^*]$ as in $(\ast)$.
Then for any $\lambda \in A^*$ such that $\bar{\lambda}\neq 1$ we have
in the $A^*$-module $\sigma^{-1}S_2(A)$ the following equality 
$$[\lambda][1-\lambda] = \langle\langle \lambda \rangle\rangle \langle \langle 1-\lambda \rangle\rangle \cdot [-1,1].$$
\end{proposition}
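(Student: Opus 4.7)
The plan is to reduce the claim to an identity in $\sigma^{-1}H_2(SL_2(A))$, where the invertibility of $s(\lambda)$ from Lemma \ref{lem:InvertibleElementInH2SL2} is available, and then exploit this invertibility to replace a direct computation by a symmetrized one.

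First, I would compute the outgoing edge map on both sides. Since $\det\colon S(A)\to\Z[A^*]$ is a ring homomorphism, and since it sends $[a]\in S_1(A)=I[A^*]$ to $\langle\langle a\rangle\rangle$ while Remark \ref{rmk:detComp} gives $\det[-1,1]=\langle 1\rangle$, both $[\lambda][1-\lambda]$ and $\langle\langle\lambda\rangle\rangle\langle\langle 1-\lambda\rangle\rangle[-1,1]$ map to $\langle\langle\lambda\rangle\rangle\langle\langle 1-\lambda\rangle\rangle$. Consequently their difference
$$\tau:=[\lambda][1-\lambda]-\langle\langle\lambda\rangle\rangle\langle\langle 1-\lambda\rangle\rangle[-1,1]$$
lies in the kernel $F_{1,1}(A^2)=\ker(\det)$ of the outgoing edge map in the filtration of $S_2(A)$.

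Second, since $SL_1(A)=\{1\}$ in our setting, the long exact sequence of the pair $(SL_2(A),SL_1(A))$ gives $H_2(SL_2(A))\stackrel{\cong}{\to} H_2(SL_2(A),SL_1(A))$, and Lemma \ref{Lem:DandF} identifies this latter group with $\sigma^{-1}F_{1,1}(A^2)$. By Lemma \ref{lem:InvertibleElementInH2SL2} the element $s(\lambda)=1-\langle\lambda\rangle-\langle 1-\lambda\rangle$ acts invertibly on $H_2(SL_2(A))$, hence on $\sigma^{-1}F_{1,1}(A^2)$. It therefore suffices to show the stronger identity $s(\lambda)\cdot\tau=0$ in $\sigma^{-1}S_2(A)$.

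Third, I would establish $s(\lambda)\cdot\tau=0$ by direct calculation with the presentation of $S_2(A)$. Expanding $[\lambda][1-\lambda]$ via Lemma \ref{lem:SnprodOfGens} writes it as a combination of symbols $[a,b]$ with $a,b\in\{1,\lambda,1-\lambda\}$. Applying the defining relation of Proposition \ref{prop:SnPresentation} with carefully chosen parameters $(\lambda_1,\lambda_2)$ such as $(-\lambda,1-\lambda)$ acting on $(a_1,a_2)=(-1,1)$ (all admissible because the infinite residue field ensures $\bar\lambda_1\neq\bar\lambda_2$ and allows any needed genericity), one rewrites $[\lambda,1-\lambda]$ and the other mixed symbols in terms of $[-1,1]$ plus symbols of the form $[1,c]$ or $[c,1]$. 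Multiplication by the symmetric coefficient $s(\lambda)$, which is invariant under $\lambda\leftrightarrow 1-\lambda$, then produces the precise cancellations between the leftover terms and matches them with the $s(\lambda)$-multiple of the expansion of $\langle\langle\lambda\rangle\rangle\langle\langle 1-\lambda\rangle\rangle[-1,1]$.

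The main obstacle is this last computational step: the bookkeeping of coefficients in $\Z[A^*]$ under the many relations of Proposition \ref{prop:SnPresentation} is delicate, and one must repeatedly check that auxiliary units remain admissible. The hypothesis that $A$ has infinite residue field is used both to invoke Lemma \ref{lem:InvertibleElementInH2SL2} and to guarantee the availability of suitable parameter values at every stage of the reduction.
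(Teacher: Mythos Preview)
Your reduction in steps 1--3 is correct and shares the essential mechanism with the paper: both arguments place the relevant element in $\sigma^{-1}F_{1,1}(A^2)\cong H_2(SL_2A)$ and exploit the invertibility of $s(\lambda)$ there. The problem is that step~4 is where the entire content of the proof lies, and your description does not give a workable route. Multiplying $\tau$ by $s(\lambda)$ and expanding via Lemma~\ref{lem:SnprodOfGens} and Proposition~\ref{prop:SnPresentation} produces a large combination of symbols $[a,b]$ with coefficients in $\Z[A^*]$; the $\lambda\leftrightarrow 1-\lambda$ symmetry of $s(\lambda)$ is not by itself enough to force these terms to cancel, and the single relation instance you mention (parameters $(-\lambda,1-\lambda)$ on $(-1,1)$) does not visibly lead anywhere.

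The paper's argument is structurally different and contains the idea you are missing. Rather than attacking $s(\lambda)\tau$ directly, it manipulates the $S_2$-relations with an \emph{auxiliary free parameter} $a\in A^*$ to show that an expression of the form $\langle -a^{-1}\rangle[a,1]$ is, up to a multiple of $(1-\langle\lambda^{-1}\rangle-\langle(1-\lambda)^{-1}\rangle)$, independent of $a$. This yields
\[
(1-\langle\lambda^{-1}\rangle-\langle(1-\lambda)^{-1}\rangle)\cdot\bigl(\langle -a^{-1}\rangle[a,1]-[-1,1]\bigr)=0,
\]
and since the bracketed factor lies in $H_2(SL_2A)$ (where squares act trivially, so the coefficient acts as $s(\lambda)$), invertibility gives the universal identity $[a,1]=\langle a\rangle[1,1]$ in $\sigma^{-1}S_2(A)$ for \emph{every} $a\in A^*$. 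The proposition then follows from this identity by a short substitution. Note that the invertibility trick is applied not to your $\tau$ but to this auxiliary family indexed by $a$; that is the step your outline does not anticipate. Moreover, the identity $[a,1]=\langle a\rangle[1,1]$ is itself needed later (Remark~\ref{rmk:S2SymbolsProds}), so the paper's route yields more than the bare proposition.
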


\begin{proof}
By Lemma \ref{lem:SnprodOfGens}, we have
$$[\lambda][1-\lambda] = [\lambda,1-\lambda] - \langle 1-\lambda\rangle [\lambda,1] - \langle \lambda \rangle [1,1-\lambda] + \langle \lambda(1-\lambda)\rangle [1,1].$$
Recall from Proposition \ref{prop:SnPresentation} that for $\alpha,\beta, s,t\in A^*$ with $\bar{\alpha}\neq \bar{\beta}$ we have
\begin{equation}
\label{lem:eqn:NS.3.17.1b}
[\alpha s,\beta t] - [s,t] = 
\langle t\rangle [(\alpha-\beta)s,\beta] -\langle -s\rangle [(\beta-\alpha)t,\alpha].
\end{equation}
Setting $\alpha = 1-b\lambda^{-1}$, $\beta=1$, $s=a\lambda$, $t=b$ (where $\bar{\lambda} \neq \bar{b}$) in (\ref{lem:eqn:NS.3.17.1b}), we obtain
\begin{equation}
\label{lem:SteinbergInSbar:eqn:1}
[a\lambda -ab,b]-[a\lambda,b] = \langle b\rangle[-ab,1]-\langle -a\lambda\rangle [b^2\lambda^{-1},1-b\lambda^{-1}].
\end{equation}
Setting $\alpha=\lambda$, $\beta = b$, $s=a$ and $t=1$ (where $\bar{\lambda} \neq \bar{b}$) in (\ref{lem:eqn:NS.3.17.1b}), we obtain
\begin{equation}
\label{lem:SteinbergInSbar:eqn:2}
[a\lambda,b]-[a,1]=[a\lambda - ab,b] - \langle -a\rangle[b-\lambda,\lambda].
\end{equation}
Adding equations (\ref{lem:SteinbergInSbar:eqn:1}) and (\ref{lem:SteinbergInSbar:eqn:2}), cancelling common summands and multiplying with $\langle -a^{-1}\rangle$ we obtain
\begin{equation}
\label{lem:SteinbergInSbar:eqn:3}
 \langle -a^{-1}b\rangle [-ab,1]+\langle -a^{-1}\rangle [a,1] = \langle \lambda\rangle [b^2\lambda^{-1},1-b\lambda^{-1}] + [b-\lambda,\lambda].
\end{equation}
for any $a,b,\lambda \in A^*$ with $\bar{\lambda}\neq \bar{b}$.
Note that the right hand side of (\ref{lem:SteinbergInSbar:eqn:3}) is independent of $a\in A^*$.
Hence, so is the left hand side and thus it equals its evaluation at $a=-1$, that is, we have
$$
 \langle - a^{-1}b\rangle [-ab,1]+\langle -a^{-1}\rangle [a,1]
= \langle b\rangle [b,1]+ [-1,1].
$$
Therefore,
\begin{equation}
\label{lem:SteinbergInSbar:eqn:4}
[-ab,1] = - \langle b^{-1}\rangle [a,1]
 + \langle -a\rangle [b,1]+ \langle -ab^{-1}\rangle[-1,1],
\end{equation}
for any $a,b\in A^*$ (as we can always choose a $\lambda \in A^*$ with $\bar{\lambda}\neq \bar{b}$).
Replacing $b$ with $-b$ in equation (\ref{lem:SteinbergInSbar:eqn:3}) we see that the expression
$\langle a^{-1}b\rangle [ab,1]+\langle -a^{-1}\rangle [a,1] $ 
does not depend on $a$.
In particular, it equals its evaluation at $a=1$, and we have
$$\langle a^{-1}b\rangle [ab,1]+\langle -a^{-1}\rangle [a,1] 
=\langle b\rangle [b,1]+\langle -1\rangle [1,1],$$
that is,
\begin{equation}
\label{lem:SteinbergInSbar:eqn:5}
[ab,1]= - \langle -b^{-1}\rangle [a,1] 
+ \langle a\rangle [b,1]+\langle -ab^{-1}\rangle [1,1].
\end{equation}
For $b=1$ this yields
\begin{equation}
\label{lem:SteinbergInSbar:eqn:5b}
[a,1]= - \langle -1\rangle [a,1] 
+ \langle a\rangle [1,1]+\langle -a\rangle [1,1].
\end{equation}
Setting $s=a$, $\alpha = \lambda$, $t=\beta =1$ (where $\bar{\lambda}\neq 1$) in (\ref{lem:eqn:NS.3.17.1b}), we obtain
$$[\lambda a,1] - [a,1] = [(\lambda-1)a,1] - \langle -a\rangle [(1-\lambda),\lambda]
$$
that is,
\begin{equation}
\label{lem:SteinbergInSbar:eqn:6}
\langle -a\rangle [1-\lambda,\lambda] =  [(\lambda -1)a,1] -[\lambda a,1] + [a,1].
\end{equation}
Together with equations (\ref{lem:SteinbergInSbar:eqn:4}) and (\ref{lem:SteinbergInSbar:eqn:5}) this yields
$$\renewcommand\arraystretch{1.5}
\begin{array}{rcl}
&&\langle -a\rangle [1-\lambda,\lambda] \\
&=& - \langle (1-\lambda)^{-1}\rangle [a,1] + \langle -a\rangle [1-\lambda,1] + \langle -a(1-\lambda)^{-1}\rangle [-1,1] \\
&& + \langle -\lambda^{-1}\rangle [a,1] -\langle a \rangle [\lambda,1] - \langle -a\lambda^{-1}\rangle [1,1] +[a,1]\\
&\underset{\text{(\ref{lem:SteinbergInSbar:eqn:5b})}}{=}&
(1-\langle \lambda^{-1}\rangle -\langle (1-\lambda)^{-1}\rangle )\cdot [a,1]\\
&&+\langle -a\rangle \cdot ([1-\lambda,1]+ \langle (1-\lambda)^{-1}\rangle [-1,1] -\langle -1\rangle [\lambda ,1] + \langle- \lambda^{-1}\rangle [1,1]).
\end{array}
$$
It follows that the expression
$$(1-\langle (1-\lambda)^{-1} \rangle -\langle \lambda ^{-1}\rangle )\cdot \langle -a^{-1}\rangle [a,1]$$
does not depend on $a$.
In particular,
\begin{equation}
\label{eqn:FirstFactor}
\{1-\langle (1-\lambda)^{-1} \rangle -\langle \lambda ^{-1}\rangle \}\cdot \{\langle -a^{-1}\rangle [a,1] - [-1,1]\} = 0 \in S_2(A).
\end{equation}
Since $\det [a,1] = \langle -a\rangle$, we have
$$\langle -a^{-1}\rangle [a,1] - [-1,1] \in \sigma^{-1} \ker (\det:S_2 \to \Z[A^*]) = \sigma^{-1}F_{2,0} = H_2(SL_2A),$$
where the equality $\sigma^{-1}F_{2,0}(A^2) = H_2(SL_2A)$ follows from Lemma \ref{lem:RelHisSn}.

By  Lemma \ref{lem:InvertibleElementInH2SL2}
the first factor in (\ref{eqn:FirstFactor}) is invertible in $H_2(SL_2A)$ as square units act trivially on that group.
Hence, we have $\langle -a^{-1}\rangle [a,1] - [-1,1] = 0 \in S_2(A)$ for all $a\in A^*$ and therefore,
\begin{equation}
\label{lem:SteinbergInSbar:eqn:7}
[a,1]=\langle a\rangle [1,1]\in \sigma^{-1}S_2(A).
\end{equation}
Setting $s=t=\alpha = 1$, $\beta = \lambda$ with $\bar{\lambda}\neq 1$ in (\ref{lem:eqn:NS.3.17.1b}) yields
$$[1,\lambda]-[1,1]=[(1-\lambda),\lambda]-\langle -1\rangle [\lambda -1,1].$$
Putting this into (\ref{lem:SteinbergInSbar:eqn:6}) with $a=1$ yields the equation in $\sigma^{-1}S_2(A)$
\begin{equation}
\label{lem:SteinbergInSbar:eqn:8}
[1,\lambda]= -\langle -1\rangle [\lambda,1] +[1,1]+\langle -1\rangle [1,1].
\end{equation}
Finally, putting $a=-1$ in (\ref{lem:SteinbergInSbar:eqn:6}) and using 
(\ref{lem:SteinbergInSbar:eqn:8}) 
we find for $\lambda \in A^*$ with $\bar{\lambda}\neq 1$ the equation in $\sigma^{-1}S_2(A)$
$$
\renewcommand\arraystretch{1.5}
\begin{array}{rcl}
&&
[1-\lambda,\lambda] - \langle \lambda \rangle [1-\lambda,1] -\langle 1-\lambda\rangle [1,\lambda] + \langle(1-\lambda)\lambda\rangle [1,1]\\
&\underset{\text{(\ref{lem:SteinbergInSbar:eqn:8})}}{=}&[1-\lambda,1]+[-1,1]-[-\lambda,1] - \langle \lambda \rangle [1-\lambda,1] \\
&&-\langle 1-\lambda\rangle (-\langle -1\rangle [\lambda,1] +[1,1]+\langle -1\rangle [1,1]) + \langle(1-\lambda)\lambda\rangle [1,1]\\
&\underset{\text{(\ref{lem:SteinbergInSbar:eqn:7})}}{=}&
\langle -1\rangle (
1
- \langle \lambda\rangle 
+ \langle \lambda(1-\lambda)\rangle 
- \langle 1- \lambda \rangle 
) \cdot [1,1]\\
&=&(
1
- \langle \lambda\rangle 
+ \langle \lambda(1-\lambda)\rangle 
- \langle 1- \lambda \rangle 
) \cdot [-1,1]\\
&= & \langle\langle \lambda \rangle\rangle \langle \langle 1-\lambda \rangle\rangle \cdot [-1,1].
\end{array}
$$
Replacing $\lambda$ with $1-\lambda$ yields the desired result.
\end{proof}

\begin{lemma}
Let $2\leq n \leq n_0$ and $\sigma \in \Z[A^*]$ as in $(\ast)$.
Then the localization map induces an isomorphism
$$\hat{K}_n^{MW}(A) \stackrel{\cong}{\longrightarrow} \sigma^{-1}\hat{K}_n^{MW}(A).$$
\end{lemma}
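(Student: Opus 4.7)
My plan is to show that, under the hypotheses~$(\ast)$ together with $n\geq 2$, the element $\sigma=s_{m,-t}$ acts on $\hat{K}^{MW}_n(A)$ as the identity, which immediately implies that the localization map is an isomorphism.

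The key observation is that condition~$(\ast)$ requires $t$ to be a multiple of every positive integer $\leq n_0$, and since $n_0\geq n\geq 2$, the integer $t$ is even. Writing $t=2s$, each summand of
$$\sigma = -\sum_{\emptyset\neq J\subset [m]}(-1)^{|J|}\langle X_J^{-t}\rangle$$
has the form $\langle (X_J^{-s})^2\rangle$. Since $A$ is a local ring with infinite residue field, Lemma~\ref{Hatlem:basicKMWformulasTwo}\,(\ref{lem:TowardsGWactionOnTwistedK:7}) applies, giving $\langle a^2\rangle [b,c]=[b,c]$ in $\hat{K}^{MW}_*(A)$ for all $a,b,c\in A^*$. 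I will use this to see that $\langle a^2\rangle$ acts as the identity on all of $\hat{K}^{MW}_n(A)$ for $n\geq 2$: every such module is generated over $\Z[A^*]$ by products $[a_1,a_2,\dots,a_n]=[a_1,a_2]\cdot[a_3]\cdots[a_n]$, and by centrality of $\Z[A^*]$ (Lemma~\ref{Hatlem:basicKMWformulas}\,(\ref{Hatlem:basicKMWformulas:3})) we can slide $\langle a^2\rangle$ onto the leading factor $[a_1,a_2]$, where it acts trivially.

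Putting these ingredients together, each $\langle X_J^{-t}\rangle$ acts as $1$ on $\hat{K}^{MW}_n(A)$, so the action of $\sigma$ reduces to the augmentation $\eps(\sigma)=1$, which was already computed in Section~\ref{Sec:HofAff}. Hence multiplication by $\sigma$ is the identity map on $\hat{K}^{MW}_n(A)$, and the localization map $\hat{K}^{MW}_n(A)\to \sigma^{-1}\hat{K}^{MW}_n(A)$ is an isomorphism. There is no real obstacle here---the entire argument is an easy consequence of the fact that $t$ is forced to be even together with the lemma that $\langle a^2\rangle$ acts trivially on $\hat{K}^{MW}_{\geq 2}(A)$. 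The only thing worth double-checking is that the degree-$2$ identity $\langle a^2\rangle [b,c]=[b,c]$ indeed extends to higher degrees, which follows at once from centrality.
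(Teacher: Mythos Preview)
Your proof is correct and follows essentially the same approach as the paper: both argue that $t$ is even (since $n_0\geq 2$), invoke Lemma~\ref{Hatlem:basicKMWformulasTwo}\,(\ref{lem:TowardsGWactionOnTwistedK:7}) to conclude that square units act trivially on $\hat{K}^{MW}_n(A)$ for $n\geq 2$, and then use $\eps(\sigma)=1$ to deduce that $\sigma$ acts as the identity. Your version spells out the extension from degree~$2$ to higher degrees via centrality, which the paper leaves implicit, but the argument is the same.
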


\begin{proof}
Since $n_0\geq 2$, the number $t$ in ($\ast$) is even and $\sigma=s_{m,-t}$ acts as $1$ on $\hat{K}_n^{MW}(A)$ for $n\geq 2$ since square units act as $1$ on it (Lemma \ref{Hatlem:basicKMWformulasTwo} (\ref{lem:TowardsGWactionOnTwistedK:7})) and $\eps(\sigma)=1$.
Therefore, $\hat{K}_n^{MW}(A) = \sigma^{-1}\hat{K}_n^{MW}(A)$.
\end{proof}

\begin{corollary}
\label{cor:S2isKMW}
Let $2\leq n \leq n_0$ and $\sigma \in \Z[A^*]$ as in $(\ast)$.
The following map is well-defined and an isomorphism $A^*$-modules
$$\hat{K}^{MW}_2(A) \stackrel{\cong}{\longrightarrow} \sigma^{-1}\bar{S}_2(A):[a,b]\mapsto [a]\cdot [b].$$
The inverse isomorphism is the map 
$T_2: \sigma^{-1}\bar{S}_2(A) \to \sigma^{-1}K_2^{MW}(A)=K_2^{MW}(A)$
from Proposition \ref{prop:SToKMW}.
\end{corollary}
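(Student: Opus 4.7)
The plan is to construct an inverse $\varphi:\hat{K}^{MW}_2(A) \to \sigma^{-1}\bar{S}_2(A)$ to $T_2$ by sending $[a][b]$ to the product $[a]\cdot [b]$ taken in the ring $S(A)$ and then reduced modulo $S(A)\cdot [-1,1]$. Once $\varphi$ is shown to be well-defined, it suffices to verify that both compositions $T_2\circ\varphi$ and $\varphi\circ T_2$ are the identity.

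For well-definedness of $\varphi$, I will use the description of $\hat{K}^{MW}_2(A)$ as a quotient of $I[A^*]\otimes_{\Z[A^*]}I[A^*]$ by the Steinberg relation (Definition \ref{dfn:KMWhat} together with Lemma \ref{lem:IAPresentation}). Since $S_1(A)=I[A^*]$ and multiplication in $S(A)$ is $\Z[A^*]$-bilinear, there is a canonical $\Z[A^*]$-module map $I[A^*]\otimes_{\Z[A^*]}I[A^*] \to S_2(A)$, so I only need to verify the Steinberg relation $[a]\cdot[1-a]=0$ in $\sigma^{-1}\bar{S}_2(A)$ whenever $a,1-a\in A^*$. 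The hypothesis $1-a\in A^*$ in the local ring $A$ forces $\bar{a}\neq 1$, so Proposition \ref{prop:SteinbergInSbar} applies and gives $[a]\cdot[1-a]=\langle\langle a\rangle\rangle\langle\langle 1-a\rangle\rangle\cdot[-1,1]$, which vanishes in $\bar{S}_2(A)$ by definition.

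The identity $T_2\circ\varphi=\mathrm{id}$ on $\hat{K}^{MW}_2(A)$ is then immediate from the multiplicativity of $T:S(A)\to\hat{K}^{MW}(A)$ (Proposition \ref{prop:SToKMW}) combined with $T_1([a])=[a]$. The main computation is the reverse composition $\varphi\circ T_2=\mathrm{id}$ on $\sigma^{-1}\bar{S}_2(A)$: since this $A^*$-module is generated by symbols $[a,b]$ (Proposition \ref{prop:SnPresentation}), the question reduces to the equality $[a,b]=[a]\cdot[b]$ in $\sigma^{-1}\bar{S}_2(A)$. Expanding $[a]\cdot[b]$ via Lemma \ref{lem:SnprodOfGens} as $[a,b]-\langle b\rangle[a,1]-\langle a\rangle[1,b]+\langle ab\rangle[1,1]$, the task becomes showing that $[a,1]$, $[1,b]$ and $[1,1]$ all vanish in $\sigma^{-1}\bar{S}_2(A)$.

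The main obstacle is therefore not a new computation but an extraction from work already done: the identities (\ref{lem:SteinbergInSbar:eqn:7}) and (\ref{lem:SteinbergInSbar:eqn:8}) established inside the proof of Proposition \ref{prop:SteinbergInSbar} express $[a,1]$ and $[1,\lambda]$ as $\Z[A^*]$-linear combinations of $[1,1]$ and $[\lambda,1]$ in $\sigma^{-1}S_2(A)$. Specialising the first to $a=-1$ yields $[-1,1]=\langle -1\rangle[1,1]$ in $\sigma^{-1}S_2(A)$, so in the quotient $\sigma^{-1}\bar{S}_2(A)$ we get $\langle -1\rangle[1,1]=0$ and hence $[1,1]=0$; feeding this back into the two identities gives $[a,1]=0$ and $[1,b]=0$, completing the verification.
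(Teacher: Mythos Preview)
Your argument is essentially correct and close in spirit to the paper's, but there is a small gap and a simpler route you are missing.

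The gap: identity (\ref{lem:SteinbergInSbar:eqn:8}) is only established for $\lambda\in A^*$ with $\bar{\lambda}\neq 1$, so your deduction of $[1,b]=0$ in $\sigma^{-1}\bar{S}_2(A)$ is incomplete when $\bar{b}=1$. This is easily repaired: from (\ref{lem:SteinbergInSbar:eqn:6}) and the vanishing of all $[x,1]$ you get $[1-\mu,\mu]=0$ for $\bar{\mu}\neq 1$; then the basic relation (\ref{lem:eqn:NS.3.17.1b}) with $\alpha=1$, $\beta=\mu$, $s=1$, $t=b$ gives $[1,\mu b]=[1,b]$ in $\sigma^{-1}\bar{S}_2(A)$. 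Since the residue field is infinite you can choose $\mu$ with $\bar{\mu}\neq 1$ and $\overline{\mu b}\neq 1$, reducing to the case already handled.

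The simpler route: the paper does not compute $\varphi\circ T_2$ at all. Once $T_2\circ\varphi=\mathrm{id}$, it suffices that $\varphi$ be surjective. The paper obtains surjectivity directly from Lemma \ref{lem:IndecGens}: in $\sigma^{-1}S_2(A)^{ind}$ one has $[a,b]=\langle ab\rangle[1,1]$ and $[1,1]=\langle -1\rangle[-1,1]$, so every generator $[a,b]$ lies in $S_1\cdot S_1 + \Z[A^*]\cdot[-1,1]$, hence in the image of $\varphi$ after passing to $\sigma^{-1}\bar{S}_2(A)$. This avoids the expansion via Lemma \ref{lem:SnprodOfGens} and the case analysis on $\bar{b}$ entirely. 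Your explicit formula $[a,b]=[a]\cdot[b]$ in $\sigma^{-1}\bar{S}_2(A)$ is true (it is recorded later as Remark \ref{rmk:S2SymbolsProds}), but it is a consequence of the corollary rather than the most efficient way to prove it.
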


\begin{proof}
The map is well-defined, by Proposition \ref{prop:SteinbergInSbar}.
It is surjective, by Lemma \ref{lem:IndecGens}.
It follows from the multiplicativity of the map in Proposition \ref{prop:SToKMW} that the composition 
$\hat{K}^{MW}_2(A) \to \sigma^{-1}\bar{S}_2(A) \to \hat{K}^{MW}_2(A)$
is the identity.
This proves the claim.
\end{proof}

We have proved the following presentation of $H_2(SL_2A)$.
Different presentations were given in \cite[Corollaire 5.11]{matsumoto}, \cite[Theorem 9.2]{moore}, \cite[Theorem 3.4]{vdKH2SL2}.

\begin{theorem}
\label{thm:H2andKMW2}
Let $A$ be a commutative local ring with infinite residue field.
Let $I[A^*]\subset \Z[A^*]$ be the augmentation ideal, and write $[a]$ for $\langle a\rangle -1 \in I[A^*]$.
Then there is an isomorphism of $A^*$-modules
$$H_2(SL_2A,\Z) \cong I[A^*]\otimes_{A^*}I[A^*]/\{[a]\otimes [1-a]|\ a,1-a\in A^*\}.$$
\end{theorem}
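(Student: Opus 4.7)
The right-hand side of the stated isomorphism is, by Definition \ref{dfn:KMWhat}, nothing other than $\hat{K}^{MW}_2(A)$: the degree $2$ piece of the tensor algebra of $I[A^*]$ over $\Z[A^*]$ modulo the Steinberg relations $[a][1-a]$. So the task reduces to producing a natural $A^*$-module isomorphism $H_2(SL_2(A),\Z) \cong \hat{K}^{MW}_2(A)$. The plan is simply to splice together three facts already established in this section, all of which apply because $A$ is local with infinite residue field (so in particular $A$ has many units, $\sr(A)=1$, and the residue field certainly has at least $4$ elements).

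First, I would reduce to a relative computation. By the conventions, for a commutative ring $A$ the group $SL_1(A)$ is trivial, so $H_i(SL_1(A))=0$ for $i\geq 1$, and the long exact sequence of the pair $(SL_2(A),SL_1(A))$ yields a canonical identification
$$H_2(SL_2(A),\Z) \;=\; H_2(SL_2(A),SL_1(A)).$$

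Second, I would fix parameters $n_0, m, t$ and $\sigma = s_{m,-t}\in \Z[A^*]$ satisfying $(\ast)$ with $n_0 \geq 2$. Applying Lemma \ref{lem:RelHisSn} in the case $n=2$ produces an isomorphism of $A^*$-modules
$$H_2(SL_2(A),SL_1(A)) \;\stackrel{\cong}{\longrightarrow}\; \sigma^{-1}\bar{S}_2(A).$$

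Third, Corollary \ref{cor:S2isKMW} provides the inverse isomorphism
$$\hat{K}^{MW}_2(A) \;\stackrel{\cong}{\longrightarrow}\; \sigma^{-1}\bar{S}_2(A), \qquad [a,b]\longmapsto [a]\cdot [b],$$
whose inverse is induced by the map $T_2$ of Proposition \ref{prop:SToKMW}. Composing the three displayed isomorphisms gives the desired identification $H_2(SL_2(A),\Z)\cong \hat{K}^{MW}_2(A)$, which is the content of the theorem. There is no genuinely new work here: the main obstacle, namely the verification of the Steinberg relation in $\sigma^{-1}\bar{S}_2(A)$ (and the compatible computation showing that units act as squares via the invertibility of $s(\lambda)$ on $H_2(SL_2A)$), has already been carried out in Lemma \ref{lem:InvertibleElementInH2SL2} and Proposition \ref{prop:SteinbergInSbar}; the present theorem is the packaging of those results together with the triviality of $SL_1(A)$.
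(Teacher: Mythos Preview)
Your proof is correct and follows essentially the same route as the paper: identify the right-hand side as $\hat{K}^{MW}_2(A)$, then compose the isomorphisms from Lemma \ref{lem:RelHisSn} (case $n=2$) and Corollary \ref{cor:S2isKMW}. The only cosmetic difference is that you make the identification $H_2(SL_2A)=H_2(SL_2A,SL_1A)$ explicit, whereas the paper writes the chain as $H_2(SL_2A)\cong\sigma^{-1}F_{1,1}(A^2)\cong\sigma^{-1}\bar{S}_2\cong\hat{K}^{MW}_2(A)$ and leaves that step implicit.
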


\begin{proof}
Recall that the right hand side is $\hat{K}_2^{MW}(A)$, by definition.
The isomorphism in the theorem is the composition of isomorphisms
$$H_2(SL_2A) \stackrel{\cong}{\longrightarrow} \sigma^{-1}F_{1,1}(A^2) \stackrel{\cong}{\longrightarrow} \sigma^{-1}\bar{S}_2 \stackrel{\cong}{\longrightarrow} 
\hat{K}_2^{MW}(A)$$
from Lemma \ref{lem:RelHisSn} and Corollary \ref{cor:S2isKMW}.
\end{proof}

\begin{remark}
\label{rmk:S2SymbolsProds}
For any $a,b\in A^*$ we have in $\sigma^{-1}S_2(A)$ the following equality
$$[a,b] = [a]\cdot [b] + (\langle -a\rangle + \langle a \rangle - \langle ab\rangle) [-1,1].$$
This follows from the isomorphism
$$(\det, T_2): \sigma^{-1}S_2(A) \stackrel{\cong}{\longrightarrow} \sigma^{-1}\Z[A^*] \oplus \hat{K}_2^{MW}(A).$$
\end{remark}
\vspace{2ex}

\subsection{Centrality of $[-1,1]$  and $H_n(SL_nA,SL_{n-1}A)$}

\begin{center}
{\em In this subsection, $A$ is a commutative local ring with infinite residue field.}
\end{center}

\begin{definition}
For $\lambda \in A^*$ with $\bar{\lambda}\neq 1$, we define the element
$\beta_{\lambda}\in S_3(A)$ as
$$\renewcommand\arraystretch{1.5}
\begin{array}{rcl}
\beta_{\lambda} & = & [1,1-\lambda, \lambda]-[1,1-\lambda,1] + [1,-\lambda,1]\\
&& -[1-\lambda,\lambda,1]+ [1-\lambda,1,1]-[-\lambda,1,1].
\end{array}$$
\end{definition}

Note that $\det \beta_{\lambda} =0$, by Remark \ref{rmk:detComp}.

\begin{lemma}
\label{lem:BetaCommutator}
For all $a, \lambda \in A^*$ such that $\bar{\lambda}\neq 1$ we have in $S_3(A)$ the equality
$$ [-1,1]\cdot [a] - [a]\cdot [-1,1]= \langle\langle a \rangle\rangle \cdot \beta_{\lambda}.$$
Moreover, $\beta_{\lambda}=\beta_{\mu}$ for all $\lambda,\mu \in A^*$ with $\bar{\lambda},\bar{\mu}\neq 1$.
\end{lemma}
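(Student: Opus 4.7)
The plan is to compute both sides of the commutator identity directly, using the presentation of $S_3(A)$ from Proposition \ref{prop:SnPresentation}, the block-diagonal product structure described in Subsection \ref{subsec:SpSeqLocRings}, and Remark \ref{rem:NS:3.14}. The latter is the key translation device: any cycle $x\in Z_3(A^3)$, once a transversal vector $v\in A^3$ is chosen, expands as $(-1)^3\sum_i n_i\langle \alpha_i\rangle[\alpha_i^{-1}v]$, converting matrix cycles into linear combinations of standard symbols $[a_1,a_2,a_3]$.

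For the commutator formula, I would first realize the two products as explicit $3$-cycles in $Z_3(A^3)$. The element $[-1,1]\cdot[a]$ is represented by the block extension $\diag(M,a)-\diag(M,1)$, where $M$ runs over a cycle representative of $[-1,1]$ in $Z_2(A^2)$; symmetrically, $[a]\cdot[-1,1]$ is represented by the block-swapped cycle $\diag(a,M)-\diag(1,M)$. I would then choose a transversal vector depending on the parameter $\lambda$, the natural candidate being $v = e_1 + (1-\lambda)e_2 + \lambda e_3$ (up to coordinate permutation), and apply Remark \ref{rem:NS:3.14} to both cycles. The six summands of $\beta_\lambda$ should emerge from the resulting expansions once the difference is formed, with the coefficient $\langle\langle a\rangle\rangle = \langle a\rangle - 1$ arising as the difference of determinants $\langle a\rangle - \langle 1\rangle$ between the two block extensions. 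Final alignment of signs $\eps^{i+3}$ and coefficients $\langle a_i\rangle$ with the defining formula for $\beta_\lambda$ should follow from a few applications of the three-term relation of Proposition \ref{prop:SnPresentation}.

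For the independence $\beta_\lambda = \beta_\mu$, the cleanest route exploits the just-proved commutator formula: since its left-hand side is manifestly $\lambda$-independent, $\langle\langle a\rangle\rangle(\beta_\lambda - \beta_\mu) = 0$ for every $a\in A^*$ with $\bar a\neq 1$. By Remark \ref{rmk:detComp}, $\det\beta_\lambda = 0$, so $\beta_\lambda - \beta_\mu$ lies in $\ker(\det:S_3\to \Z[A^*])$; combined with the universal annihilation by $\langle\langle a\rangle\rangle$ and an argument in the spirit of Lemma \ref{lem:InvertibleElementInH2SL2}, this should force vanishing. A more hands-on alternative is to exhibit $\beta_\lambda - \beta_\mu$ as the boundary of an explicit element of $\tilde{C}_4(A^3)$ whose vertices interpolate between $\lambda$ and $\mu$, for example using a $5$-tuple involving both $e_1 + (1-\lambda)e_2 + \lambda e_3$ and $e_1 + (1-\mu)e_2 + \mu e_3$.

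The main obstacle is the bookkeeping in the commutator computation: choosing $v$ so that both block-diagonal expansions refer to commensurate symbols, while simultaneously producing the precise alternating-sign six-term structure of $\beta_\lambda$ together with the scalar $\langle\langle a\rangle\rangle$, requires careful tracking of signs. The permutation relating $\diag(M,*)$ to $\diag(*,M)$ introduces an extra $\eps$-factor that must propagate cleanly through the reductions, and this is the step where the three-term relation of Proposition \ref{prop:SnPresentation} will be most heavily invoked to produce the necessary cancellations.
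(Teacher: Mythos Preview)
Your approach to the commutator formula is in the right direction but misses the key simplification that makes the paper's argument work. The paper does not expand $[-1,1]\cdot[a]$ and $[a]\cdot[-1,1]$ separately via Remark~\ref{rem:NS:3.14} and then subtract. Instead, it first manipulates the difference at the level of cycles in $Z_3(A^3)$: after a cyclic permutation (which lies in $SL_3$), one has
\[
[-1,1]\cdot[a] - [a]\cdot[-1,1] \;=\; u\cdot(ae_1 - e_1) - (ae_1 - e_1)\cdot u,
\]
where $u = d(e_2,e_3,e_3-e_2)$. Since $u$ involves only $e_2,e_3$, the diagonal matrix $\diag(a,1,1)\in GL_3$ of determinant $a$ carries $(e_1)\cdot u$ to $(ae_1)\cdot u$ and $u\cdot(e_1)$ to $u\cdot(ae_1)$; in $S_3 = SL_3\backslash Z_3$ this yields $(ae_1)\cdot u - u\cdot(ae_1) = \langle a\rangle\{(e_1)\cdot u - u\cdot(e_1)\}$, whence the commutator equals $\langle\langle a\rangle\rangle\{u\cdot(e_1) - (e_1)\cdot u\}$. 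The factor $\langle\langle a\rangle\rangle$ thus appears \emph{before} any transversal vector is chosen. Only afterwards does one pick $v = (1,-\lambda,1)^T$ and apply Remark~\ref{rem:NS:3.14} to the fixed, $\lambda$-independent cycle $u\cdot(e_1)-(e_1)\cdot u$, obtaining the six-term formula for $\beta_\lambda$.

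This ordering is precisely what makes the second claim immediate: $u\cdot(e_1)-(e_1)\cdot u$ does not involve $\lambda$, and $\beta_\lambda$ is merely its expression in standard generators for one particular choice of transversal vector, so $\beta_\lambda = \beta_\mu$ in $S_3(A)$ is automatic. Your proposed route to independence has a genuine gap. From $\langle\langle a\rangle\rangle(\beta_\lambda - \beta_\mu) = 0$ for all $a\in A^*$ you only conclude that $\beta_\lambda - \beta_\mu$ is annihilated by the augmentation ideal $I[A^*]$, i.e.\ that it is $A^*$-invariant; this does not force vanishing in $S_3(A)$, and Lemma~\ref{lem:InvertibleElementInH2SL2} (which concerns the $A^*$-action on $H_2(SL_2A)$, a different module) gives no leverage here. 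Your alternative of producing an explicit boundary in $\tilde C_4(A^3)$ is essentially what the paper does, but the point is that one need not hunt for such a boundary: the cycle $u\cdot(e_1)-(e_1)\cdot u$ is already the $\lambda$-free object, and different choices of $v$ simply give different names for the same element of $S_3(A)$.
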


\begin{proof}
Let $u$ be the element $u=d(e_2,e_3,e_3-e_2) \in Z_2(Ae_2+Ae_3)$.
We have
$$
\renewcommand\arraystretch{1.5}
\begin{array}{rcl}
&& [-1,1]\cdot [a] - [a]\cdot [-1,1] \\
&=&  d(e_1,e_2,e_2-e_1)\cdot (ae_3-e_3) - (ae_1-e_1)\cdot d(e_2,e_3,e_3-e_2)\\
&=& u \cdot (ae_1-e_1) - (ae_1-e_1)\cdot u \\
&=& \{(e_1)\cdot u-u\cdot(e_1)\} - \{(ae_1)\cdot u-u\cdot (ae_1)\} \\
&=& \langle\langle a\rangle\rangle \{u\cdot (e_1)- (e_1)\cdot u\}.
\end{array}
$$
We need to show that $\beta_{\lambda} = u\cdot (e_1) - (e_1)\cdot u$.
Note that the right hand side is independent of $\lambda$.
The vector $v = (1,-\lambda,1)^T$ is in general position with respect to the vectors $e_1,e_2,e_3,e_3-e_2$ occuring in $u(e_1) - (e_1)u$.
Therefore, we obtain the equality
$u(e_1) - (e_1)u = d\{(e_1,u,v)-(u,e_1,v)\}$.
In $\tilde{C}_4(A^3)/SL_3(A)$ we have 
$$
\renewcommand\arraystretch{1.5}
\begin{array}{rcl}
(e_1,u,v)
&=& (e_1,e_3,e_3-e_2,v) - (e_1,e_2,e_3-e_2,v) + (e_1,e_2,e_3,v)\\
&=& (e_1,e_2,e_3,v_1) - (e_1,e_2,e_3,v_2) + (e_1,e_2,e_3,v_3)
\end{array}
$$
where $v_1 = (1,1-\lambda,\lambda)^T$, $v_2 = (1,1-\lambda,1)^T$ and $v_3=v$ since matrix multiplication with $(e_1,e_3,e_3-e_2), (e_1,e_2,e_3-e_2) \in SL_3(A)$ yields $v=(e_1,e_3,e_3-e_2)v_1 = (e_1,e_2,e_3-e_2)v_2$.
Similarly, we have in $\tilde{C}_4(A^3)/SL_3(A)$
$$
\renewcommand\arraystretch{1.5}
\begin{array}{rcl}
(u,e_1,v)
&=& (e_3,e_3-e_2,e_1,v) - (e_2,e_3-e_2,e_1,v) + (e_2,e_3,e_1,v)\\
&=& (e_1,e_2,e_3,v_4) - (e_1,e_2,e_3,v_5) + (e_1,e_2,e_3,v_6)
\end{array}
$$
where $v_4 = (1-\lambda,\lambda,1)^T$, $v_5 = (1-\lambda,1,1)^T$ and $v_6=(-\lambda,1,1)^T$ since matrix multiplication with $(e_3,e_3-e_2,e_1), (e_2,e_3-e_2,e_1), (e_2,e_3,e_1) \in SL_3(A)$ yields 
$v=(e_3,e_3-e_2,e_1)v_4= (e_2,e_3-e_2,e_1)v_5 = (e_2,e_3,e_1)v_6$.
The result follows.
\end{proof}

In view of the independence of $\lambda$ we will write $\beta$ for $\beta_{\lambda}$.
Write $S^0_n(A)$ for the kernel of the determinant map $\det:S_n(A) \to \Z[A^*]$.

\begin{lemma}
\label{lem:BetaInS1S20}
Let $n_0\geq 3$ and $\sigma\in \Z[A^*]$ as in $(\ast)$.
Then 
$$\beta \in \sigma^{-1}S_1(A)\cdot S_2^0(A) \subset \sigma^{-1}S_3(A).$$
\end{lemma}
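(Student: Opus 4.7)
The plan has two stages: I reduce the lemma to the equivalent claim $\beta\in \sigma^{-1}S_1(A)\cdot S_2(A)$, then prove this claim by an explicit straightening that combines Lemma \ref{lem:S3prods} with the Steinberg relation from Proposition \ref{prop:SteinbergInSbar}.

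For the reduction, a direct calculation using Remark \ref{rmk:detComp} shows $\det\beta = 0$, so $\beta\in \sigma^{-1}S_3^0(A)$. Next, the map $\det:S_1(A)\to \Z[A^*]$ is the standard inclusion $I[A^*]\hookrightarrow \Z[A^*]$ (sending $[a]\mapsto\langle\langle a\rangle\rangle$), hence injective. Combined with the $\Z[A^*]$-linear direct sum decomposition $S_2(A)=S_2^0(A)\oplus \Z[A^*]\cdot[-1,1]$ (coming from the section $\langle a\rangle\mapsto\langle a\rangle\cdot[-1,1]$ of $\det:S_2(A)\to\Z[A^*]$, well-defined because $\det[-1,1]=1$), this gives $\sigma^{-1}S_1\cdot S_2 = \sigma^{-1}S_1\cdot S_2^0\oplus \sigma^{-1}S_1\cdot[-1,1]$ and hence $\sigma^{-1}S_1\cdot S_2\cap \sigma^{-1}S_3^0 = \sigma^{-1}S_1\cdot S_2^0$. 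It therefore suffices to show $\beta\in \sigma^{-1}S_1(A)\cdot S_2(A)$.

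For the main claim, Lemma \ref{lem:S3prods}(D) reduces each of the six generators $[a_i,b_i,c_i]$ of $\beta$ to $\langle a_ib_ic_i\rangle\cdot[1,1,1]$ modulo $\sigma^{-1}(S_1\cdot S_2+S_2\cdot[c_i])$; summing with the signs in the definition of $\beta$, the coefficients of $[1,1,1]$ cancel pairwise. Since the third coordinate $c_i$ equals $1$ in all six generators except the first one $[1,1-\lambda,\lambda]$ (for which $c_1=\lambda$), we obtain $\beta\in \sigma^{-1}(S_1\cdot S_2+S_2\cdot[\lambda])$, with the residual $S_2\cdot[\lambda]$-contribution arising entirely from $[1,1-\lambda,\lambda]$.

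The main obstacle is eliminating this residue. The crucial structural observation is that the last two coordinates $(1-\lambda,\lambda)$ of $[1,1-\lambda,\lambda]$ form a Steinberg pair, so Proposition \ref{prop:SteinbergInSbar} gives $[\lambda][1-\lambda]=\langle\langle\lambda\rangle\rangle\langle\langle 1-\lambda\rangle\rangle[-1,1]$ in $\sigma^{-1}S_2(A)$. Following the explicit formula for the $S_2\cdot[\lambda]$-contribution in the proof of Lemma \ref{lem:S3prods}(C) and applying the Steinberg identity together with the commutator formula $[-1,1][a]-[a][-1,1]=\langle\langle a\rangle\rangle\beta$ from Lemma \ref{lem:BetaCommutator}, this residue is rewritten as an element of $\sigma^{-1}S_1\cdot S_2$ plus a scalar $\Z[A^*]$-multiple of $\beta$ itself; solving the resulting linear equation for $\beta$ in $\sigma^{-1}S_3/\sigma^{-1}S_1\cdot S_2$ finishes the proof. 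Confirming that the coefficient of $\beta$ produced by the Steinberg substitution is a unit in $\sigma^{-1}\Z[A^*]$ (for which the key ingredient is $\epsilon(\sigma)=1$) is the technical heart of the argument.
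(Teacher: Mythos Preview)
Your reduction step and the initial manipulation using Lemma~\ref{lem:S3prods} are fine; in fact the paper begins the same way and arrives at essentially the same intermediate relation: $\beta \equiv r\,\langle\langle \lambda\rangle\rangle\,\beta$ modulo $S_1S_2^0$ for some $r\in\Z[A^*]$ depending on $\lambda$. The genuine gap is in your final step, where you assert that the coefficient $1-r\,\langle\langle\lambda\rangle\rangle$ (or whatever scalar you produce) is a unit in $\sigma^{-1}\Z[A^*]$. You give no argument for this, only the remark that $\eps(\sigma)=1$. But an element of augmentation~$1$ in $\Z[A^*]$ is not a unit in general (e.g.\ $2-\langle a\rangle$), and inverting $\sigma$ does not help: $\sigma$ is one specific element of augmentation~$1$, and there is no reason your coefficient should be a power of it. So the ``linear equation'' you propose to solve cannot be solved by the method you indicate.

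The paper closes this gap by a completely different mechanism which you have not invoked. One does not need $1-r\langle\langle\lambda\rangle\rangle$ to be a unit; instead one exploits that the relation $\beta\equiv r_\lambda\langle\langle\lambda\rangle\rangle\beta$ holds for \emph{every} $\lambda$ with $\bar\lambda\neq 1$, so it can be iterated. Since $\det\beta=0$ gives $\beta\in\sigma^{-1}S_3^0\cong H_3(SL_3A,SL_2A)$, the long exact sequence $H_3(SL_3A)\to H_3(SL_3A,SL_2A)\to H_2(SL_2A)$ together with the fact that cube units act trivially on $H_3(SL_3A)$ and square units act trivially on $H_2(SL_2A)$ forces $\langle\langle a^2\rangle\rangle\langle\langle b^3\rangle\rangle\cdot\beta=0$ for all $a,b\in A^*$. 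Choosing $a,b$ with $\bar a^2,\bar b^3\neq 1$ and iterating the relation with $\lambda=a^2$ and $\lambda=b^3$ then yields $\beta\equiv r_1r_2\langle\langle a^2\rangle\rangle\langle\langle b^3\rangle\rangle\beta\equiv 0$ modulo $S_1S_2^0$. This structural input from the $SL_n$-homology side is what makes the argument work, and it is missing from your proposal.
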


\begin{proof}
To simplify notation I will write $S_n$ for $\sigma^{-1}S_n(A)$.
From the definition of $\beta$ and  Lemma \ref{lem:S3prods} (\ref{lem:S3prods:D}) we have in $S_3$ modulo $S_2[\lambda] +S_1S_2$ the equality
$$\beta = \{ \langle (1-\lambda)\lambda\rangle -\langle 1-\lambda\rangle + \langle -\lambda \rangle - \langle (1-\lambda)\lambda\rangle + \langle 1-\lambda\rangle - \langle -\lambda\rangle \}\cdot [1,1,1] =0.$$
Therefore, $\beta \in S_2[\lambda] +S_1S_2$.
By Remark \ref{rmk:S2SymbolsProds}, we have $S_2=\Z[A^*]\cdot [-1,1]+S_1S_1$, and hence,
$\beta \in \Z[A^*]\cdot [-1,1][\lambda] +S_1S_2$.
Since $\det [-1,1]=1$, we have the decomposition $S_2=S_2^0+ \Z[A^*]\cdot[-1,1]$, 
and thus, $\beta = r \cdot [-1,1][\lambda] +c\cdot [-1,1]$ modulo $S_1S_2^0$ for some $r\in \Z[A^*]$ and $c\in S_1$ (depending on $\lambda$).
Since $\det \beta =0$ we can compare determinants and use $\det (S_1S_2^0)=\det(S_1)\det(S_2^0)=0$ to find $c=-r[\lambda]$.
Hence, for all $\lambda \in A^*$ with $\bar{\lambda}\neq 1$ there is 
$r\in \Z[A^*]$ such that 
\begin{equation}
\label{eqn:BetaIsLambdaBeta}
\beta = r\cdot \langle\langle \lambda \rangle\rangle \cdot \beta \mod S_1S_2^0.
\end{equation}
Since $\det \beta =0$, we have $\beta \in S_3^0 = \sigma^{-1}F_{2,1}(A^3)\cong H_3(SL_3A,SL_2A)$.
Since square units act trivially on $H_2(SL_2A)$ and cube units act trivially on $H_3(SL_3A)$, the exact sequence
$$H_3(SL_3A) \to H_3(SL_3A,SL_2A) \to H_2(SL_2A)$$
implies that for all $a,b\in A^*$ we have 
$\langle \langle a^2\rangle\rangle \langle\langle b^3\rangle\rangle \cdot H_3(SL_3A,SL_2A)=0$.
In particular, $\langle \langle a^2\rangle\rangle \langle\langle b^3\rangle\rangle \cdot \beta =0$.
Now choose $a,b\in A^*$ such that $\bar{a}^2,\bar{b}^3\neq 1$.
This is possible since $A$ has infinite residue field.
From (\ref{eqn:BetaIsLambdaBeta}) we infer that
$$\beta = r_1r_2\langle \langle a^2\rangle\rangle \langle\langle b^3\rangle\rangle \cdot \beta = 0 \mod S_1S_2^0.$$
Hence, $\beta \in S_1S_2^0$.
\end{proof}

\begin{lemma}
\label{lem:BetaInF30}
Let $n_0\geq 3$ and $\sigma\in \Z[A^*]$ as in $(\ast)$.
Then 
$$\beta \in \sigma^{-1}F_{3,0}(A^3).$$
\end{lemma}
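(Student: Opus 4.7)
Starting from the two facts at hand, namely $\beta \in \sigma^{-1}(S_1(A)\cdot S_2^0(A))$ from Lemma~\ref{lem:BetaInS1S20} and $\det\beta = 0$ from Remark~\ref{rmk:detComp} (so $\beta \in \sigma^{-1}F_{2,1}(A^3)$), my plan is to show that $\beta$ maps to zero in the quotient
\[
\sigma^{-1}F_{2,1}(A^3)/\sigma^{-1}F_{3,0}(A^3) \;\cong\; \sigma^{-1}E^{\infty}_{2,1}(A^3).
\]
I would first pin down this quotient. By Lemma~\ref{lem:NS_lemma_2.4} the only potentially nonzero outgoing differential is $d^1_{2,1}:H_2(SL_2A) \to H_2(SL_3A)$, the stabilization map, and the relevant incoming differentials all vanish for degree reasons (e.g.\ $E^1_{2,2}(A^3)$ is a subquotient of $H_2(SL_1A)=0$). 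Via Lemma~\ref{lem:RelHisSn} identifying $\sigma^{-1}F_{2,1}(A^3) \cong H_3(SL_3A, SL_2A)$, the projection to $\sigma^{-1}E^\infty_{2,1}(A^3)$ coincides up to sign with the connecting map $\partial:H_3(SL_3A, SL_2A) \to H_2(SL_2A)$ of the pair, and $\sigma^{-1}F_{3,0}(A^3)$ is $\ker\partial$. The task reduces to showing $\partial\beta = 0$.

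To compute $\partial\beta$, I would use the chain-level representative $\beta = u\cdot (e_1) - (e_1)\cdot u \in Z_3(A^3)$ from Lemma~\ref{lem:BetaCommutator}. Neither $u\cdot(e_1)$ nor $(e_1)\cdot u$ is individually a cycle; both are $3$-chains in $C_3(A^3)$ whose boundaries equal the same $2$-chain $u \in C_2(Ae_2+Ae_3) \subset C_2(A^3)$, i.e.\ lie in the ``$SL_2$-subcomplex''. Tracing through the Shapiro isomorphisms and the identification of $\H_*(GL_3, C_{\leq 1}(A^3))$-homology with $H_*(SL_3A, SL_2A)$ used in the proof of Lemma~\ref{lem:RelHisSn}, the connecting map $\partial$ on the class of either such $3$-chain is computed as the homology class of its boundary in $H_2(SL_2A)$. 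Since both boundaries equal $u$ and hence represent the same class $[u] \in H_2(SL_2A)$, the difference gives $\partial\beta = [u] - [u] = 0$.

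The main obstacle will be making the chain-level argument rigorous, since a priori $\beta$ is a cycle in $C(A^n)$ rather than in a bar complex for $SL_3$; passing from $\beta$ to a relative bar $3$-chain for the pair $(SL_3A, SL_2A)$ and identifying the boundary with the quantity $[u]\in H_2(SL_2A)$ requires careful bookkeeping of the several isomorphisms at play. A more algebraic alternative, which I find cleaner but which requires extra justification, is to use the $\mathcal B(A)$-bimodule structure of the spectral sequence: using the identification $\sigma^{-1}S_2^0 = \sigma^{-1}F_{2,0}(A^2) \cong H_2(SL_2A)$ (valid since $\sigma^{-1}E^\infty_{1,1}(A^2)$ is a subquotient of $H_1(SL_1A) = 0$), the bimodule pairing $S_1 \otimes F_{2,0}(A^2) \to F_{2,1}(A^3)$ composed with $\partial$ should descend to the pairing $I[A^*]\otimes H_2(SL_2A) \to H_2(SL_2A)$ given by $([a],y)\mapsto \langle a\rangle y - y$, and one has to verify this vanishes on the specific combination representing $\beta$.
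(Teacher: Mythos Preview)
Your first approach has a genuine gap that cannot be filled by ``careful bookkeeping.'' The element $u = d(e_2,e_3,e_3-e_2)$ represents $[-1,1]\in S_2(A)$, and $\det[-1,1]=1\neq 0$; so $u$ does \emph{not} lie in $S_2^0\cong H_2(SL_2A)$, and ``the homology class of its boundary in $H_2(SL_2A)$'' is not defined. More fundamentally, the connecting map $\partial$ you want is the map $k:D^1_{2,1}\to E^1_{2,1}$ induced by the projection $C_{\leq 1}(A^3)\to C_1(A^3)[1]$ after tensoring with a bar resolution of $\Z$ over $SL_3$. Your cycle $\beta$ sits in $P_0\otimes C_3$, not in $P_\bullet\otimes C_{\leq 1}$; passing to the correct filtration piece requires a staircase through the double complex, not simply applying $d_C$ to pieces of $\beta$.

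Your alternative approach is on the right track and is essentially how the paper proceeds, but you are missing the decisive idea. The paper formalizes your pairing as a right $\B(A)$-module map $\delta:\bigoplus_n F_{n-1,1}(A^n)\to\bigoplus_n H_{n-1}(SL_{n-1}A)$ with $\delta_1([a])=\langle\langle a\rangle\rangle$, so that for $[a]\cdot\gamma$ with $\gamma\in S_2^0=\B_2(A)$ one gets $\delta_3([a]\cdot\gamma)=\langle\langle a\rangle\rangle\cdot\gamma$, as you guessed. But you then need to show this vanishes on $\beta$, and there is no direct way to do so by staring at a decomposition of $\beta$ into products. The paper instead observes that $T_2(\langle\langle a\rangle\rangle\gamma)=\eta[a]T_2(\gamma)=\eta\,T_3([a]\cdot\gamma)$, so $T_2\circ\delta_3=\eta\circ T_3$ on $S_1S_2^0$. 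Now $T_3(\beta)=0$ is immediate since every symbol in the definition of $\beta_\lambda$ contains an entry $1$ and $[1]=0$ in $\hat K^{MW}$. Hence $T_2\delta_3(\beta)=0$, and since $T_2:H_2(SL_2A)\to K_2^{MW}(A)$ is an isomorphism (Theorem~\ref{thm:H2andKMW2}), $\delta_3(\beta)=0$. This use of the multiplicative map $T$ to $K^{MW}$ is the missing ingredient in your plan.
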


\begin{proof}
To simplify, write $F_{p,q}$, $E^s_{p,q}$, $D^s_{p,q}$
 and $S_n$ also for their localizations at $\sigma$.
Since $\det \beta =0$, we have $\beta\in \ker(\det) = F_{1,2}(A^3) = F_{2,1}(A^3)$.
So, we have to show that $\beta$ is sent to zero under the map (when $n=3$)
\begin{equation}
\label{eqn:BetaInF20}
F_{n-1,1}(A^n) \to E^{\infty}_{n-1,1}(A^n)= E^2_{n-1,1}(A^n)\subset E^1_{n-1,1}(A^n)
\end{equation}
which is well-defined for $n\leq n_0$, by Lemma \ref{lem:NS_lemma_2.4} and Proposition \ref{prop:NS_Prop_2.6}.
By Lemma \ref{Lem:DandF} (\ref{Lem:DandF:2}), this is also the map
$D^1_{n-1,1}(A^n) \to E^1_{n-1,1}(A^n)$.
For $n=1$, this map is the map $I[A^*] \to \Z[A^*]:[a] \mapsto \langle\langle a\rangle\rangle$.
Taking the direct sum over $n$, the map (\ref{eqn:BetaInF20}) is part of a $\B(A)$-bimodule map
$$\bigoplus_{0\leq n\leq n_0} F_{n-1,1}(A^n)  \to \bigoplus_{0\leq n \leq n_0} E^1_{n-1,1}(A^n)=\bigoplus_{0\leq n\leq n_0}\sigma^{-1}\Tor^{GL_{n}}_{n}(\Z[A^*],C_1(A^n)[1]),$$
by Subsection \ref{subsec:SpSeqLocRings}.
Consider the embedding
$$GL_{n-1}(A) \to \left(\begin{matrix}1 & \ast \\ 0 & GL_nA \end{matrix}\right):
M \mapsto \left(\begin{matrix}1 &  \\ 0 & M \end{matrix}\right)
$$
of $GL_{n-1}(A)$ into the stabilizer at $e_1$ of the $GL_n(A)$-action on $U_1(A^n)$.
By Theorem \ref{thm:HomologyOfAffineGps}, the inclusion induces an isomorphism
$$\bigoplus_{0\leq n\leq n_0}\sigma^{-1}\Tor^{GL_{n-1}}_{n-1}(\Z[A^*],\Z) \stackrel{\cong}{\longrightarrow} \bigoplus_{0\leq n\leq n_0}\sigma^{-1}\Tor^{GL_{n}}_{n}(\Z[A^*],C_1(A^n)[1])$$
of $A^*$-modules.
This map is also a map of right $\B(A)$-modules.
The composition of the $\B(A)$-bimodule map and the inverse of the right $\B(A)$-module map defines a right $\B(A)$-module map
$$\delta= \bigoplus_{0\leq n\leq n_0}\delta_n: \bigoplus_{0\leq n\leq n_0} F_{n-1,1}(A^n)  \to \bigoplus_{0\leq n \leq n_0} \sigma^{-1}\Tor^{GL_{n-1}}_{n-1}(\Z[A^*],\Z).$$
We have to show that $\delta_3(\beta)=0$.

I claim that the following diagram commutes (all localized at $\sigma$):
\begin{equation}
\label{eqn:DiagEtaCommutes}
\xymatrix{
S_1S_2^0\hspace{1ex} \ar@{^(->}[r] \ar@{^(->}[d]& F_{2,1} \hspace{1ex} \ar@{^(->}[r] & S_3 \ar[r]^{\hspace{-3ex}T_3} & K^{MW}_3(A) \ar[d]^{\eta}\\
F_{2,1}  \ar[r]^{\hspace{-3ex}\delta_3} & H_2(SL_2A) \hspace{1ex}\ar@{^(->}[r] & S_2 \ar[r]^{\hspace{-3ex}T_2}
 & K_2^{MW}(A)
}
\end{equation}
To see this, note that the $A^*$-module $S_1S_2^0$ is generated by products $[a]\cdot \gamma$ where 
$a\in A^*$ and $\gamma\in S^0_2$.
We have $S_2^0=\B_2(A)$, and thus, $\gamma \in \B_2(A)$.
Since $\delta$ is a right $\B(A)$-module morphism and $T$ is a map of rings, we have
$$T_2\delta_3([a]\cdot \gamma) = T_2(\delta_1([a])\cdot \gamma) =T_2(\langle\langle a\rangle\rangle \cdot \gamma) =  \langle\langle a\rangle\rangle T_2(\gamma)$$
since $\delta_1([a])=\langle\langle a\rangle\rangle$ as shown above.
On the other hand
$$\eta\cdot T_3([a]\cdot \gamma) = \eta\cdot T_1([a]) \cdot T_2(\gamma) = \eta\cdot [a]\cdot T_2(\gamma) = \langle\langle a \rangle\rangle T_2(\gamma).$$
So, the diagram does indeed commute.

Since $[1]=0\in \hat{K}_1^{MW}(A)$ and $T$ is multiplicative, the definition of $\beta$ yields $T_3(\beta) = 0$. 
By commutativity of diagram (\ref{eqn:DiagEtaCommutes}), we have $T_2\delta_3(\beta)=0$.
By (the proof of) Theorem \ref{thm:H2andKMW2},
the map $T_2:H_2(SL_2A) \to K_2^{MW}(A)$ is an isomorphism.
Hence, $\delta_3(\beta)=0$.
\end{proof}

\begin{lemma}
\label{lem:A*actionOnFA3}
Let $n_0\geq 3$ and $\sigma\in \Z[A^*]$ as in $(\ast)$. 
\begin{enumerate}
\item
\label{lem:A*actionOnFA3:A}
$A^{2*}$ acts trivially on $\sigma^{-1}S_1(A)S_2^0(A)$.
\item
\label{lem:A*actionOnFA3:B}
$A^*$ acts trivially on $F_{3,0}(A)\cap \sigma^{-1}S_1(A)S_2^0(A)$.
\end{enumerate}
In particular, for all $a\in A^*$, we have
$$\langle\langle a \rangle \rangle \cdot \beta =0\hspace{1ex} \in \sigma^{-1}S_3(A).$$
\end{lemma}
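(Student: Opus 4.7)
The plan is to exploit the $\Z[A^*]$-algebra structure on $S(A)$ together with the standard fact that the scalar matrix $bI_n\in GL_n(A)$ is central in $GL_n(A)$ and has determinant $b^n$, so conjugation by it acts trivially on $SL_n(A)$; equivalently, $\langle b^n\rangle$ acts as the identity on $H_*(SL_n(A))$.

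For part (\ref{lem:A*actionOnFA3:A}), I would first note that since $\langle b^2\rangle\in S_0(A)=\Z[A^*]$ is central in the graded algebra $S(A)$, its action on a product $[a]\cdot\gamma$ with $a\in A^*$ and $\gamma\in S_2^0(A)$ can be absorbed into either factor:
\[
\langle b^2\rangle\cdot([a]\cdot\gamma)=[a]\cdot(\langle b^2\rangle\gamma).
\]
Thus it suffices to check that $A^{2*}$ acts trivially on $\sigma^{-1}S_2^0(A)$. Since $\det[-1,1]=\langle 1\rangle$, the section $\langle a\rangle\mapsto \langle a\rangle[-1,1]$ splits the surjection $\sigma^{-1}S_2(A)\twoheadrightarrow \sigma^{-1}\Z[A^*]$, which together with Lemma \ref{lem:RelHisSn} (and $SL_1(A)=\{1\}$) identifies $\sigma^{-1}S_2^0(A)\cong \sigma^{-1}\bar S_2(A)\cong H_2(SL_2(A))$ as $\Z[A^*]$-modules. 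On $H_2(SL_2(A))$, the outer action of $b^2$ comes from inner conjugation by the central scalar $bI_2$ and is therefore trivial.

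For part (\ref{lem:A*actionOnFA3:B}), the edge map $H_3(SL_3(A))=E^1_{3,0}(A^3)\twoheadrightarrow F_{3,0}(A^3)$ and the same centrality argument applied to $bI_3\in GL_3(A)$ of determinant $b^3$ show that $A^{3*}$ acts trivially on $H_3(SL_3(A))$, hence on $F_{3,0}(A^3)$. Given $\xi\in F_{3,0}(A^3)\cap\sigma^{-1}S_1(A)S_2^0(A)$, part (\ref{lem:A*actionOnFA3:A}) applied to $b$ and to $b^{-1}$ yields $\langle b^{\pm 2}\rangle\xi=\xi$, while the $F_{3,0}$ claim gives $\langle b^3\rangle\xi=\xi$. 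Since $\langle b\rangle=\langle b^3\rangle\cdot\langle b^{-2}\rangle$ in $\Z[A^*]$, the combination gives $\langle b\rangle\xi=\xi$, i.e.\ $\langle\langle b\rangle\rangle\xi=0$.

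The final assertion is then immediate: by Lemmas \ref{lem:BetaInS1S20} and \ref{lem:BetaInF30}, $\beta$ lies in $F_{3,0}(A^3)\cap\sigma^{-1}S_1(A)S_2^0(A)$, so part (\ref{lem:A*actionOnFA3:B}) yields $\langle\langle a\rangle\rangle\cdot\beta=0$. There is no real obstacle in this argument; it rests only on the centrality of scalar matrices in $GL_n(A)$ and on the coprimality of $2$ and $3$, all the nontrivial work having already been done in Lemmas \ref{lem:BetaInS1S20} and \ref{lem:BetaInF30}.
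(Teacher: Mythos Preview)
Your proof is correct and follows essentially the same approach as the paper: both use that $A^{2*}$ acts trivially on $\sigma^{-1}S_2^0(A)\cong H_2(SL_2A)$ and $A^{3*}$ acts trivially on $F_{3,0}(A^3)$ (as a quotient of $H_3(SL_3A)$), then combine via $\langle b\rangle=\langle b^3\rangle\langle b^{-2}\rangle$. You simply spell out in more detail the reason these power-unit actions are trivial (centrality of scalar matrices) and the identification of $\sigma^{-1}S_2^0(A)$ with $H_2(SL_2A)$, which the paper leaves implicit.
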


\begin{proof}
Since square units act trivially on $\sigma^{-1}S_2^0 = H_2(SL_2A)$,
this shows part (\ref{lem:A*actionOnFA3:A}).
The group of cube units $A^{3*}$ acts trivially on 
$\sigma^{-1}F_{3,0}(A^3)$ as this group is a quotient of $H_3(SL_3A)$.
Hence, if $\gamma \in F_{3,0}(A)\cap \sigma^{-1}S_1(A)S_2^0(A)$, then 
$\langle a^2\rangle \gamma = \gamma = \langle a^3 \rangle \gamma$ and thus,
$\langle a \rangle \gamma = \gamma$.
This proves part (\ref{lem:A*actionOnFA3:B}).

By Lemmas \ref{lem:BetaInS1S20} and \ref{lem:BetaInF30}, we have
$\beta \in F_{3,0}(A)\cap \sigma^{-1}S_1(A)S_2^0(A)$.
Hence, for all $a\in A^*$, we have $\langle a \rangle \beta = \beta$, that is,
$\langle\langle a \rangle\rangle \cdot \beta =0$.
\end{proof}

\begin{proposition}
\label{prop:EcommutesWithA}
Let $n_0\geq 3$ and $\sigma\in \Z[A^*]$ as in $(\ast)$. 
Then for all $a\in A^*$, we have in $\sigma^{-1}S_3(A)$ the equality
$$[a]\cdot [-1,1] = [-1,1]\cdot [a].$$
\end{proposition}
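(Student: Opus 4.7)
The proof is an immediate synthesis of what has just been established. The plan is to combine Lemma \ref{lem:BetaCommutator} with the final assertion of Lemma \ref{lem:A*actionOnFA3}.

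Concretely, I would proceed as follows. First, choose any $\lambda \in A^*$ with $\bar{\lambda} \neq 1$; such a $\lambda$ exists because $A$ has infinite residue field. Applying Lemma \ref{lem:BetaCommutator} to this choice gives the identity
\[
[-1,1]\cdot [a] - [a]\cdot [-1,1] \;=\; \langle\langle a \rangle\rangle \cdot \beta_{\lambda}
\]
in $S_3(A)$, and hence in $\sigma^{-1}S_3(A)$. Writing $\beta = \beta_\lambda$ as justified by the second half of Lemma \ref{lem:BetaCommutator}, we now need only verify that the right-hand side vanishes after localization at $\sigma$.

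For this, I would invoke Lemma \ref{lem:A*actionOnFA3}, whose concluding statement asserts precisely that $\langle\langle a\rangle\rangle \cdot \beta = 0$ in $\sigma^{-1}S_3(A)$ for every $a \in A^*$. Therefore
\[
[-1,1]\cdot [a] - [a]\cdot [-1,1] \;=\; 0 \;\in\; \sigma^{-1}S_3(A),
\]
which is the desired equality. There is no real obstacle here: the genuine work has already been carried out in Lemmas \ref{lem:BetaInS1S20}, \ref{lem:BetaInF30}, and \ref{lem:A*actionOnFA3}, where $\beta$ was shown to lie in $F_{3,0}(A^3) \cap \sigma^{-1}S_1(A)S_2^0(A)$ so that both square and cube units act trivially on it, forcing $\langle\langle a\rangle\rangle\cdot \beta = 0$. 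The present proposition is simply the payoff of that analysis.
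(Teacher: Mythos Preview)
Your proof is correct and follows exactly the paper's approach: the paper's proof is the single sentence ``This follows from Lemmas \ref{lem:BetaCommutator} and \ref{lem:A*actionOnFA3},'' and you have simply unpacked that citation.
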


\begin{proof}
This follows from Lemmas \ref{lem:BetaCommutator} and \ref{lem:A*actionOnFA3}.
\end{proof}

For a graded ring $R=\bigoplus_{n\geq 0}R_n$, denote by $R_{\leq n}$ the quotient ring which is $R_i$ in degrees $i\leq n$ and $0$ otherwise.

\begin{corollary}
\label{cor:Centrality}
Let $n_0\geq n\geq 3$ and $\sigma\in \Z[A^*]$ as in $(\ast)$. 
Then the element $[-1,1]$ is central in $\sigma^{-1}S_{\leq n}(A)$.
In particular, 
$$\sigma^{-1}\bar{S}_{\leq n} = \sigma^{-1}S_{\leq n}(A)/S_{\leq n-2}(A)[-1,1]$$ is a quotient algebra of $\sigma^{-1}S(A)$.
\end{corollary}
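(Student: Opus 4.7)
The plan is to deduce centrality of $[-1,1]$ in $\sigma^{-1}S_{\leq n}(A)$ from Proposition \ref{prop:EcommutesWithA} by a generation argument, then derive the quotient-algebra statement from the fact that a central element generates a two-sided ideal. First I would identify a convenient set of $\Z[A^*]$-algebra generators for $\sigma^{-1}S_{\leq n}(A)$. The degree zero part is $S_0(A) = \Z[A^*]$ by definition; the degree one part $S_1(A) = I[A^*]$ is generated by the symbols $[a]$, $a \in A^*$, by Lemma \ref{lem:IAPresentation}; the degree two part is controlled by Remark \ref{rmk:S2SymbolsProds}, which writes every $[a,b]$ in $\sigma^{-1}S_2(A)$ as a $\Z[A^*]$-linear combination of $[a]\cdot [b]$ and $[-1,1]$; and in degrees $3 \leq r \leq n$, Proposition \ref{prop:Decomposibility} gives $\sigma^{-1}S_r(A) = \sigma^{-1}S_r(A)^{dec}$, so a downward induction on degree writes every element as a polynomial in the generators $\Z[A^*]$, $\{[a]: a \in A^*\}$, and $[-1,1]$.

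Next I would check that $[-1,1]$ commutes with each of these generators inside $\sigma^{-1}S_{\leq n}(A)$. That $[-1,1]$ commutes with itself is trivial, and commutation with $[a]$ is exactly the content of Proposition \ref{prop:EcommutesWithA} (which is where the hypothesis $n \geq 3$ is used, since the products live in $\sigma^{-1}S_3(A)$). Commutation with scalars $\Z[A^*] = S_0(A)$ follows from the construction of the product on $S(A)$ by cross products and concatenation of cycles: since the $A^*$-action on each $S_r$ is induced by $\det\colon GL_r(A) \to A^*$ and concatenation of matrices multiplies determinants commutatively, $S_0(A)$ is the central unit subring of $S(A)$. Once $[-1,1]$ commutes with each generator, it is central in the whole subalgebra they generate, which is all of $\sigma^{-1}S_{\leq n}(A)$.

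For the ``in particular'' clause, centrality of $[-1,1]$ means that the left $S(A)$-submodule $S_{\leq n-2}(A)\cdot [-1,1]$ coincides with the two-sided ideal generated by $[-1,1]$ in $\sigma^{-1}S_{\leq n}(A)$, so $\sigma^{-1}\bar S_{\leq n}$ inherits the structure of a quotient algebra of $\sigma^{-1}S(A)$ (more precisely of its truncation in degrees $\leq n$). The main obstacle is entirely upstream, in Proposition \ref{prop:EcommutesWithA}; granted that input, the corollary reduces to an essentially formal generation-and-induction argument, and the only mild care needed is to ensure that the commutation relations verified a priori in $\sigma^{-1}S_3(A)$ propagate through all products of total degree $\leq n$.
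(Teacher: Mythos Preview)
Your proof is correct and follows essentially the same approach as the paper. The paper's proof is a one-line reference to Proposition~\ref{prop:EcommutesWithA}, Proposition~\ref{prop:Decomposibility}, and Lemma~\ref{lem:IndecGens}; you have unpacked the same generation-and-commutation argument in more detail, using Remark~\ref{rmk:S2SymbolsProds} in place of Lemma~\ref{lem:IndecGens} to handle degree~$2$ (both show that $\sigma^{-1}S_2$ is generated over $\Z[A^*]$ by degree-$1$ products together with $[-1,1]$).
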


\begin{proof}
Centrality follows from Proposition \ref{prop:EcommutesWithA}
in view of Proposition \ref{prop:Decomposibility} and Lemma \ref{lem:IndecGens}.
The claim follows.
\end{proof}

\begin{proposition}
\label{prop:SnisKMWn}
Let $2\leq n \leq n_0$ and $\sigma \in \Z[A^*]$ as in $(\ast)$.
Then the following map is a well-defined isomorphism $A^*$-algebras
$$\hat{K}^{MW}_{\leq n}(A) \stackrel{\cong}{\longrightarrow} \sigma^{-1}\bar{S}_{\leq n}(A):[a,b]\mapsto [a]\cdot [b].$$
with inverse the map 
$T_{\leq n}: \sigma^{-1}\bar{S}_{\leq n}(A) \to \sigma^{-1}K_{\leq n}^{MW}(A)$
from Proposition \ref{prop:SToKMW}.
\end{proposition}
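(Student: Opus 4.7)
My strategy is to build the map $\phi\colon \hat{K}^{MW}_{\leq n}(A) \to \sigma^{-1}\bar{S}_{\leq n}(A)$ as a $\Z[A^*]$-algebra map, recognize $T_{\leq n}$ as a left inverse, and deduce surjectivity of $\phi$ from decomposability. The target is a graded $\Z[A^*]$-algebra by Corollary \ref{cor:Centrality}. Define $\phi$ as the unique $\Z[A^*]$-algebra map sending $[a]\in I[A^*]=\hat{K}^{MW}_1(A)$ to $[a]\in \sigma^{-1}S_1(A)=\sigma^{-1}I[A^*]$; well-definedness reduces to the $I[A^*]$-module relations (automatic) and the Steinberg relation $[a][1-a]=0$ in $\sigma^{-1}\bar{S}_2(A)$, which is precisely Proposition \ref{prop:SteinbergInSbar}.

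Next, the multiplicative map $T\colon S(A) \to \hat{K}^{MW}(A)$ of Proposition \ref{prop:SToKMW} kills $[-1,1]$ (since $[1]=0$ by Lemma \ref{Hatlem:basicKMWformulas}) and hence descends to $\bar{S}(A)$. Because $\sigma$ acts as the identity on $\hat{K}^{MW}_k(A)$ for $k\geq 2$ (the lemma preceding Corollary \ref{cor:S2isKMW}), $T$ factors further through the $\sigma^{-1}$-localization to give $T_{\leq n}\colon \sigma^{-1}\bar{S}_{\leq n}(A) \to \sigma^{-1}\hat{K}^{MW}_{\leq n}(A)$. On a $\hat{K}^{MW}_k$-generator one computes $T_{\leq n}(\phi([a_1,\dots,a_k]))=T([a_1])\cdots T([a_k])=[a_1]\cdots [a_k]=[a_1,\dots,a_k]$ by multiplicativity, so $T_{\leq n}\circ \phi=\mathrm{id}$ and $\phi$ is injective.

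The remaining substantive step is surjectivity of $\phi$, equivalently that $\sigma^{-1}\bar{S}_k(A)$ is generated as an abelian group by products of $[a_i]$'s. Degree $k=2$ is Corollary \ref{cor:S2isKMW}. For $k\geq 3$, Proposition \ref{prop:Decomposibility} gives $\sigma^{-1}S_k(A)=\sigma^{-1}S_k(A)^{dec}$, so every element of $\sigma^{-1}\bar{S}_k(A)$ is a sum of products $x\cdot y$ with $x\in \sigma^{-1}\bar{S}_i(A)$, $y\in \sigma^{-1}\bar{S}_{k-i}(A)$, $1\leq i\leq k-1$; by induction on $k$ each of $x,y$ is itself a sum of products of $[a_i]$'s, hence so is $x\cdot y$. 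Combined with the injectivity above, this forces $\phi$ to be an isomorphism with inverse $T_{\leq n}$. The hardest inputs (Steinberg in $\bar{S}_2$, centrality of $[-1,1]$, and decomposability of $S_k$ for $k\geq 3$) have already been dispatched in the preceding results, so this proposition is essentially a formal packaging step rather than a new obstruction.
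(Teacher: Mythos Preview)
Your proof is correct and follows essentially the same route as the paper: well-definedness via Proposition \ref{prop:SteinbergInSbar}, injectivity from $T_{\leq n}\circ\phi=\mathrm{id}$ by multiplicativity of $T$, and surjectivity from decomposability (Proposition \ref{prop:Decomposibility}) plus induction. The only cosmetic difference is that for degree $2$ you invoke Corollary \ref{cor:S2isKMW} directly, whereas the paper cites Lemma \ref{lem:IndecGens} together with Proposition \ref{prop:Decomposibility}; these amount to the same thing.
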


\begin{proof}
The map is well-defined, by Proposition \ref{prop:SteinbergInSbar}.
It is surjective, by Lemma \ref{lem:IndecGens} and Proposition \ref{prop:Decomposibility}.
It follows from the multiplicativity of the map $T$  that the composition 
$\sigma^{-1}\hat{K}^{MW}_{\leq n}(A) \to \sigma^{-1}\bar{S}_{\leq n}(A) \to \sigma^{-1}\hat{K}^{MW}_{\leq n}(A)$
is the identity.
This proves the claim.
\end{proof}

The following proves Theorem \ref{thm:main:SLnLocalA} in view of Theorem \ref{thm:KhatIsK}.
Recall our convention for $SL_0(A)$ from the Introduction so that
 $H_*(SL_nA) = H_*(GL_nA,\Z[A^*])$ for $n\geq 0$.
We set $SL_n(A)=GL_n(A) = \emptyset$ for $n<0$.

\begin{theorem}
\label{thm:HnIsKMW}
Let $A$ be a commutative local ring with infinite residue field.
\begin{enumerate}
\item
Then $H_i(SL_n(A),SL_{n-1}(A)) = 0$ for $i<n$ and 
the maps in Proposition \ref{prop:SToKMW} and Lemma \ref{lem:RelHisSn} induce isomorphisms of $A^*$-modules for $n\geq 0$
$$H_n(SL_n(A),SL_{n-1}(A)) \cong \hat{K}_n^{MW}(A).$$
\item
If $n$ is even, then the natural map
$$H_n(SL_n(A))\to H_n(SL_n(A),SL_{n-1}(A))$$
is surjective and inclusion $SL_{n-1}A\subset SL_nA$ induces an isomorphism
$$H_{n-1}(SL_{n-1}A) \cong H_{n-1}(SL_{n}A).$$
\end{enumerate}
\end{theorem}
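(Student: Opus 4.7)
For part (1), the plan is to assemble previously established results. The vanishing $H_i(SL_n(A), SL_{n-1}(A)) = 0$ for $i < n$ is immediate from Theorem \ref{thm:SLstability} (with $\sr(A) = 1$) via the long exact sequence of the pair. For $n \geq 2$, the isomorphism $H_n(SL_n(A), SL_{n-1}(A)) \cong \hat{K}^{MW}_n(A)$ follows by composing $H_n(SL_n, SL_{n-1}) \cong \sigma^{-1} \bar{S}_n(A)$ from Lemma \ref{lem:RelHisSn} with $\sigma^{-1} \bar{S}_n(A) \cong \hat{K}^{MW}_n(A)$ from Proposition \ref{prop:SnisKMWn}. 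The boundary cases $n = 0, 1$ reduce to direct inspection using the convention $SL_0(A) = A^*$: one has $H_0(SL_0) = \Z[A^*] = \hat{K}^{MW}_0(A)$, and the long exact sequence together with the augmentation $H_0(SL_0) \to H_0(SL_1)$ identifies $H_1(SL_1, SL_0)$ with $I[A^*] = \hat{K}^{MW}_1(A)$.

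For part (2), I would induct on even $n \geq 2$. The base case $n = 2$ is immediate: since $SL_1(A)$ is trivial, the long exact sequence gives $H_2(SL_2) \cong H_2(SL_2, SL_1)$; meanwhile both $H_1(SL_1) = 0$ and $H_1(SL_2) = SL_2(A)^{\mathrm{ab}} = 0$ (using $SL_2(A) = E_2(A)$ and Lemma \ref{lem:MaxPerf}), so the stability map $H_1(SL_1) \to H_1(SL_2)$ is trivially an isomorphism.

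For the inductive step ($n \geq 4$ even), the crucial tool is the cross product
$$H_{n-2}(SL_{n-2}(A)) \otimes H_2(SL_2(A)) \to H_n(SL_n(A))$$
arising from the block-diagonal inclusion $SL_{n-2} \times SL_2 \hookrightarrow SL_n$. Because the map $\B(A) \to S(A)$ is a map of graded $\Z[A^*]$-algebras (Section \ref{subsec:SpSeqLocRings}) and $T: \sigma^{-1}\bar{S}(A) \to \hat{K}^{MW}(A)$ is a ring homomorphism (Proposition \ref{prop:SToKMW}), the composite with the quotient to $H_n(SL_n, SL_{n-1}) \cong \hat{K}^{MW}_n(A)$ factors through the ring multiplication
$$\hat{K}^{MW}_{n-2}(A) \otimes \hat{K}^{MW}_2(A) \to \hat{K}^{MW}_n(A).$$
By the inductive hypothesis, $H_{n-2}(SL_{n-2}) \to \hat{K}^{MW}_{n-2}$ is surjective; $H_2(SL_2) \to \hat{K}^{MW}_2$ is an isomorphism by the $n = 2$ case of part (1); and the multiplication map is obviously surjective since every generator $[a_1, \ldots, a_n]$ factors as $[a_1, \ldots, a_{n-2}] \cdot [a_{n-1}, a_n]$. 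This gives the surjectivity of $H_n(SL_n) \to H_n(SL_n, SL_{n-1})$. The second assertion of part (2) is then formal: the surjectivity forces the connecting map $\partial: H_n(SL_n, SL_{n-1}) \to H_{n-1}(SL_{n-1})$ to vanish, so $H_{n-1}(SL_{n-1}) \to H_{n-1}(SL_n)$ is injective; combined with the surjectivity from Theorem \ref{thm:SLstability}, this yields the isomorphism.

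I expect the main delicate point to be the compatibility between the cross product in group homology and the ring structures on $\B(A)$, $S(A)$, and $\hat{K}^{MW}(A)$. This compatibility is precisely what the construction of the multiplicative structure on $S(A)$ in Section \ref{subsec:SpSeqLocRings} and the multiplicativity of $T$ in Proposition \ref{prop:SToKMW} were designed to provide, so the induction unfolds cleanly once these ingredients are in place.
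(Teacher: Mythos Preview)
Your proposal is correct and follows essentially the same approach as the paper. For part~(1) the paper likewise invokes the stability theorem (via Theorem~\ref{thm:Estability}, equivalent here since $E_n(A)=SL_n(A)$) and then composes Lemma~\ref{lem:RelHisSn} with Proposition~\ref{prop:SnisKMWn}; for part~(2) the paper argues that the graded ring map $\Tens_{\Z[A^*]}H_2(SL_2A)\to \B(A)\to \hat{K}^{MW}(A)$ is an isomorphism in degree~$2$ and hence surjective in all even degrees because the even part of $\hat{K}^{MW}(A)$ is generated in degree~$2$ --- your induction via the cross product is precisely an unrolling of this one-line observation, and your remark about the compatibility of products with the maps $\B(A)\to S(A)\to\hat{K}^{MW}(A)$ correctly identifies the implicit ingredient.
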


\begin{proof}
For $i<n$, the vanishing of homology follows from Theorem \ref{thm:Estability} as $\sr(A)=1$ and $E_n(A)=SL_n(A)$.
 
The statement of the theorem is clear for $n\leq 1$.
So, assume $n\geq 2$.
For $2\leq n\leq n_0$, 
the identification with Milnor-Witt $K$-theory follows from Proposition \ref{prop:SnisKMWn}
together with Lemma \ref{lem:RelHisSn}.

For the second part, assume $n$ is even.
We have maps of graded $\Z[A^*]$-algebras
$$\Tens_{\Z[A^*]} H_2(SL_2(A)) \to \B(A) = \bigoplus_{n\geq 0}H_n(SL_n(A)) \to S(A) \stackrel{T}{\to} \hat{K}^{MW}(A)$$
where $H_2(SL_2(A))$ is placed in degree $2$ and $H_0(SL_0A) = \Z[A^*]$, by our convention for $SL_0(A)$.
The composition is an isomorphism in degrees $0$ and $2$.
Since the target ring $\hat{K}^{MW}(A)$ is generated in degree $1$, its even part is generated in degree $2$, and the composition is surjective in even degrees.
The claim follows.
\end{proof}

\begin{theorem}
\label{thm:BassConjLocalRingsInText}
Let $A$ be a commutative local ring with infinite residue field.
Then, for $i\geq 0$, the natural homomorphism
$$\pi_iBGL_{n-1}^+(A) \to \pi_iBGL_n^+(A)$$
is an isomorphism for $n\geq i + 2$ and surjective for $n \geq i+1$.
Moreover, for $n\geq 2$ there is an exact sequence
$$\pi_nBGL_{n-1}^+(A) \to \pi_nBGL_{n}^+(A) \to K^{MW}_n(A) \to \pi_{n-1}BGL_{n-1}^+(A)
\to \pi_{n-1}BGL_{n}^+(A).$$
\end{theorem}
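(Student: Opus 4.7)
The first assertion is essentially a direct corollary of the already established general Bass conjecture. Since $A$ is a commutative local ring with infinite residue field, its stable rank is $\sr(A) = 1$ (and $A$ has many units), so applying Theorem \ref{thm:BassConjecture} with $\sr(A) = 1$ immediately yields that $\pi_i BGL_{n-1}^+(A) \to \pi_i BGL_n^+(A)$ is an isomorphism for $n \geq i+2$ and surjective for $n \geq i+1$. No further work is needed here.

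For the exact sequence, the plan is to use the long exact sequence of the homotopy fibration
$$\F_n(A) \to BGL_{n-1}^+(A) \to BGL_n^+(A),$$
which produces the five-term sequence
$$\pi_n BGL_{n-1}^+(A) \to \pi_n BGL_n^+(A) \to \pi_{n-1}\F_n(A) \to \pi_{n-1}BGL_{n-1}^+(A) \to \pi_{n-1}BGL_n^+(A).$$
The remaining task is to identify $\pi_{n-1}\F_n(A)$ with $K_n^{MW}(A)$ for $n \geq 2$.

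This identification proceeds in two steps. First, Theorem \ref{thm:BassConjectureBis} (which was proved by the same Serre spectral sequence / Hurewicz argument as in the proof of Theorem \ref{thm:BassConjecture}, but exploiting that $SL_r(A) = E_r(A)$ is the maximal perfect subgroup of $GL_r(A)$ for local $A$) gives
$$\pi_{n-1}\F_n(A) \cong H_n(SL_n(A), SL_{n-1}(A)).$$
Second, Theorem \ref{thm:HnIsKMW} identifies this relative homology group with $\hat{K}_n^{MW}(A)$, and Theorem \ref{thm:KhatIsK} then identifies $\hat{K}_n^{MW}(A)$ with $K_n^{MW}(A)$ in degrees $n \geq 2$ (which is applicable since the residue field of $A$ is infinite, hence has at least $4$ elements). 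Substituting these identifications into the five-term exact sequence yields precisely the desired sequence.

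There is no serious obstacle; all the hard work has already been done in the earlier sections, so this theorem is essentially a bookkeeping corollary of Theorems \ref{thm:BassConjecture}, \ref{thm:BassConjectureBis}, \ref{thm:HnIsKMW} and \ref{thm:KhatIsK}. The only small subtlety is to note that Theorem \ref{thm:BassConjectureBis} requires $n \geq 2$, which is exactly the range in which the exact sequence is being claimed.
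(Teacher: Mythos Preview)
Your proposal is correct and follows essentially the same approach as the paper: the paper's proof is simply the one-line statement that the theorem follows from Theorem~\ref{thm:HnIsKMW} in view of Theorem~\ref{thm:BassConjectureBis}, and you have merely unpacked this by writing out the long exact sequence of the fibration and tracing the identifications through Theorems~\ref{thm:BassConjectureBis}, \ref{thm:HnIsKMW}, and~\ref{thm:KhatIsK}. The only cosmetic difference is that for the first assertion you invoke Theorem~\ref{thm:BassConjecture} with $\sr(A)=1$, whereas the paper derives it from the vanishing $\pi_{i-1}\F_n(A)=0$ for $i<n$ in Theorem~\ref{thm:BassConjectureBis}; both are valid.
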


\begin{proof}
The theorem follows from Theorem \ref{thm:HnIsKMW} in view of Theorem \ref{thm:BassConjectureBis}.
\end{proof}

\subsection{Prestability}

\begin{center}
{\em In this subsection, $A$ is a commutative local ring with infinite residue field.}
\end{center}

This subsection is devoted to an explicit computation of the kernel and cokernel of the stabilization map in homology at the edge of stabilization as was done in \cite{HutchinsonTao:Stability} for characteristic zero fields.

Assume that $A$ is a local ring for which the Milnor conjecture on bilinear forms holds, that is, 
the ring homomorphism defined by Milnor \cite{milnor:KMpaper} is an isomorphism
\begin{equation}
\label{eqn:MilnorConj}
K^M_*(A)/2 \cong \bigoplus_{n\geq 0} I^n(A)/I^{n+1}(A)
\end{equation}
where $I(A)\subset W(A)$ is the fundamental ideal in the Witt ring of $A$.
By the work of Voevodsky and collaborators \cite{voevodskyCollab} and its extension by Kerz \cite[Theorem 7.10]{Kerz}, the map (\ref{eqn:MilnorConj}) is an isomorphism if $A$ is local and contains an infinite field of characteristic not $2$.
The map is also an isomorphism for any henselian local ring $A$ with $\frac{1}{2}\in A$ as both sides agree with their value at the residue field of $A$.
Using the isomorphism (\ref{eqn:MilnorConj}) we obtain a commutative diagram 
\begin{equation}
\label{eqn:MorelSquare}
\xymatrix{
K^{MW}_n(A) \ar@{->>}[d] \ar[r]^{\eta^n} & I^n(A) \ar@{->>}[d]\\
K^M_n(A) \ar@{->>}[r] & K^M_n(A)/2.
}
\end{equation}
In the following theorem we will assume this diagram to be cartesian.
By \cite{morel:puissances}, this is the case for fields whose characteristic is different from $2$.
This was generalized in \cite{GilleEtAl} to commutative local rings containing an infinite field of characteristic different from $2$.
So, our theorem holds in this case.

\begin{theorem}
\label{thm:KerCokerOdd}
Let $A$ be a commutative local ring with infinite residue field.
Assume that the map (\ref{eqn:MilnorConj}) is an isomorphism and that 
the diagram (\ref{eqn:MorelSquare}) is cartesian for all $n\geq 0$.
Then for $n\geq 3$ odd we have exact sequences
$$H_n(SL_{n-1}A) \to H_n(SL_nA) \to 2K^M_n(A) \to 0,$$

$$0 \to I^n(A) \to H_{n-1}(SL_{n-1}A) \to H_{n-1}(SL_{n}A) \to 0.$$
\end{theorem}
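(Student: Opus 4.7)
My plan is to feed Theorem~\ref{thm:HnIsKMW} into the long exact sequence of the pair and then use the cartesian hypothesis on (\ref{eqn:MorelSquare}) to identify the resulting images.

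Since $H_i(SL_nA,SL_{n-1}A)=0$ for $i<n$ and $H_n(SL_nA,SL_{n-1}A)\cong K^{MW}_n(A)$ by Theorem~\ref{thm:HnIsKMW}, the long exact sequence of the pair truncates to
\[
H_n(SL_{n-1}A)\to H_n(SL_nA)\xrightarrow{q_*}K^{MW}_n(A)\xrightarrow{\partial}H_{n-1}(SL_{n-1}A)\to H_{n-1}(SL_nA)\to 0.
\]
Under the hypothesis that (\ref{eqn:MorelSquare}) is cartesian we have $K^{MW}_n(A)\cong K^M_n(A)\times_{K^M_n(A)/2}I^n(A)$, and the subgroup $2K^M_n(A)\subset K^{MW}_n(A)$ is precisely the kernel of the projection to $I^n(A)$. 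Both exact sequences of the theorem follow from the single statement
\[
\operatorname{image}(q_*)=2K^M_n(A)\subset K^{MW}_n(A),
\]
since then $\operatorname{image}(\partial)\cong K^{MW}_n(A)/2K^M_n(A)\cong I^n(A)$ via the cartesian square.

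For the inclusion $\operatorname{image}(q_*)\subseteq 2K^M_n(A)$, equivalently $\ker(\partial)\subseteq 2K^M_n(A)$, I would identify the composition
\[
K^{MW}_n(A)\xrightarrow{\partial}H_{n-1}(SL_{n-1}A)\twoheadrightarrow H_{n-1}(SL_{n-1}A,SL_{n-2}A)\cong K^{MW}_{n-1}(A)
\]
(the surjection is available because $n-1$ is even, by Theorem~\ref{thm:HnIsKMW}) with multiplication by $\eta$. Once that is done, $\ker(\partial)\subseteq\ker(\eta)=2K^M_n(A)$ by the cartesian square. The $\eta$-identification is the main technical obstacle; my plan is to extract it from the multiplicative spectral sequence of Subsection~\ref{subsec:SpSeqLocRings}, using the graded ring map $T\colon S(A)\to\hat{K}^{MW}(A)$ of Proposition~\ref{prop:SToKMW}, the isomorphism $F_{n-1,1}/F_{n,0}\cong\ker(H_{n-1}(SL_{n-1}A)\to H_{n-1}(SL_nA))$ coming from Lemma~\ref{lem:RelHisSn} and Proposition~\ref{prop:NS_Prop_2.6}, and the centrality of $[-1,1]$ from Corollary~\ref{cor:Centrality}; compatibility of the $S(A^n)$ and $S(A^{n-1})$ filtrations under the stabilization map should translate left multiplication by $[-1,1]$ in $S(A)$ into multiplication by $\eta$ in $\hat{K}^{MW}(A)$ at the level of the $\bar S$-quotients.

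For the reverse $\operatorname{image}(q_*)\supseteq 2K^M_n(A)$, I would compare with the Nesterenko--Suslin diagram
\[
\xymatrix{
H_n(SL_nA)\ar[r]^{q_*}\ar[d] & K^{MW}_n(A)\ar@{->>}[d]^{\pi_M}\\
H_n(GL_nA)\ar@{->>}[r] & K^M_n(A)
}
\]
whose bottom row is surjective. By Lemma~\ref{Hatlem:basicKMWformulasTwo}(\ref{lem:TowardsGWactionOnTwistedK:6}) the distinguished elements $h\cdot[a_1,\dots,a_n]=[a_1^2,a_2,\dots,a_n]\in K^{MW}_n(A)$ lie in $\ker(K^{MW}_n(A)\to I^n(A))\cong 2K^M_n(A)$ and project to $2\{a_1,\dots,a_n\}$ in $K^M_n(A)$. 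Restricting the Nesterenko--Suslin torus construction to the subtorus $T\cap SL_nA=\{\mathrm{diag}(b_1,\dots,b_{n-1},(b_1\cdots b_{n-1})^{-1})\}\subset SL_nA$ and using the identity $\{b,b\}=\{b,-1\}$ with graded anticommutativity in $K^M$, these symbols unpack (for $n$ odd) into explicit $2$-multiples in $K^M_n(A)$ which collectively span $2K^M_n(A)$; together with the upper bound this pins down $\operatorname{image}(q_*)$.
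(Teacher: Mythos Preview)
Your framework is correct and matches the paper's: reduce to $\im(q_*)=2K^M_n(A)$ inside $K^{MW}_n(A)$, with the two exact sequences falling out via the cartesian square. The paper proves both inclusions via the same mechanism, which you are missing.

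For the $\eta$-identification (your inclusion $\subseteq$): your spectral-sequence sketch is vague. The paper's argument (Lemma~\ref{lem:PreHutchinsonTaoComps}(\ref{item2:lem:PreHutchinsonTaoComps})) is short: the triple boundaries $\partial_n\colon K^{MW}_n\to K^{MW}_{n-1}$ assemble into a left $\B(A)$-module map, and $\B(A)\to K^{MW}(A)$ is surjective in even degrees (Theorem~\ref{thm:HnIsKMW}). Lifting $[a_1,\dots,a_{n-1}]$ to $b\in\B_{n-1}(A)$ gives $\partial_n([a_1,\dots,a_n])=b\cdot\partial_1([a_n])=\langle\langle a_n\rangle\rangle[a_1,\dots,a_{n-1}]$, which is $\eta$-multiplication.

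For the reverse inclusion $\supseteq$, your torus argument is a genuine gap. The subtorus $T\cap SL_n$ has rank $n-1$, so $H_n$ of it contains no pure $n$-fold product of $H_1$-classes; the cycles you would need involve an $H_2(A^*)$ factor, and you have not computed their image in $K^M_n$. The paper instead proves $\delta_nh_n=0$ (equivalently $\im(h_n)\subset\ker(\delta_n)=\im(q_*)$) by the same module trick: the pair boundaries $\delta_n\colon K^{MW}_n\to H_{n-1}(SL_{n-1}A)$ also form a left $\B(A)$-module map, so $\delta_n(h[a_1,\dots,a_n])=b\cdot\delta_3(h[a_{n-2},a_{n-1},a_n])$ with $b\in\B_{n-3}(A)$ lifting $[a_1,\dots,a_{n-3}]$. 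The base case $\delta_3h_3=0$ is free: at $n=3$ the comparison square of Lemma~\ref{lem:PreHutchinsonTaoComps}(\ref{item3:lem:PreHutchinsonTaoComps}) is bicartesian because $SL_1$ is trivial. Since $\ker(\eta_n)=\im(h_n)$ from the cartesian hypothesis, the sandwich $\im(h_n)\subset\ker(\delta_n)\subset\ker(\partial_n)=\ker(\eta_n)=\im(h_n)$ forces equality.

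So the missing idea is that \emph{both} boundary maps are left $\B(A)$-module homomorphisms, and surjectivity of $\B\to K^{MW}$ in even degrees reduces each statement to a trivial low-degree check. Replace the torus argument with this.
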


The proof requires the following lemma.

\begin{lemma}
\label{lem:PreHutchinsonTaoComps}
Let $A$ be a local ring with infinite residue field for which
(\ref{eqn:MilnorConj}) is an isomorphism and the square
(\ref{eqn:MorelSquare}) is cartesian for all $n\geq 0$.
Then the following hold.
\begin{enumerate}
\item
\label{item1:lem:PreHutchinsonTaoComps}
The following sequence is exact
$$K^{MW}_n(A) \stackrel{h_n}{\to} K^{MW}_n(A) \stackrel{\eta_n}{\to} K^{MW}_{n-1}(A) \to K^M_{n-1}(A) \to 0$$
where $h_n$ and $\eta_n$ are multiplication with $h=1+\langle -1\rangle$ and $\eta$, respectively.
\item
\label{item2:lem:PreHutchinsonTaoComps}
For $n\geq 3$ odd, under the isomorphism of Theorem \ref{thm:HnIsKMW}, the boundary map 
$$\partial_n: H_n(SL_nA,SL_{n-1}A) \to H_{n-1}(SL_{n-1}A,SL_{n-2}A)$$
of the triple $(SL_nA,SL_{n-1}A,SL_{n-2}A)$ is multiplication with $\eta$.
\item
\label{item3:lem:PreHutchinsonTaoComps}
The following square is bicartesian for $n\geq 3$ odd
$$\xymatrix{
H_{n-1}(SL_{n-1}A) \ar@{->>}[r]  \ar@{->>}[d] & H_{n-1}(SL_{n}A) \ar@{->>}[d]\\
H_{n-1}(SL_{n-1}A,SL_{n-2}A) \ar@{->>}[r]  & H_{n-1}(SL_nA, SL_{n-2}A).
}$$
\end{enumerate}
\end{lemma}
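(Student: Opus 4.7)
For part (\ref{item1:lem:PreHutchinsonTaoComps}), I would first verify $\eta h = 0$ by direct computation: since $h = 1+\langle -1\rangle = 2 + \eta[-1]$, one has $\eta h = 2\eta + \eta^2[-1] = 0$ by relation (4) of Definition \ref{dfn:HopkinsMorelMilnorWittK}. Exactness at $K^M_{n-1}$ is the standard identification $K^M_* = K^{MW}_*/(\eta)$, which follows from comparing the defining presentations. For exactness in the middle, I would exploit the cartesianness of (\ref{eqn:MorelSquare}): the map $\eta_n$ factors as $K^{MW}_n \twoheadrightarrow I^n \hookrightarrow K^{MW}_{n-1}$, the first arrow being the right vertical of (\ref{eqn:MorelSquare}). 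By the pullback property, $\ker(K^{MW}_n \to I^n)$ equals the preimage of $2 K^M_n \subset K^M_n$ under the surjection $K^{MW}_n \twoheadrightarrow K^M_n$. Since $h$ maps to $2$ in $K^M_0$ and to $0$ in the Witt ring (the hyperbolic plane vanishes in $W(A)$), multiplication by $h$ lands precisely in this preimage, giving $hK^{MW}_n = \ker \eta_n$.

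For part (\ref{item2:lem:PreHutchinsonTaoComps}), I would leverage the multiplicative structure from Subsection \ref{subsec:SpSeqLocRings}: the graded module $\bigoplus_n H_n(SL_n, SL_{n-1})$ is a $\B(A)$-bimodule, the map $T$ of Proposition \ref{prop:SToKMW} is multiplicative, and the triple boundary $\partial_n$ is compatible with the $\B(A)$-action. Under the identification with Milnor-Witt $K$-theory (Theorem \ref{thm:KhatIsK}), multiplication by $\eta$ is likewise $\Z[A^*]$-linear and graded of degree $-1$, and it is compatible with the multiplication on $K^{MW}$. Checking $T_{n-1}\partial_n = \eta \, T_n$ therefore reduces, via multiplicativity on both sides and bimodule compatibility, to the base case $n=3$: one writes $[a_1,\dots,a_n] = [a_1, a_2, a_3]\cdot [a_4,\dots,a_n]$ and propagates the identity using Leibniz-type formulas. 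For $n=3$, I would represent a generator of $\hat K^{MW}_3$ by an explicit cycle in $Z_3(A^3)$ as in Remark \ref{rem:NS:3.14} and Proposition \ref{prop:SnPresentation}, apply the standard boundary of the triple filtration to produce a chain in $Z_2(A^2)$, and compare its class in $\bar S_2(A) \cong \hat K^{MW}_2$ with the expansion of $\eta [a,b,c]$ derived from Lemma \ref{Hatlem:basicKMWformulas}. The main obstacle will be this chain-level computation with its sign and coefficient bookkeeping.

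For part (\ref{item3:lem:PreHutchinsonTaoComps}), I would first check that all four arrows are surjective: the top by Theorem \ref{thm:Estability} (since $\sr(A)=1$); the left vertical by Theorem \ref{thm:HnIsKMW}(2) applied to the even integer $n-1$; the right vertical from the pair long exact sequence of $(SL_n, SL_{n-2})$ together with the fact that $H_{n-2}(SL_{n-2}) \to H_{n-2}(SL_n)$ is an isomorphism (composing $H_{n-2}(SL_{n-2}) \cong H_{n-2}(SL_{n-1})$ from Theorem \ref{thm:HnIsKMW}(2) with stability); and the bottom by the long exact sequence of the triple $(SL_n, SL_{n-1}, SL_{n-2})$ together with the vanishing $H_{n-1}(SL_n, SL_{n-1}) = 0$. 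A commutative square of surjections of abelian groups is bicartesian iff the induced map between the kernels of the horizontal arrows is an isomorphism. Both such kernels are images under $\partial$-maps out of $K^{MW}_n = H_n(SL_n, SL_{n-1})$: the top via $\partial_n$ and the bottom via $\partial'_n = g\circ\partial_n$, and the induced map on kernels is $g$ itself, hence automatically surjective. Injectivity reduces to $\im(\partial_n)\cap\ker(g) = 0$; combining parts (\ref{item1:lem:PreHutchinsonTaoComps}) and (\ref{item2:lem:PreHutchinsonTaoComps}) this amounts to the inclusion $hK^{MW}_n \subseteq \ker \partial_n$, equivalently that $hK^{MW}_n$ lifts to the image of $H_n(SL_n) \to H_n(SL_n,SL_{n-1})$. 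The main obstacle is this last inclusion; I would approach it via the $A^*$-action on the relative homology and the observation that $\langle -1\rangle$ acts through a cycle-level symmetrization that, combined with the even-index surjectivity of Theorem \ref{thm:HnIsKMW}(2), produces the required lift of $h\cdot x$.
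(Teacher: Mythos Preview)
Your plan for part (\ref{item1:lem:PreHutchinsonTaoComps}) is essentially the paper's argument: both identify $K^{MW}_n$ with the fibre product $I^n\times_{K^M_n/2}K^M_n$ via (\ref{eqn:MorelSquare}) and read off exactness from the evident exact sequence on fibre products.

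For part (\ref{item2:lem:PreHutchinsonTaoComps}) you reach the right framework (the $\partial_n$ assemble into a left $\B(A)$-module map, and one reduces to a base case via lifting even-degree factors to $\B(A)$), but you pick the wrong base case. Your decomposition $[a_1,a_2,a_3]\cdot[a_4,\dots,a_n]$ has the odd-degree factor on the left, which cannot be lifted to $\B(A)$; you would need $b\cdot[a_{n-2},a_{n-1},a_n]$ with $b\in\B_{n-3}(A)$ lifting $[a_1,\dots,a_{n-3}]$, and then an explicit chain-level computation of $\partial_3$. The paper instead lifts $[a_1,\dots,a_{n-1}]$ to $b\in\B_{n-1}(A)$ (legitimate since $n-1$ is even) and reduces to $\partial_1$, which is immediate: $\partial_1([a])=\langle\langle a\rangle\rangle$ is just the inclusion $I[A^*]\subset\Z[A^*]$. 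This bypasses your ``sign and coefficient bookkeeping'' entirely.

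For part (\ref{item3:lem:PreHutchinsonTaoComps}) your reduction to the condition $hK^{MW}_n\subseteq\ker\delta_n$ (equivalently $\delta_n h_n=0$) is correct and matches the paper's, modulo a typo where you write $\ker\partial_n$ but mean $\ker\delta_n$. However, your proposed verification via ``cycle-level symmetrization'' is vague, and there is a much cleaner argument you have overlooked. For $n=3$ the square in (\ref{item3:lem:PreHutchinsonTaoComps}) has $SL_{n-2}=SL_1=1$, so both vertical maps are isomorphisms and the square is trivially bicartesian; hence $\delta_3 h_3=0$. For $n\geq 5$ odd, the $\B(A)$-module structure on $\delta$ gives
\[
\delta_n(h\cdot[a_1,\dots,a_n])=b\cdot\delta_3(h\cdot[a_{n-2},a_{n-1},a_n])=b\cdot 0=0,
\]
where $b\in\B_{n-3}(A)$ lifts $[a_1,\dots,a_{n-3}]$. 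So the same module-structure trick that you intended for (\ref{item2:lem:PreHutchinsonTaoComps}) actually resolves (\ref{item3:lem:PreHutchinsonTaoComps}) once you notice the $n=3$ case is free.
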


\begin{proof}
The sequence in (\ref{item1:lem:PreHutchinsonTaoComps}) is exact since it is isomorphic to the exact sequence
\begin{equation}
\label{eqn:MWexSeqAsFibProd}
I^n\times_{k^M_n}K^M_n \stackrel{(0,2)}{\longrightarrow} I^n\times_{k^M_n}K^M_n \stackrel{(1,0)}{\longrightarrow} I^{n-1}\times_{k^M_{n-1}}K^M_{n-1} \longrightarrow K^M_{n-1} \to 0
\end{equation}
in view of the cartesian square (\ref{eqn:MorelSquare}).

We prove (\ref{item2:lem:PreHutchinsonTaoComps}).
Under the isomorphism $\hat{K}^{MW}_n(A)\cong K^{MW}_n(A)$ for $n\geq 2$ proved in Theorem \ref{thm:KhatIsK}, multiplication by $\eta\in K^{MW}_{-1}(A)$ corresponds to the map 
$$\eta_n:\hat{K}^{MW}_n(A) \to \hat{K}^{MW}_{n-1}(A): [a_1,...,a_n]\mapsto \langle\langle a_n\rangle \rangle [a_1,...,a_{n-1}].$$
We will show that for all odd $n\geq 1$ the map in  (\ref{item2:lem:PreHutchinsonTaoComps}) is $\eta_n$.
This is clear for $n=1$.
The map
\begin{equation}
\label{eqn:BtoKMW}
\B(A)=\bigoplus_{n\geq 0}H_n(SL_nA) \to \bigoplus_{n\geq 0} \hat{K}^{MW}_n(A)
\end{equation}
is a $\Z[A^*]$-algebra homomorphism which is surjective in even degrees (Theorem \ref{thm:HnIsKMW}).
Moreover, the maps in (\ref{item2:lem:PreHutchinsonTaoComps}) assemble to a map of left $B$-modules
$$\partial:\bigoplus_{n\geq 0} H_n(SL_nA,SL_{n-1}A) \to \bigoplus_{n\geq 0} H_n(SL_n(A),SL_{n-1}A) \cong \hat{K}^{MW}(A).$$
For $n\geq 3$ odd and $[a_1,...,a_n]\in H_n(SL_nA,SL_{n-1}A)=\hat{K}^{MW}_n(A)$ choose a lift $b\in \B_{n-1}(A)$ of $[a_1,...,a_{n-1}]$ for the map (\ref{eqn:BtoKMW}).
Then
$$\partial_n([a_1,...,a_n]) = b\cdot \partial_1([a_n]) = b\cdot \langle \langle a_n\rangle\rangle = \langle \langle a_n\rangle\rangle [a_1,...,a_{n-1}].$$
This proves (\ref{item2:lem:PreHutchinsonTaoComps}).

We prove (\ref{item3:lem:PreHutchinsonTaoComps}).
The horizontal maps in diagram (\ref{item3:lem:PreHutchinsonTaoComps}) are surjective because we have $H_{n-1}(SL_nA,SL_{n-1}A)=0$.
The left vertical map is surjective, by Theorem \ref{thm:HnIsKMW}.
It follows that the right vertical map is also surjective.
The total complex of the square in (\ref{item3:lem:PreHutchinsonTaoComps}) is part of a Mayer-Vietoris type long exact sequence with boundary map the composition
$$H_n(SL_nA,SL_{n-2}A) \stackrel{\alpha_n}{\to} H_n(SL_nA,SL_{n-1}A) \stackrel{\delta_n}{\to} H_{n-1}(SL_{n-1}A).$$
Thus, the square in (\ref{item3:lem:PreHutchinsonTaoComps}) is bicartesian if and only if $\delta_n\alpha_n=0$.
From (\ref{item1:lem:PreHutchinsonTaoComps}) and (\ref{item2:lem:PreHutchinsonTaoComps}) we have
$\im(\alpha_n) = \ker(\eta_n) = \im(h_n)$.
Hence, the square in (\ref{item3:lem:PreHutchinsonTaoComps}) is bicartesian if and only if $\delta_nh_n=0$.
For $n=3$, the square in (\ref{item3:lem:PreHutchinsonTaoComps}) is bicartesian
since the vertical maps are isomorphisms.
In particular, $\delta_3h_3=0$.
Now, the map
$$\delta: \bigoplus_{n\geq 0} H_n(SL_n(A),SL_{n-1}A) \to \bigoplus_{n\geq 0}H_n(SL_nA)$$
is a left $\B(A)$-module map.
Take $[a_1,...,a_n]\in K^{MW}_n(A)=H_n(SL_nA,SL_{n-1}A)$ where $n\geq 5$ is odd.
Then the element $[a_1,...,a_{n-3}]\in \hat{K}^{MW}_{n-3}(A)$ lifts to $b\in \B_{n-3}(A)$
and
$$\delta_n(h\cdot [a_1,...,a_n]) = b\cdot \delta_3h_3 ([a_{n-2},a_{n-1},a_n]) = b\cdot 0 =0$$
\end{proof}

\begin{proof}[Proof of Theorem \ref{thm:KerCokerOdd}]
From Lemma \ref{lem:PreHutchinsonTaoComps} we have (using the notation of that lemma)
$$\ker\left(H_{n-1}(SL_{n-1}A) \to H_{n-1}(SL_nA)\right) = \im(\partial_n) = \im(\eta_n) = I^n(A)$$
and
$$\coker\left(H_n(SL_{n-1}A) \to H_n(SL_nA)\right) = \ker(\partial_n) = \ker(\eta_n) = \im(h_n)=2K^M_n(A)$$
where the last equality follows because of the exact sequence of Lemma \ref{lem:PreHutchinsonTaoComps} (\ref{item1:lem:PreHutchinsonTaoComps})
being isomorphic to (\ref{eqn:MWexSeqAsFibProd}).
\end{proof}

\section{Euler class groups}
\label{sec:EulerClGps}

Let $X$ be a separated noetherian scheme, and denote by $\Open_X$ the category of Zariski open subsets of $X$ and inclusions thereof as morphisms.
For a simplicial presheaf $F:\Open_X^{op}\to \sSets$ on the small Zariski site of $X$, we denote by 
$$[X,F]_{\Zar} = \pi_0(F_{\Zar}(X))$$
the set of maps from $X$ to $F$ in the homotopy category of simplicial presheaves on $X$ for the Zariski-topology \cite{BrownGersten}.
This is the set of 
path components of the simplicial set $F_{\Zar}(X)$ where $F \to F_{\Zar}$ is a map of simplicial presheaves which induces a weak equivalence of simplicial sets $F_x \to (F_{\Zar})_x$ on all stalks, $x\in X$, and $F_{\Zar}$ is object-wise weakly equivalent to its fibrant model in the Zariski topology \cite{BrownGersten}.
The latter means that $F_{\Zar}(\emptyset)$ is contractible and $F_{\Zar}$ sends a square
\begin{equation}
\label{eqn:MV}
\renewcommand\arraystretch{1.5}
\begin{array}{ccc}
U\cup V & \leftarrow & U \\
\uparrow & & \uparrow \\
V & \leftarrow & U\cap V
\end{array}
\end{equation}
of inclusions of open subsets of $X$ to a homotopy cartesian square of simplicial sets; see \cite[Theorem 4]{BrownGersten}.

\begin{example}
\label{ex:BGLn}
For the presheaf $BGL_n$ defined by $U \mapsto BGL_n(\Gamma(U,O_X))$ write
$B_{\Zar}GL_n$ for $(BGL_n)_{\Zar}$.
Then $B_{\Zar}GL_n(X)$ is a (functorial) model of the classifying space $B\Vect_n(X)$ of the category $\Vect_n(X)$ of rank $n$-vector bundles on $X$ with isomorphisms as morphisms.
For the inclusion of the automorphisms of $O_X^n$ into $\Vect_n(X)$ induces a map of simplicial presheaves
$BGL_n \to B\Vect_n$ which is a weak equivalence at the stalks of $X$ as vector bundles over local rings are free.
Moreover, $B_{\Zar}GL_n = B\Vect_n$ sends the squares (\ref{eqn:MV}) to homotopy cartesian squares, by an application of Quillen's Theorem B \cite{quillen:higherI}, for instance.
Hence, 
$$\Phi_n(X)=[X,BGL_n]_{\Zar}$$
is the set $\pi_0B\Vect_n(X)$ of isomorphism classes of rank $n$ vector bundles on $X$.
\end{example}

\begin{example}
Similar to Example \ref{ex:BGLn},  
the simplicial presheaf $BSL_n$ defined by $U \mapsto BSL_n(\Gamma(U,O_X))$ has a model $B_{\Zar}SL_n$ where $B_{\Zar}SL_n(X)$ is the classifying space $B\Vect^+_n(X)$  of the category $\Vect^+_n(X)$ of oriented rank $n$-vector bundles on $X$ with isomorphisms as morphisms.
Here, an oriented vector bundle of rank $n$ is a pair $(V,\omega)$ consisting of a vector bundle $V$ of rank $n$ and an isomorphism $\omega: \Lambda^nV\cong O_X$ of line bundles called orientation.
Morphisms of oriented vector bundles are isomorphisms of vector bundles preserving the orientation.
So, the set 
$$\Phi_n^+(X)=[X,BSL_n]_{\Zar}$$
is the set $\pi_0B\Vect^+_n(X)$ of isomorphism classes of rank $n$ oriented vector bundles on $X$.
\end{example}

\begin{example}
\label{ex:EMandCoh}
Let $n\geq 2$ be an integer, and let $F$ be a pointed simplicial presheaf such that $\pi_i(F_x)=0$ for $i\neq n$ and $x\in X$ where $F_x$ denotes the stalk of $F$ at $x\in X$.
Then there is a natural bijection of pointed sets
$$[X,F]_{\Zar}\cong H^n_{\Zar}(X,\tilde{\pi}_nF)$$
where the right hand side denotes Zariski cohomology of $X$ with coefficients in the sheaf of abelian groups $\tilde{\pi}_nF$ associated with the presheaf $U \mapsto \pi_n(F(U))$ \cite[Propositions 2 and 3]{BrownGersten}.
\end{example}

Denote by $\H_n(SL_n,SL_{n-1})$ the Zariski sheaf associated to the presheaf 
$$U\mapsto H_n(SL_n(\Gamma(U,O_X)),SL_{n-1}(\Gamma(U,O_X))).$$
Similarly, denote by $\K_n^{MW}$ the sheaf associated to the presheaf
$U\mapsto K_n^{MW}(\Gamma(U,O_X))$.

\begin{lemma}
\label{lem:SheafHnSLnIsKMW}
Let $X$ be a scheme with infinite residue fields.
Then for $n\geq 2$ there is an isomorphism of sheaves of abelian groups on $X$
$$\H_n(SL_n,SL_{n-1}) \cong \K_n^{MW}.$$
\end{lemma}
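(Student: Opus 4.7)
The plan is to reduce this global sheaf statement to the pointwise statement of Theorem \ref{thm:HnIsKMW}. First I would produce a natural transformation of presheaves on $\Open_X$ from $R \mapsto H_n(SL_n(R), SL_{n-1}(R))$ to $R \mapsto K^{MW}_n(R)$. For an arbitrary commutative ring $R$, composing the map of Lemma \ref{lem:RelHisSn} with the Steinberg-relation quotient $\sigma^{-1}\bar S_n(R) \to \hat K^{MW}_n(R)$ (inverse of the iso in Proposition \ref{prop:SnisKMWn}, whose defining formula on $S_n(R)$ is the functorial map $T_n$ of Proposition \ref{prop:SToKMW}) and then with $\hat K^{MW}_n(R) \to K^{MW}_n(R)$ from Theorem \ref{thm:KhatIsK} yields a homomorphism
\[
H_n(SL_n(R), SL_{n-1}(R)) \longrightarrow K^{MW}_n(R).
\]
Each map in this composition is described by an explicit formula on generators (for $T_n$, by $[a_1,\dots,a_n]\mapsto[a_1,\dots,a_n]$) or by a functorial map of complexes of $GL_n(R)$-modules (for the map of Lemma \ref{lem:RelHisSn}), hence is manifestly natural in $R$.

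Next I would sheafify, obtaining a morphism $\H_n(SL_n,SL_{n-1}) \to \K^{MW}_n$ of Zariski sheaves on $X$, and then check that it is an isomorphism on stalks. For a point $x\in X$, the stalk of the presheaf $U \mapsto H_n(SL_n \Gamma(U,O_X),SL_{n-1}\Gamma(U,O_X))$ is $H_n(SL_n(O_{X,x}),SL_{n-1}(O_{X,x}))$, since the $SL_m$ functor commutes with filtered colimits of rings and integral group homology commutes with filtered colimits of groups. Similarly, the explicit presentation of $K^{MW}_*$ by finite sequences of units and relations involving only finitely many units shows that Milnor–Witt $K$-theory commutes with filtered colimits of commutative rings, so the stalk of $\K^{MW}_n$ at $x$ is $K^{MW}_n(O_{X,x})$.

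Finally, $O_{X,x}$ is a commutative local ring whose residue field $\kappa(x)$ is infinite by hypothesis, so in particular has cardinality at least $4$. Theorems \ref{thm:HnIsKMW} and \ref{thm:KhatIsK} therefore assert that the induced map on stalks at $x$ is an isomorphism for every $n\geq 2$, and the lemma follows. The only points that require verification beyond appealing to already-proved theorems are the naturality of the stalkwise isomorphism and the commutation of the relevant functors with filtered colimits; both are routine inspections of the explicit constructions, so I do not expect any genuine obstacle here.
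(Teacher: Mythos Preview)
Your overall strategy---construct a map of presheaves and check it is an isomorphism on stalks via Theorem~\ref{thm:HnIsKMW}---is exactly right, and the stalk computation is fine. The gap is in the construction of the presheaf map itself. The map $T_n:S_n(A)\to \hat K^{MW}_n(A)$ of Proposition~\ref{prop:SToKMW} is \emph{only defined for local $A$ with infinite residue field}: its well-definedness rests on the presentation of $S_n(A)$ in Proposition~\ref{prop:SnPresentation}, which in turn uses the acyclicity of $\tilde C(A^n)$ (Lemma~\ref{lem:QuillenAcyclicity}) and the transversality argument (Remark~\ref{rem:NS:3.14}), both of which require the local hypothesis. For an arbitrary section ring $R=\Gamma(U,O_X)$ there is no reason the symbols $[a_1,\dots,a_n]$ generate $S_n(R)$, nor that the relations among them are only those of Proposition~\ref{prop:SnPresentation}. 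So the phrase ``explicit formula on generators $\dots$ hence manifestly natural in $R$'' does not go through, and you do not have a presheaf map to sheafify.

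The paper's proof handles this by a zigzag rather than a direct map. The map~(\ref{eqn:CanMapToSn}) is defined for any commutative ring, giving a genuine presheaf map $H_n(SL_n,SL_{n-1})\to \sigma^{-1}\bar S_n$; and in the \emph{other} direction the multiplication map $\Tens_{\Z[R^*]}I[R^*]\to S(R)$ is natural for all $R$, and after sheafifying it factors through $\hat\K^{MW}$ because the Steinberg relation holds in $\sigma^{-1}\bar\SSS_2$ on stalks (Proposition~\ref{prop:SteinbergInSbar}). This gives sheaf maps $\H_n(SL_n,SL_{n-1})\to\sigma^{-1}\bar\SSS_n\leftarrow\hat\K^{MW}_n$, both of which are isomorphisms on stalks by Lemma~\ref{lem:RelHisSn} and Proposition~\ref{prop:SnisKMWn}. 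Your $T_n$ is, on stalks, precisely the inverse of the right-hand arrow, so the resulting sheaf isomorphism is the one you intended; but you need the zigzag to actually produce it.
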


\begin{proof}
Let $A$ be a commutative ring.
Recall from \S \ref{subsec:SpSeqLocRings}
the graded $\Z[A^*]$-algebra $S(A)=\bigoplus_{n\geq 0}S_n(A)$.
It has $S_0(A)=\Z[A^*]$ and $S_1(A) =I[A^*]$.
Denote by $\SSS$ the sheaf of graded algebras associated with the presheaf
$U \mapsto S(\Gamma(U,O_X))$.
Let $2\leq n \leq n_0$ and $\sigma \in \Z[A^*]$ as in $(\ast)$.
By Corollary \ref{cor:Centrality}, the element $[-1,1] \in \SSS_2(X)$ is central in $\sigma^{-1}\SSS_{\leq n}(X)$, and we can define the quotient sheaf of algebras
$$\sigma^{-1}\bar{\SSS}_{\leq n} = \sigma^{-1}\SSS_{\leq n}/[-1,1]\SSS_{\leq n-2}.$$
The map of graded algebras
$\Tens_{\Z[A^*]}I[A^*] \to S(A)$ induced by the identity in degrees $0$ and $1$
induces the homomorphism of sheaves of algebras 
$\hat{\K}^{MW}_{\leq n} \to \sigma^{-1}\bar{\SSS}_{\leq n}$, 
by Proposition \ref{prop:SteinbergInSbar}.
Together with the map (\ref{eqn:CanMapToSn}), which is defined even when $A$ is not local, we obtain 
the diagram of sheaves
$$\H_n(SL_n,SL_{n-1}) \longrightarrow \sigma^{-1}\bar{\SSS}_n \longleftarrow \hat{\K}^{MW}_n$$
in which the maps are isomorphisms,
by Lemma \ref{lem:RelHisSn}, and Proposition \ref{prop:SnisKMWn}.
Since $n_0\geq 2$ can be any integer, we have 
$$\H_n(SL_n,SL_{n-1}) \cong \hat{\K}^{MW}_n$$
for all $n\geq 2$.
Finally, by Theorem \ref{thm:KhatIsK}, the natural surjection of sheaves of graded algebras
$\hat{\K}^{MW} \to {\K}^{MW}:[a]\mapsto [a]$ 
is an isomorphism in degrees $\geq 2$.
\end{proof}

For a commutative ring $R$, we denote by $\Delta R$ the standard simplicial ring
$$n \mapsto \Delta_nR = R[T_0,...,T_n]/(-1+T_0+\cdots + T_n).$$
For an integer $n\geq 0$, let $\tilde{E}_n(R)$ be the maximal perfect subgroup of the kernel of $GL_n(R) \to \pi_0GL_n(\Delta R)$.
Note that $\tilde{E}_n(R)$ is in fact a subgroup of $SL_n(R)$ since it maps to zero in the commutative group $GL_n(R)/SL_n(R)=R^*$.

\begin{lemma}
Let $X = \Spec R$, and $x\in X$.
Let $n\geq 1$ be an integer.
If $n\neq 2$ or $n=2$ and 
the residue field $k(x)$ of $x$ has more than $3$ elements, then
the inclusion $\tilde{E}_n \subset SL_n$ of presheaves induces an isomorphism on stalks at $x$
$$(\tilde{E}_n)_x \cong SL_n(O_{X,x}).$$
\end{lemma}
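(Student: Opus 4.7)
The plan is to check injectivity and surjectivity of the stalk map $(\tilde{E}_n)_x \to SL_n(O_{X,x})$ separately. Injectivity will be immediate: the stalk $(\tilde{E}_n)_x = \colim_{f(x)\neq 0} \tilde{E}_n(R_f)$ is a filtered colimit of subgroup inclusions $\tilde{E}_n(R_f) \hookrightarrow SL_n(R_f)$, and filtered colimits preserve injections, so the resulting map into $SL_n(O_{X,x}) = \colim SL_n(R_f)$ is injective. The real content is surjectivity.

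For surjectivity, given $g \in SL_n(O_{X,x})$, I plan to exhibit $g$ as coming from $\tilde{E}_n(R_{fh})$ for a suitable localization with $f(x), h(x) \neq 0$. First, since $O_{X,x}$ is local and $n \geq 2$, standard Gaussian elimination gives $SL_n(O_{X,x}) = E_n(O_{X,x})$, so $g$ is a product of elementary matrices; clearing denominators in the entries lifts this factorization to an element of $E_n(R_f)$ for some $f$ with $f(x) \neq 0$. The task is then reduced to showing $E_n(R_{fh}) \subset \tilde{E}_n(R_{fh})$ after possibly shrinking further. Each elementary matrix $e_{ij}(a)$ is connected to the identity by the path $T_1 \mapsto e_{ij}(aT_1) \in GL_n(\Delta_1 R) = GL_n(R[T_1])$, so $E_n(R) \subset \ker\bigl(GL_n(R) \to \pi_0 GL_n(\Delta R)\bigr)$ unconditionally; what remains is to force $E_n(R_{fh})$ to be perfect.

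For $n \geq 3$ perfectness holds for every commutative ring (\cite[Corollary 1.5]{Bass:StableK}), so the $n \geq 3$ case is essentially formal and no further shrinking is needed. The main obstacle is the $n = 2$ case, where $E_2(R)$ is not perfect in general. Here the hypothesis $|k(x)| > 3$ will be used to pick $\bar{u} \in k(x)^*$ with $\bar{u}^2 \neq 1$, lift it to $u \in R_f$, and localize at some $h$ with $h(x) \neq 0$ so that both $u$ and $u^2-1$ become units in $R_{fh}$. On such an $R_{fh}$, the standard identity $w(u) w(1) = \diag(-u, -u^{-1})$ with $w(v) = e_{12}(v) e_{21}(-v^{-1}) e_{12}(v)$ places $\diag(u, u^{-1}) \in E_2(R_{fh})$, and conjugating $e_{12}(b)$ (resp.\ $e_{21}(b)$) by $\diag(u, u^{-1})$ rescales $b$ by $u^2$ (resp.\ $u^{-2}$), so invertibility of $u^2 - 1$ expresses every elementary matrix as a commutator in $E_2(R_{fh})$, making it perfect. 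Combined with the kernel containment this yields $E_2(R_{fh}) \subset \tilde{E}_2(R_{fh})$, hence $g \in (\tilde{E}_2)_x$, completing surjectivity.
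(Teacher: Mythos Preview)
Your proof is correct and follows essentially the same approach as the paper: both arguments show $E_n(R_f)$ lies in the kernel via the path $T\mapsto e_{ij}(aT)$, invoke perfectness of $E_n$ for $n\geq 3$, and for $n=2$ localize to produce a unit $u$ with $u^2-1$ invertible so that the commutator identity $[\diag(u,u^{-1}),e_{12}(b)]=e_{12}((u^2-1)b)$ makes $E_2$ perfect. The only cosmetic difference is that the paper packages the conclusion as a sandwich $(E_n)_x\subset(\tilde{E}_n)_x\subset SL_n(O_{X,x})$ with the outer inclusion an equality, whereas you separate injectivity and surjectivity; also, you should note (as the paper does) that the case $n=1$ is trivial since $SL_1=\{1\}$.
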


\begin{proof}
The statement is trivial for $n=1$.
So we assume $n\geq 2$.
Every elementary matrix $e_{i,j}(r)\in GL_n(R)$, $r\in R$, is the evaluation at $T=1$ of the elementary matrix $e_{i,j}(Tr)\in GL_n(R[T])$ whereas the evaluation at $T=0$ yields $1$.
Therefore, on fundamental groups the natural map $BGL_n(R) \to BGL_n(\Delta R)$ sends $E_n(R)$ to $1$.

If $n\geq 3$, the group $E_n(R)$ is perfect.
If $n=2$, our hypothesis implies that there is $f\in R$ such that $0\neq f \in k(x)$ and $R_f$ has a unit $u$ such that $u+1$ and $u-1$ are also units. 
Replacing $R$ with $R_f$, we can assume that $R$ and any localization $A$ of $R$ has this property.
For such rings $A$ the group $E_2(A)$ is perfect since for all $a\in A$ we have
$$
\left[ \left(\begin{smallmatrix} u & 0 \\ 0 & u^{-1} \end{smallmatrix}\right) , 
\left(\begin{smallmatrix} 1 & (1-u^2)^{-1}a \\ 0 & 1 \end{smallmatrix}\right) \right]
= 
\left(\begin{smallmatrix} 1 & a \\ 0 & 1 \end{smallmatrix}\right),
$$
and $\left(\begin{smallmatrix} u & 0 \\ 0 & u^{-1} \end{smallmatrix}\right)$ is a product of elementary matrices.

In any case, we can assume $E_n(A)$ perfect and contained in $\tilde{E}_n(A)$ for all localizations $A$ of $R$.
From the inclusions $E_n(A) \subset \tilde{E}_n(A) \subset SL_n(A)$
we obtain the inclusions of corresponding stalks $(E_n)_x \subset (\tilde{E}_n)_x \subset (SL_n)_x = SL_n(O_{X,x})$.
Since the composition is an isomorphism, the lemma follows.
\end{proof}

We will write $BGL_n^+(R)$ and $BSL_n^+(R)$
 for a functorial version of Quillen's plus-construction applied to $BGL_n(R)$ and $BSL_n(R)$ with respect to the perfect normal subgroup $\tilde{E}_n(R)$.
See \cite[VII \S 6]{BousfieldKan} for how to make the plus-construction functorial.
The canonical inclusion $GL_{n-1}(R) \subset GL_n(R)$ induces maps
$BGL_{n-1}^+ \to BGL_{n}^+$ and $BSL_{n-1}^+ \to BSL_{n}^+$.

Recall from \cite[\S 8]{may:book} that for every integer $n\geq 0$ there is an endofunctor $P_{\leq n}:\sSets \to \sSets$ of the category of simplicial sets together with natural transformations $S \to P_{\leq n}S$ such that for every choice of base point $x\in S_0$, the map $\pi_i(S,x) \to \pi_i(P_{\leq n}S,x)$ is an isomorphism for $i\leq n$ and $\pi_i(P_{\leq n}S,x)=0$ for $i>n$.
Moreover, the map $S \to P_{\leq n}S$ factors naturally as $S \to P_{\leq n+1}S \to P_{\leq n}S$.
If $F$ is a simplicial presheaf then $P_{\leq n}F$ denotes the presheaf $U\mapsto P_{\leq n}(F(U))$.
For a pointed simplicial presheaf $F$ we denote by 
$\tilde{\pi}_iF$ the Zariski sheaf associated with the presheaf
 $U\mapsto \pi_i(F(U),x_0)$ where $x_0$ is the base point of $F$.
We consider the quotient $B/A$ of an inclusion of simplicial sets $A \subset B$ and $P_{\leq n}(B/A)$ pointed at $\{A\}$.

\begin{lemma}
\label{lem:TruncatedQuotientIsEM}
Let $X$ be a scheme with infinite residue fields.
Then for $n\geq 2$ there are isomorphisms of Zariski sheaves on $X$
$$\tilde{\pi}_iP_{\leq n}(BSL_n^+/BSL_{n-1}^+) \cong \left\{
\begin{array}{ll} 
0 & i\neq n\\
\K^{MW}_n& i=n.
\end{array}\right.
$$
\end{lemma}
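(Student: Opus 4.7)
The plan is to compute the homotopy sheaves stalk by stalk and then invoke Lemma \ref{lem:SheafHnSLnIsKMW}. Fix $x \in X$ and write $A = O_{X,x}$, a local ring with infinite residue field. Since taking stalks commutes with quotients of simplicial sets and with the functorial truncation $P_{\leq n}$ (both being compatible with filtered colimits, and the plus-construction having the stalk $BSL_n(A)^+$ by the preceding lemma), the stalk at $x$ of $P_{\leq n}(BSL_n^+/BSL_{n-1}^+)$ is $P_{\leq n}\bigl(BSL_n^+(A)/BSL_{n-1}^+(A)\bigr)$, and its $i$-th homotopy group computes $(\tilde{\pi}_i P_{\leq n}(BSL_n^+/BSL_{n-1}^+))_x$.

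Next I would check simple connectivity. Since $\sr(A) = 1$, Lemma \ref{lem:MaxPerf} gives $SL_m(A) = E_m(A)$ for all $m \geq 2$, and this group is perfect; hence $\pi_1 BSL_m^+(A) = 0$ for $m \geq 2$, while $BSL_1^+(A)$ is a point. Replacing the map $BSL_{n-1}^+(A) \to BSL_n^+(A)$ by a cofibration, the resulting cofibre $Y = BSL_n^+(A)/BSL_{n-1}^+(A)$ is therefore simply connected in both cases $n = 2$ and $n \geq 3$.

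Now I would feed in Theorem \ref{thm:HnIsKMW}. The reduced homology satisfies
\[
\widetilde{H}_i(Y) \;\cong\; H_i\bigl(BSL_n^+(A), BSL_{n-1}^+(A)\bigr) \;\cong\; H_i\bigl(SL_n(A), SL_{n-1}(A)\bigr),
\]
which vanishes for $i < n$ and equals $K^{MW}_n(A)$ for $i = n$. The Hurewicz theorem applied to the simply connected space $Y$ then shows that $Y$ is $(n-1)$-connected with $\pi_n(Y) \cong K^{MW}_n(A)$. Consequently $P_{\leq n}(Y)$ is an Eilenberg--MacLane space $K(K^{MW}_n(A), n)$, so $\pi_i P_{\leq n}(Y) = 0$ for $i \neq n$ and $\pi_n P_{\leq n}(Y) \cong K^{MW}_n(A)$. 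Sheafifying in $x \in X$ and identifying the resulting sheaf $U \mapsto K^{MW}_n(\Gamma(U, O_X))$ with $\K^{MW}_n$ via Lemma \ref{lem:SheafHnSLnIsKMW} (or by definition) gives the result.

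The one point requiring care is verifying that the stalk, quotient, plus-construction, and Postnikov truncation all commute as asserted; this is routine since each operation preserves filtered colimits of (pointed) simplicial sets. All the substantive input has already been assembled in Theorem \ref{thm:HnIsKMW} and Lemma \ref{lem:MaxPerf}.
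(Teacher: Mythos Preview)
Your proof is correct and follows essentially the same route as the paper: both compute the homotopy sheaves via the Hurewicz theorem applied to the simply connected quotient $BSL_n^+(A)/BSL_{n-1}^+(A)$ at local rings $A$, feed in the homology computation from Theorem \ref{thm:HnIsKMW}, and then identify the resulting sheaf with $\K^{MW}_n$ via Lemma \ref{lem:SheafHnSLnIsKMW}. Your version is slightly more explicit about the stalk-level commutation of the various functors and about why the quotient is simply connected (invoking Lemma \ref{lem:MaxPerf}), but the argument is the same.
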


\begin{proof}
From the properties of $P_{\leq n}$, the statement is clear for $i>n$.
So, assume $i\leq n$.
We have the following string of sheaves
$$\tilde{\pi}_iP_{\leq n}(BSL_n^+/BSL_{n-1}^+) \stackrel{\cong}{\leftarrow}
\tilde{\pi}_i(BSL_n^+/BSL_{n-1}^+) \to  \H_i(SL_n,SL_{n-1})$$
where the right arrow is the Hurewicz homomorphism which is an isomorphism for $i\leq n$, by Theorem \ref{thm:HnIsKMW} and the fact that $BSL_n^+R/BSL_{n-1}^+R$ is simply connected for local rings $R$ (with infinite residue field).
Using Lemma \ref{lem:SheafHnSLnIsKMW}, the claim follows.
\end{proof}

\begin{corollary}
\label{cor:TrunsAnKMW}
Let $X$ be a noetherian separated scheme with infinite residue fields.
Then for $n\geq 2$, there is a natural bijection of pointed sets
$$[X,P_{\leq n}(BSL_n^+/BSL_{n-1}^+)]_{\Zar}\cong H^n_{\Zar}(X,\K^{MW}_n).$$
\end{corollary}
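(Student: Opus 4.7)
The plan is to deduce this corollary directly by applying Example \ref{ex:EMandCoh} to the pointed simplicial presheaf
\[F = P_{\leq n}(BSL_n^+/BSL_{n-1}^+),\]
pointed at the image of $BSL_{n-1}^+$. The only thing to check is the hypothesis of Example \ref{ex:EMandCoh}, namely that $\pi_i(F_x) = 0$ for all $i \neq n$ and all $x \in X$, together with the identification of $\tilde{\pi}_n F$ with $\K^{MW}_n$.

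First I would note that the Postnikov truncation $P_{\leq n}$ is defined objectwise and commutes with filtered colimits of simplicial sets, so the stalk $F_x$ is naturally weakly equivalent to $P_{\leq n}$ applied to the stalk $(BSL_n^+/BSL_{n-1}^+)_x$. Since $\pi_i$ commutes with filtered colimits, the stalk of the sheaf $\tilde{\pi}_i F$ at $x$ agrees with $\pi_i(F_x)$. Because a sheaf of abelian groups is zero if and only if all of its stalks vanish, the vanishing statement $\tilde{\pi}_i F = 0$ for $i \neq n$ provided by Lemma \ref{lem:TruncatedQuotientIsEM} translates directly into $\pi_i(F_x) = 0$ for all $x \in X$ and $i \neq n$; the same lemma provides the canonical isomorphism $\tilde{\pi}_n F \cong \K^{MW}_n$.

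Once the hypothesis is verified, Example \ref{ex:EMandCoh} applies (using $n \geq 2$) and yields a natural bijection of pointed sets
\[[X, F]_{\Zar} \cong H^n_{\Zar}(X, \tilde{\pi}_n F) \cong H^n_{\Zar}(X, \K^{MW}_n),\]
which is exactly the claim.

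The main work has already been absorbed into Lemma \ref{lem:TruncatedQuotientIsEM}: once one knows that the stalks of $F$ are Eilenberg--MacLane spaces $K(K^{MW}_n(O_{X,x}), n)$, the result is an immediate application of the Brown--Gersten-type representability statement of Example \ref{ex:EMandCoh}. The only subtle point I would flag, beyond the manipulations above, is the passage from sheaf-level vanishing to stalk-level vanishing via the compatibility of $\pi_i$ with filtered colimits of simplicial sets; this is standard but worth a sentence.
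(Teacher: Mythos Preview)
Your proof is correct and follows essentially the same approach as the paper, which simply cites Example \ref{ex:EMandCoh} and Lemma \ref{lem:TruncatedQuotientIsEM}. The extra sentence you include, translating the sheaf-level vanishing of Lemma \ref{lem:TruncatedQuotientIsEM} into the stalk-level hypothesis required by Example \ref{ex:EMandCoh}, is a reasonable clarification that the paper leaves implicit.
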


\begin{proof}
This is Example \ref{ex:EMandCoh} and Lemma \ref{lem:TruncatedQuotientIsEM}.
\end{proof}

\begin{definition}
\label{dfn:Eulermap}
The {\em Euler class map} (for rank $n$ oriented vector bundles) is the composition of maps of simplicial presheaves
$$e: BSL_n^+ \to BSL_n^+/BSL_{n-1}^+ \to P_{\leq n}(BSL_n^+/BSL_{n-1}^+).$$
By definition, it is trivial when restricted to $BSL_{n-1}^+$.
\end{definition}

Let $n\geq 2$. 
In view of Corollary \ref{cor:TrunsAnKMW}, applying the functor $F\mapsto [X,F]_{\Zar}$ to the sequence 
$$BSL_{n-1}^+ \to BSL_n^+  \stackrel{e}{\to} P_{\leq n}(BSL_n^+/BSL_{n-1}^+)$$
yields the sequence
\begin{equation}
\label{eqn:PlusExSeq}
[X,BSL_{n-1}^+]_{\Zar} \longrightarrow [X,BSL_n^+]_{\Zar} \stackrel{e}{\longrightarrow} H^n_{\Zar}(X,\K^{MW}_n).
\end{equation}
\vspace{1ex}

A sequence $U \to V \to W$ of sets with $W$ pointed is called {\em exact} if every element of $V$ which is sent to the base point in $W$ comes from $U$.

\begin{theorem}
\label{thm:ObstPlusExSeq}
Let $n\geq 2$ be an integer and
let $X$ be a noetherian separated scheme with infinite residue fields.
Assume that the dimension of $X$ is at most $n$.
Then the sequence of sets (\ref{eqn:PlusExSeq}) is exact.
\end{theorem}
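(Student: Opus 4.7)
The plan is to realize the Euler class as the obstruction to lifting through an explicit homotopy fibre, and to turn the hypothesis $\dim X \le n$ into a vanishing statement for the higher obstructions. Let $\mathcal{E}$ denote the Zariski-local homotopy fibre of the map $e\colon BSL_n^+ \to P_{\leq n}(BSL_n^+/BSL_{n-1}^+)$. Since $e$ factors through the cofibre $BSL_n^+/BSL_{n-1}^+$, the composition $BSL_{n-1}^+ \to BSL_n^+ \to P_{\leq n}(BSL_n^+/BSL_{n-1}^+)$ carries a canonical null-homotopy, giving a natural map $\phi\colon BSL_{n-1}^+ \to \mathcal{E}$ that lifts the inclusion $BSL_{n-1}^+ \to BSL_n^+$. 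Given $V \in [X, BSL_n^+]_{\Zar}$ with $e(V) = 0 \in H^n_{\Zar}(X, \K^{MW}_n)$, the exact sequence of pointed sets attached to the fibre sequence $\mathcal{E} \to BSL_n^+ \to P_{\leq n}(BSL_n^+/BSL_{n-1}^+)$, combined with Corollary~\ref{cor:TrunsAnKMW}, lifts $V$ to some $V' \in [X, \mathcal{E}]_{\Zar}$. The theorem is thus reduced to proving that $\phi_*\colon [X, BSL_{n-1}^+]_{\Zar} \to [X, \mathcal{E}]_{\Zar}$ is surjective.

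I first show that $\phi$ is $n$-connected at every Zariski stalk. At a local stalk $R = \mathcal{O}_{X,x}$, both $BSL_{n-1}^+(R)$ and $BSL_n^+(R)$ are simply connected, and by Theorem~\ref{thm:BassConjectureBis} the inclusion $BSL_{n-1}^+(R) \to BSL_n^+(R)$ is $(n-1)$-connected. The Blakers--Massey excision theorem, applied to the homotopy cofibre square
$$
\begin{array}{ccc}
BSL_{n-1}^+(R) & \longrightarrow & BSL_n^+(R) \\
\downarrow & & \downarrow \\
\ast & \longrightarrow & BSL_n^+(R)/BSL_{n-1}^+(R),
\end{array}
$$
implies that the canonical map $BSL_{n-1}^+(R) \to \mathrm{hofib}(BSL_n^+(R) \to BSL_n^+(R)/BSL_{n-1}^+(R))$ is at least $n$-connected. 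By Lemma~\ref{lem:TruncatedQuotientIsEM}, the truncation $BSL_n^+(R)/BSL_{n-1}^+(R) \to P_{\leq n}(BSL_n^+(R)/BSL_{n-1}^+(R))$ is an isomorphism on $\pi_i$ for $i \le n$, so the induced comparison of homotopy fibres is $n$-connected by the long exact sequence. Composing, $\phi$ is $n$-connected on stalks; equivalently, its stalkwise homotopy fibre $\mathcal{H}$ is $(n-1)$-connected.

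Surjectivity of $\phi_*$ then follows from a standard Zariski-local obstruction argument. Using the Postnikov tower of the map $\phi$ of simplicial presheaves together with Brown--Gersten descent, the successive obstructions to lifting $V' \in [X, \mathcal{E}]_{\Zar}$ through $\phi$ lie in the groups $H^j_{\Zar}(X, \tilde{\pi}_{j-1}\mathcal{H})$ for $j \ge n+1$. The hypothesis that $X$ is separated noetherian of Krull dimension at most $n$, combined with Grothendieck's vanishing theorem for Zariski cohomology, forces all of these groups to be zero; consequently the required lift exists and maps to $V$ in $[X, BSL_n^+]_{\Zar}$.

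The crux will be this last step: setting up the Zariski-local Postnikov/obstruction machinery for the map $\phi$ of simplicial presheaves rigorously enough to convert the stalkwise connectivity estimate into a surjectivity statement on Zariski-local homotopy classes. The argument is substantially simplified by the fact that, at every stalk, all spaces in sight (including $\mathcal{E}$ and $\mathcal{H}$) are simply connected, so the obstruction coefficient systems are untwisted Zariski sheaves of abelian groups, and the descent spectral sequence reduces directly to Grothendieck vanishing.
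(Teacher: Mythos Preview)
Your argument is correct and follows essentially the same route as the paper, which simply invokes the obstruction theory of \cite[Corollary B.10]{morel:book} together with Lemma~\ref{lem:TruncatedQuotientIsEM}. You have unpacked that citation: the Blakers--Massey step and the long exact sequence comparison give precisely the stalkwise $n$-connectivity of $\phi$ needed as input, and the Moore--Postnikov/Grothendieck-vanishing argument you outline is exactly the content of the cited obstruction-theoretic result.
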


\begin{proof}
This follows from obstruction theory \cite[Corollary B.10]{morel:book} in view of Lemma \ref{lem:TruncatedQuotientIsEM}.
\end{proof}

One would like to replace $BSL_r^+$ with $BSL_r$ for $r=n-1,n$ in 
Theorem \ref{thm:ObstPlusExSeq}.
This motivates the following.

\begin{question}
\label{quest1}
For which (affine) noetherian scheme $X$ is the canonical map 
$$[X,BGL_n]_{\Zar} \to [X,BGL_n^+]_{\Zar}$$
a bijection?
\end{question}

Unfortunately, we don't know the answer to Question \ref{quest1} (other than for $n=1$ or when $X$ is affine and $\dim X > n$).
Instead we will prove a weaker version in Corollary  \ref{cor:BGLnPlusFactor} below.
This will be sufficient for our application.

\begin{lemma}
\label{lem:BZarDescent}
Let $R$ be a commutative ring.
If $f,g\in R$ with $fR+gR=R$, then the following diagram is homotopy cartesian
$$\xymatrix{
B_{\Zar}GL_n(R) \ar[r] \ar[d] & B_{\Zar}GL_n(R_f) \ar[d]\\
B_{\Zar}GL_n(R_g) \ar[r] & B_{\Zar}GL_n(R_{fg}).
}$$
\end{lemma}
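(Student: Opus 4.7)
The plan is to reduce the lemma to the Zariski descent property of $B_{\Zar}GL_n$ already recorded in Example \ref{ex:BGLn}. First I would set $X=\Spec R$ and translate the algebraic hypothesis $fR+gR=R$ into the geometric statement that $U=\Spec R_f$ and $V=\Spec R_g$ form a Zariski open cover of $X$, with $U\cap V = \Spec R_{fg}$ and $U\cup V = X$. The commutative square of open immersions
$$
\begin{array}{ccc}
X & \leftarrow & U \\
\uparrow & & \uparrow \\
V & \leftarrow & U\cap V
\end{array}
$$
is then precisely a Mayer--Vietoris square of the form (\ref{eqn:MV}) in $\Open_X$.

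Next I would apply the simplicial presheaf $B_{\Zar}GL_n$ to this square. Example \ref{ex:BGLn} identifies $B_{\Zar}GL_n$ with $B\Vect_n$ on $\Open_X$ and records the crucial fact that this presheaf takes every Mayer--Vietoris square in $\Open_X$ to a homotopy cartesian square of simplicial sets (via Quillen's Theorem B applied to the groupoid of vector bundles). Evaluated on the affine open $\Spec R_h \subset X$, $B_{\Zar}GL_n$ is (by that same identification) the simplicial set $B\Vect_n(\Spec R_h) = B_{\Zar}GL_n(R_h)$, since $B\Vect_n(W)$ depends only on $W$ as a scheme and not on the ambient $X$. Feeding the Mayer--Vietoris square above through $B_{\Zar}GL_n$ therefore produces exactly the square in the statement of the lemma, and by descent it is homotopy cartesian.

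I do not expect a substantive obstacle here: the lemma is essentially a tautological instance of the descent property built into the construction of $B_{\Zar}GL_n$, specialized to the distinguished two-open affine cover $\{\Spec R_f, \Spec R_g\}$ of $\Spec R$. The only minor verification is the compatibility between $B_{\Zar}GL_n$ evaluated on $\Spec R_h$ viewed as an open subset of $\Spec R$ and $B_{\Zar}GL_n(R_h)$ viewed intrinsically, which is immediate from the identification $B_{\Zar}GL_n = B\Vect_n$.
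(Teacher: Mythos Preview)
Your proposal is correct and matches the paper's approach: the paper's proof simply says ``This follows from descent and can also be checked using Quillen's Theorem B,'' and you have spelled out exactly that argument by invoking the descent property of $B_{\Zar}GL_n = B\Vect_n$ recorded in Example \ref{ex:BGLn}.
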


\begin{proof}
This follows from descent and can also be checked using Quillen's Theorem B \cite{quillen:higherI}.
\end{proof}

For $n\in \N$, we write $\Vect^R_n(R[T_1,...,T_n])$ for the full subcategory of  the category
$\Vect_n(R[T_1,...,T_n])$ of those projective modules $P$ which are extended from $R$, that is, which are isomorphic to $Q\otimes_RR[T_1,...,T_n]$ for some $Q\in \Vect_n(R)$.
Write $B^R_{\Zar}GL_n(R[T_1,...,T_n])$ for the classifying space (that is, nerve) of the category $\Vect^R_n(R[T_1,...,T_n])$.

\begin{lemma}
\label{lem:BZarPolynomialDescent}
Let $R$ be a commutative ring.
If $f,g\in R$ with $fR+gR=R$, then for all integers $q\geq 0$ the following diagram is homotopy cartesian
$$\xymatrix{
B_{\Zar}^R GL_n(R[T_1,...,T_q])
 \ar[r]
\ar[d]  
& B^{R_f}_{\Zar} GL_n(R_f[T_1,...,T_q]) 
\ar[d]
\\
B_{\Zar}^{R_g}GL_n(R_g[T_1,...,T_q]) \ar[r] & B_{\Zar}^{R_{fg}}GL_n(R_{fg}[T_1,...,T_q]).
}$$
\end{lemma}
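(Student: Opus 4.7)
The plan is to identify $B^A_{\Zar}GL_n(A[T_1,\dots,T_q])$ with the nerve of the groupoid $\Vect^A_n(A[T_1,\dots,T_q])$ (which is its definition) and then reduce the homotopy cartesianness to the statement that the underlying square of groupoids is a 2-pullback. This reduction is standard, since the nerve functor sends 2-pullbacks of groupoids to homotopy pullbacks of simplicial sets. For $q=0$ the claim reduces directly to Lemma \ref{lem:BZarDescent}, so from now on assume $q \geq 1$.

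Let $\P$ denote the 2-pullback of the three non-source corners of the square, i.e., the groupoid whose objects are triples $(P_f, P_g, \alpha)$ with $P_f \in \Vect^{R_f}_n(R_f[T_1,\dots,T_q])$, $P_g \in \Vect^{R_g}_n(R_g[T_1,\dots,T_q])$, and $\alpha$ an isomorphism $P_f \otimes_{R_f[T]} R_{fg}[T] \cong P_g \otimes_{R_g[T]} R_{fg}[T]$ over $R_{fg}[T_1,\dots,T_q]$. There is a natural comparison functor $\Vect^R_n(R[T_1,\dots,T_q]) \to \P$ sending $P$ to its base changes. For essential surjectivity I would apply Zariski descent for modules over the scheme $\Spec R[T_1,\dots,T_q]$ along the cover $\{D(f), D(g)\}$ (which is indeed a cover, since $fR[T] + gR[T] = R[T]$, and is essentially Lemma \ref{lem:BZarDescent} applied to the ring $R[T_1,\dots,T_q]$) to first glue a given triple $(P_f, P_g, \alpha)$ to a finitely generated projective $R[T_1,\dots,T_q]$-module $P$ of rank $n$. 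Fully faithfulness of the comparison functor follows analogously by gluing isomorphisms of modules on $\Spec R[T_1,\dots,T_q]$.

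It remains to verify that the glued module $P$ lies in the extended subcategory $\Vect^R_n(R[T_1,\dots,T_q])$. This is the crucial content and is supplied by Quillen's patching theorem: since $f$ and $g$ generate the unit ideal in $R$ and each of $P \otimes_{R[T]} R_f[T] \cong P_f$ and $P \otimes_{R[T]} R_g[T] \cong P_g$ is by hypothesis extended from $R_f$, respectively $R_g$, Quillen's theorem forces $P$ itself to be extended from $R$. Hence $P \in \Vect^R_n(R[T_1,\dots,T_q])$, completing essential surjectivity and establishing the 2-pullback property.

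The main obstacle, insofar as there is one, is correctly invoking Quillen's patching theorem, whose role is precisely to ensure that the property ``extended from $R$'' is Zariski-local on $\Spec R$; the remaining ingredients (Zariski descent for modules and the passage from 2-pullbacks of groupoids to homotopy cartesian squares of classifying spaces) are routine and amount to Lemma \ref{lem:BZarDescent} combined with standard facts about nerves of groupoids.
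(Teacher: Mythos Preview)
Your proof is correct and follows essentially the same approach as the paper: both arguments combine Lemma \ref{lem:BZarDescent} applied to the polynomial ring $R[T_1,\dots,T_q]$ with Quillen's Patching Theorem to handle the ``extended'' condition. The paper's proof is terser (it simply observes that the extended subcategories form unions of connected components and that Quillen's theorem identifies which components are extended), while you unpack the same content as an explicit verification of the 2-pullback property via essential surjectivity and full faithfulness.
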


\begin{proof}
By Quillen's Patching Theorem \cite[Theorem 1']{quillen:Serre}, a projective $R[T_1,...,T_q]$-module $P$ is extended from $R$ if and only if $P_f$ and $P_g$ are extended from $R_f$ and $R_g$.
Hence, the lemma follows from Lemma \ref{lem:BZarDescent} with $R[T_1,...,T_q]$ in place of $R$.
\end{proof}

Write $B_{\Zar}^RGL_n(\Delta R)$ for the diagonal of the simplicial space
$q \mapsto B_{\Zar}^RGL_n(\Delta_q R)$.

\begin{corollary}
\label{cor:BZarDeltaDescent}
Let $R$ be a commutative ring.
If $f,g\in R$ with $fR+gR=R$, then the following diagram of simplicial sets is homotopy cartesian
$$\xymatrix{
B_{\Zar}^RGL_n(\Delta R) \ar[r] \ar[d] & B^{R_f}_{\Zar}GL_n(\Delta  R_f) \ar[d]\\
B_{\Zar}^{R_g}GL_n(\Delta R_g) \ar[r] & B_{\Zar}^{R_{fg}}GL_n(\Delta R_{fg}).
}$$
\end{corollary}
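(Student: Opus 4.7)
The plan is to obtain this corollary by applying the diagonal functor to the bisimplicial square assembled out of Lemma \ref{lem:BZarPolynomialDescent} as $q$ varies, and to invoke a standard fact that the diagonal preserves homotopy cartesian squares in this setting. Note that $\Delta_q R \cong R[T_1,\ldots,T_q]$ as $R$-algebras, so Lemma \ref{lem:BZarPolynomialDescent} already gives, for every $q\geq 0$, a homotopy cartesian square
$$
\xymatrix{
B^R_\Zar GL_n(\Delta_q R) \ar[r] \ar[d] & B^{R_f}_\Zar GL_n(\Delta_q R_f) \ar[d] \\
B^{R_g}_\Zar GL_n(\Delta_q R_g) \ar[r] & B^{R_{fg}}_\Zar GL_n(\Delta_q R_{fg}).
}
$$
These squares are natural in $q$, so they assemble into a square of bisimplicial sets which is levelwise homotopy cartesian in the $\Delta R$-direction.

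First I would replace the right-hand vertical arrow by a Kan fibration, functorially in $q$, via a path-object factorization (or any standard functorial fibrant replacement in the Kan model structure on simplicial sets). This leaves every homotopy type in the square unchanged and converts the levelwise homotopy cartesian square into a levelwise strict pullback square along a Kan fibration. Next I would take the diagonal. Since the diagonal functor from bisimplicial sets to simplicial sets is obtained by restriction along $\Delta\to\Delta\times\Delta$ composed with evaluation, it preserves finite limits, so the diagonal of a strict pullback is a strict pullback. Finally I would invoke the classical result (Bousfield--Friedlander, essentially a consequence of Moore's work) that the diagonal of a bisimplicial map which is a Kan fibration in each column is again a Kan fibration of simplicial sets. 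Together these imply that the diagonal square is a strict pullback along a Kan fibration, hence homotopy cartesian, which is what we want.

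The only nontrivial point is ensuring the fibrant-replacement factorization is genuinely functorial in $q$, so that the face and degeneracy operators of the bisimplicial structure respect the factorization; this is standard (e.g.\ using the small object argument to produce a natural transformation) but must be carried out with some care. Once that is in place, the remainder of the argument is formal: diagonal preserves limits, diagonal of a columnwise Kan fibration is a Kan fibration, strict pullbacks of Kan fibrations are homotopy pullbacks.
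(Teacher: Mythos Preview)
Your overall strategy---use Lemma~\ref{lem:BZarPolynomialDescent} to get a levelwise homotopy cartesian square and then pass to the diagonal---is the same as the paper's. The gap is in the step you flag as ``classical'': the assertion that the diagonal of a bisimplicial map which is a Kan fibration in each column is again a Kan fibration is \emph{false} in general. If it were true, the Bousfield--Friedlander theorem \cite[Theorem~B.4]{BousfieldFriedlander} would need no hypothesis beyond the levelwise statement; but that theorem requires the $\pi_*$-Kan condition precisely because levelwise fibrancy does not suffice, and there are standard counterexamples. Your functorial fibrant replacement produces only a columnwise Kan fibration, not a Reedy fibration, so nothing stronger is available to you.

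The paper instead invokes the Bousfield--Friedlander theorem directly and verifies its hypothesis: the simplicial set $q \mapsto \pi_0 B^R_{\Zar}GL_n(\Delta_q R)$ is constant, since a rank~$n$ projective $R[T_1,\dots,T_q]$-module extended from $R$ is determined up to isomorphism by its restriction to $R$, so $\pi_0 B^R_{\Zar}GL_n(\Delta_q R) \cong \Phi_n(R)$ independently of $q$. This constancy is the substantive input that makes the passage to the diagonal legitimate, and it is missing from your argument. Once you add this observation and cite \cite[Theorem~B.4]{BousfieldFriedlander} correctly, your proof becomes the paper's proof.
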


\begin{proof}
For $q\in \N$, the diagram is homotopy cartesian for $\Delta_q$ in place of $\Delta$, in view of Lemma \ref{lem:BZarPolynomialDescent}.
The Corollary now follows from the Bousfield-Friedlander Theorem \cite[Theorem B.4]{BousfieldFriedlander} which we can apply since the simplicial set $q\mapsto \pi_0B_{\Zar}^RGL_n(\Delta_q R)$ of connected components is a constant simplicial set for any $R$.
\end{proof}

Write $B_{\Zar}^{\bullet}GL_n^{\Delta}$  the simplicial presheaf 
$$X \mapsto B_{\Zar}^RGL_n(\Delta R),\hspace{3ex}\text{where}\hspace{1ex}R=\Gamma(X,O_X).$$
Inclusion of its degree zero space into the simplicial space induces a map of simplicial presheaves
$B_{\Zar}GL_n \to B_{\Zar}^{\bullet}GL_n^{\Delta}$.

\begin{theorem}
\label{thm:SingBGLn}
Let $X=\spec R$ where $R$ is a noetherian ring.
Then the natural maps of simplicial presheaves $BGL_n \to B_{\Zar}GL_n \to  B_{\Zar}^{\bullet}GL_n^{\Delta}$ induce a bijection
$$[X,BGL_n]_{\Zar} \cong [X,B_{\Zar}^{\bullet}GL_n^{\Delta}]_{\Zar}.$$
\end{theorem}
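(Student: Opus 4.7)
The plan is to identify both sides with the set $\Phi_n(X)$ of isomorphism classes of rank $n$ vector bundles on $X$, in a way compatible with the natural map in the statement. By Example \ref{ex:BGLn} the map $BGL_n \to B_{\Zar}GL_n$ is a stalk-wise weak equivalence whose target satisfies Zariski descent, so $[X, BGL_n]_{\Zar} \cong \pi_0 B_{\Zar}GL_n(X) = \Phi_n(X)$; the substance of the theorem is therefore the parallel calculation $[X, B^{\bullet}_{\Zar}GL_n^{\Delta}]_{\Zar} \cong \Phi_n(X)$, together with compatibility of these identifications with the natural comparison map.

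Writing $F := B^{\bullet}_{\Zar}GL_n^{\Delta}$, I would first compute $\pi_0 F(R)$ for an arbitrary commutative ring $R$. The evaluation homomorphism $\Delta_q R \to R$ given by $T_0 \mapsto 1$ and $T_i \mapsto 0$ for $i \geq 1$ is a retraction of the canonical inclusion $R \hookrightarrow \Delta_q R$. Consequently two rank $n$ projective $R$-modules $P_1, P_2$ are isomorphic iff the extensions $P_1 \otimes_R \Delta_q R$ and $P_2 \otimes_R \Delta_q R$ are isomorphic as $\Delta_q R$-modules, and the natural map $\pi_0 B_{\Zar}GL_n(R) \to \pi_0 B^R_{\Zar}GL_n(\Delta_q R)$ is a bijection for every $q \geq 0$. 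Thus the simplicial set $q \mapsto \pi_0 B^R_{\Zar}GL_n(\Delta_q R)$ is a constant simplicial set equal to $\Phi_n(\Spec R)$, and taking $\pi_0$ of the diagonal of the simplicial space $q \mapsto B^R_{\Zar}GL_n(\Delta_q R)$ yields $\pi_0 F(R) = \Phi_n(\Spec R)$.

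Next, by Corollary \ref{cor:BZarDeltaDescent} the presheaf $F$ sends basic affine Zariski Mayer-Vietoris squares of type (\ref{eqn:MV}) to homotopy cartesian squares, and $F(\emptyset)$ is contractible since the zero ring has trivial $GL_n$. An application of Brown-Gersten descent on the Zariski site of the noetherian affine $X$ then shows that the natural map $F(X) \to F_{\Zar}(X)$ is a weak equivalence, whence $[X, F]_{\Zar} = \pi_0 F(X) = \Phi_n(X)$. A diagram chase confirms that the composite bijection $[X, BGL_n]_{\Zar} \to [X, F]_{\Zar}$ sends the class of a vector bundle $P$ to itself, hence is the identity on $\Phi_n(X)$. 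The principal obstacle will be justifying the Brown-Gersten step for a noetherian affine $X$ of possibly infinite Krull dimension; in the finite-dimensional case the classical Brown-Gersten descent spectral sequence applies directly, and in general one expects to reduce to the finite-dimensional case via compatibility of $F$ and the formation of $F_{\Zar}$ with filtered limits of open subschemes.
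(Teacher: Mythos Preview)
Your overall strategy matches the paper's: both sides are identified with $\Phi_n(X)$, the left-hand side via Example~\ref{ex:BGLn}, and the right-hand side by first computing $\pi_0 F(R)\cong \Phi_n(\Spec R)$ and then showing $F(X)\to F_{\Zar}(X)$ is a weak equivalence. Your computation of $\pi_0 F(R)$ is fine.

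However, you have misdiagnosed the ``principal obstacle.'' The issue is not infinite Krull dimension but rather that Corollary~\ref{cor:BZarDeltaDescent} only gives the \emph{affine} B.G.-property: it says the square~(\ref{eqn:dfn:AffineBGZariski}) is homotopy cartesian when $U=\Spec R$ is affine and $U_f,U_g$ are principal opens. The classical Brown--Gersten theorem \cite[Theorem~4]{BrownGersten} requires the \emph{full} B.G.-property, i.e., that $F$ send \emph{every} square~(\ref{eqn:MV}) of open subsets to a homotopy pullback. For arbitrary open $U,V\subset X$ (which need not be affine even when $X$ is) there is no reason for $F(U)=B^{\Gamma(U,O_X)}_{\Zar}GL_n(\Delta\,\Gamma(U,O_X))$ to satisfy descent; indeed the proof of Corollary~\ref{cor:BZarDeltaDescent} rests on Quillen's patching theorem, which is intrinsically an affine statement. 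So even in the finite-dimensional case your appeal to Brown--Gersten does not go through, and your proposed reduction via filtered limits of open subschemes addresses the wrong problem.

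The paper closes this gap by invoking Theorem~\ref{thm:AffineBGToBG} (the appendix result, due to Hoyois): for a simplicial presheaf on a noetherian scheme having only the affine B.G.-property, one still gets $F(Y)\simeq F_{\Zar}(Y)$ for all \emph{affine} open $Y$. The proof is nontrivial and occupies the entire appendix; it proceeds by forming the homotopy right Kan extension $\hat F$ of $F$ from affines to all opens, showing that $\hat F$ acquires the full B.G.-property (via a sequence of refinement and covering lemmas for homotopy limits), and then applying the classical Brown--Gersten result to $\hat F$. This is the missing ingredient in your argument.
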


\begin{proof}
This follows from Corollary \ref{cor:BZarDeltaDescent} in view of Theorem \ref{thm:AffineBGToBG}.
\end{proof}

We can reformulate the theorem as follows.
For a simplicial presheaf $F$ defined on the category of schemes, we write $\Sing F$ for the simplicial presheaf $X\mapsto (q\mapsto F(X\times \Spec \Delta_q\Z))$.
The map of simplicial rings $\Z\to \Delta$ induces a natural map $F \to \Sing F$ of simplicial presheaves.

\begin{theorem}
\label{thm:SingBGRepsVect}
Let $X=\spec R$ where $R$ is a noetherian ring.
Then the natural map of simplicial presheaves $BGL_n \to \Sing BGL_n$ induces a bijection
$$\Phi_n(X)=[X,BGL_n]_{\Zar} \cong [X,\Sing BGL_n]_{\Zar}.$$
\end{theorem}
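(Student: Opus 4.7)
The plan is to deduce the theorem from Theorem~\ref{thm:SingBGLn} via a stalk-wise comparison. First I would construct a factorization of simplicial presheaves
$$BGL_n \;\longrightarrow\; \Sing BGL_n \;\longrightarrow\; B_{\Zar}^{\bullet}GL_n^{\Delta},$$
where in simplicial degree $q$ the second arrow is the canonical inclusion $BGL_n(\Delta_q R) \hookrightarrow B_{\Zar}^{R}GL_n(\Delta_q R)$ sending the unique vertex to the free module $(\Delta_q R)^n \in \Vect_n^R(\Delta_q R)$ and each $g \in GL_n(\Delta_q R)$ to the corresponding automorphism of $(\Delta_q R)^n$. Naturality of $GL_n$ in the ring, together with the identification $R^n \otimes_R \Delta_q R = (\Delta_q R)^n$, shows that the composite agrees with the map $BGL_n \to B_{\Zar}^{\bullet}GL_n^{\Delta}$ used in Theorem~\ref{thm:SingBGLn}. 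Since the latter induces a bijection on $[X,-]_{\Zar}$, it suffices to show that $\Sing BGL_n \to B_{\Zar}^{\bullet}GL_n^{\Delta}$ does so as well.

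For this I would verify that $\Sing BGL_n \to B_{\Zar}^{\bullet}GL_n^{\Delta}$ is a stalk-wise weak equivalence of simplicial presheaves on $X_{\Zar}$; such maps induce bijections on $[X,-]_{\Zar}$ for the Brown-Gersten Zariski-local homotopy category. Fix $x \in X$ and set $R = O_{X,x}$. Because $BGL_n$ and $B_{\Zar}^{R}GL_n$ commute with filtered colimits of rings under our noetherian hypothesis, the stalks at $x$ are $BGL_n(\Delta R)$ and the diagonal of $q \mapsto B_{\Zar}^{R}GL_n(\Delta_q R)$. Since $R$ is local, every rank-$n$ projective $R$-module is free, so every object of $\Vect_n^R(\Delta_q R) = \{Q \otimes_R \Delta_q R \mid Q \in \Vect_n(R)\}$ is isomorphic to $(\Delta_q R)^n$. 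Thus $\Vect_n^R(\Delta_q R)$ is a connected groupoid with automorphism group $GL_n(\Delta_q R)$ at the distinguished object, and the inclusion $BGL_n(\Delta_q R) \hookrightarrow B_{\Zar}^{R}GL_n(\Delta_q R)$ is a weak equivalence in each simplicial degree $q$. Passing to diagonals preserves levelwise weak equivalences of bisimplicial sets, yielding the stalk-wise equivalence.

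There is no serious obstacle here: the substantive technical content (Quillen patching and the Bousfield-Friedlander input behind Corollary~\ref{cor:BZarDeltaDescent}) has already been invested in Theorem~\ref{thm:SingBGLn}, and the present theorem is essentially a reformulation in the convenient $\Sing$-language. The one step requiring a bit of care is checking the commutativity of the asserted factorization at each simplicial level, which is routine from naturality.
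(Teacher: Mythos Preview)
Your proposal is correct and follows exactly the paper's approach: the paper's proof is the single sentence that the map $\Sing BGL_n \to B_{\Zar}^{\bullet}GL_n^{\Delta}$ is a weak equivalence at the local rings of $X$, whence the result follows from Theorem~\ref{thm:SingBGLn}. You have simply unpacked this sentence, supplying the factorization explicitly and the reason for the stalk-wise equivalence (freeness of projectives over local rings making each $\Vect_n^R(\Delta_qR)$ a connected groupoid on $(\Delta_qR)^n$).
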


\begin{proof}
This follows from Theorem \ref{thm:SingBGLn} since the natural map of simplicial presheaves
$\Sing BGL_n \to B_{\Zar}^{\bullet}GL_n^{\Delta}$ is a weak equivalence at the local rings of $X$.
\end{proof}

By definition of the presheaf of perfect groups $\tilde{E}_n$, the canonical map of simplicial presheaves $BGL_n \to  \Sing BGL_n$ factors through $BGL_n^+$.
From Theorem \ref{thm:SingBGRepsVect} we therefore obtain the following.

\begin{corollary}
\label{cor:BGLnPlusFactor}
Let $X=\Spec R$ be an affine noetherian scheme.
Then the string of maps of simplicial presheaves
$BGL_n \to BGL_n^+ \to \Sing BGL_n$ induces the sequence of maps
$$[X,BGL_n]_{\Zar} \to [X,BGL_n^+]_{\Zar} \to [X,\Sing BGL_n]_{\Zar}$$
whose composition is a bijection.
\end{corollary}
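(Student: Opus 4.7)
The strategy is to deduce the corollary directly from Theorem \ref{thm:SingBGRepsVect} by exhibiting a factorization of the natural map $BGL_n \to \Sing BGL_n$ through $BGL_n^+$.

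First I would construct, for each commutative ring $R$, a factorization of the natural map
$$BGL_n(R) \longrightarrow BGL_n^+(R) \longrightarrow \Sing BGL_n(R).$$
By the universal property of Quillen's plus construction (in the functorial form of \cite[VII \S 6]{BousfieldKan}), the existence of such a factorization, unique up to homotopy, reduces to verifying that the subgroup $\tilde{E}_n(R)$ lies in the kernel of the induced homomorphism
$$\pi_1 BGL_n(R) = GL_n(R) \longrightarrow \pi_1 \Sing BGL_n(R) = \pi_0 GL_n(\Delta R).$$
But this is tautological: by definition, $\tilde{E}_n(R)$ is the maximal perfect subgroup of the kernel of precisely this map. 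Naturality in $R$ is obtained by choosing the Bousfield--Kan functorial model of the plus construction, so that the factorization assembles into a morphism of simplicial presheaves.

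Second, I would apply the functor $[X,-]_{\Zar}$ to the resulting diagram of simplicial presheaves to obtain the sequence
$$[X,BGL_n]_{\Zar} \longrightarrow [X,BGL_n^+]_{\Zar} \longrightarrow [X,\Sing BGL_n]_{\Zar}.$$
By construction, the composite is the map induced by $BGL_n \to \Sing BGL_n$, which is shown to be a bijection in Theorem \ref{thm:SingBGRepsVect}. This yields the claim.

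The only real subtlety is to make sure the factorization $BGL_n \to BGL_n^+ \to \Sing BGL_n$ is an actual morphism of simplicial presheaves and not merely a levelwise homotopy-commutative diagram; this is handled by the functoriality of the Bousfield--Kan construction. Even if strict functoriality were unavailable, one would only need naturality up to homotopy to deduce the corollary, since the conclusion is a statement about the maps on the sets $[X,-]_{\Zar}$, where homotopies are collapsed.
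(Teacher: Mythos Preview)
Your proposal is correct and follows the same approach as the paper: the factorization $BGL_n \to BGL_n^+ \to \Sing BGL_n$ exists by the very definition of $\tilde{E}_n(R)$ (together with the functorial plus construction of \cite[VII \S 6]{BousfieldKan}), and the composite is a bijection by Theorem \ref{thm:SingBGRepsVect}. Your write-up is more explicit than the paper's one-line justification, but the argument is identical.
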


\begin{definition}
\label{dfn:EulerClass}
Let $X$ be a scheme with infinite residue fields and $V$ an oriented rank $n$ vector bundle on $X$.
The {\em Euler class} 
$$e(V)\in H^n_{\Zar}(X,\K^{MW}_n)$$ of $V$ is the image of $[V]\in [X,BSL_n]_{\Zar}$  under the canonical map
$$[X,BSL_n]_{\Zar} \to [X,BSL_n^+]_{\Zar} \stackrel{e}{\to}  H^n_{\Zar}(X,\K^{MW}_n).$$
By construction, we have
$e(W\oplus O_X) = 0$ for any rank $n-1$ oriented vector bundle $W$.
\end{definition}

\begin{theorem}
\label{thm:GeneralEulerclass}
Let $R$ be a commutative noetherian ring of dimension $n\geq 2$.
Assume that all its residue fields are infinite.
Let $P$ be an oriented rank $n$ projective $R$-module.
Then 
$$P\cong Q\oplus R \Leftrightarrow e(P)=0 \in H^n_{\Zar}(R,\K^{MW}_n).$$
\end{theorem}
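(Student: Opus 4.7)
The direction $(\Rightarrow)$ is immediate from Definition \ref{dfn:EulerClass}: if $P \cong Q \oplus R$ as oriented bundles, then $[P]^+ \in [X, BSL_n^+]_{\Zar}$ is the image under the stabilization map of the class represented by $Q$ in $[X, BSL_{n-1}^+]_{\Zar}$, and hence $e(P) = 0$.

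For the converse, set $X = \Spec R$ and suppose $e(P) = 0$. Since $\dim X = n$ and $X$ has infinite residue fields, the oriented analog of the exact sequence of Theorem \ref{thm:ObstPlusExSeq} produces a class $\beta \in [X, BSL_{n-1}^+]_{\Zar}$ whose image in $[X, BSL_n^+]_{\Zar}$ equals $[P]^+$. The goal is then to promote $\beta$ to an honest rank $n-1$ oriented vector bundle. Invoking the $SL$-version of Corollary \ref{cor:BGLnPlusFactor}---that the composition
$$[X, BSL_{n-1}]_{\Zar} \longrightarrow [X, BSL_{n-1}^+]_{\Zar} \longrightarrow [X, \Sing BSL_{n-1}]_{\Zar}$$
is a bijection---the image of $\beta$ in the rightmost term lifts to some class $[Q] \in [X, BSL_{n-1}]_{\Zar}$, i.e., to an oriented rank $n-1$ projective module $Q$. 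In the commutative diagram
$$\xymatrix{
[X, BSL_{n-1}]_{\Zar} \ar[r] \ar[d] & [X, BSL_{n-1}^+]_{\Zar} \ar[r] \ar[d] & [X, \Sing BSL_{n-1}]_{\Zar} \ar[d] \\
[X, BSL_n]_{\Zar} \ar[r] & [X, BSL_n^+]_{\Zar} \ar[r] & [X, \Sing BSL_n]_{\Zar},
}$$
the classes $[P]$ and $[Q \oplus R]$ in $[X, BSL_n]_{\Zar}$ have the same image in $[X, \Sing BSL_n]_{\Zar}$: indeed, by commutativity, both are obtained by taking the image of $\beta$ in $[X, \Sing BSL_{n-1}]_{\Zar}$ and then applying the right vertical arrow. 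Since the $SL_n$-analog of Corollary \ref{cor:BGLnPlusFactor} says the composition $[X, BSL_n]_{\Zar} \to [X, \Sing BSL_n]_{\Zar}$ is a bijection, we conclude $P \cong Q \oplus R$ as oriented bundles.

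The principal technical obstacle is establishing the required $SL$-analogs of Theorems \ref{thm:ObstPlusExSeq} and \ref{thm:SingBGRepsVect} and of Corollary \ref{cor:BGLnPlusFactor}. For Theorem \ref{thm:ObstPlusExSeq} the obstruction-theoretic argument carries over verbatim, since Lemma \ref{lem:TruncatedQuotientIsEM} already concerns $BSL_n^+/BSL_{n-1}^+$. For the representability statements one must re-run the descent/patching arguments of Lemmas \ref{lem:BZarDescent}--\ref{lem:BZarPolynomialDescent}, Corollary \ref{cor:BZarDeltaDescent}, and Theorems \ref{thm:SingBGLn}--\ref{thm:SingBGRepsVect} with $BSL_n$ in place of $BGL_n$; this reduces to Quillen's patching theorem applied to pairs (projective module, trivialization of its determinant), which patches correctly since trivializations of a line bundle themselves satisfy Zariski descent.
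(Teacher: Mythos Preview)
Your overall strategy is sound, but you take an unnecessarily hard route and your justification for the extra work is not airtight.

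The paper's proof differs from yours in one key respect: instead of staying in the $SL$-world and invoking an $SL$-analog of Corollary~\ref{cor:BGLnPlusFactor}, it passes to $GL$ by composing with the forgetful map. Concretely, the paper uses the maps of presheaves $BSL_r \to BSL_r^+ \to BGL_r^+ \to \Sing BGL_r$ to obtain the diagram
\[
\xymatrix{
[X,BSL_{n-1}]_{\Zar} \ar[r] \ar[d] & [X,BSL_{n-1}^+]_{\Zar} \ar[r] \ar[d] & [X,BGL_{n-1}]_{\Zar} \ar[d]\\
[X,BSL_n]_{\Zar} \ar[r] & [X,BSL_n^+]_{\Zar} \ar[r] & [X,BGL_n]_{\Zar}
}
\]
whose horizontal compositions simply forget the orientation. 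Since Corollary~\ref{cor:BGLnPlusFactor} (for $GL$) is already established, the right-hand column is the genuine set of vector bundles, and one concludes $P\cong Q\oplus R$ as modules --- which is all the theorem asserts. No $SL$-representability is needed.

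Your route, by contrast, requires proving the $SL_r$-versions of Theorem~\ref{thm:SingBGRepsVect} and Corollary~\ref{cor:BGLnPlusFactor} for $r=n-1,n$. (Note also that Theorem~\ref{thm:ObstPlusExSeq} is already stated for $SL$, so nothing there needs to ``carry over''.) The critical step is the $SL$-analog of Lemma~\ref{lem:BZarPolynomialDescent}, which requires a Quillen patching statement for the property ``extended from $R$ as an \emph{oriented} bundle''. Your justification --- ``trivializations of a line bundle satisfy Zariski descent'' --- addresses a different issue: descent is about gluing over a cover of $\Spec R[T]$, whereas patching for ``extended'' is about recognizing when an oriented bundle on $R[T]$ comes from one on $R$. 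For the latter you would need to know, for instance, that if $P\cong P_0\otimes_R R[T]$ as modules and $\omega$ is an orientation on $P$, then $(P,\omega)$ is isomorphic as oriented bundle to some $(P_0',\omega_0')\otimes_R R[T]$; this amounts to surjectivity of $\det:\Aut(P_0\otimes R[T])\to R[T]^*$, which is not an immediate consequence of descent. It is plausible (and likely true), but your sketch does not supply it. The paper's forgetful trick sidesteps the entire issue.
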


\begin{proof}
We already know that $e(Q\oplus R)=0$.
So assume $e(P)=0$.
In view of Corollary \ref{cor:BGLnPlusFactor}, 
the maps of simplicial presheaves
$$BSL_r \to BSL_r^+ \to BGL_r^+ \to \Sing BGL_r$$
induce a commutative diagram 
$$\xymatrix{
[X,BSL_{n-1}]_{\Zar} \ar[r] \ar[d] & [X,BSL_{n-1}^+]_{\Zar} \ar[r] \ar[d] & [X,BGL_{n-1}]_{\Zar} \ar[d]\\
[X,BSL_n]_{\Zar} \ar[r] & [X,BSL_n^+]_{\Zar} \ar[r] & [X,BGL_n]_{\Zar}
}$$
where the horizontal composition is the map which forgets the orientation.
The commutativity of this diagram together with Theorem \ref{thm:ObstPlusExSeq} and the hypothesis $e(P)=0$ implies the result.
\end{proof}

\begin{remark}
Theorem \ref{thm:GeneralEulerclass} is a generalization of a theorem of Morel \cite[Theorem 8.14]{morel:book} who proved it for $X$ smooth affine over an infinite perfect field.
To compare the two versions, note that instead of our Milnor-Witt $K$-theory sheaf, Morel uses the unramified Milnor-Witt $K$-theory sheaf.
But for a smooth $X$ over an infinite field of characteristic not $2$, the canonical map from our Milnor-Witt $K$-sheaf to Morel's Milnor-Witt $K$-sheaf is an isomorphism which follows from the exactness of the Gersten complex for Milnor-Witt $K$-theory of regular local rings containing an infinite field of characteristic not $2$ \cite{GilleEtAl}.
Moreover, Morel uses Nisnevich cohomology instead of Zariski cohomology.
Again because of the exactness of the Gersten complex for $K^{MW}$, the change of topology map is an isomorphism for $X$ smooth over an infinite field of characteristic not $2$:
$$H_{\Zar}^*(X,\K^{MW}) \cong H_{\Nis}^*(X,\K^{MW}).$$
\end{remark}

\begin{remark}
\label{rmk:OrientInL}
Let $L$ be a line bundle on $X=\Spec R$.
Theorem \ref{thm:GeneralEulerclass} has an evident generalization to rank $n$ vector bundles $P$ with orientation $w:\Lambda^n_RP \cong L$ in $L$.
Equip $R^{n-1}\oplus L$ with the canonical orientation $\Lambda^n_R(R^{n-1}\oplus L) \cong \Lambda^{n-1}_RR^{n-1}\otimes \Lambda^1_RL = L$, and denote by 
$SL_n^L(R)$ the group of orientation perserving $R$-linear automorphisms of $R^{n-1}\oplus L$.
Then 
$$\Phi_n^L(X)=[X,BSL_n^L]_{\Zar}$$ 
is the set of isomorphism classes of rank $n$ vector bundles on $X$ with orientation in $L$.
Define the sheaf $\K^{MW}_n(L)$ on $X$ as 
$$\K^{MW}_n(L) = \H_n(SL_n^L,SL_{n-1}^L).$$
Its stalks are, of course, the usual Milnor-Witt $K$-groups of the local rings of $X$.
Replacing $SL_n$ with $SL_n^L$ everywhere, we obtain an Euler class map as in Definition \ref{dfn:Eulermap} and an Euler class 
$e(P,L)\in H^n(R,\K^{MW}(L))$
for projective modules $P$ with orientation in $L$ as in Definition \ref{dfn:EulerClass}.
\end{remark}

With the definitions in Remark \ref{rmk:OrientInL} we have the following theorem whose proof is {\it mutatis mutandis} the same as in the case $L=R$ in Theorem \ref{thm:GeneralEulerclass}.

\begin{theorem}
\label{thm:GeneralEulerclassOrientL}
Let $R$ be a commutative noetherian ring of dimension $n\geq 2$.
Assume that all its residue fields are infinite.
Let $L$ be a line bundle on $R$.
Let $P$ be a rank $n$ projective $R$-module with orientation in $L$.
Then 
$$P\cong Q\oplus R \Leftrightarrow e(P,L)=0 \in H^n_{\Zar}(R,\K^{MW}_n(L)).$$
\end{theorem}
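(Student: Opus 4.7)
The plan is to run the proof of Theorem \ref{thm:GeneralEulerclass} verbatim, replacing $SL_n$ by $SL_n^L$, $\mathcal{K}^{MW}_n$ by $\mathcal{K}^{MW}_n(L)$, and $BSL_n^+$ by $BSL_n^{L+}$ throughout. The forward implication is immediate: if $P \cong Q \oplus R$ compatibly with the orientation in $L$, then the classifying map $X \to BSL_n^L$ factors through $BSL_{n-1}^L$, hence through the null-homotopic composition $BSL_{n-1}^L \to BSL_n^{L+} \to P_{\leq n}(BSL_n^{L+}/BSL_{n-1}^{L+})$ defining $e(\cdot,L)$.

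For the converse, I would first establish the twisted analogs of the sheaf-level ingredients. Because every line bundle trivializes Zariski-locally, the pair $(SL_n^L, SL_{n-1}^L)$ becomes isomorphic (non-canonically) to $(SL_n, SL_{n-1})$ on a neighborhood of any point, so the stalks of $\mathcal{H}_n(SL_n^L, SL_{n-1}^L) = \mathcal{K}^{MW}_n(L)$ are the usual Milnor-Witt groups of the local rings, though the sheaf structure is twisted by $L$. Combined with Theorem \ref{thm:HnIsKMW}, this yields the twisted version of Lemma \ref{lem:TruncatedQuotientIsEM}: the presheaf $P_{\leq n}(BSL_n^{L+}/BSL_{n-1}^{L+})$ has only one nonzero homotopy sheaf, $\mathcal{K}^{MW}_n(L)$ in degree $n$. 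Obstruction theory as in Theorem \ref{thm:ObstPlusExSeq} then gives, for $X$ noetherian separated of dimension $\le n$ with infinite residue fields, exactness of
\[
[X, BSL_{n-1}^{L+}]_{\Zar} \longrightarrow [X, BSL_n^{L+}]_{\Zar} \stackrel{e}{\longrightarrow} H^n_{\Zar}(X, \mathcal{K}^{MW}_n(L)).
\]

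Next I would invoke the affine representability result, Theorem \ref{thm:SingBGRepsVect} and Corollary \ref{cor:BGLnPlusFactor}, in its twisted form: the underlying Quillen patching inputs (Lemma \ref{lem:BZarPolynomialDescent} and Corollary \ref{cor:BZarDeltaDescent}) concern whether projective modules on $R[T_1,\ldots,T_q]$ are extended from $R$, a property that is insensitive to any choice of orientation, so the arguments transfer unchanged to $GL_n^L$-torsors and to the pair $(SL_n^L, GL_n^L)$. Therefore the composition $[X, BSL_r^L]_{\Zar} \to [X, BSL_r^{L+}]_{\Zar} \to [X, \Sing BGL_r]_{\Zar}$ agrees with the usual forgetful map sending an oriented vector bundle to its underlying bundle, and one gets a commutative diagram as in the proof of Theorem \ref{thm:GeneralEulerclass} with $SL_r$ replaced by $SL_r^L$. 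Chasing this diagram: the hypothesis $e(P,L)=0$ lifts $[P]$ to a class in $[X, BSL_{n-1}^{L+}]_{\Zar}$, whose image in $[X, BGL_{n-1}]_{\Zar}$ is represented by a genuine rank $n{-}1$ projective module $Q$, and compatibility of the forgetful maps produces the splitting $P \cong Q \oplus R$.

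The main obstacle is the bookkeeping for the affine representability step: one must check that the Quillen-patching/$\Sing$-representability machinery of Section~\ref{sec:EulerClGps} really does apply to $BSL_n^L$ for an arbitrary line bundle $L$. Everything else is formal transport of the untwisted proof through a Zariski-local isomorphism $SL_n^L \cong SL_n$.
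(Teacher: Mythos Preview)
Your proposal is correct and matches the paper's approach exactly: the paper simply states that the proof is \emph{mutatis mutandis} the same as that of Theorem~\ref{thm:GeneralEulerclass}, and what you have written is precisely a careful unpacking of that phrase, including the key observation that the Zariski-local triviality of $L$ reduces the stalk computations to the untwisted case. Your identification of the affine representability bookkeeping as the main point to check is apt, though the paper does not spell this out either.
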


For a field $k$, denote by ${\H(k)}$ the Morel-Voevodsky unstable $\aaa^1$-homotopy category of smooth schemes over $k$ \cite{morelvoev}.
Recall that $\Phi_n(X)$ denotes the set of isomorphism classes of rank $n$ vector bundles on the scheme $X$.
The arguments in the proof of Theorem \ref{thm:SingBGLn} 
can be used to give a simple proof of a theorem of Morel \cite[Theorem 8.1 (3)]{morel:book}.
Note that we do not need to exclude the case $n=2$.

\begin{theorem}[Morel]
\label{thm:MorelReps}
Let $k$ be an infinite perfect field.
Then for any smooth affine $k$-scheme $X$, there is a natural bijection
$$\Phi_n(X)\cong [X,BGL_n]_{\H(k)}.$$
\end{theorem}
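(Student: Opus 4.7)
The strategy will be to combine Theorem \ref{thm:SingBGRepsVect} with the verification that $\Sing BGL_n$ is an $\aaa^1$-Nisnevich fibrant model of $BGL_n$ on $\Sm_k$, for $k$ infinite perfect. Since $X$ is affine noetherian, Theorem \ref{thm:SingBGRepsVect} already gives
$$\Phi_n(X) \cong [X,\Sing BGL_n]_{\Zar},$$
so it will suffice to identify the right hand side with $[X, BGL_n]_{\H(k)}$.

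The first step is to establish two properties of $\Sing BGL_n$ on $\Sm_k$. \emph{$\aaa^1$-invariance} is standard for the $\Sing$ construction, via the simplicial homotopy coming from scalar multiplication $\aaa^1 \times \aaa^1 \to \aaa^1$; in particular the canonical map $BGL_n \to \Sing BGL_n$ is an $\aaa^1$-weak equivalence. \emph{Nisnevich descent on $\Sm_k$} is the Nisnevich analog of Corollary \ref{cor:BZarDeltaDescent} and will be proved by the same Bousfield-Friedlander argument. That argument needs (i) Nisnevich descent for the presheaf $R \mapsto B^R_{\Nis}GL_n(R[T_1,\ldots,T_q])$ at each simplicial level, which I would obtain by Quillen's patching theorem exactly as in Lemma \ref{lem:BZarPolynomialDescent}, with Nisnevich distinguished squares in place of Zariski ones; and (ii) a constancy/fibrancy condition on the simplicial set $q \mapsto \pi_0 B^R_{\Nis}GL_n(R[T_1,\ldots,T_q]) = \Phi_n(\Spec R \times \aaa^q)$ as $R$ ranges over a Nisnevich cover.

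Condition (ii) is precisely where the hypothesis ``$k$ infinite perfect'' is used, via Lindel's theorem: for $R$ essentially smooth over an infinite perfect field, every finitely generated projective $R[T_1,\ldots,T_q]$-module is extended from $R$, so $\Phi_n$ is $\aaa^q$-invariant on $\Sm_k$ and the relevant simplicial set of components is constant. Given (i) and (ii), $\Sing BGL_n$ is $\aaa^1$-Nisnevich fibrant on $\Sm_k$ (up to a simplicial fibrant replacement that does not affect $\pi_0$ pointwise), so
$$[X, BGL_n]_{\H(k)} \cong \pi_0 \Sing BGL_n(X).$$
Since Nisnevich descent implies Zariski descent, the same simplicial set also computes $[X,\Sing BGL_n]_{\Zar}$, and the chain
$$\Phi_n(X) \cong [X,\Sing BGL_n]_{\Zar} \cong \pi_0 \Sing BGL_n(X) \cong [X, BGL_n]_{\H(k)}$$
yields the theorem.

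The main obstacle is the verification of Nisnevich descent for the bisimplicial presheaf underlying $\Sing BGL_n$; Lindel's theorem is the crucial new input beyond the Zariski arguments of Theorem \ref{thm:SingBGLn}. The case $n=2$ need not be excluded because the use of Theorem \ref{thm:SingBGRepsVect} bypasses the plus-construction arguments that force the $n \geq 3$ restriction in Morel's original proof.
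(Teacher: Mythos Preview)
Your overall strategy matches the paper's: establish that $\Sing BGL_n$ (or rather $\Sing B_{\Zar}GL_n$) is $\aaa^1$-invariant and satisfies the affine B.G.-property for the Nisnevich topology via Lindel and Bousfield--Friedlander, and then identify its sections over an affine $X$ with $[X,BGL_n]_{\H(k)}$. But there is a genuine gap at the step ``Given (i) and (ii), $\Sing BGL_n$ is $\aaa^1$-Nisnevich fibrant.'' In unstable $\aaa^1$-homotopy theory this implication is \emph{false} in general: a simplicial presheaf that is sectionwise $\aaa^1$-invariant and Nisnevich-local need not be $\aaa^1$-local, because Nisnevich fibrant replacement can destroy $\aaa^1$-invariance and the functor $L_{\aaa^1}$ is a transfinite alternation of the two. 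Some extra input is required to guarantee that the process stabilises.

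The paper supplies that input by invoking \cite[Theorem A.19]{morel:book}, which demands, beyond (i) and (ii), that the loop space $\Omega^1_s\Sing B_{\Zar}GL_n=\Sing GL_n$ also have the affine Nisnevich B.G.-property, that the Zariski $\pi_0$-sheaf be trivial, and that the Zariski $\pi_1$-sheaf be strongly $\aaa^1$-invariant. These are then checked: $\pi_0$ is trivial since vector bundles over local rings are free, and $\pi_1$ is the sheaf of units $\mathbb{G}_m$ (using $SL_nR=E_nR$ for local $R$), which is strongly $\aaa^1$-invariant. Only then does one conclude that $\Sing B_{\Zar}GL_n\to L_{\aaa^1}\Sing B_{\Zar}GL_n$ is a weak equivalence on affines. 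A minor point: Quillen's patching theorem is a Zariski statement and does not literally apply to Nisnevich squares; the paper instead observes directly that $B_{\Zar}GL_n\Delta_q=B\Vect_n\Delta_q$ has Nisnevich descent (faithfully flat descent for vector bundles), and then Lindel makes the $\pi_0$-simplicial set constant so Bousfield--Friedlander applies.
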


\begin{proof}
Let $R$ be a smooth $k$-algebra.
For each $q\geq 0$, the simplicial presheaf 
$$B_{\Zar}GL_n\Delta_q = B\Vect_n\Delta_q$$
has the affine B.G.-property for the Zariski and the Nisnevich topology (see \cite{morel:book} for the definition), by descent, or an application of Quillen's theorem B.

By a result of Lindel \cite{Lindel},  
for any smooth $k$-algebra $R$, extension by scalars induces a bijection  $\Phi_n(R)\cong \Phi_n(R[T])$.
In other words, the simplicial set of connected components $q\mapsto \Phi_n(\Delta_qR) =\pi_0  B_{\Zar}GL_n(\Delta_qR)$ of the simplicial space 
$q \mapsto B_{\Zar}GL_n\Delta_qR$
is constant.
In view of the Bousfield-Friedlander Theorem \cite[Theorem B.4.]{BousfieldFriedlander}, it follows that
the diagonal $\Sing B_{\Zar}GL_n$ of the bisimplicial presheaf $q \mapsto B_{\Zar}GL_n\Delta_q$ has the affine B.G.-property for the Zariski and the Nisnevich topology. 
By construcion, the simplicial presheaf 
$\Sing B_{\Zar}GL_n$
is $\aaa^1$-invariant.
We will show that the map of simplicial presheaves
\begin{equation}
\label{eqn:SingToLASing}
\Sing B_{\Zar}GL_n \to L_{\aaa^1} \Sing B_{\Zar}GL_n
\end{equation}
is a weak equivalence on affine $k$-schemes, by an application of \cite[Theorem A.19]{morel:book}.
We already know that the source of (\ref{eqn:SingToLASing}) is $\aaa^1$-invariant, and satisfies the affine B.G.-property for the Zariski and the Nisnevich topology.
Furtheremore, 
$\Sing GL_n =  \Omega^1_s \Sing B_{Zar}GL_n$  has the affine B.G.-property for the Nisnevich topology because $\Sing B_{Zar}GL_n$ has.
The $\pi_0$ sheaf of $\Sing B_{Zar}GL_n$ is trivial in the Zariski topology because over a local ring every rank $n$ vector bundle is trivial.
Finally, the $\pi_1$ sheaf of $\Sing B_{Zar}GL_n$ in the Zariski topology is the $\pi_0$-sheaf of $\Sing GL_n$ which is the group of units (for integral schemes), hence strongly $\aaa^1$-invariant, since for a local ring $R$ and $n\geq 1$, we have $SL_nR = E_nR$.
In view of \cite[Theorem A.19]{morel:book},
the map (\ref{eqn:SingToLASing}) is a weak equivalence on affine $k$-schemes.

Now, the map $BGL_n \to \Sing B_{\Zar}GL_n$ is an $\aaa^1$-weak equivalence, and, by Lindel's theorem, we have $\Phi_n(R) = \pi_0 B_{\Zar}GL_n(\Delta R)$.
This finishes the proof.
\end{proof}

\begin{remark} 
Similar arguments apply to the symplectic groups $Sp_n$ in place of $GL_n$.
\end{remark}

\appendix

\section{The affine B.G.-property for the Zariski topology}

\begin{definition}
Let $X$ be a scheme and let $F:\Open_X^{op} \to \sSets$ be a simplicial presheaf on $X$.
We say that $F$ has the {\em affine B.G.-property for the Zariski topology}
if $F(\emptyset)$ is contractible and for any affine $U=\Spec R \in \Open_X$ and $f,g\in R$ with $(f,g)=R$, the following square of simplicial sets is homotopy cartesian
\begin{equation}
\label{eqn:dfn:AffineBGZariski}
\xymatrix{
F(R) \ar[r] \ar[d] & F(R_f) \ar[d] \\
F(R_g) \ar[r] & F(R_{fg}).}
\end{equation}
\end{definition}

The aim of this appendix is to give a proof of the  following result due to Marc Hoyois \cite{Hoyois}.
Whereas Hoyois' proof uses $\infty$-categories, we give a proof in the framework of model categories based on standard manipulations of homotopy limits.

\begin{theorem}[Hoyois]
\label{thm:AffineBGToBG}
Let $X$ be a noetherian scheme and let
$F:\Open_X^{op} \to \sSets$ be a simplicial presheaf on $X$ which has the affine B.G.-property.
Then for all affine $Y\in \Open_X$, the following canonical map is a weak equivalence
$$F(Y) \stackrel{\sim}{\longrightarrow}  F_{\Zar}(Y),$$
where $F\to F_{\Zar}$ is a fibrant replacement of $F$ for the Zariski topology on $X$.
\end{theorem}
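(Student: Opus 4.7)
The plan is to show that $F$ satisfies Zariski descent on the full subcategory of affine open subschemes of $X$, from which the theorem follows by a standard fibrant-replacement argument. By the Brown–Gersten criterion (cf.\ \cite{BrownGersten}), it suffices to verify that $F$ sends every elementary Zariski square with affine total space to a homotopy cartesian square; the affine B.G.-property provides this only in the restricted case where both open subsets are principal, and the task is to bootstrap to the general case. Concretely, I would first reduce the problem to Čech descent for finite principal covers: for a noetherian affine $Y = \Spec R \in \Open_X$ the topology of $Y$ is generated by principal opens, and noetherianity guarantees that every Zariski cover admits a finite refinement by principal opens, so it is enough to show that for every $f_1,\dots,f_n \in R$ with $(f_1,\dots,f_n)=R$, the natural map
$$F(Y) \longrightarrow \holim_{\Delta} F\bigl(\check{C}_\bullet(\{Y_{f_i}\})\bigr)$$
from $F(Y)$ to the totalization of the Čech nerve of this cover is a weak equivalence.

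Next, I would prove this Čech descent statement by induction on $n$. The case $n=1$ is trivial, and $n=2$ is equivalent to the affine B.G.-property: indeed, the totalization of the two-term Čech nerve $\check{C}_\bullet\{Y_f,Y_g\}$ is naturally equivalent to the homotopy pullback $F(Y_f)\times^h_{F(Y_{fg})}F(Y_g)$, so the required equivalence is exactly the square (\ref{eqn:dfn:AffineBGZariski}) being homotopy cartesian. For the inductive step $n \geq 3$ I would organize the $n$-fold Čech nerve as a bisimplicial object by splitting the index set $\{1,\dots,n\}$ according to whether a chosen pivot index (say $n$) is present, and compute the resulting homotopy limit iteratively. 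Each iteration involves only intersections $Y_{f_I}$, which remain affine principal opens of $Y$, and reduces via a single invocation of the inductive hypothesis together with the affine B.G.-property to a descent statement with strictly fewer generators. Finally, standard cofinality then upgrades Čech descent for principal covers to descent for arbitrary Zariski hypercovers of $Y$, whence $F(Y)\simeq F_{\Zar}(Y)$.

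The main obstacle is the inductive step: the naive splitting $Y = W \cup Y_{f_n}$ with $W = \bigcup_{i<n} Y_{f_i}$ tempts one to apply affine B.G.\ to the square $(W\cap Y_{f_n},\, W,\, Y_{f_n},\, Y)$, but $W$ is not affine and the hypothesis does not cover this situation. The resolution is to avoid ever leaving the world of principal affine opens: by performing the bisimplicial totalization in the direction that runs over subsets of $\{1,\dots,n-1\}$ first, each intermediate homotopy limit is indexed over a diagram of genuine principal affine opens $Y_{f_I}$ with $n \in I$ or $n \notin I$, and at every stage one only ever invokes the affine B.G.-property or the inductive hypothesis on an affine, never on the non-affine $W$. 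The noetherian hypothesis enters both in the initial reduction to finite principal covers and in ensuring that all required Čech totalizations can be manipulated finitely and combinatorially.
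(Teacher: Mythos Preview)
Your overall strategy matches the paper's: reduce to \v{C}ech descent for finite principal covers of affines, then bootstrap. But the inductive step as you sketch it does not go through. When you split the punctured $n$-cube $\P_0([n])$ by whether the pivot index $n$ is present, Fubini gives
\[
\holim_{S\in\P_0([n])}F(Y_{f_S})\;\simeq\;
F(Y_{f_n})\;\times^h_{\holim_{S\in\P_0([n-1])}F(Y_{f_{S}f_n})}\;\holim_{S\in\P_0([n-1])}F(Y_{f_S}),
\]
and now you must identify the two outer $\P_0([n-1])$-limits. The inductive hypothesis does not apply to either: the elements $f_1,\dots,f_{n-1}$ need not generate the unit ideal in $R$, nor in $R_{f_n}$, so $\{Y_{f_i}\}_{i<n}$ is not a cover of any affine in sight. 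Reversing the order of totalization does not help, because the projection $\P_0([n])\to\P([n-1])$ is not a cartesian fibration (the fibre over $\emptyset$ is $\{\{n\}\}$ while the other fibres are copies of $[1]$, and there is no cartesian lift into $\{n\}$ from a nonempty $T\subset[n-1]$). So ``running over subsets of $\{1,\dots,n-1\}$ first'' does not produce a diagram to which the hypothesis applies either; at no stage do you obtain an $(n-1)$-element \emph{cover} of an affine.

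The paper fixes exactly this point by first \emph{refining} the cover. From $(f_1,\dots,f_n)=R$ one extracts $g\in R$ with $(g,f_n)=R$ and $Y_g\subset\bigcup_{i<n}Y_{f_i}$; then $\{Y_g,Y_{f_n}\}$ is a two-term principal cover (affine B.G.\ applies), and $\{Y_{gf_i}\}_{i<n}$ is a genuine $(n-1)$-term principal cover of the \emph{affine} $Y_g$ (induction applies). Iterating this produces what the paper calls an \emph{elementary} cover; descent for elementary covers is then transferred back to the original cover via a Refinement Lemma, and the base-change stability of elementary covers makes the hypotheses of that lemma checkable. Your sketch is missing precisely this refinement step; without it the induction has nothing to bite on. A second, smaller point: your final passage from \v{C}ech descent on affines to $F(Y)\simeq F_{\Zar}(Y)$ via ``hypercover cofinality'' still needs an argument relating the affine site to the full site $\Open_X$; the paper handles this by homotopy right Kan extending $F$ to all opens and invoking Brown--Gersten there.
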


The proof will occupy the rest of this appendix.
We start by reviewing basic properties of homotopy limits \cite{BousfieldKan}, \cite{SchererEtAl}.

Let $f:\C \to \D$ be a functor between small categories.
For an object $D$ of $\D$, the category $(f\downarrow D)$ has objects pairs $(C,a)$ where $C$ is an object of $\C$ and $a:f(C)\to D$ is a map in $\D$.
A map $(C,a)\to (C',a')$ in $(f\downarrow D)$ is a map $C\to C'$ in $\C$ which makes the induced triangle in $\D$ commute. 
Composition is composition of maps in $\C$.
There is a similar category $(D\downarrow f)$ whose objects are pairs $(C,a:D\to fC)$.
When $f=id:\C \to \C$ is the identity functor, one writes $(\C\downarrow C)$ and $(C\downarrow \C)$ for $(id\downarrow C)$ and $(C\downarrow id)$.
For a small category $\C$, we denote by $B\C$ the classifying space of $\C$, that is, the nerve simplicial set of $\C$.

Let $F:\C \to \sSets$ be a functor from a small category $\C$ to simplicial sets.
Assume that $F$ is object-wise fibrant, that is, $FC$ is a fibrant simplicial set for all objects $C$ of $\C$.
Then the {\em homotopy limit} of $F$ over $\C$ is the simplicial set defined by the equalizer diagram
$$\holim_{\C}F \to  \prod_{C \in \C} \Hom(B(\C\downarrow C),F(C)) 
\xymatrix{ \ar@<1ex>[r]^a \ar@<-1ex>[r]_b & }
\prod_{\gamma:C \to C' \in \C} \Hom(B(\C\downarrow C),F(C'))
$$ 
where $a$ and $b$ are induced by 
$$\xymatrix{
\Hom(B(\C\downarrow C),F(C)) \ar[r]^{F\gamma} & \Hom(B(\C\downarrow C),F(C'))\\
\Hom(B(\C\downarrow C'),F(C')) \ar[r]^{(\C\downarrow \gamma)} & \Hom(B(\C\downarrow C),F(C')).
}$$
If $F$ is not object-wise fibrant, we define the homotopy limit of $F$ over $\C$ as the homotopy limit of $\Ex^{\infty}F$ over $\C$ as above
where $F \to \Ex^{\infty}F$ is Kan's fibrant replacement functor in the category of simplicial sets.
So, $F \to \Ex^{\infty}F$ is an object-wise weak equivalence, that is, $FC \to \Ex^{\infty}FC$ is a weak equivalence for all $C\in \C$, and $\Ex^{\infty}F$ is object-wise fibrant. 

The homotopy limit has the following useful properties.
\vspace{1ex}

\noindent
{\bf Functoriality}.
The homotopy limit $\holim_{\C}F$ is covariantly functorial in $F$ and contravariantly functorial in $\C$.
More precisely, 
define a category $[\Cat,\sSets]$ whose objects are pairs $(\C,F)$ where $\C$ is a small category and $F:\C \to \sSets$ is a functor.
Given two objects $(\C,F)$ and $(\D,G)$ of $[\Cat,\sSets]$, a morphism 
$(\C,F)\to (\D,G)$ in $[\Cat,\sSets]$ is a
pair $(f,\ffi)$  where $f:\C \to \D$ is a functor and $\ffi:f^*G \to F$ a natural transformation.
Composition is defined as $(g,\gamma)  \circ (f,\ffi) = (gf,\ffi\circ f^*(\gamma))$.
The homotopy limit defines a functor
\begin{equation}
\label{eqn:Functoriality1}
\holim: [\Cat,\sSets]^{op}\to \sSets
\end{equation}
which sends the map $(f,\ffi) : (\C,F)\to (\D,G)$ in $[\Cat,\sSets]$ to the map of simplicial sets
$$(f,\ffi)^*:\holim_{\D}G \to \holim_{\C}F$$
which is the composition of the two maps \cite[XI \S 3.2]{BousfieldKan}
$$\holim_{\D}G \stackrel{\holim(f)}{\longrightarrow} \holim_{\C}f^*G \stackrel{\holim(\ffi)}{\longrightarrow} \holim_{\C}F.$$
\vspace{1ex}

\noindent
{\bf Homotopy Lemma}.
Let $F \to F'$ be a natural transformation of functors $F,F':\C \to \sSets$ such that for all $C\in \C$ the map $F(C) \to F'(C)$ is a weak equivalence of simplicial sets, then the induced map on homotopy limits is a weak equivalence \cite[XI \S 5.6]{BousfieldKan}:
$$\holim_{\C}F \stackrel{\sim}{\longrightarrow} \holim_{\C}F'.$$
\vspace{1ex}

\noindent
{\bf Cofinality}.
A functor $f:\C \to \D$ between small categories is called {\em left cofinal}  if for every $D \in \D$, the classifyting space of the category $(f\downarrow D)$ is contractible.

Let $f:\C \to \D$ be a left cofinal functor.
Then for every functor $F:\D \to \sSets$, the induced map on homotopy limits is a weak equivalence \cite[XI \S 9.2]{BousfieldKan}:
$$(f,1)^*: \holim_{\D}F \stackrel{\sim}{\longrightarrow} \holim_{\C}f^*F.$$
\vspace{1ex}

\noindent
{\bf Fubini's theorem}.
A functor 
\begin{equation}
\label{eqn:Fubini1}
\C \to [\Cat,\sSets]^{op}:C\mapsto (\D_C,F_C)
\end{equation}
is given by the following data:
\begin{itemize}
\item
a functor $\D:\C^{op} \to \Cat:C\mapsto \D_C$,
\item
for every object $C\in \C$ a functor $F_C: \D_C\to \sSets$, and
\item
for every map $\gamma:C_0 \to C_1$ a natural transformation $\delta_{\gamma}: \D_{\gamma}^*F_{C_0} \to F_{C_1}$ such that $\delta_1=id$ and  $\delta_{\gamma_1\gamma_0} = 
\delta_{\gamma_1}\D_{\gamma_1}(\delta_{\gamma_0})$ for any two composable arrows $\gamma_0$, $\gamma_1$ in $\C$.
\end{itemize}
To give such data is equivalent to giving a functor
\begin{equation}
\label{eqn:Fubini2}
\xymatrix{F: \C \oint \D \to \sSets}
\end{equation}
where $\C\oint \D = (\C^{op}\int \D)^{op}$ is the opposite of the Grothendieck construction on the functor $\D:\C^{op}\to \Cat$.
In detail, $\C \oint \D$ is the category whose objects are pairs $(C,x)$ where $C$ is an object of $\C$ and $x$ is an object of $\D_C$.
A map $(C_0,x_0)\to (C_1,x_1)$ in $\C\oint \D$ is given by a pair $(\gamma,a)$ where $\gamma:C_0 \to C_1$ is a map in $\C$ and $a:x_0\to \D_{\gamma}x_1$ is a map in $\D_{C_0}$.
Composition is defined by $(\gamma_1,a_1)\circ (\gamma_0,a_0) = (\gamma_1\gamma_0,\D_{\gamma_0}(a_1)\circ a_0)$.
The functor (\ref{eqn:Fubini2}) induced by the collection of data above sends $(C,x)$ to $F_C(x)$ and a map $(\gamma,a)$ to $\delta_{\gamma}(x_1)\circ F_{C_0}(a)$.

The composition of the functors (\ref{eqn:Functoriality1}) and (\ref{eqn:Fubini1}) determine a functor
$$\C \to \sSets: C\mapsto \holim_{x\in \D_C}F_C(x)$$
which in turn defines a simplicial set 
$$\holim_{C\in \C}\holim_{x\in \D_C}F_C(x).$$
On the other hand, the functor (\ref{eqn:Fubini2}) also determines a simplicial set
$$\holim_{(C,x)\in \C\oint \D}F_C(x).$$
The Fubini Theorem for homotopy limits asserts that these two simplicial sets are naturally weakly equivalent \cite[III Theorem 26.8 and III 31.5]{SchererEtAl}:
\begin{equation}
\label{FubiniThm1}
\holim_{C\in \C}\holim_{x\in \D_C}F_C(x) \simeq \holim_{(C,x)\in \C\oint \D}F_C(x).
\end{equation}
If $\D:\C^{op}\to \Cat$ is a constant functor, that is, $\D_{\gamma}=id$ for all maps $\gamma$ in $\C$, then $\C\oint \D = \C\times \D$ and Fubini's Theorem reduces to a weak equivalence \cite[XI Example 4.3]{BousfieldKan}
\begin{equation}
\label{FubiniThm2}
\holim_{C\in \C}\holim_{x\in \D}F_C(x)\simeq \holim_{(C,x)\in \C\times \D}F_C(x).
\end{equation}
\vspace{1ex}

\noindent
{\bf Homotopy pull-backs}.
\vspace{1ex}
A commutative square of simplicial sets
$$\xymatrix{X \ar[r] \ar[d] & Y \ar[d] \\
Z \ar[r] & W}$$
is homotopy cartesian if and only if the natural map,
induced by the unique map from the index category to the final object in $\Cat$,
$$X \longrightarrow \holim(Y \to W \leftarrow  Z),$$
is a weak equivalence of simplicial sets \cite[XI Example 4.1 (iv)]{BousfieldKan}.
\vspace{1ex}

\noindent
{\bf Extended Functoriality}.
Let $(f_0,\ffi_0), (f_1,\ffi_1):(\C,F) \to (\D,G)$ be morphisms in $[\Cat,\sSets]$.
A natural transformation $\delta:(f_0,\ffi_0) \to (f_1,\ffi_1)$ in $[\Cat,\sSets]$ is 
a natural transformation of functors $\delta:f_0 \to f_1$ such that 
$\ffi_0 = \ffi_1 \circ G(\delta) : f_0^*G \to F$.

If there is a natural transformation 
$\delta:(f_0,\ffi_0) \to (f_1,\ffi_1)$ of maps in $[\Cat,\sSets]$ as above
then the induced maps on homotopy limits  
$$(f_0,\ffi_0)^*, (f_1,\ffi_1)^*:\holim_{\D}G \to \holim_{\C}F$$
are homotopic.

\begin{proof}
The Extended Functoriality is a consequence of Cofinality as follows.
Denote by $p:\C\times [1] \to \C$ the projection.
The two maps $(f_i,\ffi_i): (\C,F) \to (\D,G)$ are the two compositions in a diagram in $[\Cat,\sSets]$
$$\xymatrix{ 
(\C,F) \ar@<1ex>[r]^{\hspace{-4ex}(s_0,1)} \ar@<-1ex>[r]_{\hspace{-4ex}(s_1,1)} & 
(\C\times [1], p^*F) \ar[r]^{\hspace{4ex}(f,\ffi)} & (\D,G)
}$$
where $[1]$ is the poset $0<1$ and $s_i:\C \to \C\times [1]:C\mapsto (C,i)$ is the obvious inclusion, $i=0,1$.
The functor $p:\C \times [1] \to \C$ is left cofinal since for every $C \in \C$
the composition 
$$(\C\downarrow C) \stackrel{s_0}{\to} (p\downarrow C) \stackrel{p}{\to} (\C\downarrow C)$$
is the identity whearas the the composition 
$$(p\downarrow C) \stackrel{p}{\to} (\C\downarrow C) \stackrel{s_0}{\to} (p\downarrow C)$$
admits a natural transformation to the identity.
Since $(\C\downarrow C)$ has a final object, this category and hence $(p\downarrow C)$ are contractible.
By Cofinality, the map
$$(p,1)^*:\holim_{\C}F \to \holim_{\C\times [1]}p^*F $$
is a weak equivalence.
Since $(s_i,1)^*(p,1)^*=1$, the two maps $(s_i,1)^*$ are homotopic, $i=0,1$.
In particular, $(f_0,\ffi_0)^* = (s_0,1)^*(f,\ffi)^*$ is homotopic to 
$(f_1,\ffi_1)^* =  (s_1,1)^*(f,\ffi)^*$.
\end{proof}

Most functors we want to take a homotopy limit of factor through the category $\Open_X^{op}$ of open subsets of a space $X$.
Since the category $\Open_X^{op}$ is a poset, this simplifies the treatment, and we introduce the following category $\Cat_X$.
Its objects are pairs $(\C,U)$ where $\C$ is a small category and
 $U:\C \to \Open_X^{op}$ is a functor.
A map $f:(\C,U) \to (\D,V)$ in $\Cat_X$ can be thought of as a ``refinement''.
It is a functor $f:\C \to \D$ such that
$U(C) \subset V(f(C))$ for all $C \in \C$.
Composition in $\Cat_X$ is composition of functors.
A natural transformation $\delta:f\to g$ of maps $f,g:(\C,U) \to (\D,V)$ in $\Cat_X$ is by definition a natural transformation $\delta:f\to g$ of functors $f,g:\C \to \D$.

If $F:\Open^{op}_X \to \sSets$ is a simplicial presheaf on $X$, 
then $\holim \circ F$ defines a functor $\Cat_X^{op} \to \sSets$ 
by $(\holim\circ F) (\C,U)= \holim_{\C}FU$.
A map $f:(\C,U) \to (\D,V)$ in $\Cat_X$ induces a map $(f,\can):(\C,FU) \to (\D,FV)$ in $[\Cat,\sSets]$ where $\can: f^*FV \to FU$ is the restriction map.
Thus, $F$ defines a functor
$$F: \Cat_X \to [\Cat,\sSets]: (\C,U)\mapsto (\C,FU)
$$
which sends natural transformations to natural transformations,
and $\holim \circ F$ is the composition 
\begin{equation}
\label{eqn:CatXtoCatsSets}
\holim\circ F: \Cat_X^{op}\stackrel{F}{\longrightarrow} [\Cat,\sSets]^{op}\stackrel{\holim}{\longrightarrow} \sSets.
\end{equation}
Call two maps $f,g:(\C,U) \to (\D,V)$ in $\Cat_X$ homotopic if there is a zigzag
$f=f_0 \to f_1 \leftarrow f_2 \to \cdots \leftarrow f_n=g$ of natural transformations of maps $(\C,U) \to (\C,V)$ in $\Cat_X$.
By the Extended functoriality for homotopy limits, the functor 
(\ref{eqn:CatXtoCatsSets}) sends homotopic maps to homotopic maps.

The category $\Cat_X$ has a final object, namely $(*,X)$ where $*$ denotes the one-object-one-morphism category, that is, the final object in $\Cat$.
In particular, for any $(\C,U)$ in $\Cat_X$, there is a natural map of simplicial sets
$$F(X) \to \holim_{\C}F(U).$$

If $I$ is a set, we denote by $\P_0(I)$ the category of non-empty subsets $S \subset I$  where we have a unique arrow $S \to S'$ if $S \subset S'$, otherwise there is no arrow.
An open cover $\U=\{U_i \to U\}_{i\in I}$ of some open $U\subset X$ defines a functor $U:\P_0(I) \to \Open_X^{op}: S \mapsto U_S$ where $U_S=\bigcap_{s\in S}U_s$.

\begin{definition}
Let $X$ be a noetherian scheme, and $\U = \{U_i \to U|i\in I\}$ an open cover of some open subset $U\subset X$.
We say that a simplicial presheaf $F:\Open_X^{op}\to \sSets$ has {\em descent for $\U$} if the following canonical map is a weak equivalence
$$F(U) \stackrel{\sim}{\to} \holim_{\emptyset \neq S \subset I}F(U_S).$$
\end{definition}
The following is a version of \cite[Lemma 5.6]{VoevodskyCD}.

\begin{lemma}[Refinement Lemma]
\label{lem:Refinement}
Let $\U=\{U_i \to X\}_{i\in I}$ and $\V = \{V_j \to X \}_{j\in J}$ be open covers of $X$, and assume that $\V$ is a refinement of $\U$, that is, there is a map $f:J \to I$ such that $V_j \subset U_{f(j)}$ for all $j\in J$.
Let $F$ be a simplicial presheaf on $X$.
If $F$ has descent for $\V$ and for $\V \cap U_S = \{V_j\cap U_S \to U_S\}_{j\in J}$ for all $S \subset I$, then $F$ has descent for $\U$.
\end{lemma}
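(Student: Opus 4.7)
The plan is to compute $\holim_{S\in\P_0(I)} F(U_S)$ via the bifunctor $(S,T)\mapsto F(V_T\cap U_S)$ on $\P_0(I)\times\P_0(J)$ and to compare the two iterated homotopy limits using Fubini. Applying descent for $\V\cap U_S$ to each $S$ and the Homotopy Lemma upgrades $\holim_{S} F(U_S)$ to $\holim_S\holim_T F(V_T\cap U_S)$; by the product form (\ref{FubiniThm2}) of Fubini this is equivalent to $\holim_T\holim_S F(V_T\cap U_S)$. If the latter can be identified with $\holim_T F(V_T)$, then descent for $\V$ finishes the job since $\holim_T F(V_T)\simeq F(X)$. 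The whole argument thus reduces to the single key identification
\[
\holim_{S\in\P_0(I)} F(V_T\cap U_S)\;\simeq\; F(V_T)\qquad\text{for each fixed }T\in\P_0(J),
\]
which is the only point where the refinement hypothesis enters.

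To establish this identification, fix $T$, pick any $t\in T$, and set $i_0=f(t)$, so that $V_T\subset U_{i_0}$. Let $\P_0^{i_0}(I)\subset\P_0(I)$ be the full subposet of subsets containing $i_0$. The assignment $\pi\colon\P_0(I)\to\P_0^{i_0}(I)$, $S\mapsto S\cup\{i_0\}$, is left adjoint to the inclusion $j$, and for $S'\in\P_0^{i_0}(I)$ the comma category $(\pi\downarrow S')$ is the poset of all $S\in\P_0(I)$ with $S\cup\{i_0\}\subset S'$, i.e.\ with $S\subset S'$; this has $S'$ as a final object, so $\pi$ is left cofinal. Because $V_T\subset U_{i_0}$, one has $V_T\cap U_S=V_T\cap U_{S\cup\{i_0\}}$, so the functor $S\mapsto F(V_T\cap U_S)$ on $\P_0(I)$ is exactly $\pi^*$ of its restriction to $\P_0^{i_0}(I)$. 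Cofinality reduces the limit to one over $\P_0^{i_0}(I)$, which has $\{i_0\}$ as an initial object, and the limit collapses to $F(V_T\cap U_{i_0})=F(V_T)$.

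I expect the Fubini and Homotopy Lemma manipulations to be routine given their treatment in the preamble to the lemma; the main point where care is required is the cofinality argument above and the bookkeeping that all comparison maps in the final zigzag are the canonical ones, so that the weak equivalence is realised by the natural map $F(X)\to\holim_S F(U_S)$ coming from restriction. This should reduce to observing that both the $S$-first and the $T$-first iterated homotopy limits receive the same canonical map from $F(X)$ via the restrictions $F(X)\to F(V_T\cap U_S)$, after which the chain of equivalences assembles into the desired descent for $\U$.
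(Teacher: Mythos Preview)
Your argument is correct and takes a genuinely different route from the paper. Both proofs pass through the bifunctor $(S,T)\mapsto F(U_S\cap V_T)$ on $\P_0(I)\times\P_0(J)$ and use Fubini together with the Homotopy Lemma and descent for $\V\cap U_S$ to identify the $S$-first iterated homotopy limit with $\holim_S F(U_S)$. The divergence is in how the $T$-first side is handled. The paper builds the square
\[
\xymatrix{
F(X) \ar[r] \ar[d] & \holim_{S} F(U_S) \ar[dl]_{f^*} \ar[d]\\
\holim_{T} F(V_T) \ar[r] & \holim_{S,T}F(U_S\cap V_T)
}
\]
where the diagonal is induced by $f:\P_0(J)\to\P_0(I)$, $T\mapsto f(T)$, and shows that the lower triangle commutes up to homotopy by exhibiting the auxiliary map $g:(S,T)\mapsto S\cup f(T)$ together with natural transformations $p_I\to g\leftarrow f\circ p_J$; this invokes the Extended Functoriality property of homotopy limits. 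One then concludes by a two-out-of-three argument using that both vertical maps are weak equivalences.

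You instead compute $\holim_{S}F(V_T\cap U_S)$ directly for each fixed $T$ via the cofinal functor $\pi:S\mapsto S\cup\{i_0\}$ onto the subposet $\P_0^{i_0}(I)$ with initial object $\{i_0\}$, obtaining $F(V_T)$. The crucial observation that this is realised by the \emph{canonical} restriction map $F(V_T)\to\holim_S F(V_T\cap U_S)$, hence is natural in $T$ despite the auxiliary choice of $i_0$, is exactly what makes the Homotopy Lemma applicable. Your approach trades Extended Functoriality for two applications of Cofinality, which is arguably more elementary; the paper's approach has the advantage that the bookkeeping of comparing the two composites $F(X)\to\holim_{S,T}$ is absorbed into a single commutative square rather than into a compatibility check for the Fubini zigzag, which you correctly flag as the one remaining routine verification.
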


\begin{proof}
Consider the diagram in $\Cat_X$
$$\xymatrix{
(*,X) & (\P_0(I),U) \ar[l]\\
(\P_0(J),V) \ar[u] \ar[ur]^f & (\P_0(I)\times P_0(J),U\cap V) 
\ar[l]^{\hspace{-4ex}p_J} \ar[u]_{p_I}.
}$$
The square and the upper left triangle commute in $\Cat_X$.
We check that the lower right triangle commutes up to homotopy.
To this end, consider the map in $\Cat_X$
$$g:(\P_0(I)\times P_0(J),U\cap V) \to (\P_0(I),U): (S,T)\mapsto S\cup f(T)$$
which is well-defined as $U_S\cap V_T \subset U_{S\cup f(T)}$ in view of the equality  $U_{S\cup f(T)} = U_S\cap U_{f(T)}$ and the inclusion $V_T \subset U_{f(T)}$.
Now, the (unique) natural transformations $p_I \to g$ and $f\circ p_J \to g$ show that the lower right triangle commutes up to homotopy in $\Cat_X$.
Applying the functor $\holim\circ F$ yields a diagram of simplicial sets
\begin{equation}
\label{eqn:lem:refine}
\xymatrix{
F(X) \ar[r] \ar[d] & \holim_{\emptyset \neq S \subset I} F(U_S) \ar[dl] \ar[d]\\
\holim_{\emptyset \neq T \subset J} F(V_T) \ar[r] & \holim_{\underset{\emptyset \neq S \subset I}{\emptyset \neq T \subset J}}F(U_S\cap V_T)}
\end{equation}
in which the outer square and the upper triangle commute and the lower triangle commutes up to homotopy.
The left vertical map is a weak equivalence, by assumption.
By the Fubini Theorem for homotopy limits (\ref{FubiniThm2}), the right vertical map can be identified with the map 
$$\holim_{\emptyset \neq S \subset I} F(U_S) \to \holim_{\emptyset \neq S \subset I} \holim_{\emptyset \neq T \subset J} F(U_S\cap V_T)$$
induced by the maps
$$F(U_S) \to \holim_{\emptyset \neq T \subset J} F(U_S\cap V_T)$$
which are weak equivalence, by assumption.
Hence both vertical maps in diagram (\ref{eqn:lem:refine}) are weak equivalences.
It follows that the diagonal map is a weak equivalence, and hence, so are the horizontal maps.
\end{proof}

\begin{corollary}
\label{cor:trivCover}
Let $F$ be a simplicial presheaf on $X$ and $\{U_i \to U\}_{i\in I}$ an open cover of some open $U\subset X$.
If for some $i\in I$, the map $U_i \to U$ is the identity, then $F$ has descent for the cover $\{U_i \to U\}_{i\in I}$.
\end{corollary}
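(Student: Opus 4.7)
The plan is to deduce this from the Refinement Lemma \ref{lem:Refinement}, with the roles reversed from how that lemma is usually applied: I will take $\V$ to be the single-element cover $\{U \to U\}$ (index set $J = \{*\}$) and use the given cover $\U = \{U_i \to U\}_{i\in I}$ as the cover that gets refined by $\V$. Fix $i_0\in I$ with $U_{i_0}=U$. Then the refinement map $f:J\to I$ is defined by $f(*) = i_0$, and the inclusion $V_* = U \subset U_{f(*)} = U_{i_0} = U$ holds trivially.

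Next I need to verify the two descent hypotheses required by the Refinement Lemma. First, $F$ has descent for $\V = \{U\to U\}$: here $\P_0(J) = \P_0(\{*\})$ is the terminal category (one object, only the identity morphism), so $\holim_{\emptyset \neq T \subset J} F(V_T) = F(U)$ and the canonical map $F(U) \to \holim F(V_T)$ is literally the identity. Second, I need to check that $F$ has descent for the intersected cover $\V \cap U_S = \{U \cap U_S \to U_S\} = \{U_S \to U_S\}$ for every nonempty $S\subset I$; this is the trivial cover of $U_S$ by itself and is handled by the same observation.

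With both hypotheses verified, the Refinement Lemma directly yields that $F$ has descent for $\U$, which is the conclusion. So the whole argument is essentially a one-line invocation: the only thing to notice is that the Refinement Lemma is stated asymmetrically but applies equally well in this degenerate direction where $\V$ is coarser than $\U$ as a diagram but still satisfies the containment condition $V_j \subset U_{f(j)}$ because one of the $U_i$ happens to be all of $U$.

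There is no real obstacle here; the only mild subtlety worth flagging is making sure that the application of the Refinement Lemma is legitimate — specifically, confirming that the lemma's hypothesis is a containment $V_j \subset U_{f(j)}$ (not strict refinement or covering of $U_{f(j)}$ by $V_j$'s), so that taking $\V$ to be a one-element cover by $U$ itself is allowed once some $U_{i_0}$ equals $U$.
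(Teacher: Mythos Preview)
Your proof is correct and follows essentially the same approach as the paper: apply the Refinement Lemma with $\V=\{U\to U\}$ refining $\U$ via $f(*)=i_0$, noting that $F$ trivially has descent for one-element covers $\{V\to V\}$ and for the intersected covers $\{U_S\to U_S\}$. The paper's proof is just a two-line version of exactly this argument.
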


\begin{proof}
By hypothesis, the cover $\{1:U \to U\}$ refines $\{U_i \to U\}$.
Since $F$ has descent for any cover of the form $\{1:V \to V\}$, the Refinement Lemma \ref{lem:Refinement} implies the result.
\end{proof}

\begin{corollary}
\label{cor:repetition}
Let $F$ be a simplicial presheaf on $X$.
Let $\U$ and $\V$ be open covers of some open $U \subset X$.
If $\V$ is obtained from $\U$ by repeating some open sets, then $F$ has descent for $\U$ if and only if it has descent for $\V$.
\end{corollary}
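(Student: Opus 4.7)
The plan is to apply the Refinement Lemma \ref{lem:Refinement} twice, once in each direction, using Corollary \ref{cor:trivCover} to discharge the hypotheses on the intermediate covers.

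Unpack the hypothesis as follows. Write $\U = \{U_i \to U\}_{i \in I}$ and $\V = \{V_j \to U\}_{j \in J}$. The assertion that $\V$ is obtained from $\U$ by repeating open sets means that there is a surjection $f \colon J \to I$ such that $V_j = U_{f(j)}$ for every $j \in J$; in particular $V_j \subset U_{f(j)}$, so $f$ exhibits $\V$ as a refinement of $\U$ in the sense of Lemma \ref{lem:Refinement}. Choose a section $s \colon I \to J$ of $f$ (i.e.\ $f \circ s = \mathrm{id}_I$); since $V_{s(i)} = U_{f(s(i))} = U_i$, the map $s$ exhibits $\U$ as a refinement of $\V$.

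First I would show that if $F$ has descent for $\U$ then $F$ has descent for $\V$. By the Refinement Lemma applied with $\U$ refining $\V$ via $s$, it suffices to show that $F$ has descent for $\U$ and for $\U \cap V_T = \{U_i \cap V_T \to V_T\}_{i \in I}$ for every nonempty $T \subset J$. Descent for $\U$ is hypothesis. For the second, pick any $t \in T$ and set $i_0 = f(t)$; then $U_{i_0} = V_t \supset V_T$, so $U_{i_0} \cap V_T = V_T$, meaning the cover $\U \cap V_T$ contains the identity $V_T \to V_T$. By Corollary \ref{cor:trivCover}, $F$ has descent for this cover, and we are done.

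For the converse direction I would run the symmetric argument: assuming $F$ has descent for $\V$, apply Lemma \ref{lem:Refinement} with $\V$ refining $\U$ via $f$. One must verify that $F$ has descent for $\V \cap U_S = \{V_j \cap U_S \to U_S\}_{j \in J}$ for every nonempty $S \subset I$. Given $i_0 \in S$, set $j_0 = s(i_0)$; then $V_{j_0} = U_{i_0} \supset U_S$, so $V_{j_0} \cap U_S = U_S$, and Corollary \ref{cor:trivCover} again provides descent for $\V \cap U_S$. The Refinement Lemma then yields descent for $\U$. There is no real obstacle here: the only substantive input is that both refinements contain an identity map among the intersections with each $U_S$ or $V_T$, which is immediate from $f \circ s = \mathrm{id}$.
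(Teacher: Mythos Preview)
Your proof is correct and follows precisely the approach of the paper, just with the details spelled out explicitly: the paper's proof is the two-sentence summary ``$\U$ refines $\V$ and $\V$ refines $\U$; apply the Refinement Lemma with hypotheses verified by Corollary~\ref{cor:trivCover},'' and your argument is exactly the unpacking of that.
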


\begin{proof}
By assumption, $\U$ refines $\V$ and $\V$ refines $\U$.
The result follows from the Refinement Lemma \ref{lem:Refinement} whose hypothesis we check using Corollary \ref{cor:trivCover}.
\end{proof}

\begin{lemma}[Covering Lemma]
\label{lem:Covering}
Lef $F$ be a simplicial presheaf on $X$ and $V\subset X$ some open subset.
Let $\{V_i \to V\}_{i\in I}$ and $\{U_{i,j}\to V_i\}_{j\in J}$ be open covers for $i\in I$.
For a non-empty $S \subset I$ and function $\sigma:S \to J$ write $U_{\sigma}=\bigcap_{i\in S}U_{i,\sigma(i)}$.
Assume that $F$ has descent for 
$\{V_i \to V\}_{i\in I}$ and for $\{U_{\sigma}\to V_S\}_{\sigma:S \to J}$, $\emptyset \neq S \subset I$.
Then $F$ has descent for $\{U_{i,j} \to V\}_{(i,j)\in I\times J}$.
\end{lemma}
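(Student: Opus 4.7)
My plan is to apply the two descent hypotheses iteratively, reassemble via Fubini for homotopy limits, and then identify the result with the target homotopy limit by a cofinality argument.

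First, I will use descent for $\{V_i \to V\}_{i \in I}$ to write $F(V) \simeq \holim_{\emptyset \neq S \subset I} F(V_S)$. Then at each $S$, I will substitute the descent equivalence $F(V_S) \simeq \holim_{\emptyset \neq \Sigma \subset J^S} F(U_\Sigma)$, where $U_\Sigma = \bigcap_{\sigma \in \Sigma} U_\sigma$, using the Homotopy Lemma to combine them into
\[
F(V) \;\simeq\; \holim_{S \in \P_0(I)} \holim_{\Sigma \in \P_0(J^S)} F(U_\Sigma).
\]
Next I will set up Fubini with $\D : \P_0(I)^{op} \to \Cat$ defined by $\D_S = \P_0(J^S)$ and transition functors given by restriction $\Sigma' \mapsto \Sigma'|_S$ for $S \subset S'$; since $U_{\Sigma'} \subset U_{\Sigma'|_S}$, the restriction maps $F(U_{\Sigma'|_S}) \to F(U_{\Sigma'})$ make $(S, \Sigma) \mapsto F(U_\Sigma)$ a functor on the Grothendieck construction $\P_0(I) \oint \D$, and Fubini yields $F(V) \simeq \holim_{(S, \Sigma) \in \P_0(I) \oint \D} F(U_\Sigma)$.

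Finally I will define $\psi : \P_0(I) \oint \D \to \P_0(I \times J)$ by $(S, \Sigma) \mapsto \{(i, \sigma(i)) : i \in S,\, \sigma \in \Sigma\}$. Grouping the intersection coordinate-by-coordinate gives the key identity $U_{\psi(S, \Sigma)} = U_\Sigma$, so $\psi$ induces a map $\psi^* : \holim_{T \in \P_0(I \times J)} F(U_T) \to \holim_{(S, \Sigma)} F(U_\Sigma)$ that fits into a commuting triangle with the canonical maps from $F(V)$. Thus the desired descent will follow once I show $\psi$ is left cofinal. The hard part is this cofinality step: for a fixed $T \in \P_0(I \times J)$, I will set $S^T = p_I(T)$, $T_i = \{j : (i,j) \in T\}$, and $\Sigma^T = \prod_{i \in S^T} T_i \subset J^{S^T}$, then verify that $\psi(S^T, \Sigma^T) = T$ and that for every $(S, \Sigma)$ with $\psi(S, \Sigma) \subset T$ the inclusions $S \subset S^T$ and $\Sigma \subset \Sigma^T|_S$ hold; since $\P_0(I) \oint \D$ is a poset, these give a unique morphism $(S, \Sigma) \to (S^T, \Sigma^T)$, making $(S^T, \Sigma^T, \mathrm{id})$ a terminal object of $(\psi \downarrow T)$, whose nerve is therefore contractible.

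The main obstacle will be the Grothendieck-construction bookkeeping — getting the contravariant functoriality of $\D$ correct and extracting the right terminal object of $(\psi \downarrow T)$. Once that is in place, the conclusion follows by directly combining the Homotopy Lemma, Fubini and Cofinality from earlier in the appendix.
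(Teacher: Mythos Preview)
Your proposal is correct and follows essentially the same line as the paper's proof: apply the two descent hypotheses in sequence, reassemble the double homotopy limit via Fubini over the Grothendieck construction $\P_0(I)\oint \P_0(J^?)$, and then compare with $\holim_{\P_0(I\times J)}FU$ via the functor $(S,\Sigma)\mapsto\{(i,\sigma(i)):i\in S,\ \sigma\in\Sigma\}$, whose left cofinality is established by exhibiting the terminal object $(S^T,\Sigma^T)=(p_I(T),\prod_{i\in S^T}T_i)$ of the comma category. The only cosmetic differences are notational (the paper writes $f$, $T$, $R$ where you write $\psi$, $\Sigma$, $T$) and that the paper leaves the commutativity of the comparison triangle implicit while you spell it out.
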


\begin{proof}
For two sets $S,J$ write $J^S$ for the set of functions $S\to J$.
As before, for a non-empty $T\subset J^S$, write $U_T$ for $\bigcap_{\sigma\in T}U_{\sigma}$.
By assumption, we have weak equivalences of simplicial sets
$$F(V) \stackrel{\sim}{\to} \holim_{S \in \P_0(I)}F(V_S) \stackrel{\sim}{\to}
\holim_{S \in \P_0(I)}\holim_{T \in \P_0(J^S)}F(U_T).$$
By the Fubini Theorem  for homotopy limits (\ref{FubiniThm1}), the right hand term is $\holim_{\C}f^*FU$ for the functor (map of posets)
$$f:\C = \xymatrix{\P_0(I)\oint \P_0(J^?)} = \{(S,T)|\ S \in \P_0(I),\ T\in \P_0(J^S)\}\longrightarrow \P_0(I\times J)$$
defined by 
$$f(S,T)= \{(i,j)\in I\times J|\ i\in S,\ j\in \{\sigma(i)| \sigma\in T\}\}$$
where $FU=F\circ U$ is the usual functor with
$$U:\P_0(I\times J) \to \Open^{op}_X:R \mapsto U_R=\bigcap_{(i,j)\in R}U_{i,j}.$$

By Cofinality, we are done once we show that the functor $f$ is left cofinal.
Thus, for $R\in \P_0(J^S)$, we have to check that $(f\downarrow R)$ is contractible.
But the category $(f\downarrow R)$, considered as a full subcategory of $\C$, has a final object, namely $(S_R,T_R)$ where 
$$
\renewcommand\arraystretch{1.5}
\begin{array}{rcl}
S_R&=&\{i\in I| \exists j\in J|\ (i,j)\in R\},\\
R_i& =& \{ j\in J|\ (i,j)\in R\},\\
T_R&=&\{\sigma:S_R\to J|\ \sigma(i)\in R_i\}.
\end{array}$$
Therefore, $(f\downarrow R)$ is contractible, and we are done.
\end{proof}

\begin{corollary}
\label{cor:CoverLem}
Let $Y\subset X$ be an open subset of a space $X$.
If a simplicial presheaf $F$ on $X$ has descent for the open covers 
$\{V_j \to V\}_{j\in J}$, $\{V\to Y,\ W \to Y\}$ and $\{V_j\cap W \to V\cap W\}_{j\in J}$, then $F$ has descent for $\{V_j \to Y, W \to Y\}_{j\in J}$.
\end{corollary}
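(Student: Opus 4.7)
The plan is to deduce this corollary directly from the Covering Lemma (Lemma \ref{lem:Covering}), using Corollaries \ref{cor:trivCover} and \ref{cor:repetition} to reconcile the indexing. Concretely, I would set $I=\{1,2\}$ with $V_1=V$, $V_2=W$, so that the hypothesized cover $\{V\to Y, W\to Y\}$ plays the role of the outer cover $\{V_i\to V\}_{i\in I}$ of $Y$ in the statement of Lemma \ref{lem:Covering}, for which descent is given by hypothesis.

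To invoke Lemma \ref{lem:Covering} I need a single index set shared by the refining covers of $V_1$ and $V_2$. I would simply reuse $J$, setting $U_{1,j}=V_j$ (so that $\{U_{1,j}\to V_1\}_{j\in J}$ is the given cover of $V$, with descent by hypothesis) and $U_{2,j}=W$ for every $j\in J$, so that $\{U_{2,j}\to V_2\}_{j\in J}$ is the trivial cover $W\to W$ repeated $|J|$ times; descent for this follows from Corollaries \ref{cor:trivCover} and \ref{cor:repetition}. (If $J$ is empty then $V=\emptyset$ and the conclusion is trivial, so we may assume $J\neq\emptyset$.)

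Next I would verify the remaining hypothesis of the Covering Lemma: descent for $\{U_\sigma\to V_S\}_{\sigma:S\to J}$ for each nonempty $S\subset I$. For $S=\{1\}$ and $S=\{2\}$ this reduces to the two covers above. For $S=\{1,2\}$ one has $U_\sigma = U_{1,\sigma(1)}\cap U_{2,\sigma(2)} = V_{\sigma(1)}\cap W$, which depends only on $\sigma(1)\in J$; hence the resulting cover of $V\cap W$ is $\{V_j\cap W\to V\cap W\}_{j\in J}$ with each index appearing $|J|$ times, and another application of Corollary \ref{cor:repetition} combined with hypothesis (3) gives descent.

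Applying Lemma \ref{lem:Covering} then yields descent for the total cover $\{U_{i,j}\to Y\}_{(i,j)\in I\times J}$, which is $\{V_j\to Y\}_{j\in J}$ together with $|J|$ copies of $W\to Y$. A final invocation of Corollary \ref{cor:repetition} collapses these repeated copies of $W$ and produces descent for $\{V_j\to Y,\ W\to Y\}_{j\in J}$, which is the claim. The proof is essentially bookkeeping; there is no conceptual obstacle, and the only subtlety I expect is keeping the indexing straight when unpacking the $S=\{1,2\}$ case of the Covering Lemma.
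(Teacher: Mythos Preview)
Your proposal is correct and follows essentially the same approach as the paper's proof: set up the Covering Lemma with $I=\{1,2\}$ (the paper uses $\{0,1\}$), $V_1=V$, $V_2=W$, $U_{1,j}=V_j$, $U_{2,j}=W$, verify the hypotheses via Corollaries \ref{cor:trivCover} and \ref{cor:repetition}, and then strip repetitions at the end. Your write-up is in fact more explicit than the paper's in checking the $S=\{1,2\}$ case and in flagging the degenerate $J=\emptyset$ case, but the argument is the same.
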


\begin{proof}
In view of the hypothesis and Corollary \ref{cor:repetition} we can apply the Covering Lemma \ref{lem:Covering} to
$I=\{0,1\}$, $V_0=V$, $V_1=W$, $U_{0,j}=V_j$, $U_{1,j}=W$.
Therefore, $F$ has descent for $\{U_{i,j}\to Y\}_{i\in I,\ j\in J}$ which, after omitting repetitions, is $\{V_j \to Y,W\to Y\}_{j\in J}$.
By Corollary \ref{cor:repetition}, we are done.
\end{proof}

Let $X$ be a noetherian scheme and $U\subset X$ an open subscheme.
We will call a finite cover $\{U_i \to U\}_{i\in I}$ of $U$ {\em elementary} if there exists a total order on $I$ such that for all $i\in I$, there are $f,g\in \Gamma(U_{\leq i},O_X)$ such that $(f,g)$ generates the unit ideal in $\Gamma(U_{\leq i},O_X)$ and such that $U_i=(U_{\leq i})_f$ and $U_{<i}=(U_{\leq i})_g$ where
$U_{\leq i}=\bigcup_{j\leq i}U_j$ and $U_{<i}=\bigcup_{j<i}U_j$.
Note that elementary covers are closed under taking base change.
Note also that if $U$ is affine then $U_i$, $U_{<i}$ and $U_{\leq i}$ are also affine.

\begin{lemma}
Let $R$ be a noetherian ring.
Then any open cover of $\Spec R$ can be refined by a finite elementary open cover.
\end{lemma}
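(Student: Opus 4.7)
The plan is to reduce to the case of a finite cover by principal opens and then induct on the number of elements. Since $\Spec R$ is quasi-compact and the principal opens $D(h) \subset \Spec R$ form a basis of the Zariski topology, any open cover admits a refinement of the form $\{D(h_1), \dots, D(h_n)\}$ with $(h_1, \dots, h_n) = R$, so it suffices to prove that every such basic cover admits a finite elementary refinement. I will establish this by induction on $n$; the base case $n = 1$ is immediate, since $D(h_1) = \Spec R$ and the singleton family $\{U_1 = \Spec R\}$ is elementary via $f = 1$, $g = 0 \in R$ (one has $(1,0) = R$, $(\Spec R)_1 = \Spec R$, and $(\Spec R)_0 = \emptyset$).

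For the inductive step, assume $n \geq 2$ and the lemma for noetherian affine schemes covered by at most $n - 1$ principal opens. Writing $1 = \sum_{i=1}^n a_i h_i$, set $f := 1 - a_1 h_1 = \sum_{i=2}^n a_i h_i \in R$. Then $(f, h_1) = R$, so $\{D(f), D(h_1)\}$ is an elementary two-cover of $\Spec R$. Furthermore $f \in (h_2,\dots,h_n)$ forces $D(f) \subset \bigcup_{i \geq 2} D(h_i)$, and hence $\{D(fh_i)\}_{i=2}^n$ is a cover of the noetherian affine scheme $D(f) = \Spec R_f$ by $n - 1$ principal opens. Applying the induction hypothesis to $\Spec R_f$ yields an elementary refinement, which I interpret as a chain of affine opens $V_{\leq 1} \subset \cdots \subset V_{\leq N} = D(f)$ with each $V_{\leq j-1}$ a principal open of $V_{\leq j}$ and each complementary piece $V_j$ contained in some $D(h_i)$ with $i \geq 2$.

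Finally I extend this chain by appending $V_{\leq N+1} := \Spec R$: the inclusion $V_{\leq N} = D(f) \subset \Spec R$ is the principal open embedding $(\Spec R)_f \subset \Spec R$, whose complementary principal open is $V_{N+1} := D(h_1) = (\Spec R)_{h_1}$, with the relation $(f, h_1) = R$ witnessing that this last step is elementary. The resulting chain $V_{\leq 1} \subset \cdots \subset V_{\leq N+1} = \Spec R$ encodes a finite elementary cover $\{V_1, \dots, V_N, V_{N+1}\}$ refining the original. The only substantive point in the argument is the construction of the element $f = 1 - a_1 h_1$: it lies in $(h_2,\dots,h_n)$ and is coprime to $h_1$, which is precisely what lets me package all but one of the basic opens into a single principal open of $\Spec R$ that is affine and covered by only $n - 1$ basic opens --- this is what drives the induction.
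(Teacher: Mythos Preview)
Your proof is correct and follows essentially the same approach as the paper's: reduce to a finite cover, induct on its size, split $\Spec R$ into two coprime principal opens one of which is covered by $n-1$ members, apply induction there, and append the remaining piece. The only cosmetic difference is that you first refine to a cover by principal opens $D(h_i)$, which lets you write down $f=1-a_1h_1$ explicitly, whereas the paper works with an arbitrary finite open cover and extracts $f,g$ from the vanishing ideals of the closed complements of $U_n$ and $U_{<n}$.
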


\begin{proof}
Since $R$ is noetherian, any cover of $X=\Spec R$ can be refined by a finite cover.
So, it suffices to prove the claim for finite covers $\{U_i\to X\}_{i=1,...,n}$.
We will prove by induction on $n \in \N_{\geq 1}$ that any cover consisting of $n$ open subsets can be refined by an elementary open cover.
If $n=1$ then $U_1=X$ and the cover is already elementary as we can choose $f=1$ and $g=0$.
Assume now that $n\geq 2$.
Let $I$ and $J$ be the vanishing ideals of $X-U_n$ and $X-U_{<n}$.
Since $U_n$ and $U_{<n}$ cover $X$, we have $I+J=R$, and we can choose $f\in I$, $g\in J$ with $f+g=1$.
Then $X_f\subset U_n$ and $X_g\subset U_{<n}$, and $X_f$ and $X_g$ cover $X$.
By induction hypothesis, the cover $\{(U_i)_g \to (U_{<n})_g=X_g\}_{i=1,...,n-1}$ can be refined by an elementary cover $\{V_i \to X_g\}_{i\in I}$.
Then $\{V_i \to X,\ X_f \to X\}_{i\in I}$ is an elementary cover of $X$ which refines $\{U_i \to X\}_{i=1,...,n}$.
\end{proof}

\begin{lemma}
\label{lem:ElementaryCoverDescent}
Let $X$ be a noetherian scheme and $F$ a simplicial presheaf on $X$ which has the affine B.G.-property.
Then for every open affine $U\subset X$, the simplicial presheaf $F$ has descent for all elementary open covers of $U$.
\end{lemma}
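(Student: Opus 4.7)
The plan is to argue by induction on the cardinality $n = |I|$ of the indexing set of the elementary cover $\{U_i \to U\}_{i\in I}$, using Corollary~\ref{cor:CoverLem} to decompose an $n$-element elementary cover into a two-element affine cover (to which the affine B.G.-property applies directly) and an elementary cover of length $n-1$ (to which the induction hypothesis applies).

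For the base case $n=1$, the cover consists of the single morphism $U_1 = U \to U$, so descent is immediate from Corollary~\ref{cor:trivCover}. For the inductive step, let $i_0$ be the maximum element of $I$, and write $V = U_{<i_0}$ and $W = U_{i_0}$. By the definition of an elementary cover, there exist $f,g \in \Gamma(U,O_X)$ with $(f,g) = \Gamma(U,O_X)$, $V = U_g$, and $W = U_f$. Since $U$ is affine, so are $V$, $W$, and $V \cap W = U_{fg}$. The affine B.G.-property then gives descent for the two-element cover $\{V \to U,\ W \to U\}$.

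Next, the cover $\{U_j \to V\}_{j<i_0}$ is an elementary cover of the affine scheme $V$ with $n-1$ elements, so the induction hypothesis yields descent for it. Moreover, since elementary covers are closed under base change (as noted before the lemma), the pullback $\{U_j \cap W \to V \cap W\}_{j<i_0}$ is an elementary cover of the affine $V \cap W$ with at most $n-1$ elements, to which the induction hypothesis again applies. I would then invoke Corollary~\ref{cor:CoverLem} with $Y = U$ to conclude that $F$ has descent for $\{U_j \to U\ (j<i_0),\ W \to U\}$, which is exactly $\{U_i \to U\}_{i\in I}$.

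The only potential obstacle is bookkeeping: one must verify that the two-element cover $\{V,W\}$ to which the affine B.G.-property is applied really falls under the hypothesis (both $V$ and $W$ are principal opens of the affine $U$ cut out by a coprime pair), and that the base-changed cover $\{U_j \cap W\}_{j<i_0}$ is genuinely elementary of shorter length, so that the induction can be applied to it. Both facts are immediate from the definition of an elementary cover and its stability under restriction to opens.
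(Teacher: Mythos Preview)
Your proof is correct and follows essentially the same route as the paper's: induction on the cardinality of the elementary cover, with the inductive step splitting off the top element and invoking Corollary~\ref{cor:CoverLem} after verifying its three hypotheses via the affine B.G.-property (for the two-element cover $\{U_{<i_0},U_{i_0}\}$) and the induction hypothesis (for the shorter elementary covers of $U_{<i_0}$ and of $U_{<i_0}\cap U_{i_0}$). The only cosmetic difference is that the paper treats $n\leq 2$ as the base case directly from the affine B.G.-property, whereas you start at $n=1$ and let $n=2$ fall under the inductive step.
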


\begin{proof}
We will prove the claim by induction on the cardinality of an elementary open cover of an affine open subset of $X$.
If $n\leq 2$, the claim follows from the definition of the affine B.G.-property.
Now assume $n\geq 3$.
Let $\{U_i \to U\}_{i=1,...,n}$ be an elementary cover of an open affine $U\subset X$.
By induction hypothesis and the definition of elementary cover, the simplicial presheaf $F$ has descent for the covers
$\{U_i \to U_{<n}\}_{i=1,...,n-1}$, $\{U_i\cap U_n \to U_{<n}\cap U_n\}_{i=1,...,n-1}$ and $\{U_n \to U,\ U_{<n}\to U\}$.
By Corollary \ref{cor:CoverLem}, the simplicial presheaf $F$ has descent for $\{U_i \to U\}_{i=1,...,n}$.
\end{proof}

\begin{lemma}
\label{lem:AffineCoverDescent}
Let $X$ be a noetherian scheme and $F$ a simplicial presheaf on $X$ which has the affine B.G.-property.
Then for every open affine $U\subset X$, the simplicial presheaf $F$ has descent for all open affine covers of $U$.
\end{lemma}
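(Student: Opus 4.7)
The plan is to deduce descent for a general open affine cover from the elementary case established in Lemma \ref{lem:ElementaryCoverDescent}, by invoking the Refinement Lemma \ref{lem:Refinement}. Let $\U = \{U_i \to U\}_{i \in I}$ be an open affine cover of the affine open $U = \Spec R \subset X$. Since $R$ is noetherian, $U$ is quasi-compact, so $\U$ admits a finite subcover; I reduce to the case $I$ finite at the outset. The passage from the finite subcover back to $\U$ can be arranged via a second application of the Refinement Lemma in which Corollary \ref{cor:trivCover} trivializes most of the induced covers (those indexed by $S$ meeting the chosen finite subcover).

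With $I$ finite, I would then invoke the unnamed lemma immediately preceding Lemma \ref{lem:ElementaryCoverDescent} to obtain a finite elementary open cover $\V = \{V_j \to U\}_{j \in J}$ refining $\U$, and apply Lemma \ref{lem:Refinement} to the pair $(\V, \U)$. Two hypotheses need to be verified: first, that $F$ has descent for $\V$; second, that $F$ has descent for each induced cover $\V \cap U_S = \{V_j \cap U_S \to U_S\}_{j \in J}$ for $\emptyset \neq S \subset I$. The first is immediate from Lemma \ref{lem:ElementaryCoverDescent} since $\V$ is an elementary open cover of the affine scheme $U$. For the second, the key observation is that $U_S = \bigcap_{i \in S} U_i$ is a finite intersection of affine opens in the separated scheme $U$, hence itself affine; and since elementary covers are preserved under base change, $\V \cap U_S$ is an elementary open cover of the affine scheme $U_S$, so Lemma \ref{lem:ElementaryCoverDescent} applies once more.

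The main obstacle is precisely the requirement that $U_S$ be affine, which forces the preliminary reduction to finite $I$ via noetherianness: if $I$ were infinite and $S$ likewise, then $U_S$ need not be quasi-compact, let alone affine, placing $\V \cap U_S$ outside the scope of Lemma \ref{lem:ElementaryCoverDescent}. The argument thus leans essentially on the noetherian hypothesis on $X$, both to secure a finite refinement $\V$ and to ensure affineness of the intersections $U_S$. Once these are in hand, the Refinement Lemma assembles the two elementary descent statements into descent for $\U$, completing the proof.
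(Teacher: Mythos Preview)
Your core approach—refine the given affine cover $\U$ by an elementary cover and apply the Refinement Lemma—is exactly the paper's argument. The paper's proof is the two-line version of your second paragraph: elementary covers are closed under base change, and finite intersections of affine opens in the affine (hence separated) scheme $U$ are affine, so the hypotheses of Lemma~\ref{lem:Refinement} follow from Lemma~\ref{lem:ElementaryCoverDescent}.

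Your additional reduction to finite $I$ is motivated by a genuine worry (for infinite $S$ the intersection $U_S$ need not even be open), but the proposed fix does not close the gap. In the second application of the Refinement Lemma—passing from the finite subcover $\{U_i\}_{i\in I'}$ back to $\U$—you still need descent for the induced cover of $U_S$ for \emph{every} nonempty $S\subset I$, and the $S$ disjoint from $I'$ are precisely the ones Corollary~\ref{cor:trivCover} does not handle; yet such $S$ may be infinite, reproducing the original difficulty. The clean resolution is a cofinality argument you did not invoke: the inclusion of the poset of finite nonempty subsets of $I$ into $\P_0(I)$ is left cofinal (for each $S\in\P_0(I)$ the over-category is the filtered poset of finite nonempty subsets of $S$, hence has contractible nerve), so the descent homotopy limit depends only on finite $S$. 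With that observation your concern evaporates and the paper's direct argument goes through without any preliminary reduction. The paper is silent on this point; in its applications only finite covers arise.
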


\begin{proof}
Note that an elementary open cover of an affine scheme is an affine cover.
Now the claim follows from Lemma \ref{lem:ElementaryCoverDescent} and the Refinement Lemma \ref{lem:Refinement} which we can apply since elementary covers are closed under base change, and every intersection of affine open subsets in $U$ is affine.
\end{proof}

Denote by $\Sch$  a small full subcategory of the category of schemes closed under taking open subschemes and fibre products.
For instance, $\Sch$ could be the category of open subsets of a given scheme, the category of finite type $S$-scheme, or smooth $S$-scheme for a noetherian scheme $S$.
Let $\Aff\subset \Sch$ be the full subcategory of affine schemes.
For a simplicial presheaf $F$ on $\Sch$,
its homotopy right Kan extension 
from $\Aff$ to $\Sch$ is the simplicial presheaf $\hat{F}$ on $\Sch$ defined by
\begin{equation}
\label{eqn:htpyKanExtn}
\hat{F}(X)=\holim_{U\in (\Aff\downarrow X)}F(U).
\end{equation}
The canonical map $((\Aff\downarrow X), F) \to (*,F(X))$ in $[\Cat,\sSets]$ induces a map of simplicial presheaves
$$F \to \hat{F}.$$
When $U\in \Sch$ is affine then this map induces a weak equivalence of simplicial sets $F(U) \stackrel{\sim}{\to}\hat{F}(U)$ since then $(\Aff\downarrow U)$ has a final object.

\begin{lemma}
\label{lem:AmpleFamCover}
Let $X\in \Sch$ be a scheme, let $L_i$ be line bundles on $X$, and let $f_i\in \Gamma(X,L_i)$ be global sections of $L_i$, $i=1,...,n$.
Assume that $X = \bigcup_{i\in I}X_{f_i}$.
If $F$ is a simplicial presheaf on $\Sch$ which has the affine B.G.-property,
then its homotopy right Kan extension $\hat{F}$ has descent for the open cover $\{X_{f_i}\to X\}_{i\in I}$.
\end{lemma}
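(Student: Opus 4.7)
The plan is to combine the affine descent property (Lemma~\ref{lem:AffineCoverDescent}) with a cofinality argument to transfer Čech descent from the affine site to the homotopy right Kan extension $\hat F$, by working one affine $U \in (\Aff \downarrow X)$ at a time and then commuting two homotopy limits.

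The first step is a geometric observation: for every $U \in (\Aff \downarrow X)$, the pullbacks $U_{f_i} := U \times_X X_{f_i}$ are affine, since each is the non-vanishing locus on the affine scheme $U$ of the pulled-back section $f_i|_U \in \Gamma(U, L_i|_U)$. More generally, for every $\emptyset \neq S \subset I$, the intersection $U_S := U \times_X X_S = \bigcap_{i \in S} U_{f_i}$ is affine (the non-vanishing locus of the tensor-product section), and $\{U_{f_i} \to U\}_{i \in I}$ is an affine open cover of $U$. Lemma~\ref{lem:AffineCoverDescent} then gives the descent weak equivalence
\[
F(U) \xrightarrow{\sim} \holim_{\emptyset \neq S \subset I} F(U_S),
\]
naturally in $U$.

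Next I would take the homotopy limit of this natural weak equivalence over $U \in (\Aff \downarrow X)$. By the Homotopy Lemma and Fubini's Theorem,
\[
\hat F(X) = \holim_U F(U) \xrightarrow{\sim} \holim_U \holim_{S} F(U_S) \simeq \holim_{S} \holim_U F(U_S).
\]
It then remains to identify the inner homotopy limit $\holim_{U \in (\Aff \downarrow X)} F(U_S)$ with $\hat F(X_S) = \holim_{V \in (\Aff \downarrow X_S)} F(V)$. For this, the assignment $U \mapsto U_S$ defines a functor $q_S \colon (\Aff \downarrow X) \to (\Aff \downarrow X_S)$; it has a section $\iota_S \colon (\Aff \downarrow X_S) \to (\Aff \downarrow X)$ sending $V \to X_S$ to $V \to X_S \hookrightarrow X$, and $q_S \circ \iota_S = \mathrm{id}$. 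The core observation is that for every $V \in (\Aff \downarrow X_S)$, the pair $(\iota_S V, \mathrm{id}_V)$ is terminal in the relevant comma category: any $(U, a \colon V \to U_S)$ maps uniquely to it because the composition $V \xrightarrow{a} U_S \hookrightarrow U$ is the unique morphism $V \to U$ over $X$ whose $q_S$-image is $a$. This cofinality of $q_S$ (taken in the appropriate direction) yields $\holim_U F(U_S) \simeq \hat F(X_S)$, completing the proof.

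The main obstacle is this final cofinality step. The verification itself is short, but care is required with the direction of the comma categories: since $F$ is a presheaf and $\hat F$ is defined via a homotopy limit of a contravariant functor, the Cofinality Theorem must be applied to the opposite $q_S^{op}$, where the terminality of $(\iota_S V, \mathrm{id}_V)$ translates (via the adjunction $\iota_S \dashv q_S$) into the contractibility of $(q_S^{op} \downarrow V)$.
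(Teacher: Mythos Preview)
Your approach is essentially the paper's: both combine affine descent (Lemma~\ref{lem:AffineCoverDescent}), Fubini's theorem, and left cofinality of the opposite of the pullback functor $q_S\colon (\Aff\downarrow X)\to(\Aff\downarrow X_S)$, differing only in the order of application (the paper does cofinality first, then Fubini, then affine descent).

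There is, however, one genuine gap. What you produce is a \emph{zigzag} of weak equivalences
\[
\hat F(X)\ \xrightarrow{\ \sim\ }\ \holim_U\holim_S F(U_S)\ \simeq\ \holim_S\holim_U F(U_S)\ \xleftarrow{\ \sim\ }\ \holim_S \hat F(X_S),
\]
but descent for $\hat F$ requires that the \emph{canonical} restriction map $\hat F(X)\to\holim_S\hat F(X_S)$ be a weak equivalence, and you have not checked that your zigzag is compatible with that map. Concretely, for fixed $S$ your route produces the map $\holim_U F(U)\to\holim_U F(U_S)$ induced by the restrictions $F(U)\to F(U_S)$, whereas the canonical route followed by your cofinality equivalence produces the map induced by the endofunctor $(\iota_S q_S)^{op}$ of $(\Aff\downarrow X)^{op}$. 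These agree only up to homotopy, via the natural transformation furnished by the counit $\iota_S q_S\Rightarrow\mathrm{id}$ (that is, the open inclusions $U_S\hookrightarrow U$) together with Extended Functoriality. The paper carries out exactly this verification (phrased as showing a certain round-trip is homotopic to the identity); you should include it as well.

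A small slip: in $(V\downarrow q_S)$ the object $(\iota_S V,\mathrm{id}_V)$ is \emph{initial}, not terminal --- the map $V\to U_S\hookrightarrow U$ you exhibit goes \emph{from} $(\iota_S V,\mathrm{id}_V)$ \emph{to} $(U,a)$. Your closing remark via the adjunction $\iota_S\dashv q_S$ has this right.
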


\begin{proof}
Let $y:Y\to X$ be an affine map of schemes.
Consider the functor 
$$f: (\Aff\downarrow X) \to (\Aff\downarrow Y):(V\to X) \mapsto y^*V=(V\times_XY \to Y).$$
The induced functor on opposite categories $f^{op}$ is left cofinal because for every $w:W\to Y$ in $(\Aff\downarrow Y)$, the category $(w\downarrow f^{op})^{op}=(w\downarrow f)$ has an initial object given by $yw:W \to X$ and $(1,yw):W \to W\times_XY$.
For $\emptyset \neq S\subset I$ and $Y=U_S = \bigcap_{i\in S}X_{f_i} \to X$ the open inclusion, Cofinality for homotopy limits then yields a weak equivalence of simplicial sets
$$\holim_{W\in (\Aff\downarrow U_S)}F(W) \stackrel{\sim}{\longrightarrow} 
\holim_{V\in (\Aff\downarrow X)}F(V\times_XU_S).$$
Taking homotopy limit over $\P_0(I)$, we obtain from the Homotopy Lemma the 
weak equivalence of simplicial sets
$$\holim_{S\in \P_0(I)}\holim_{W\in (\Aff\downarrow U_S)}F(W) \stackrel{\sim}{\longrightarrow} 
\holim_{S\in \P_0(I)}\holim_{V\in (\Aff\downarrow X)}F(V\times_XU_S).$$
The left hand side is 
$$\holim_{S\in \P_0(I)}\hat{F}(U_S)$$
and the right hand side is
$$\holim_{V\in (\Aff\downarrow X)}\holim_{S\in \P_0(I)}F(V\times_XU_S) = 
\holim_{V\in (\Aff\downarrow X)}F(V) = \hat{F}(X)$$
since $F$ has descent for open covers of $V$, by Lemma \ref{lem:AffineCoverDescent}.
Thus, we have a sequence of maps in which the second map is a weak equivalence of simplicial sets
$$\hat{F}(X) \longrightarrow \holim_{S\in \P_0(I)}\hat{F}(U_S) \stackrel{\sim}{\longrightarrow} \hat{F}(X).$$
We are done once we show that the composition is homotopic to the identity.
The existence of the homotopy follows from the Extended Functoriality for homotopy limits 
since the following diagram in $[\Cat,\sSets]$ commutes up to natural transformation
$$\xymatrix{
(\P_0(I)\times (\Aff\downarrow X), f^*F ) \ar[r]^f \ar[dr]
& (\P_0(I)\oint (\Aff\downarrow U_?), F ) \ar[d] \\
& ((\Aff\downarrow X),F)
}
$$
where the horizontal functor is $(S,V\to X)\mapsto (S,V\times_XU_S\to U_S)$, the diagonal functor is $(S,V\to X)\mapsto (V\to X)$ and the vertical functor is
$(S,W \to U_S)\mapsto (W \to X)$ using the inclusion $U_S\subset X$.
The functor $F$ sends $(S,W\to U_S)$ and $(W\to X)$ to $F(W)$.
The natural transformation at $(S,V\to X)$ is the projection map $V\times_XU_S \to V$.
\end{proof}

Recall that a quasi-compact scheme $X$ admits an ample family of line bundles if the open subsets $X_{f}$ form a basis for the Zariski topology on $X$ where $f\in \Gamma(X,L)$ and $L$ runs through all line bundles on $X$.
For instance, any quasi-affine scheme has an ample family of line bundles.

\begin{theorem}
\label{thm:KanExtDescent}
Let $\Sch$ be a small category of noetherian schemes closed under open immersions and fibre products.
Let $F$ be a simplicial presheaf on $\Sch$ which has the affine B.G.-property.
Let $\hat{F}$ be the homotopy right Kan extension of $F$ from the full subcategory $\Aff$ of affine schemes to $\Sch$; see (\ref{eqn:htpyKanExtn}).
Let $X\in \Sch$ be a noetherian scheme with an ample family of line bundles.
Then $\hat{F}$ has descent for all open covers of $X$.
\end{theorem}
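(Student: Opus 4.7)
The plan is to combine Lemma \ref{lem:AmpleFamCover} with the Refinement Lemma \ref{lem:Refinement}. Let $\U = \{U_i \to X\}_{i \in I}$ be an arbitrary open cover of $X$.

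First, I would use the ample family of line bundles together with the noetherian hypothesis to construct a finite refinement of $\U$ by basic opens. For each $i \in I$ and each $x \in U_i$, the ample family property furnishes a line bundle $L$ on $X$ and a section $f \in \Gamma(X,L)$ with $x \in X_f \subset U_i$. Since $X$ is noetherian, hence quasi-compact, finitely many such basic opens $X_{f_j}$, $j=1,\ldots,n$, already cover $X$, yielding a finite refinement $\V = \{X_{f_j} \to X\}_{j=1,\ldots,n}$ of $\U$, with a refinement map $\sigma \colon \{1,\ldots,n\} \to I$ satisfying $X_{f_j} \subset U_{\sigma(j)}$. By Lemma \ref{lem:AmpleFamCover}, $\hat{F}$ has descent for $\V$.

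Next, for each non-empty $S \subset I$, I would verify descent of $\hat{F}$ for the restricted cover $\V \cap U_S = \{X_{f_j} \cap U_S \to U_S\}_{j=1,\ldots,n}$. The intersection $U_S = \bigcap_{i\in S} U_i$ is an open subscheme of $X$, hence lies in $\Sch$ and is noetherian. One has $X_{f_j} \cap U_S = (U_S)_{f_j|_{U_S}}$, a basic open of $U_S$ associated to the restricted line bundle $L_j|_{U_S}$ and the restricted section $f_j|_{U_S}$. Since an ample family of line bundles restricts to an ample family on any open subscheme, Lemma \ref{lem:AmpleFamCover} applies to $U_S$ in place of $X$ and yields descent of $\hat{F}$, restricted to opens of $U_S$, for the cover $\V \cap U_S$. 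Here one uses the formal point that $\hat{F}(W) = \holim_{V \in (\Aff \downarrow W)} F(V)$ depends only on the slice category $(\Aff \downarrow W)$, so the restriction of $\hat{F}$ to open subschemes of $U_S$ agrees with the homotopy right Kan extension of $F$ from affines over $U_S$.

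With descent for $\V$ and for every $\V \cap U_S$ verified, the Refinement Lemma \ref{lem:Refinement} immediately gives descent of $\hat{F}$ for $\U$. The only substantive input is Lemma \ref{lem:AmpleFamCover} itself; the rest of the argument is a combination of the standard facts that ample families restrict to open subschemes, noetherianness provides finite subcovers, and $\Sch$ is closed under open immersions, so that all intermediate schemes $U_S$ and $X_{f_j}\cap U_S$ remain in $\Sch$ and Lemma \ref{lem:AmpleFamCover} applies at each stage.
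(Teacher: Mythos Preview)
Your proof is correct and follows essentially the same approach as the paper: refine the given cover by basic opens $X_{f_j}$, apply Lemma~\ref{lem:AmpleFamCover}, observe that such covers are stable under intersection with $U_S$, and conclude via the Refinement Lemma~\ref{lem:Refinement}. One small remark: your invocation of the fact that ample families restrict to open subschemes is harmless but unnecessary, since Lemma~\ref{lem:AmpleFamCover} does not require an ample family on $U_S$ --- it only needs the cover $\{(U_S)_{f_j|_{U_S}} \to U_S\}$ to be by basic opens of line bundles, which you had already verified directly.
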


\begin{proof}
Since $X$ has an ample family of line bundles, every open cover $\U$ of $X$ can be refined by a cover as in \ref{lem:AmpleFamCover}.
Since those covers are closed under base change, Lemma \ref{lem:AmpleFamCover} together with the Refinement Lemma \ref{lem:Refinement} implies that $\hat{F}$ has descent for $\U$.
\end{proof}

\begin{proof}[Proof of Theorem \ref{thm:AffineBGToBG}]
For an open inclusion $j:Y\subset X$, the natural map of simplicial presheaves
$(j^*F)_{\Zar} \to j^*(F_{\Zar})$ is a map of fibrant objects and a weak equivalence for the Zariski topology, hence it is an object-wise weak equivalence.
Therefore, we can replace $F$ with $j^*F$ and
assume that $X=Y$ is affine.
Then $X$ and all its open subsets have an ample family of line bundles.
Let $U,V\subset X$ be open subsets.
By Theorem \ref{thm:KanExtDescent} with $\Sch = \Open_X$ and $U\cup V$ in place of $X$, the simplicial presheaf $\hat{F}$ has descent for the cover $\{U\to U\cup V, V\to U\cup V\}$ of $U\cup V$.
That is, $\hat{F}$ sends the square (\ref{eqn:MV}) to a homotopy cartesian square of simplicial sets.
By \cite[Theorem 4]{BrownGersten}, the map
$\hat{F} \to \hat{F}_{\Zar}$ from $\hat{F}$ to its Zariski fibrant replacement is an object-wise weak equivalence.
Since $F\to \hat{F}$ is an equivalence on affine schemes, the composition
$F \to \hat{F}_{\Zar}$ is an equivalence on affine schemes and a Zariski weak equivalences to a fibrant simplicial presheaf.
\end{proof}


\end{document}